    \definecolor{darkblue}{rgb}{0, 0, .4}
\definecolor{grey}{rgb}{.7, .7, .7}
    \renewcommand{\bibname}{References}
\renewcommand{\a}{\alpha}
\newcommand{\e}{\varepsilon}
    \newtheorem{theorem}{Theorem}[subsection]
    \newtheorem{proposition}[theorem]{Proposition}
    \newtheorem{lemma}[theorem]{Lemma}
    \newtheorem{corollary}[theorem]{Corollary}
    \theoremstyle{definition}
        \newtheorem{definition}[theorem]{Definition}
        \newtheorem{example}[theorem]{Example}
        \newtheorem{conjecture}[theorem]{Conjecture}
    \theoremstyle{remark}
        \newtheorem{remark}[theorem]{Remark}
    \numberwithin{equation}{section}
    \numberwithin{figure}{section}
\newlength\cellsize \setlength\cellsize{18\unitlength}
\newcommand\cellify[1]{\def\thearg{#1}\def\nothing{}%
\ifx\thearg\nothing
\vrule width0pt height\cellsize depth0pt\else
\hbox to 0pt{\usebox2\hss}\fi%
\vbox to 18\unitlength{
\vss
\hbox to 18\unitlength{\hss$#1$\hss}
\vss}}
\newcommand\tableau[1]{\vtop{\let\\=\cr
\setlength\baselineskip{-16000pt}
\setlength\lineskiplimit{16000pt}
\setlength\lineskip{0pt}
\halign{&\cellify{##}\cr#1\crcr}}}
\newcommand\expath[1]{%
\hbox to 0pt{\usebox3\hss}%
\vbox to 15\unitlength{
\vss
\hbox to 15\unitlength{\hss$#1$\hss}
\vss}}
    \newcommand{\n}{\vspace{12pt}} % for typesetting line breaks with 12
    \newcommand{\newchapter}[3] % for typesetting new chapters with the
	{                           % correct initial page numbering style
        % Arguments: (#1) Short name for chapter, which is used in 
        %                 any running headers
        %            (#2) Medium length name for chapter, which is
        %                 used in the table of contents
        %            (#3) Long name for chapter, which is typeset at 
        %                 the starting the chapter
        \chapter[#2]{#3}
        \chaptermark{#1}
        \thispagestyle{myheadings}
	}
\begin{document}
    
    % Toggle commenting the following command out in order to toggle 
    % the inclusion of line numbering (this requires the ``lineno'' 
    % package to be includes above):
%     \linenumbers

    % The following commands produce page numbering at the bottom
    % center using roman numerals per UC Davis requirements for the
    % front matter of the dissertation:
    \pagenumbering{roman}
    \pagestyle{plain}

    % %------------------------------------------------------------------------------
    % 
    % %------------------------------------------------------------------ NEW PAGE --
    %
    % % -- TITLE PAGE

    % The following command produces single-spaced lines for the title page:
    \singlespacing

    ~\vspace{-0.75in} % to force the title up a bit higher
    \begin{center}

        \begin{huge}
            Combinatorics of $(\ell,0)$-JM partitions, $\ell$-cores, the ladder crystal and the finite Hecke algebra
        \end{huge}\\\n
        By\\\n
        {\sc Christopher J. Berg}\\
        B.S. (UC Santa Barbara) 2003\\
        M.A. (UC Davis) 2005\\\n
        DISSERTATION\\\n
        Submitted in partial satisfaction of the requirements for the degree of\\\n
        DOCTOR OF PHILOSOPHY\\\n
        in\\\n
        MATHEMATICS\\\n
        in the\\\n
        OFFICE OF GRADUATE STUDIES\\\n
        of the\\\n
        UNIVERSITY OF CALIFORNIA\\\n
        DAVIS\\\n\n
        
        Approved:\\\n\n
        
        \rule{4in}{1pt}\\
        ~Monica Vazirani (Chair)\\\n\n
        
        \rule{4in}{1pt}\\
        ~Anne Schilling\\\n\n
        
        \rule{4in}{1pt}\\
        ~Dmitry Fuchs\\
        
        \vfill
        
        Committee in Charge\\
        ~2009

    \end{center}

    \newpage

    % %------------------------------------------------------------------------------
    % 
    % %------------------------------------------------------------------ NEW PAGE --
    %
    % % -- COPYRIGHT PAGE
        
    % The following commands create a copyright notice page. This 
    % page can be deleted if you would prefer to not include it.

    % ~\\[7.75in] % to place the copyright near the bottom of the page
  %  \centerline{
   %             \copyright\ Christopher J. Berg,
    %                        2009. All rights reserved.
     %          }
    %\thispagestyle{empty}
    %\addtocounter{page}{-1}

    %\newpage

    % %------------------------------------------------------------------------------
    % 
    % %------------------------------------------------------------------ NEW PAGE --
    %
    % % -- DEDICATION PAGE
    
    % The following command inserts a nice dedication to whomever you
    % feel is appropriate.  This page can be deleted if you would
    % prefer to not include it.
    \begin{center}\huge
    Dedicated to Sonya and Kai (a.k.a. $\chi$).
    \end{center}
    \newpage

    % The following command produces double-spaced lines for the
    % remainder of the document:
    \doublespacing

    % %------------------------------------------------------------------------------
    % 
    % %------------------------------------------------------------------ NEW PAGE --
    %
    % % -- TABLE OF CONTENTS
    
    % The following command creates the table of contents:
    \tableofcontents
    
    % The following command, though it follows the above 
    % ``\tableofcontents'' command, actually causes the Table of 
    % Contents to start on a new page:
    \newpage

  %  ~\vspace{-1in} % to force the attribution text up a bit higher
%    \begin{flushright}
 %       \singlespacing
  %      Christopher J. Berg\\
   %     June 2009\\
    %    Mathematics
    %\end{flushright}

    \centerline{\large $(\ell,0)$-JM Partitions and $\ell$-cores}
    
    \centerline{\textbf{\underline{Abstract}}}

The following thesis contains results on the combinatorial representation theory of the finite Hecke algebra $H_n(q)$. 

In Chapter 2 simple combinatorial descriptions are given which
 determine when a Specht module corresponding to a partition $\lambda$ is irreducible. This is done
 by extending the results of James and Mathas. These descriptions depend on the 
crystal of the basic representation of the affine Lie algebra $\widehat{\mathfrak{sl}_\ell}$. 

In Chapter 3 these results are extended to determine which irreducible modules
 have a realization as a Specht module. To do this, a new condition of irreducibility due to
 Fayers is combined with a new description of the crystal from Chapter 2.

In Chapter 4 a bijection of cores first described by myself and 
Monica Vazirani is studied in more depth. 
Various descriptions of it are given, relating to the quotient $\widetilde{S_\ell}/{S_\ell}$ and to
 the bijection given by Lapointe and Morse.
    
    \newpage

    % %------------------------------------------------------------------------------
    % 
    % %------------------------------------------------------------------ NEW PAGE --
    %
    % % -- ACKNOWLEDGMENTS

    \chapter*{\vspace{-1.5in}Acknowledgments and Thanks}
    % the vspace command forces the title up a bit higher 
    
\indent I would like to thank Steve Pon for his help in finding a proof of our crystal isomorphism. To Brant Jones I am very grateful for his help with Chapter \ref{sec:LabelForChapter4}, as well as saving my thesis by helping me repair a broken lemma in Chapter \ref{sec:LabelForChapter3}.

I would like to thank Dmitry Fuchs and Anne Schilling for reading this thesis.

Lastly, I would like to thank my advisor Monica Vazirani. She introduced me to the representation theory of the symmetric group, and certainly this work would not exist without her.

    \newpage

    % %------------------------------------------------------------------------------
    % 
    % %------------------------------------------------------------------ NEW PAGE --
    %
    % % -- BEGIN TEXT OF THESIS

    % The following commands produce page numbering at the top right
    % using arabic numerals per UC Davis requirements for the main
    % text of the dissertation:
    \pagestyle{fancy}
    \pagenumbering{arabic}
       
    % %------------------------------------------------------------------------------
    % 
    % %--------------------------------------------------------------- NEW CHAPTER --
    %
    % % -- INTRODUCTION

    \newchapter{Representation Theory of  symmetric groups}{Historical Background of the Representation Theory of the symmetric groups}
{Historical Background of the Representation Theory of the Symmetric Group and Corresponding Hecke Algebra}    \label{sec:Intro}

        % Either type your Introduction here or input a file 
        % containing it using the ``\input'' command.
        
        % You will probably want to split your chapter up into several
        % sections (with each section possibly even split up into
        % subsections), each of which can either be written directly
        % in this file or input from an external file as above.
        %
        % Suggested sections for your introduction include a general 
        % overview (with historical motivation as appropriate) and a 
        % summary of your main results. E.g.:

\section{Motivation of combinatorics}
The majority of this thesis is concerned with the combinatorics behind the representation theory of the symmetric group and corresponding Hecke algebra. The representation theory of the symmetric group and Hecke algebra are quite combinatorial in nature, for instance, a basis for their irreducible representations are in bijection with standard Young tableaux. In this thesis, we study the representation theory from the combinatorial side more than the algebraic. This allows us to give elementary arguments to the majority of our theorems, and is quite common in the study of the representation theory of the symmetric group. Of particular difficulty is when the characteristic of the field of the representation is not of characteristic zero (equivalently, when $q$ is a root of unity for the Hecke algebra). This chapter is a very brief summary of the combinatorial representation theory of the symmetric group and Hecke algebra.  

\section{Representation theory}\label{rep_theory}

\subsection{Definitions}
A \textit{representation} of a group $G$ on a vector space $V$ is a group homomorphism from $G$ to the group of all invertible linear maps on $V$ (this group is denoted $GL(V)$). We often identify the group element of $G$ with the invertible linear map without mentioning the representation itself. 

The dimension of the representation is defined to be the dimension of $V$. Throughout this thesis, all of the representations studied are finite dimensional. 

If $W$ is a subspace of $V$ for which every group element $g \in G$ is an automorphism of $W$ (i.e. for every $g \in G$ and $w \in W, gw\in W$), then $W$ is called a \textit{subrepresentation} of $V$. 

If $V$ has no nontrivial (not $\{0\}$ or $V$) subrepresentations then $V$ is said to be \textit{irreducible}.

If $V$ and $W$ are two representations of $G$, then their direct sum $V \bigoplus W$ has a natural structure of a representation ($g \in G$ acts on $v \oplus w$ as $g(v\oplus w) = gv \oplus gw)$. $V \bigoplus W$ naturally has $V$ and $W$ as subrepresentations.

A representation $V$ which is the direct sum of smaller representations is called \textit{decomposable}.
A representation which is not the direct sum of smaller representations is called \textit{indecomposable}.

The group algebra $\mathbb{C} G$ is the algebra with basis $\{g : g \in G\}$ with multiplication taken from the group multiplication and extended linearly.

The representation theory of a finite group is equivalent to the study of modules of the group algebra $\mathbb{C} G$. Because of this, we will often interchange the words representation and module.

\subsection{Classical results in representation theory}
Suppose we wish to study the representation theory of a finite group $G$ where all vector spaces are defined over $\mathbb{C}$. Then the following results are well known (see \cite{S}).

\begin{proposition}
The number of irreducible representations of the group $G$ is the same as the number of conjugacy classes of $G$.
\end{proposition}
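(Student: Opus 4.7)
The plan is to use character theory and prove the equality by exhibiting two different descriptions of the same integer, namely the dimension of the space of complex-valued class functions on $G$. Writing $\text{Cl}(G)$ for that space, a basis is given by indicator functions of conjugacy classes, so $\dim \text{Cl}(G)$ is precisely the number of conjugacy classes. I would then show that the characters of the (finitely many) pairwise non-isomorphic irreducible representations form another basis of $\text{Cl}(G)$, giving the matching count on the representation-theoretic side.

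The first step is to assign to each representation $\rho: G \to GL(V)$ its character $\chi_\rho(g) = \text{tr}(\rho(g))$ and observe that $\chi_\rho$ is a class function, since traces are invariant under conjugation. I would then recall (or quickly justify via averaging $\frac{1}{|G|}\sum_{g \in G} \rho(g)$ to produce a $G$-invariant projection) Maschke's theorem, so that every finite dimensional representation of $G$ over $\mathbb{C}$ decomposes as a direct sum of irreducibles. Together with the fact that irreducible characters determine the representation up to isomorphism, this identifies the distinct irreducible characters with the isomorphism classes of irreducible representations.

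The heart of the argument is the orthogonality of characters with respect to the inner product $\langle f_1, f_2 \rangle = \frac{1}{|G|} \sum_{g \in G} f_1(g)\overline{f_2(g)}$ on $\text{Cl}(G)$. Using Schur's lemma applied to the averaging operator $\frac{1}{|G|}\sum_{g} \rho(g) T \sigma(g^{-1})$ for an arbitrary linear map $T$ between two irreducible representations $\rho$ and $\sigma$, one obtains the relation $\langle \chi_\rho, \chi_\sigma \rangle = \delta_{\rho \cong \sigma}$. This already shows the irreducible characters are linearly independent, bounding the number of irreducibles from above by the number of conjugacy classes.

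The main obstacle, and the step I expect to be the most delicate, is proving that the irreducible characters actually span $\text{Cl}(G)$. The standard device is to consider, for any class function $f$ orthogonal to every irreducible character, the operator $T_f^\rho = \sum_{g \in G} f(g)\rho(g)$ on an irreducible representation $\rho$; Schur's lemma (using that $f$ is a class function to ensure $T_f^\rho$ commutes with $\rho(G)$) forces $T_f^\rho$ to be a scalar, and taking traces together with the orthogonality hypothesis forces that scalar to be zero. Applying this to the regular representation of $G$ on $\mathbb{C}G$, which contains every irreducible as a summand, one deduces $f \equiv 0$. This completes the proof that irreducible characters form an orthonormal basis of $\text{Cl}(G)$, and comparing dimensions yields the proposition.
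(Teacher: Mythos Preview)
Your proof is correct and follows the standard character-theoretic argument. However, the paper does not actually supply a proof of this proposition: it is stated as a well-known result in the background section on classical representation theory, with a reference to Sagan's book \cite{S}. So there is no in-paper argument to compare against; your write-up is precisely the textbook proof one would find in such a reference.
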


\begin{theorem}[Maschke's Theorem]
Let $V$ be a representation of $G$. Then $V$ is irreducible if and only if it is indecomposable. 
\end{theorem}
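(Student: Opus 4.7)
The plan is to prove both implications, with one direction being essentially tautological and the other being the substantive content of Maschke's theorem. The easy direction is that irreducibility implies indecomposability: if $V = U \oplus W$ with $U, W$ nontrivial subrepresentations, then $U$ itself is a nontrivial proper subrepresentation of $V$, contradicting irreducibility. This follows immediately from the definitions given in the previous subsection.

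For the reverse implication, I would argue the contrapositive: if $V$ has any nontrivial proper subrepresentation $W$, then $V$ is decomposable. The key step is constructing a $G$-invariant complement to $W$. First I would choose any vector-space complement of $W$ in $V$, yielding a linear projection $\pi : V \to W$ that is not necessarily $G$-equivariant. Then I would apply the averaging trick: define
\[
\pi' = \frac{1}{|G|} \sum_{g \in G} g \pi g^{-1},
\]
where we identify each $g \in G$ with its image in $GL(V)$. A short verification shows that $\pi'$ is still a projection onto $W$ (because $\pi$ restricts to the identity on $W$ and each conjugate sends $W$ to itself), and that $\pi'$ commutes with the action of every $h \in G$ (by a change of variables in the sum). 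Hence $W' = \ker(\pi')$ is a $G$-invariant subspace complementary to $W$, and $V = W \oplus W'$ exhibits $V$ as decomposable.

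The main subtlety — and the step I would flag as the heart of the argument — is the averaging itself, which requires the ability to divide by $|G|$. This works cleanly over $\mathbb{C}$, but fails when the characteristic of the base field divides $|G|$. This failure is precisely what makes the modular representation theory of $S_n$ (and of $H_n(q)$ at roots of unity) so rich, and foreshadows why Specht modules can be indecomposable without being irreducible in the setting studied in the later chapters.
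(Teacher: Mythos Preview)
Your proof is correct and is the standard averaging argument for Maschke's theorem. Note, however, that the paper does not actually prove this statement: it is listed in Section~\ref{rep_theory} among results that ``are well known (see \cite{S})'' and is stated without proof, with the reader referred to Sagan's book. So there is no paper proof to compare against; your argument simply supplies the omitted standard proof, and your closing remark about the failure of averaging in positive characteristic correctly anticipates the modular phenomena the paper goes on to study.
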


Another way to state Maschke's theorem is that every representation can be written as a direct sum of irreducible representations. Given a representation, one can go about decomposing the representation by using \textit{character theory}. Although we will not go into detail, character theory yields a way of determining the number of irreducible components in a representation.

\subsection{Modular representation theory}
Now suppose we change to a field of characteristic $p$. Then the number of irreducible representations of the group changes; it is now the number of $p$-regular conjugacy classes of $G$ (a conjugacy class is $p$-regular if $p$ does not divide the order of $g$ for some (equivalently any) representative $g$ of the conjugacy class). 

We also lose Maschke's theorem; there exists representations which are indecomposable but not irreducible.  In other words, an arbitrary representation cannot be written as a direct sum of irreducible representations. To give meaning then to the number of times a given irreducible occurs in an arbitrary representation, we recall the definition of a composition series:

\begin{definition}
A \textit{composition series} of a finite dimensional representation $V$ is a sequence of representations $\{0\} = V_{0} \subset V_1 \subset \dots \subset V_k = V$ such that each quotient $V_{i+1}/V_i$ is irreducible. Each $V_{i+1}/V_i$ is called a \textit{composition factor} of $V$. 
\end{definition}

\begin{proposition}
Let $V$ be a representation and let $W$ be an irreducible representation. Then the number of times that $W$ occurs as a composition factor of $V$ is independent of the choice of composition series. In other words, composition factors are well defined.
\end{proposition}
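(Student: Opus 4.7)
The plan is to prove this by induction on $\dim V$, following the classical Jordan--Hölder argument. The base case $V = \{0\}$ is trivial since the only composition series is the trivial one. For the inductive step, suppose we have two composition series
$$\{0\} = V_0 \subset V_1 \subset \cdots \subset V_k = V \quad \text{and} \quad \{0\} = W_0 \subset W_1 \subset \cdots \subset W_m = V,$$
and we need to show that, as multisets, $\{V_{i+1}/V_i\}$ and $\{W_{j+1}/W_j\}$ agree up to isomorphism.

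If the two top terms agree, that is $V_{k-1} = W_{m-1}$, then the top factors $V/V_{k-1}$ and $V/W_{m-1}$ coincide, and the inductive hypothesis applied to the representation $V_{k-1}$ finishes the argument. Otherwise, I would consider $U := V_{k-1} \cap W_{m-1}$. The key observation is that $V_{k-1} + W_{m-1}$ is a subrepresentation of $V$ that strictly contains $V_{k-1}$; since $V/V_{k-1}$ is irreducible, this forces $V_{k-1} + W_{m-1} = V$. The second isomorphism theorem then yields
$$V_{k-1}/U \;\cong\; V/W_{m-1} \quad \text{and} \quad W_{m-1}/U \;\cong\; V/V_{k-1},$$
both of which are irreducible.

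Now pick any composition series $\{0\} = U_0 \subset U_1 \subset \cdots \subset U_r = U$ of $U$. Appending $V_{k-1}$ gives a composition series of $V_{k-1}$ whose factors are the factors of $U$ together with $V_{k-1}/U \cong V/W_{m-1}$. By the inductive hypothesis applied to $V_{k-1}$, this matches the multiset $\{V_{i+1}/V_i\}_{i<k-1}$. Hence the first original series has composition factors equal to the factors of $U$ together with $V/W_{m-1}$ and $V/V_{k-1}$. The symmetric argument using $W_{m-1}$ shows the second original series has composition factors equal to the factors of $U$ together with $V/V_{k-1}$ and $V/W_{m-1}$. These two multisets coincide, completing the induction.

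The main subtlety, and the step I expect to require the most care, is the diamond argument: verifying that $V_{k-1} + W_{m-1} = V$ and then correctly identifying the quotients $V_{k-1}/U$ and $W_{m-1}/U$ so that the two refinements of the filtration through $U$ produce matching multisets of irreducibles. Once this is in place, the induction goes through cleanly, and no appeal to Maschke's theorem is needed — which is important since the statement is meant to apply in the modular setting discussed immediately afterward.
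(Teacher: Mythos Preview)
The paper does not prove this proposition at all; it is stated without proof in the background chapter on representation theory, alongside other standard facts such as Maschke's theorem and the count of irreducibles by conjugacy classes. Your argument is the classical Jordan--H\"older proof and is correct, including the key diamond step: since $V_{k-1}\neq W_{m-1}$ and both have irreducible quotient in $V$, neither can contain the other, so $V_{k-1}+W_{m-1}=V$ and the second isomorphism theorem gives the cross-identifications $V_{k-1}/U\cong V/W_{m-1}$ and $W_{m-1}/U\cong V/V_{k-1}$; the induction on dimension then applies to $V_{k-1}$ and $W_{m-1}$ separately.
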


The number of times that the irreducible representation $W$ occurs in $V$ will be denoted $[V:W]$. 

\section{The symmetric group}
The construction of the irreducible representations below can be found in Sagan's book \cite{S}.

\subsection{Definitions}
The symmetric group of degree $n$ (denoted $S_n$) is defined to be the group of all bijective functions on the set $[n] := \{ 1, 2, \dots, n\}$ of $n$ elements, with group multiplication being composition of functions. Equivalently, it can be thought of as the set of permutations on a set with $n$ elements. Functions which swap two entries and fix the rest are called \textit{transpositions}. The transpositions
which take $i$ to $i+1$ for some $i$ are called the \textit{simple transpositions}. 

One can equivalently use the definition described by generators and relations. We take as a set of generators the simple transpositions $s_i$ which take $i$ to $i+1$. The symmetric group is generated by the simple transpositions, with the relations:
\begin{itemize}
\item $s_i^2 = 1$
\item $s_i s_{i+1} s_i = s_{i+1} s_i s_{i+1}$
\item $ s_i s_j = s_j s_i  \;\;\;\;\;\;\;\;\;\;\; \textit{for } |i-j|>1 $ 
\end{itemize}

A cycle is a permutation which has no fixed points. We write $\sigma = (x_1, x_2, \dots x_k)$ to mean that $\sigma$ sends $x_1\to x_2, x_2 \to x_3, \dots , x_k \to x_1$. 

A permutation $\sigma \in S_n$ can be written as a group of disjoint cycles on subsets of $[n]$.

\begin{definition}
The \textit{cycle type} of a permutation is the non-increasing sequence of integers of the cycle lengths of the permutation.
\end{definition}

\begin{example}
The cycle type of a transposition is $(2,1,1,\dots, 1)$. 
\end{example}

\begin{example}
In $S_5$ the permutation $\sigma = (1,4,3)(2,5)$ has cycle type $(3,2)$. 
\end{example}

A permutation $\sigma$ can always be written as a product of simple transpositions. If $\sigma = s_{i_1} \dots s_{i_k}$ is a minimal expression for $\sigma$, then we say that the length of $\sigma$ is $k$, written $len(\sigma) = k$. 

\subsection{Constructing irreducible representations} 
\begin{definition}
A partition $\lambda$ of $n$ is a non-increasing sequence of non-negative integers ($\lambda = \lambda_1 \geq \lambda_2 \geq \dots \geq \lambda_k \geq \dots$) which sums to $n$ $(\sum_i \lambda_i = n)$. The set of all partitions of $n$ will be denoted $\mathcal{P}_n$. The set of all partitions (of any integer) will be denoted $\mathcal{P}$.
\end{definition}
As mentioned in Section \ref{rep_theory}, the number of irreducible representations of a finite group is the same as the number of conjugacy classes of the group. For the symmetric group, it is well known that 
the number of conjugacy classes is the same as the number of partitions (the bijection sending a conjugacy class to its cycle type).

 In what follows, we will recall the bijection between partitions and irreducible representations of the symmetric group. To any partition $\lambda$, we construct the Specht module $S^{\lambda}$ below.
 
\begin{definition}
A Young diagram for $\lambda = (\lambda_1, \dots, \lambda_k)$ is a diagram which has $\lambda_j$ boxes in row $j$.
\end{definition}

\begin{example}
$\lambda = (4,2,2,1)$ is a partition of 9. Its Young diagram is below.

\begin{center}
$\tableau{\mbox{}&\mbox{}&\mbox{}&\mbox{}\\
\mbox{}&\mbox{}\\
\mbox{}&\mbox{}\\
\mbox{}}$
\end{center}
\end{example}

\begin{definition}
A tableau of shape $\lambda$ will be a bijective assignment of the set $[n]$ to each box in the Young diagram of $\lambda$.
\end{definition}

\begin{example}
Continuing with the previous example, we give a tableau of $(4,2,2,1)$ below.

\begin{center}
$\tableau{4&3&1&9\\
5&2\\
6&8\\
7}$
\end{center}
\end{example}

Two tableau are called \textit{row equivalent} if the content in each of their rows are the same. The set of all row equivalent tableau is called a tabloid. We will always represent a tabloid with its entries increasing across rows.

The group $S_n$ has a natural action on the set of all tabloids, by acting on the entries in each box. We make the set into a vector space by making each tabloid a basis vector. This is called the permutation module, denoted $M^{\lambda}$. $M^\lambda$ is usually reducible. 

For a partition $J = (J_1, \dots , J_k)$ of the set $[n]$, we let $S_J$ be the subgroup of $S_n$ of all of the permutations $\sigma$ for which $\sigma (j_i) \in J_i$ for all $j_i \in J_i$. Then the antisymmetrizer of $J$,
 written $s_J$, is the group algebra element $\sum_{\sigma \in S_J} (-1)^{len(\sigma)} \sigma$. 

For any tableau $t$, the column partition $C_t$ will be the set partition of $[n]$ described by grouping all of the letters which share a column together.

A \textit{polytabloid} of $\lambda$ is an element in $M^\lambda$ of the form $s_{C_t} \{t\}$, where $\{t\}$ is the equivalence class of $t$ in $M^\lambda$. 

\begin{example}
If $t = \tableau{1&2\\3}$ then the columns partition $\{1,2,3\}$ into $\{1,3\}$ and $\{2\}$. Hence the corresponding polytabloid is $\tableau{1&2\\3} - \tableau{2&3\\1}\;.$
\end{example}

\begin{definition}
The Specht module $S^\lambda$ corresponding to a partition $\lambda$ is defined to be the submodule spanned by all of the polytabloids of $\lambda$.
\end{definition}

\begin{theorem}
The Specht modules $S^\lambda$ are irreducible and distinct $(S^\lambda \neq S^\mu $  if  $\lambda \neq \mu)$. Hence $\{S^\lambda: \lambda \in \mathcal{P}_n \}$ labels a complete set of irreducible representations of $S_n$.
\end{theorem}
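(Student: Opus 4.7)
The plan is to follow the classical approach (as in Sagan's book): introduce a dominance partial order on partitions, prove the \emph{submodule theorem} via a sign-lemma argument, and then deduce irreducibility, distinctness, and completeness in turn.

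First I would put an $S_n$-invariant inner product on $M^\lambda$ by declaring the basis of tabloids to be orthonormal. The technical heart of the proof is a single key lemma, often called the sign lemma or Garnir-type lemma: for a tableau $t$ of shape $\lambda$ and a tabloid $\{s\}$ of shape $\mu$, if some pair of entries lies in the same column of $t$ and the same row of $\{s\}$, then $s_{C_t}\{s\}=0$ (the column transposition swapping those entries kills the sum). A counting argument then shows that if $\lambda$ is not dominated by $\mu$ in the partition order, two such entries always exist, so $s_{C_t}\{s\}=0$; and when $\lambda=\mu$, the only nonvanishing case forces $\{s\}=\{t\}$ up to a column permutation, giving $s_{C_t}\{s\}=\pm e_t$ where $e_t$ is the polytabloid.

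From this I would derive the submodule theorem: any $\mathbb{C}S_n$-submodule $U\subseteq M^\lambda$ satisfies either $S^\lambda\subseteq U$ or $U\subseteq (S^\lambda)^\perp$. The argument is: pick $u\in U$ and a tableau $t$; then $s_{C_t}u$ is a scalar multiple of $e_t$ by the lemma, and if that scalar is ever nonzero we generate $e_t\in U$, hence all polytabloids by the $S_n$-action, giving $S^\lambda\subseteq U$; otherwise $u\perp e_t$ for every $t$, so $u\in(S^\lambda)^\perp$. Applied to a submodule $U\subseteq S^\lambda$ this immediately yields irreducibility, since $S^\lambda\cap(S^\lambda)^\perp=0$ in characteristic $0$ (the inner product is positive definite, which is the one place characteristic matters and is the main obstacle to extending this to modular representations, as anticipated later in the thesis).

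For distinctness, suppose $\theta\colon S^\lambda\to S^\mu$ is a nonzero $\mathbb{C}S_n$-map. Composing with the inclusion $S^\mu\hookrightarrow M^\mu$ and extending $\theta$ to $M^\lambda$ using semisimplicity (Maschke), I would evaluate $\theta$ on a polytabloid $e_t = s_{C_t}\{t\}$: its image lies in $s_{C_t}M^\mu$, which by the sign lemma is $0$ unless $\lambda\trianglerighteq\mu$. Reversing the roles gives $\mu\trianglerighteq\lambda$, hence $\lambda=\mu$. Finally, completeness follows by counting: the $S^\lambda$ for $\lambda\in\mathcal{P}_n$ give $|\mathcal{P}_n|$ pairwise non-isomorphic irreducibles, and this number equals the number of conjugacy classes of $S_n$ (via cycle type), which by the proposition cited earlier equals the number of irreducibles. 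So the list is exhaustive.
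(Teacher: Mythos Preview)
Your proposal is correct and follows the classical approach of Sagan's book \cite{S}, which is exactly what the paper cites for this result; the paper itself states this theorem as background without proof, referring the reader to Sagan for the construction and the arguments you outline (sign lemma, submodule theorem, dominance comparison, and a count against conjugacy classes).
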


\begin{definition}
A tableau $t$ of $\lambda$ will be called \textit{standard} if all of its entries are increasing from left to right along a row and from top to bottom down a column.
\end{definition}

\begin{theorem}
A basis for the Specht module $S^\lambda$ is the set of polytabloids generated by standard tableaux.
\end{theorem}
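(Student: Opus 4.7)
The plan is to establish two facts: that the standard polytabloids span $S^\lambda$, and that they are linearly independent in $M^\lambda$. Throughout I would equip the set of tableaux of shape $\lambda$ with a partial order $\prec$ built column by column: declare $t' \prec t$ if, reading the largest entry on which $t$ and $t'$ differ, that entry sits in a later column (or lower down the same column) in $t'$ than in $t$. Any total refinement of this well-founded order will do. By definition a standard tableau is $\prec$-minimal among row-equivalent tableaux.

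For spanning, the main tool is the \emph{Garnir relation}. Given a non-standard $t$, locate a violation of the standard condition: either an adjacent column pair where entries in some row are decreasing, or a column whose entries are out of order. Take $A$ to be the entries at and above the violation in the right-hand column and $B$ the entries at and below the violation in the left-hand column, so that $|A|+|B|$ exceeds the column length. The Garnir element $G_{A,B}=\sum_\sigma \mathrm{sgn}(\sigma)\,\sigma$, where $\sigma$ ranges over coset representatives of $S_A\times S_B$ in $S_{A\cup B}$, annihilates $e_t$, because any permutation mixing $A$ and $B$ puts two equal entries into a single row of the resulting tabloid and so the pigeonhole principle forces cancellation when combined with the column antisymmetrizer. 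Rearranging gives $e_t = \sum_{t'\prec t}\pm\, e_{t'}$. Iterating and invoking well-foundedness of $\prec$ expresses every polytabloid in terms of standard ones.

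For linear independence I would argue via leading terms. Expand each polytabloid $e_t = s_{C_t}\{t\}$ as a signed sum of tabloids; the tabloid $\{t\}$ itself appears with coefficient $\pm 1$. Order tabloids by the (column) dominance order: $\{t_1\} \unrhd \{t_2\}$ if, for each $k$, the top $k$ rows of $\{t_1\}$ collectively contain entries at least as large as those in the top $k$ rows of $\{t_2\}$. A short combinatorial check shows that if $t$ is standard, then for every nontrivial column permutation $\sigma\in S_{C_t}$, the tabloid $\{\sigma t\}$ is strictly dominated by $\{t\}$, so $\{t\}$ is the unique $\unrhd$-maximal tabloid occurring in $e_t$. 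Since distinct standard tableaux of shape $\lambda$ give distinct tabloids (entries strictly increase down each column), the leading tabloids of the standard polytabloids are all distinct. In any linear dependence one could then look at the $\unrhd$-maximal tabloid appearing with nonzero coefficient and derive a contradiction.

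The main obstacle will be the spanning step: the Garnir relation requires a careful choice of the subsets $A,B$, and one must verify both that $G_{A,B}e_t = 0$ and that every polytabloid in the resulting rewrite is indexed by a tableau strictly smaller in $\prec$. Once this combinatorial identity and its monotonicity are established, the spanning half follows by induction and the independence half reduces to a standard leading-term argument in the tabloid basis of $M^\lambda$.
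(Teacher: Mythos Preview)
The paper does not actually prove this theorem: it appears in the background chapter as a classical fact, stated without proof and attributed to Sagan's textbook \cite{S}. So there is no ``paper's own proof'' to compare against. Your sketch is the standard textbook argument (Garnir relations for spanning, dominance of the tabloid $\{t\}$ among the summands of $e_t$ for independence), which is essentially the proof one finds in Sagan; it is correct in outline, though your description of the partial order $\prec$ is a bit loose and in practice one uses the column superstandard order or Sagan's version of dominance on tableaux rather than the ad hoc order you define.
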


\begin{example}
Let $\lambda = (2,1)$. Then $S^\lambda$ is 2 dimensional. The polytabloid corresponding to the tableau $\tableau{1&2\\3}$ is $v_1 := \tableau{1&2\\3} - \tableau{2&3\\1}$. The polytabloid corresponding to the tableau $\tableau{1&3\\2}$ is $v_2 := \tableau{1&3\\2} - \tableau{2&3\\1}$. The corresponding matrices for the generators $s_1, s_2$ are: 

\begin{center}
$s_1 = \left( \begin{array}{cc} 1&0\\-1&-1\end{array} \right) $ \;\;\;\;\;\;\;\;\;\;\;\;\;\;\;\;\;\;\;\;\;\;
$s_2 = \left( \begin{array}{cc} 0&1\\1&0\end{array} \right) $
\end{center}

A simple check shows that these two operators have no common eigenvector and hence $S^{(2,1)}$ is irreducible.

\end{example}

\begin{proposition}
The matrices representing the group $S_n$ in the Specht module $S^\lambda$ are always integer matrices.
\end{proposition}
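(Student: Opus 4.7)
My plan is to work over $\mathbb{Z}$ throughout and show that the $\mathbb{Z}$-span of polytabloids is both stable under the $S_n$-action and free on the standard polytabloids. Once those two facts are in hand, the matrices of the generators in the standard polytabloid basis must have entries in $\mathbb{Z}$.

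First I would observe that the action of $S_n$ on the tabloid basis of $M^\lambda$ is by permutation, so relative to the tabloid basis every group element acts by a $\{0,1\}$-matrix. Next, for a tableau $t$ and any $\sigma \in S_n$, a direct computation from the definition $e_t = s_{C_t}\{t\}$ shows $\sigma \cdot e_t = e_{\sigma t}$, because the column stabilizer satisfies $\sigma S_{C_t}\sigma^{-1} = S_{C_{\sigma t}}$ and length is preserved under this conjugation. Consequently the $\mathbb{Z}$-submodule
\[
S^\lambda_\mathbb{Z} \;:=\; \mathbb{Z}\text{-span of all polytabloids } e_t
\]
is an $S_n$-invariant sublattice of $M^\lambda_\mathbb{Z}$, the $\mathbb{Z}$-span of tabloids.

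The key step is to prove that the standard polytabloids form a $\mathbb{Z}$-basis of $S^\lambda_\mathbb{Z}$. The standard polytabloids are $\mathbb{C}$-linearly independent by the theorem quoted just above, so I only have to show that any polytabloid $e_t$ can be written as a $\mathbb{Z}$-linear combination of standard polytabloids $e_s$. This is the content of the straightening (Garnir) algorithm: if $t$ is not standard, there is a column descent or a row inversion to which one applies a Garnir relation; each such relation expresses $e_t$ as an integer linear combination (in fact with coefficients $\pm 1$) of polytabloids $e_{t'}$ whose column tableaux are lexicographically smaller in a suitable dominance-type order. Induction on that order terminates at standard tableaux, and at each step only integer coefficients are introduced, so $e_t \in \mathbb{Z}\{e_s : s \text{ standard}\}$.

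With these ingredients in place, the proof is immediate: fix $\sigma \in S_n$ and a standard tableau $t$. Then $\sigma \cdot e_t = e_{\sigma t} \in S^\lambda_\mathbb{Z}$, and by the previous paragraph this has an integer expansion in the standard polytabloid basis. Varying $t$ gives that every column of the matrix of $\sigma$ in the basis $\{e_t : t \text{ standard}\}$ has integer entries. The main obstacle is really the integrality of the straightening algorithm; once one trusts that the Garnir relations have coefficients in $\{+1,-1\}$ and that the induction on tableau order terminates, the rest is formal.
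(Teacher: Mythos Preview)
Your argument is correct and is the standard textbook proof (via $\sigma e_t = e_{\sigma t}$ and the Garnir straightening algorithm over $\mathbb{Z}$). Note, however, that the paper does not actually supply a proof of this proposition: it is stated in the introductory background section as a known fact, with the reader referred to Sagan's book \cite{S} for the construction of Specht modules. So there is no ``paper's own proof'' to compare against; your write-up simply fills in the standard justification that the paper omits.
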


There is a functor from the category of $S_{n+1}$ modules to the category of $S_n$ modules, called \textit{restriction}, which simply views $S_n$ as a subgroup of $S_{n+1}$ and forgets the action of the generator $s_n$. It has an adjoint functor which is called \textit{induction}. Induction and restriction are described on Specht modules in the following theorem.

\begin{definition}
For a partition $\lambda$ of $n$ and $\mu$ of $n+1$, we say that $\mu \succ \lambda$ if the Young diagram of $\lambda$ fits inside the Young diagram of $\mu$. This is usually read as $\mu$ covers $\lambda$. The poset it generates is called Young's lattice.
\end{definition}

\begin{theorem}[Branching Rule]  The operations of induction and restriction have simple combinatorial descriptions in the setting of the symmetric group:
\begin{itemize}

\item$Ind \; S^\lambda = \bigoplus_{\mu \supset \lambda} S^\mu$.

\item$Res \;S^\mu = \bigoplus_{\lambda \subset \mu} S^\lambda$.
\end{itemize}

\end{theorem}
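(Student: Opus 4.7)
The plan is to establish the restriction formula first by constructing an explicit filtration on $\mathrm{Res}^{S_{n+1}}_{S_n} S^\mu$, and then derive the induction formula using Frobenius reciprocity together with Maschke's theorem (Theorem 1.2.1).

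Label the removable boxes of $\mu$ as $b_1, b_2, \ldots, b_k$, ordered from bottom to top, and let $\lambda^{(i)}$ be the partition of $n$ obtained by deleting $b_i$ from $\mu$. Define $V_i \subseteq S^\mu$ to be the subspace spanned by those standard polytabloids $e_t$ for which the entry $n+1$ occupies one of $b_1, \ldots, b_i$. Using the standard basis theorem for $S^\mu$ together with the Garnir relations to straighten any non-standard term, one checks that each $V_i$ is $S_n$-stable: since $S_n$ permutes only the labels $1, \ldots, n$, the position occupied by $n+1$ is preserved under straightening. This produces a filtration $0 = V_0 \subset V_1 \subset \cdots \subset V_k = \mathrm{Res}\, S^\mu$.

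The heart of the argument is identifying each quotient $V_i/V_{i-1}$ with $S^{\lambda^{(i)}}$. The natural candidate sends the class of a standard polytabloid $e_t$ of shape $\mu$ with $n+1$ in $b_i$ to the polytabloid $e_{t'}$ of shape $\lambda^{(i)}$ obtained by erasing $n+1$. This is a bijection of standard bases, giving the correct dimension. To check $S_n$-equivariance, one expands both column antisymmetrizers and observes that any column swap in $t$ which drags $n+1$ out of $b_i$ sends $n+1$ into a box strictly below $b_i$ within its column; after straightening, such terms end up in $V_{i-1}$ and thus vanish in the quotient. Since Maschke's theorem applies, the filtration splits and we obtain
\[
\mathrm{Res}\, S^\mu \;\cong\; \bigoplus_{\lambda \prec \mu} S^\lambda.
\]

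For the induction formula, apply Frobenius reciprocity: for each partition $\nu$ of $n+1$,
\[
\dim \mathrm{Hom}_{S_{n+1}}(\mathrm{Ind}\, S^\lambda,\, S^\nu) \;=\; \dim \mathrm{Hom}_{S_n}(S^\lambda,\, \mathrm{Res}\, S^\nu).
\]
By the restriction formula just proved and Schur's lemma, the right-hand side equals $1$ when $\nu \succ \lambda$ and $0$ otherwise. Since $\mathrm{Ind}\, S^\lambda$ is semisimple (Maschke again), this forces $\mathrm{Ind}\, S^\lambda = \bigoplus_{\mu \succ \lambda} S^\mu$. The main obstacle is the middle step: making the identification $V_i / V_{i-1} \cong S^{\lambda^{(i)}}$ genuinely canonical rather than a mere dimension match. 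The bookkeeping requires the Garnir relations and a careful ordering of removable boxes so that column-swap terms which dislodge $n+1$ land consistently in the earlier part of the filtration; this is exactly where the bottom-to-top ordering of $b_1,\ldots,b_k$ is used.
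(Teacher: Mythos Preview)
The paper does not give its own proof of this theorem. It appears in Chapter~1 as part of the historical background, stated without proof and attributed to the standard references (Sagan \cite{S}); later, Proposition~\ref{branching_rule} records the Grothendieck-group version for Hecke algebras, again citing Mathas \cite{M} rather than proving it.

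Your argument is the standard textbook proof and is correct. The filtration of $\mathrm{Res}\,S^\mu$ by the position of $n+1$, the identification of successive quotients with $S^{\lambda^{(i)}}$, and the deduction of the induction statement via Frobenius reciprocity is exactly the route taken in Sagan's book (Theorem~2.8.3), which is the reference the paper points to. One small wording issue: it is not quite that ``the position occupied by $n+1$ is preserved under straightening''; rather, straightening can move $n+1$, but only to a removable box that appears earlier in your bottom-to-top list, which is precisely why the resulting term lands in $V_{i-1}$ and dies in the quotient. You say this correctly later in the paragraph, so this is a matter of exposition rather than a gap.
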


\subsection{Modular representation theory}
If $p \mid n!$ then the representation theory of the symmetric group over a field of characteristic $p$ is not well understood. Since the Specht modules constructed above have integer entries, by taking these entries mod $p$ we obtain a Specht module for the symmetric group in characteristic $p$. We will denote it by $\overline{S^\lambda}$. Similarly, we could have constructed $\overline{S^\lambda}$ by following the same construction in the previous section (these two descriptions are isomorphic). 

$\overline{S^\lambda}$ may be reducible. The  following example illustrates this point.

\begin{example}
Let $\lambda = (2,1)$ and $p = 3$. Then the matrices representing the generators are now:

\begin{center}
$s_1 = \left( \begin{array}{cc} 1&0\\2&2\end{array} \right)$ \;\;\;\;\;\;\;\;\;\;\;\;\;\;\;\;\;\;\;\;\;\;
$s_2 = \left( \begin{array}{cc} 0&1\\1&0\end{array} \right) $
\end{center}

The one dimensional subspace $W$ spanned by the vector $v_1 + v_2$ is a subrepresentation of $\overline{S^{(2,1)}}$.

 This subspace is the \textit{trivial} representation of $S_3$ (all elements of $S_3$ act as the identity).  $\overline{S^{(2,1)}} / W$ is the \textit{sign} representation (all of the simple transpositions in $S_3$ act as $-1$). 
Note though that the sign representation is not a subrepresentation of $\overline{S^{(2,1)}}$. Hence $\overline{S^{(2,1)}}$ is indecomposable but not irreducible.
\end{example}

The module $M^\lambda$ has a simple bilinear form on it; defined by making the tabloids an orthonormal basis. Restricting this form to $\overline{S^\lambda} \subset M^\lambda$, we see that this form can be degenerate. In fact, the kernel of this form is a submodule, called the radical of $\overline{S^\lambda}$, or $rad(\overline{S^\lambda})$. 

\begin{definition}
A partition $\lambda$ is called $p$-regular if it does not have $p$ or more nonzero repeating parts.
\end{definition}

We can now constuct all of the irreducible representations for the symmetric group in this case. This construction can be found in the book of James and Kerber \cite{JK}.

\begin{theorem}
The module $D^\lambda := \overline{S^\lambda} / rad(\overline{S^\lambda})$ is non-zero if and only if $\lambda$ is a $p$-regular partition. If $\lambda$ is $p$-regular then $D^\lambda$ is irreducible. The set 

\hspace{1.5in}$\{ D^\lambda \textrm{ : }\lambda \textrm{ a } p \textrm{-regular partition of } n \}$

\noindent is a complete set of nonequivalent irreducible representations of $S_n$.
\end{theorem}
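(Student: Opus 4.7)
The plan is to follow James's classical approach, centered on his \emph{submodule theorem}, which is the technical core of the whole result.

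First, I would extend the bilinear form on $M^\lambda$ (defined by declaring the tabloids an orthonormal basis) to a symmetric, $S_n$-invariant form and verify a key computation: for any polytabloid $e_t = s_{C_t}\{t\}$ and any tabloid $\{s\}$, the inner product $\langle e_t,\{s\}\rangle$ is either $0$ or $\pm s_{C_t}\{s\}$, in particular it lies in $S^\lambda$. From this I would deduce the submodule theorem: for any $S_n$-submodule $U\subseteq M^\lambda$, either $\overline{S^\lambda}\subseteq U$ or $U\subseteq (\overline{S^\lambda})^\perp$. The proof is the standard one: pick $u\in U$, apply $s_{C_t}$ to its expression in the tabloid basis, observe that the result is a scalar multiple of $e_t$, and argue via the form whether this scalar can be nonzero.

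From the submodule theorem, the irreducibility half is immediate. Any proper $S_n$-submodule of $\overline{S^\lambda}$ must lie in $\overline{S^\lambda}\cap(\overline{S^\lambda})^\perp = \mathrm{rad}(\overline{S^\lambda})$, and so $D^\lambda = \overline{S^\lambda}/\mathrm{rad}(\overline{S^\lambda})$ is either $0$ or irreducible. To decide which, I would show that $D^\lambda\ne 0$ iff $\lambda$ is $p$-regular. The ``$\Rightarrow$'' direction comes from exhibiting a polytabloid $e_t$ with $\langle e_t,e_t\rangle\ne 0\bmod p$ when $\lambda$ is $p$-regular; this reduces via the formula for $\langle e_t,e_t\rangle$ to a product of factorials of column lengths, and a short combinatorial computation with the hook length / column structure shows nondegeneracy precisely when no $p$ consecutive parts of $\lambda$ coincide. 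The ``$\Leftarrow$'' direction shows that when $\lambda$ has $p$ equal parts, an averaging argument on rows forces every polytabloid into the radical.

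For completeness, I would first prove that distinct $p$-regular partitions give non-isomorphic modules. The standard trick is to order partitions by dominance and observe that $D^\lambda$ occurs with multiplicity $1$ as a composition factor of $\overline{S^\lambda}$, and that $D^\mu$ can occur in $\overline{S^\lambda}$ only when $\mu\trianglerighteq\lambda$; thus $D^\lambda\cong D^\mu$ forces $\lambda=\mu$. Finally I would invoke the count from modular representation theory: the number of irreducible $kS_n$-modules in characteristic $p$ equals the number of $p$-regular conjugacy classes of $S_n$, which by the cycle-type bijection is exactly the number of $p$-regular partitions of $n$. Since we have produced that many pairwise non-isomorphic irreducibles, the list is complete.

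The main obstacle is the submodule theorem itself, together with the computation of $\langle e_t,e_t\rangle\pmod p$ that pins down exactly when the radical equals all of $\overline{S^\lambda}$; every other step is essentially formal once those two inputs are in hand.
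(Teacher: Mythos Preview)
The paper does not give its own proof of this theorem; it is stated as background and attributed to James--Kerber, so there is no in-paper argument to compare against.  Your overall architecture---submodule theorem, then irreducibility of $D^\lambda$, then nonvanishing characterization, then inequivalence via dominance, then a count of $p$-regular classes---is exactly James's classical route and is sound in outline.

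There is, however, a genuine gap in the step where you argue $D^\lambda\neq 0$ for $p$-regular $\lambda$.  You claim that $\langle e_t,e_t\rangle$ reduces to a product of factorials of column lengths and that this is nonzero modulo $p$ precisely when $\lambda$ is $p$-regular.  The formula $\langle e_t,e_t\rangle=\prod_j(\lambda'_j)!$ is correct, but it does \emph{not} detect $p$-regularity: it is nonzero mod $p$ iff every column has length $<p$, i.e.\ iff $\lambda$ is $p$-\emph{restricted}.  For a concrete failure, take $p=3$ and $\lambda=(2,1,1)$: this partition is $3$-regular, yet $\langle e_t,e_t\rangle=3!\cdot 1!=6\equiv 0\pmod 3$, so your diagonal computation says nothing.  (Also, your sentence ``$\langle e_t,\{s\}\rangle$ is either $0$ or $\pm s_{C_t}\{s\}$, in particular it lies in $S^\lambda$'' conflates a scalar with a module element; the intended lemma is that $\kappa_t u$ is a scalar multiple of $e_t$ for every $u\in M^\lambda$.)

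James's actual argument for nonvanishing is more delicate: one constructs a specific second tableau $t^*$ (obtained by a row-reversal procedure) and shows that the coefficient controlling $\langle e_t,e_{t^*}\rangle$ is, up to sign, $\prod_i m_i!$, where $m_i$ is the number of parts of $\lambda$ equal to $i$.  This product is coprime to $p$ exactly when each $m_i<p$, which is the $p$-regularity condition.  You will need this off-diagonal pairing (or an equivalent device) to close the gap; the diagonal value $\langle e_t,e_t\rangle$ alone is not enough.
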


One of the major questions in this field is to count the number of times a $D^{\mu}$ occurs as a composition factor of the module $\overline{S^\lambda}$. This number is denoted $d_{\lambda, \mu}$ and the corresponding matrix is called the \textit{decomposition matrix}.

\begin{definition}
The socle of a module $M$ is the direct sum of all of the irreducible submodules of $M$.
\end{definition}

A branching rule for the modular representation theory of the symmetric group is not fully understood. However, Kleshchev \cite{Kl2} gave a branching rule for the socle. 

\begin{theorem}
The socle of $Ind \,D^\lambda$ is $\bigoplus_{\lambda \rightarrow \mu} D^\mu$, where $\lambda \to \mu$ means that there is an arrow connecting $\lambda$ to $\mu$ in the crystal graph $B(\Lambda_0)$ of $\widehat{\mathfrak{sl}_p}$.

Similarly, the socle of $Res \,D^\mu$ is $\bigoplus_{\lambda \rightarrow \mu} D^\lambda$.
\end{theorem}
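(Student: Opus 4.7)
The plan is to prove this in two main stages: first establish Kleshchev's modular branching rule in its native combinatorial form (in terms of \textit{good nodes} and \textit{cogood nodes}), and then identify that combinatorial rule with the arrows of the crystal graph $B(\Lambda_0)$ via the Misra--Miwa realization on the Fock space.

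The first step is to refine $\mathrm{Ind}$ and $\mathrm{Res}$ using the residue of a box. Define the residue of a box in row $r$ and column $c$ to be $(c-r) \bmod p$. The functor $\mathrm{Res}$ decomposes as $\bigoplus_{i \in \mathbb{Z}/p\mathbb{Z}} \mathrm{Res}_i$ using the central characters (blocks) of $\mathbb{F}_p S_n$; explicitly, $\mathrm{Res}_i D^\mu$ is the projection onto the block labelled by removing a box of residue $i$. Similarly $\mathrm{Ind} = \bigoplus_i \mathrm{Ind}_i$. Since $\mathrm{Ind}_i$ and $\mathrm{Res}_i$ are biadjoint (up to degree shift), proving either one of the two claims implies the other via adjointness, so I would focus on the $\mathrm{Res}$ statement.

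The second and most substantive step is to show that $\mathrm{soc}(\mathrm{Res}_i D^\mu)$ is either $0$ or isomorphic to $D^\lambda$, where $\lambda$ is obtained from $\mu$ by removing the (unique) \textit{good} removable $i$-node, determined by the signature sequence of addable and removable $i$-nodes of $\mu$. This is Kleshchev's modular branching theorem, and it is the hard part of the argument. I would follow Kleshchev's original strategy: work inside the Specht filtration of $\mathrm{Res}\,\overline{S^\mu}$, use the fact that $\mathrm{Res}_i\,\overline{S^\mu}$ has a filtration by $\overline{S^\lambda}$'s where $\lambda$ runs over partitions obtained by removing a removable $i$-node, and then analyze which of these Specht factors can contribute an irreducible quotient to the socle after passing to $D^\mu$. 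The key combinatorial identity is that only the good $i$-node gives a non-zero map, which is proved by induction on $n$ together with a careful study of the bilinear form on $\overline{S^\mu}$ and Garnir-type relations. An alternative, perhaps cleaner modern route would be to invoke the categorification picture: the operators $[E_i] = [\mathrm{Res}_i]$ and $[F_i] = [\mathrm{Ind}_i]$ on the Grothendieck group satisfy the Chevalley relations of $\widehat{\mathfrak{sl}_p}$, and a standard argument (via the Shapovalov form on the basic representation) forces the socle constituents to be single irreducibles indexed by good nodes.

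The final step is to identify the combinatorial good-node rule with the crystal graph $B(\Lambda_0)$. Here I would invoke the Misra--Miwa theorem: the basic representation $V(\Lambda_0)$ of $\widehat{\mathfrak{sl}_p}$ is realized inside the Fock space $\bigoplus_\lambda \mathbb{Q}(q)\,|\lambda\rangle$ so that the crystal $B(\Lambda_0)$ has vertices indexed by $p$-regular partitions and an $i$-arrow $\lambda \to \mu$ precisely when $\mu$ is obtained from $\lambda$ by adding the (cogood) addable $i$-node determined by the same signature rule appearing in step two. Assembling the pieces, $\mathrm{soc}(\mathrm{Res}\, D^\mu) = \bigoplus_i \mathrm{soc}(\mathrm{Res}_i D^\mu) = \bigoplus_{\lambda \to \mu} D^\lambda$, and the $\mathrm{Ind}$ statement follows by biadjointness. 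The most delicate obstacle is step two: proving that the socle is simple and not merely semisimple with possibly multiple summands, which requires the subtle argument that no two removable $i$-nodes of $\mu$ can simultaneously contribute $D^\lambda$'s to the socle.
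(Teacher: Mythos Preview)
The paper does not actually prove this statement; it is stated as background material in Chapter~1 and attributed to Kleshchev \cite{Kl2} without any argument. So there is no ``paper's own proof'' to compare against. Your proposal is essentially a sketch of Kleshchev's original proof strategy together with the Misra--Miwa identification, which is the standard route to this result, but note that the thesis simply quotes the theorem rather than reproving it.
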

For a description of the crystal $B(\Lambda_0)$ see Section \ref{crystal_description}.

\subsection{Carter's criterion for the symmetric group}
Finally, we would like to mention a theorem originally due to James and Murphy (see \cite{JMu}) which is the basis of this thesis.

\begin{definition} For any box $(a,b)$ in the Young diagram of $\lambda$, the hook length corresponding to the box $(a,b)$, written $h_{(a,b)}^\lambda$, is the number of boxes to the right and below the box $(a,b)$, including the box $(a,b)$ itself.
\end{definition}

The following theorem classifies the $p$-regular partitions $\lambda$ for which $\overline{S^\lambda}$ remains irreducible in characteristic $p$. 

\begin{theorem} Let $\lambda$ be a $p$-regular partition.
The Specht module $\overline{S^\lambda}$ in characteristic $p$ is irreducible if and only if for every $a,b,$ and $c$ such that $(a,c)$ and $(b,c)$ are boxes in the Young diagram of $\lambda$, the exponent of $p$ dividing the hook length $h_{(a,c)}^\lambda$ is equal to the exponent of $p$ dividing the hook length $h_{(b,c)}^\lambda$. 
\end{theorem}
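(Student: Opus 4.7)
The plan is to deduce the theorem from the Jantzen sum formula for Specht modules, which expresses the class of the Jantzen filtration in the Grothendieck group as a combinatorial sum over pairs of boxes in the same column, weighted by $p$-adic valuations of hook length ratios. Write $\overline{S^\lambda} = J_0(\lambda) \supseteq J_1(\lambda) \supseteq J_2(\lambda) \supseteq \cdots \supseteq 0$ for the Jantzen filtration arising from the natural bilinear form on $M^\lambda$ restricted to $\overline{S^\lambda}$. By construction $J_1(\lambda) = \mathrm{rad}(\overline{S^\lambda})$, so $\overline{S^\lambda}$ is irreducible (as a $p$-regular Specht module with simple head $D^\lambda$) if and only if $J_1(\lambda) = 0$, if and only if $\sum_{i \geq 1} [J_i(\lambda)] = 0$ in the Grothendieck group — the last equivalence because each $[J_i(\lambda)]$ is a nonnegative integer combination of irreducibles.

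The main tool I would invoke is the formula (due to Jantzen, refined by James, Murphy and Schaper) of the shape
\[
\sum_{i \geq 1} [J_i(\lambda)] \;=\; \sum_{c}\;\sum_{1 \leq a < b} \nu_p\!\left(\frac{h^\lambda_{(a,c)}}{h^\lambda_{(b,c)}}\right)\,[\overline{S^{\lambda(a,b,c)}}],
\]
where $(a,c)$ and $(b,c)$ range over pairs of boxes in the same column of $\lambda$, $\lambda(a,b,c)$ denotes the partition obtained from $\lambda$ by the corresponding row-swap / box-move operation, and $\nu_p$ is the $p$-adic valuation. Given this formula, the forward direction is almost immediate: if the Carter condition holds, then every coefficient $\nu_p(h^\lambda_{(a,c)}) - \nu_p(h^\lambda_{(b,c)})$ vanishes, the entire right-hand side is zero, each $[J_i(\lambda)] = 0$, and therefore $\mathrm{rad}(\overline{S^\lambda}) = 0$.

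For the converse, suppose the Carter condition fails, so there exist boxes $(a,c)$ and $(b,c)$ in $\lambda$ with $\nu_p(h^\lambda_{(a,c)}) \neq \nu_p(h^\lambda_{(b,c)})$. The danger is that the terms on the right might cancel in the Grothendieck group, since individual coefficients can be negative. To rule this out I would choose a pair $(a,b,c)$ minimizing the resulting partition $\mu := \lambda(a,b,c)$ in dominance order among those with nonzero coefficient, and observe that for such a minimal $\mu$ the class $[\overline{S^\mu}]$ contributes a composition factor $D^\mu$ (or more carefully, some composition factor not appearing in the $[\overline{S^{\mu'}}]$ for $\mu' \rhd \mu$ on the right-hand side) that cannot be canceled by any other term. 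This produces a nonzero contribution to $\sum_{i\geq 1}[J_i(\lambda)]$, hence $J_1(\lambda) \neq 0$, hence $\overline{S^\lambda}$ is reducible.

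The main obstacle is establishing the Jantzen sum formula itself. This is not a formal consequence of the general Jantzen filtration machinery; it requires the explicit computation, over $\mathbb{Z}_{(p)}$, of the determinant of the Gram matrix of the bilinear form on $\overline{S^\lambda}$ in a standard basis (such as the polytabloid or Murphy basis), together with a careful bookkeeping via Garnir relations that identifies the hook length valuations $\nu_p(h^\lambda_{(a,c)}/h^\lambda_{(b,c)})$ as the relevant exponents. Once this formula is in place, the argument above reduces the theorem to clean combinatorics; the genuine representation-theoretic input is hidden in the derivation of the sum formula.
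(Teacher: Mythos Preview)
The paper does not prove this theorem; it appears in Chapter~1 as cited background, attributed to James and Murphy \cite{JMu}. So there is no proof in the paper to compare your proposal against.

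That said, your approach via the Jantzen--Schaper sum formula is the standard route and is correct in outline. Two points worth tightening. First, the displayed formula is not quite Schaper's formula as usually stated: the right-hand side carries an additional sign $(-1)^{\ell\ell}$ depending on the leg length of the rim hook being moved, and $\lambda(a,b,c)$ should be specified precisely (unwrap the rim hook with hand $(a,c)$ and foot in column $c$, then rewrap one of the same length with foot in row $b$; the term vanishes if this is not a partition). You evidently know signs can appear, since you discuss possible cancellation, but the formula as written suppresses them. Second, in the converse your dominance-minimality argument needs the observation that for the dominance-minimal $\mu$ appearing with nonzero coefficient, the composition factor $D^{\mathcal{R}\mu}$ (the regularization of $\mu$) occurs in $[\overline{S^{\mu}}]$ but in no $[\overline{S^{\mu'}}]$ with $\mu' \rhd \mu$; this is the unitriangularity of the decomposition matrix with respect to dominance and should be invoked explicitly.

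You correctly identify that the substantive input is the Gram determinant computation; that is exactly the content of the James--Murphy paper the thesis cites.
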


\begin{example}\label{32}
Let $\lambda = (3,2)$ and $p = 2$. Then $S^\lambda$ is reducible because $2^2 \mid h_{(1,1)}^\lambda$ but $2^2 \nmid h_{(2,1)}^\lambda$. $\lambda$, with hook lengths inserted in boxes, is drawn below.

\begin{center}
$\tableau{4&3&1\\2&1}$
\end{center}

\end{example}

\section{Hecke algebras}
We now highlight the representation theory of the finite Hecke algebra. The Hecke algebra is a deformation of the group algebra of the symmetric group. For the whole of this thesis, the algebra is defined over a field $\mathbb{F}$ of characteristic zero.
\subsection{Definitions}

\begin{definition}
Let $0 \neq q \in \mathbb{F}$. The finite Hecke Algebra $H_n(q)$ is defined to be the algebra over $\mathbb{F}$ generated by $T_1, ... ,T_{n-1}$ with relations

$$\begin{array}{ll} 	T_i  T_j = T_j  T_i    				& \textrm{for $|i-j|>1$}\\
			T_i  T_{i+1}  T_i = T_{i+1}  T_i  T_{i+1} 	& \textrm{for $i < n-1$}\\
			T_i^2 = (q-1)T_i + q				& \textrm{for $i \leq n-1$}.\\

\end{array}$$
\end{definition}
If $w$ is an element of the symmetric group, and $w = s_{i_1} \dots s_{i_k}$ is a minimal length representation of $w$, then we associate $w$ with the  corresponding Hecke algebra element $T_w := T_{i_1} \dots T_{i_k}$. The set $\{ T_w : w \in S_n \}$ is a basis of $H_n(q)$. 

We note that when $q=1$ we obtain the group algebra of the symmetric group. 

\subsection{Constructing irreducible representations}
When $q$ is not a root of unity, the number of representations of $H_n(q)$ is again the number of partitions of $n$. The representation theory of $H_n(q)$ mimics that of $\mathbb{C} S_n$. 

Similar to the representation theory of the symmetric group, Dipper and James \cite{DJ} constructed Specht modules $S^\lambda$ for $H_n(q)$. We outline the construction of $S^\lambda$ below.

First, we start by constructing the permutation module $M^\lambda$. As a vector space, $M^\lambda$ is again spanned by all tabloids. The action of $T_i$ on  a tabloid $\{t\}$ is defined by: 
\begin{itemize}
\item $T_i \{t\} = q\cdot \{t\}$ if $i$ and $i+1$ are in the same row.
\item $T_i \{t\} = \{s_i(t)\}$ if $i$ is in a row above $i+1$.
\item $T_i \{t\} =  (q-1)\cdot \{t\} + q\cdot  \{s_i(t)\}$ if $i$ is in a row below $i+1$.
\end{itemize}
Here $\{s_i(t)\}$ is the tabloid $\{t\}$ with the positions $i$ and $i+1$ switched.

We modify the definition of an antisymmetrizer of a set partition $J$ of $[n]$ by defining $s_J = \sum_{\sigma \in S_J} (-q)^{-len(\sigma)} T_\sigma$. 

For any tableau $t$, we then define the polytabloid corresponding to $t$ to be $s_{C_t} \{t\}$. The Specht module $S^\lambda$ for the finite Hecke algebra is the defined to be the submodule of $M^\lambda$ spanned by all polytabloids.

\begin{theorem}[Dipper and James]
The Specht modules $S^\lambda$ are irreducible and distinct when $q$ is not a root of unity. Hence $\{S^\lambda: \lambda \in \mathcal{P}_n \}$ labels a complete set of irreducible representations of $H_n(q)$. A basis for $S^\lambda$ are those polytabloids coming from standard tableau. Furthermore, when $q$ is specialized to be 1, then this construction yields the Specht module of $S_n$. 
\end{theorem}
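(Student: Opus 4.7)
The strategy is to carry James's classical construction (sketched earlier in this chapter) over to $H_n(q)$ by replacing each permutation $w$ with the Hecke algebra element $T_w$ and each antisymmetrizer with its $q$-deformation $\sum_{\sigma \in S_J}(-q)^{-len(\sigma)} T_\sigma$. First I would equip $M^\lambda$ with a symmetric bilinear form $\langle \cdot,\cdot \rangle_q$ by declaring the tabloids to be orthogonal with carefully chosen $q$-power self-pairings, and then verify that the three-case action of $T_i$ on tabloids leaves the form invariant. This is a direct case check on pairs of adjacent tabloids, one case per clause in the definition of $T_i\{t\}$.

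The main structural tool is a $q$-analog of James's submodule theorem: every $H_n(q)$-submodule $U \subseteq M^\lambda$ satisfies either $S^\lambda \subseteq U$ or $U \subseteq (S^\lambda)^\perp$. Its proof rests on the lemma that $s_{C_t}\{u\}$ is either zero or a $q$-scalar multiple of the polytabloid $s_{C_t}\{t\}$, which lets one produce a nonzero scalar multiple of any polytabloid starting from any nonzero element of $U$. Combined with the theorem, the unique maximal submodule of $S^\lambda$ is the radical $S^\lambda \cap (S^\lambda)^\perp$. Irreducibility then follows once the restricted form is shown to be non-degenerate: its Gram determinant is a polynomial in $q$ that specializes at $q=1$ to the classical nonzero determinant computed by James, and Dipper and James show that all of its other roots are roots of unity. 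Distinctness is then the standard dominance argument: the ``row-reading'' tabloid of $\lambda$ appears with nonzero coefficient in every polytabloid of shape $\lambda$ but in no polytabloid of shape $\mu$ that is not dominated by $\lambda$, so an isomorphism $S^\lambda \cong S^\mu$ forces $\lambda = \mu$.

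For the standard basis, $q$-deformed Garnir relations express a polytabloid attached to a column-nonstandard tableau as a linear combination of polytabloids of tableaux strictly lower in a chosen total order, giving spanning; linear independence then drops out of the classical case by specializing at $q=1$, where the $q$-standard polytabloids become James's standard polytabloids, known to be independent. Finally, the $q=1$ claim is immediate, since the three-case formula for $T_i\{t\}$ collapses to the permutation action $s_i\{t\}$ and $\sum_\sigma (-q)^{-len(\sigma)} T_\sigma$ becomes $\sum_\sigma (-1)^{len(\sigma)}\sigma$, so both the module structure on $M^\lambda$ and the polytabloids agree on the nose with James's $S_n$-construction. The hard part throughout is the submodule theorem together with the Garnir computation: conceptually these parallel the classical case, but every step accumulates $q$-powers coming from the quadratic relation $T_i^2 = (q-1)T_i + q$, and one must verify that the critical cancellations survive the deformation and that none of the resulting $q$-scalars vanishes when $q$ is not a root of unity.
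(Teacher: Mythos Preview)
The paper does not prove this theorem at all: it is stated in the background chapter as a result of Dipper and James with a citation to \cite{DJ}, and no argument is supplied. So there is no ``paper's own proof'' to compare your proposal against.

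That said, your sketch is a faithful outline of the original Dipper--James argument: the $q$-deformed bilinear form on $M^\lambda$, the $q$-analogue of James's submodule theorem, the Gram determinant specialization at $q=1$, the dominance argument for distinctness, and the $q$-Garnir relations for the standard basis are exactly the ingredients of their proof. One small caveat: the non-degeneracy step is slightly more delicate than ``all other roots are roots of unity'' suggests --- the Gram determinant factors as a product of cyclotomic-type quantities (essentially products of $q$-integers $[h]_q$ over hook lengths $h$), and the precise statement one needs is that none of these vanish when $q$ is not a root of unity. But this is a refinement of phrasing rather than a gap.
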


\begin{example}
Let $\lambda = (2,1)$. Then the two basis vectors $v_1$ and $v_2$ are the polytabloids:

$\displaystyle v_1 = (1-q^{-3}T_1T_2T_1)\,\tableau{1&2\\3} = \tableau{1&2\\3} - q^{-2}\; \tableau{2&3\\1}$ 
\;\;\; and \\\\

$\displaystyle v_2 = (1-q^{-1}T_1)\, \tableau{1&3\\2} = \tableau{1&3\\2} - q^{-1}\tableau{2&3\\1}$\;\;. 
\\

The corresponding matrices representing $T_1$ and $T_2$ are:

\begin{center}
$T_1 = \left( \begin{array}{cc} q&0 \\ -q^{-1}&-1 \end{array} \right)$\;\;\;\;\;\;\;\;\;\;\;\;\;\;\;\;\;\;\;\;\;\;
$T_2 = \left( \begin{array}{cc} 0&q \\ 1&q-1 \end{array} \right)$
\end{center}
\end{example}
\subsection{Representation theory at a root of unity}
When $q$ is a primitive $\ell^{th}$ root of unity, the representation theory of $H_n(q)$ resembles the modular representation theory of $S_n$. Again, the Specht modules $S^{\lambda}$ can be constructed, but now they can be reducible. We show this on the example from above.

\begin{example}\label{Specht21}
Let $\lambda = (2,1)$ and $\ell = 3$ (so $1+q+q^2 = 0$). Then the matrices for $T_1$ and $T_2$ are:

\begin{center}
$T_1 = \left( \begin{array}{cc} q&0 \\ -q^2&-1 \end{array} \right)$\;\;\;\;\;\;\;\;\;\;\;\;\;\;\;\;\;\;\;\;\;\;
$T_2 = \left( \begin{array}{cc} 0&q \\ 1&q-1 \end{array} \right)$.
\end{center}

Now $S^\lambda$ has a one dimensional subrepresentation $W$ spanned by the vector $v_1+v_2$. This is the trivial subrepresentation (where $T_\sigma$ acts by $q^{len(\sigma)}$). The quotient is the sign representation (where $T_\sigma$ acts by $(-1)^{len(\sigma)}$).
\end{example}

Again, $M^\lambda$ has a bilinear form. This form is uniquely determined by the following three conditions:
\begin{itemize}
\item $\langle \{t_\lambda\}, \{t_\lambda\} \rangle = 1$. 
\item $\langle \{t\}, \{t'\} \rangle = 0 \textrm{ unless } \{t\} = \{t'\}$.
\item $\langle T_i v, w \rangle = \langle v, T_i w \rangle$ for all $v, w \in M^\lambda$.
\end{itemize}
Here $t_\lambda$ is the tableau which has the entries $1, \dots ,n$ inserted in order, going down columns and then across rows. (So for instance, $t_{(2,1)}$ has 1 and 2 in its first column).

This form can be restricted to the Specht module. It has a kernel, called the radical of $S^\lambda$. The following theorem, analogous to the modular representation theory of $S_n$, is due to Dipper and James \cite{DJ}.

\begin{theorem}
When $q$ is a primitive $\ell^{th}$ root of unity, the module $D^\lambda := S^\lambda / rad(S^\lambda)$ is non-zero if and only if $\lambda$ is an $\ell$-regular partition. If $\lambda$ is $\ell$-regular then $D^\lambda$ is irreducible. The set $\{ D^\lambda \textrm{ : }\lambda \textrm{ an } \ell \textrm{-regular partition of } n \}$ is a complete set of nonequivalent irreducible representations of $H_n(q)$.
\end{theorem}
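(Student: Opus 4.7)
The plan is to follow the strategy of Dipper and James, which is a Hecke-algebra adaptation of James's original construction for the symmetric group. The key ingredients are (1) the contravariant bilinear form on $M^\lambda$ whose restriction to $S^\lambda$ has kernel equal to $\mathrm{rad}(S^\lambda)$, (2) a ``submodule theorem'' showing this radical is maximal in $S^\lambda$ whenever it is proper, and (3) an explicit computation characterizing when $D^\lambda$ collapses to zero.

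The first step I would carry out is the submodule theorem: for every $H_n(q)$-submodule $U$ of $M^\lambda$, either $S^\lambda \subseteq U$ or $U \subseteq (S^\lambda)^\perp$. Following James, this is proved by showing that the standard polytabloid $e_\lambda := s_{C_{t_\lambda}}\{t_\lambda\}$ cyclically generates $S^\lambda$, and that for any tabloid $\{t\}$ appearing in $U$, applying a suitable product of $T_w$ together with the appropriate antisymmetrizer either annihilates $\{t\}$ or returns a nonzero scalar multiple of $e_\lambda$. The Hecke-analogue Garnir relations are what allow the reduction to standard polytabloids; these are valid because $\mathbb{F}$ has characteristic zero, so only the vanishing of various $q$-integers $[k]_q = 1 + q + \cdots + q^{k-1}$ at the primitive $\ell^{\mathrm{th}}$ root of unity can intervene. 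Once the submodule theorem is in place, $\mathrm{rad}(S^\lambda) = S^\lambda \cap (S^\lambda)^\perp$ is automatically the unique maximal submodule of $S^\lambda$, so $D^\lambda$ is either zero or irreducible.

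Next, I would characterize precisely when $D^\lambda = 0$. Since $S^\lambda$ is cyclic on $e_\lambda$, the form is identically zero on $S^\lambda$ if and only if $\langle e_\lambda, e_\lambda \rangle = 0$. Expanding $e_\lambda$ using the definition of the Hecke antisymmetrizer and using that the tabloids form an orthonormal basis of $M^\lambda$, one obtains a closed expression for this inner product as a product, over columns of $\lambda$, of certain $q$-factorials. A direct check shows this product vanishes in $\mathbb{F}$ precisely when some column contains two rows of equal length that are congruent in the wrong way, which collapses to the statement that $\lambda$ has $\ell$ or more equal parts, i.e.\ that $\lambda$ is not $\ell$-regular. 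Conversely, when $\lambda$ is $\ell$-regular the inner product is a nonzero element of $\mathbb{F}$, so $e_\lambda \notin \mathrm{rad}(S^\lambda)$ and $D^\lambda \neq 0$.

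Finally, to see that $\{D^\lambda : \lambda \text{ an } \ell\text{-regular partition of } n\}$ is a complete irredundant list, I would first show pairwise non-isomorphism via a dominance argument: the multiplicity of the tabloid $\{t_\lambda\}$ in $e_\lambda$ is $1$ and in any polytabloid $e_\mu$ for $\mu \vartriangleright \lambda$ is $0$, so $D^\lambda \not\cong D^\mu$ for distinct $\ell$-regular $\lambda,\mu$. Completeness then follows from a counting/Grothendieck-group argument: the decomposition matrix is uni-triangular with respect to dominance order on $\ell$-regular partitions, and the number of simple $H_n(q)$-modules coincides with the number of $\ell$-regular partitions of $n$ (computed, e.g., via the Brauer-style correspondence between $p$-regular conjugacy classes in the symmetric group case and its $q$-deformed analogue). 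The step I expect to be the main obstacle is the explicit evaluation of $\langle e_\lambda, e_\lambda \rangle$ and the verification that its vanishing in characteristic zero is exactly equivalent to $\ell$-singularity of $\lambda$, since this is the numerical heart of the theorem and requires careful Hecke-algebra bookkeeping at the root of unity.
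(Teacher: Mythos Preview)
The paper does not prove this theorem; it is stated in the introductory chapter as background and attributed to Dipper and James \cite{DJ} without argument. Your outline follows the Dipper--James strategy (submodule theorem, then identification of which $D^\lambda$ vanish, then completeness), which is the appropriate reference, so in that sense there is nothing in the paper to compare against.

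That said, your sketch of the middle step has a genuine gap. From cyclicity of $S^\lambda$ on $e_\lambda$ together with contravariance of the form you get that $D^\lambda=0$ if and only if $\langle e_\lambda, v\rangle=0$ for \emph{all} $v\in S^\lambda$; you cannot collapse this to the single condition $\langle e_\lambda,e_\lambda\rangle=0$ without further argument. More seriously, the computation you describe does not match the conclusion you want. Expanding $e_\lambda$ in tabloids and using orthonormality gives (in the symmetric-group prototype) $\langle e_t,e_t\rangle=\prod_c (\text{column length }c)!$, and the $q$-analogue is a product of $q$-factorials of column lengths. This product vanishes at a primitive $\ell^{\mathrm{th}}$ root of unity exactly when some \emph{column} has length $\geq \ell$, i.e.\ when $\lambda$ is $\ell$-\emph{restricted} in the transpose sense, which is not the same as $\lambda$ failing to be $\ell$-regular (e.g.\ $\lambda=(3,2,1)$ has a column of length $3$ but no repeated parts). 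In Dipper--James the argument that $D^\lambda=0$ for $\ell$-singular $\lambda$ is not done by evaluating $\langle e_\lambda,e_\lambda\rangle$; one instead exhibits, for any $\lambda$ with $\ell$ equal parts, an explicit element of the Hecke algebra that witnesses $e_\lambda\in (S^\lambda)^\perp$, and conversely for $\ell$-regular $\lambda$ one shows $e_\lambda\notin\mathrm{rad}(S^\lambda)$ by producing a specific $v$ with $\langle e_\lambda,v\rangle\neq 0$. You should replace your inner-product computation with that pair of arguments.
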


\begin{example}
When $n=3$ and $\ell=3$ there are two $\ell$-regular partitions of $n$. They are $(3)$ and $(2,1)$. The Specht module $S^{(3)}$ is one dimensional, hence irreducible. It is $D^{(3)}$, the trivial representation. The Specht module $S^{(2,1)}$ from Example \ref{Specht21} is reducible. Its radical is the one dimensional trivial representation. The quotient $D^{(2,1)}$ is the sign representation. 
\end{example}

\subsection{Decomposition matrices}
Just like the representation theory of $S_n$, the computation of the decomposition numbers $d_{\lambda, \mu} = [S^\lambda: D^\mu]$ was difficult to describe until an algorithm (conjectured by Lascoux, Leclerc and Thibon \cite{LLT}) was given (and proven by Ariki \cite{A}) which involved the representation theory of the Fock space of the affine Lie algebra $\widehat{\mathfrak{sl}_\ell}$. This algorithm will not be described in detail. 

One particularly nice result, due to James \cite{J}, ties the representation theory of $H_n(q)$ at a $p^{th}$ root of unity to that of $S_n$ over a field of characteristic $p$. 
\begin{theorem}
There exists a unitriangular matrix $A$ with positive integer coefficients such that the decomposition matrices $D_{S_n}$ of $S_n$ over a field of characterisitic $p$ and $D_{H_n(q)}$ when $q$ is a $p^{th}$ root of unity are related by:
$$D_{S_n} = D_{H_n(q)} \cdot A.$$
\end{theorem}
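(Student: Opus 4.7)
The plan is to build a $p$-modular system that simultaneously accommodates both algebras, realize each Specht module as an integral object over this system, and then compare the two ways of decomposing its reduction.

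First, I would choose a discrete valuation ring $\mathcal{O}$ with field of fractions $K$ of characteristic $0$ containing a primitive $p^{\text{th}}$ root of unity $\zeta$, and residue field $k = \mathcal{O}/\mathfrak{m}$ of characteristic $p$; note $\bar\zeta = 1$ in $k$. Setting $q = \zeta$, the Hecke algebra $H_n^{\mathcal{O}}(\zeta)$ is an $\mathcal{O}$-order in $H_n^{K}(\zeta)$, while its reduction satisfies $k \otimes_{\mathcal{O}} H_n^{\mathcal{O}}(\zeta) \cong k S_n$, since the defining relations collapse to those of $S_n$ upon $\zeta \mapsto 1$. The Dipper--James construction produces Specht modules $S^{\lambda}_{\mathcal{O}}$ which are free $\mathcal{O}$-lattices, compatible with the given $T_w$-basis, and satisfying $K \otimes S^{\lambda}_{\mathcal{O}} = S^{\lambda}_K$ and $k \otimes S^{\lambda}_{\mathcal{O}} \cong \overline{S^{\lambda}}$, the Specht module of $kS_n$.

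Next, for each $p$-regular partition $\mu$ I would choose an $\mathcal{O}$-lattice $L^{\mu}$ inside the irreducible $H_n^K(\zeta)$-module $D^{\mu}_K$ (any full $\mathcal{O}$-submodule spanning $D^{\mu}_K$ over $K$ suffices). Reducing $L^{\mu}$ modulo $\mathfrak{m}$ gives a $kS_n$-module $\overline{L^{\mu}}$ whose class in the Grothendieck group is independent of the choice of lattice. Define $a_{\mu\nu} := [\overline{L^{\mu}} : D^{\nu}_k]$ and $A := (a_{\mu\nu})$; these entries are non-negative integers by definition. The key structural fact is that the assignment $[V] \mapsto [k \otimes V]$ for a lattice $V$ gives a well-defined decomposition map $d : R_0(H_n^K(\zeta)) \to R_0(kS_n)$ on Grothendieck groups. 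Applying $d$ to the composition series of $S^{\lambda}_K$ over $H_n^K(\zeta)$ yields
\[
[\overline{S^{\lambda}}] \;=\; d([S^{\lambda}_K]) \;=\; \sum_{\mu} d^{H_n(q)}_{\lambda\mu}\,[\overline{L^{\mu}}] \;=\; \sum_{\mu,\nu} d^{H_n(q)}_{\lambda\mu}\,a_{\mu\nu}\,[D^{\nu}_k],
\]
and comparing with $[\overline{S^{\lambda}}] = \sum_{\nu} d^{S_n}_{\lambda\nu}\,[D^{\nu}_k]$ gives the matrix identity $D_{S_n} = D_{H_n(q)} \cdot A$.

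It remains to verify that $A$ is unitriangular. For this I would use the bilinear form on $M^{\lambda}$ and the fact that both Specht-to-simple decompositions are already unitriangular with respect to the dominance order: in $H_n^K(\zeta)$ one has $[S^{\mu}_K : D^{\mu}_K] = 1$ and $[S^{\mu}_K : D^{\nu}_K] = 0$ unless $\mu \trianglelefteq \nu$, and the analogous statement holds for $\overline{S^{\mu}}$ over $kS_n$. Choosing $L^{\mu}$ as a lattice which is the image of a lattice inside $S^{\mu}_{\mathcal{O}}$ modulo the radical then forces $\overline{L^{\mu}}$ to have $D^{\mu}_k$ as a composition factor with multiplicity exactly one, and forces $a_{\mu\nu} = 0$ whenever $\nu \not\trianglerighteq \mu$. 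This gives $a_{\mu\mu} = 1$ and upper unitriangularity, completing the proof.

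The main obstacle is the verification of unitriangularity: one must show that a careful choice of integral lattice $L^{\mu}$ survives reduction as an object whose top is still $D^{\mu}_k$, which in turn requires knowing that the Jantzen-type filtration coming from the bilinear form on the integral Specht module is compatible with both specializations. The other ingredients (existence of the decomposition map, freeness of the Specht lattice, agreement of the reduction with $\overline{S^{\lambda}}$) are standard consequences of the $p$-modular system formalism.
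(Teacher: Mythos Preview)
The paper does not contain a proof of this statement; it is quoted as a known result due to James \cite{J} and used as background. There is therefore nothing in the paper to compare your argument against.

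That said, your outline is the standard route to this factorization and is correct in its essentials: one realizes $H_n(q)$ over a discrete valuation ring $\mathcal{O}$ whose fraction field contains a primitive $p^{\text{th}}$ root of unity and whose residue field has characteristic $p$, so that the reduction of the Hecke algebra is the group algebra $kS_n$; the decomposition map on Grothendieck groups then factors $D_{S_n}$ through $D_{H_n(q)}$, and the matrix $A$ records the composition multiplicities of the reductions of the simple $H_n^K(\zeta)$-modules. Your identification of the unitriangularity step as the delicate point is accurate: one needs that the reduction of (a lattice in) $D^{\mu}_K$ still has $D^{\mu}_k$ as its unique top composition factor, which follows from the compatibility of the cellular (or Specht) filtration and the bilinear form across the modular system. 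The statement in the thesis says ``positive integer coefficients,'' but what your argument yields, and what is actually true, is non-negative integer entries with ones on the diagonal; this is almost certainly what was intended.
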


\subsection{Carter's criterion for the finite Hecke Algebra}
Similar to the representation theory of $S_n$, James and Mathas \cite{JM} gave a simple criterion for $S^{\lambda}$ to be an irreducible module for $H_n(q)$ when $q$ is an $\ell^{th}$ root of unity. Their rule is defined over any field, but we will restrict it to a field of characteristic zero. 

For $k \in \mathbb{Z}$, let
$$\nu_{\ell}(k) = \left\{ 	\begin{array}{ll}
			  1 &  \textrm{ $\ell \mid k$}\\
			  0 &  \textrm{ $\ell \nmid k$}. 
			\end{array} \right.$$ 

The generalization of Carter's criterion in this setting is then:

\begin{theorem}
The Specht module $S^{\lambda}$ indexed by an $\ell$-regular partition $\lambda$  is irreducible  if and only if 
$$\begin{array}{lccrr} (\star) &  \nu_{\ell}(h_{(a,c)}^\lambda) = \nu_{\ell}(h_{(b,c)}^\lambda) & \textrm{for all pairs $(a,c)$, $(b,c) \in \lambda$} \end{array}$$
\end{theorem}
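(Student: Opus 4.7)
The plan is to translate the statement into one about the Gram determinant of the Dipper--James bilinear form on $S^\lambda$, then match its vanishing pattern at $q$ a primitive $\ell$-th root of unity against the condition $(\star)$. First I would observe that the contravariance $\langle T_i v, w\rangle = \langle v, T_i w\rangle$ makes the kernel of $\langle \cdot, \cdot\rangle|_{S^\lambda}$ an $H_n(q)$-submodule, which by definition is $\mathrm{rad}(S^\lambda)$. Since the Dipper--James theorem above tells us that $D^\lambda = S^\lambda/\mathrm{rad}(S^\lambda)$ is irreducible for $\ell$-regular $\lambda$, it follows that $S^\lambda$ is irreducible iff $\mathrm{rad}(S^\lambda) = 0$, iff $\det G^\lambda \neq 0$, where $G^\lambda$ is the Gram matrix in the basis of standard-tableau polytabloids.

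The central technical input is the $q$-analogue of the James--Murphy Gram determinant formula, which writes $\det G^\lambda$, up to a unit, as a product of $q$-integers $[h_{(a,c)}^\lambda]_q$ with explicit multiplicities $m_{(a,c)}(\lambda)$ indexed by boxes of $\lambda$. Since $[n]_q = (1-q^n)/(1-q)$ vanishes at a primitive $\ell$-th root of unity iff $\ell \mid n$, iff $\nu_\ell(n) = 1$, the theorem reduces to the combinatorial claim that the total exponent of each vanishing factor in $\det G^\lambda$ is zero precisely when $(\star)$ holds. For the reverse direction, if $(\star)$ holds then within each column $c$ the numbers $\nu_\ell(h_{(a,c)}^\lambda)$ are constant in $a$; I would argue that the column-indexed contributions telescope so that every vanishing $q$-integer contributes with total exponent zero, giving $\det G^\lambda \neq 0$. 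For the forward direction I would prove the contrapositive: if some column $c$ contains boxes $(a,c), (b,c)$ with $\nu_\ell(h_{(a,c)}^\lambda) \neq \nu_\ell(h_{(b,c)}^\lambda)$, then I would use a $q$-Garnir-type relation in column $c$ to exhibit a non-zero element of $\mathrm{rad}(S^\lambda)$ and hence force $S^\lambda$ to be reducible.

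The main obstacle will be the bookkeeping of the multiplicities $m_{(a,c)}(\lambda)$ in the Gram determinant: both proving the exact column-by-column cancellation under $(\star)$, and showing that a failure of $(\star)$ necessarily produces a genuine radical vector rather than being absorbed by some internal cancellation. The $\ell$-regularity hypothesis plays a real role here, both in guaranteeing $D^\lambda \neq 0$ and in controlling the sign pattern across the diagram. A conceptually cleaner alternative, closer to the original James--Mathas argument, would be to use the Jucys--Murphy operators of $H_n(q)$ and the residue sequences of standard tableaux of shape $\lambda$, iterating along the chain $H_1(q) \subset H_2(q) \subset \cdots \subset H_n(q)$ so that $(\star)$ becomes precisely the statement that no two such tableaux have residue sequences forced to coincide modulo the radical at each stage of the induction.
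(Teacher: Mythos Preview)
The paper does not prove this theorem; it is stated in the background chapter as the result of James and Mathas \cite{JM} (their $q$-analogue of the Jantzen--Schaper theorem), and is used throughout as a black box. So there is no ``paper's own proof'' to compare against.

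That said, your outline is aimed at the right target but is vague at exactly the places where the real work lies. The equivalence $S^\lambda$ irreducible $\iff$ $\det G^\lambda \neq 0$ is fine for $\ell$-regular $\lambda$, but your description of the Gram determinant as ``up to a unit, a product of $[h_{(a,c)}^\lambda]_q$ with explicit multiplicities $m_{(a,c)}(\lambda)$'' is not the shape of the actual formula. The James--Murphy/James--Mathas determinant is a product over boxes of \emph{ratios} of $q$-integers built from hook lengths of $\lambda$ compared against hook lengths of partitions dominating $\lambda$, with exponents given by differences of numbers of standard tableaux; the relevant quantities are the $g_{\lambda\nu}$ appearing in the Jantzen--Schaper sum formula. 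Your ``column-indexed contributions telescope'' claim is precisely what one must prove, and it does not follow from $(\star)$ by a short cancellation argument --- it requires the full Jantzen--Schaper machinery together with a careful induction on dominance.

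The contrapositive direction is where your proposal has a genuine gap. A failure of $(\star)$ in some column does not directly hand you a $q$-Garnir element in the radical; Garnir relations are identities in $S^\lambda$, not relations that become degenerate at roots of unity in the way you need. The actual argument in \cite{JM} shows that if $(\star)$ fails then some $g_{\lambda\nu}$ is nonzero, forcing a nontrivial Jantzen filtration and hence a composition factor $D^\nu$ with $\nu \neq \lambda$. Your alternative via Jucys--Murphy residue sequences is closer in spirit to how one would nowadays phrase it, but ``no two tableaux have residue sequences forced to coincide modulo the radical'' is not the right formulation either --- many standard tableaux share residue sequences even when $S^\lambda$ is irreducible.
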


Partitions which satisfy $(\star)$ are called $(\ell,0)$-Carter partitions.

\begin{example}
Let $\ell = 2$ and $\lambda = (3,2)$ as in Example \ref{32}. Then $S^\lambda$ is irreducible when $q$ is an $\ell^{th}$ root of unity, because each of the hook lengths in the first column are divisible by $\ell$, and none of the other hook lengths are.
\end{example}

The results of Chapter \ref{sec:LabelForChapter2} are mostly about $(\ell,0)$-Carter partitions.

\section{The basic crystal}\label{crystal_description}
We will not outline the theory of Lie algebras, but will note that the representation theory of the symmetric group over a field of characteristic $\ell$ (or of the representation theory of $H_n(q)$ at an $\ell^{th}$ root of unity) is connected to the crystal theory of the basic representation of the affine Lie algebra $\widehat{\mathfrak{sl}_{\ell}}$. This crystal is denoted $B(\Lambda_0)$ and will be described combinatorially in this section.

\subsection{Combinatorial description of the basic crystal} 

The set of nodes of $B(\Lambda_0)$ is denoted $B := \{ \lambda \in \mathcal{P} : \, \lambda \textrm{ is } \ell \textrm{-regular} \} $. We will describe the arrows of $B(\Lambda_0)$ below. This description is originally due to Misra and Miwa (see \cite{MM}).

We now view the Young diagram for $\lambda$ as a set of boxes, with their corresponding residues $b-a \mod \ell$ written into the box in row $a$ and column $b$. A box $x$ in $\lambda$ is said to be a removable $i$-box if it has residue $i$ and after removing $x$ from $\lambda$ the remaining diagram is still a partition. A position $y$ not in $\lambda$ is an addable $i$-box if it has residue $i$ and adding $y$ to $\lambda$ yields a partition.

\begin{example}
Let $\lambda = (8,5,4,1)$ and $\ell = 3$. Then the residues are filled into the corresponding Young diagram as follows:

$$\begin{array}{cc}
\lambda = & \tableau{0&1&2&0&1&2&0&1 \\
2&0&1&2&0 \\
1&2&0&1\\0}
\end{array}
$$

Here $\lambda$ has two removable 0-boxes (the boxes (2,5) and (4,1)), two removable 1-boxes (the boxes (1,8) and (3,4)), no removable 2-boxes, no addable 0-boxes, two addable 1-boxes (at (2,6) and (4,2)), and three addable 2-boxes (at (1,9), (3,5) and (5,1)).
\end{example}

For a fixed $i$, ($0 \leq i < \ell$), we place $-$ in each removable $i$-box and $+$ in each addable $i$-box. The $i$-signature of $\lambda$ is the word of $+$ and $-$'s in the diagram for $\lambda$, read from bottom left to top right. The reduced $i$-signature is the word obtained after repeatedly removing from the $i$-signature all pairs $- +$. The reduced $i$-signature is of the form $+ \dots +++--- \dots -$. The boxes corresponding to $-$'s in the reduced $i$-signature are called \textit{normal $i$-boxes}, and the boxes corresponding to $+$'s are called \textit{conormal $i$-boxes}. $\varepsilon_i(\lambda)$ is defined to be the number of normal $i$-boxes of $\lambda$, and $\varphi_i(\lambda)$ is defined to be the number of conormal $i$-boxes. If a leftmost $-$ exists in the reduced $i$-signature, the box corresponding to said  $-$ is called the \textit{good $i$-box} of $\lambda$. If a rightmost $+$ exists in the reduced $i$-signature, the box corresponding to said $+$ is called the \textit{cogood $i$-box}. All of these definitions can be found in Kleshchev's book \cite{Kl}.

\begin{example}
Let $\lambda = (8,5,4,1)$ and $\ell =3$ be as above. Fix $i=1$. The diagram for $\lambda$ with removable and addable 1-boxes marked looks like:
$$\tableau{ \mbox{} & \mbox{} & \mbox{}& \mbox{} & \mbox{}& \mbox{}& \mbox{}& -\\  \mbox{}& \mbox{}& \mbox{}&\mbox{} & \mbox{} \,\,\,\,\,\,\,\,\,\,\,\,\,\,\,\,\,\,\,\,\,+\\ \mbox{}& \mbox{}& \mbox{}& -\\ \mbox{}\,\,\,\,\,\,\,\,\,\,\,\,\,\,\,\,\,\,\,\,\,+ } $$

The 1-signature of $\lambda$ is $+-+-$, so the reduced 1-signature is $+ \,\,\,\,\,\,\,\,\,\,-$ and the diagram has a good 1-box in the first row, and a cogood 1-box in the fourth row. Here $\varepsilon_1(\lambda)=1$ and $\varphi_1(\lambda)=1$. 
\end{example}

We recall the action of the crystal operators on $B.$ The crystal operator $\widetilde{e}_{i}: B \xrightarrow{i} B \cup \{0\}$ assigns to a partition  $\lambda$ the partition $\widetilde{e}_{i}(\lambda) = \lambda \setminus x$, where $x$ is the good $i$-box of $\lambda$. If no such box exists, then $\widetilde{e}_{i}(\lambda)=0$. It is clear that $\varepsilon_i(\lambda) = \max\{k : \widetilde{e}_{i}^k \lambda \neq 0\}$.

Similarly, $\widetilde{f}_{i}: B \xrightarrow{i} B \cup \{0\}$ is the operator which assigns to a partition  $\lambda$ the partition $\widetilde{f}_{i}(\lambda) = \lambda \cup x$, where $x$ is the cogood $i$-box of $\lambda$. If no such box exists, then $\widetilde{f}_{i}(\lambda)=0$. It is clear that $\varphi_i(\lambda) = \max\{k : \widetilde{f}_{i}^k \lambda \neq 0\}$.

For $i$ in $\mathbb{Z}/ \ell \mathbb{Z}$, we write $\lambda \xrightarrow{i} \mu$ to stand for $\widetilde{f}_{i} \lambda = \mu$. We say that there is an $i$-arrow from $\lambda$ to $\mu$. Note that $\lambda \xrightarrow{i} \mu$ if and only if $\widetilde{e}_{i} \mu = \lambda$. A maximal chain of consecutive $i$-arrows is called an $i$-string. We note that the empty partition $\emptyset$ is the unique highest weight node of the crystal. For a picture of the first few levels of this crystal graph, see \cite{LLT} for the cases $\ell = 2$ and $3$.

\begin{example}
Continuing with the above example, $\widetilde{e}_{1} (8,5,4,1) = (7,5,4,1)$ and $\widetilde{f}_{1}(8,5,4,1) = (8,5,4,2)$. Also, $\widetilde{e}_{1} ^2(8,5,4,1) = 0$ and $\widetilde{f}_{1}^2(8,5,4,1) = 0$. The sequence $(7,5,4,1) \xrightarrow{1} (8,5,4,1) \xrightarrow{1} (8,5,4,2)$ is a $1$-string of length 3.
\end{example}
  For the rest of this thesis, $\varphi = \varphi_i(\lambda)$ and $\varepsilon = \varepsilon_i(\lambda)$.

    % %------------------------------------------------------------------------------
    % 
    % %--------------------------------------------------------------- NEW CHAPTER --
    %
    % % -- [INSERT TITLE OF CHAPTER TWO]

    \newchapter{l-partitions}{A Combinatorial Study of l-partitions}{Carter partitions, their crystal-theoretic behavior and generating function}
    \label{sec:LabelForChapter2}

        % Either type your chapter's text here or input a file 
        % containing it using the ``\input'' command.
        
        % You will probably want to split your chapter up into several
        % sections (with each section possibly even split up into
        % subsections), each of which can either be written directly
        % in this file or input from an external file as above. E.g.:
    
The results of this chapter are joint work with Monica Vazirani.
\section{Introduction}
\subsection{Preliminaries}
Let $\lambda$ be a partition of $n$ and $\ell \geq 2$ be an integer. We will use the convention $(x,y)$ to denote the box which sits in the $x^{\textrm{th}}$ row and the $y^{\textrm{th}}$ column of the Young diagram of $\lambda$. The length of a partition $\lambda$ is defined to be the number of nonzero parts of $\lambda$ and is denoted $len(\lambda)$.

A \textit{removable $\ell$-rim hook} in $\lambda$ is a connected sequence of $\ell$ boxes in the Young diagram of $\lambda$, containing no $2 \times 2$ square, such that when removed from $\lambda$, the remaining diagram is the Young diagram of some other partition. We will often abbreviate and call a removable $\ell$-rim hook an $\ell$-rim hook.

Any partition which has no removable $\ell$-rim hooks is called an \textit{$\ell$-core}. Every partition $\lambda$ has a well defined $\ell$-core, which is obtained by removing $\ell$-rim hooks  from the outer edge while at each step the removal of a hook is still a (non-skew) partition. The $\ell$-core is uniquely determined from the partition, independently of choice of  the order in which one successively removes $\ell$-rim hooks. The number of $\ell$-rim hooks which must be removed from a partition $\lambda$ to obtain its core is called the \textit{weight} of $\lambda$. The set of all $\ell$-cores is denoted $\mathcal{C}_{\ell}$. See \cite{JK} for more details.

\begin{remark}\label{divisibility}
A necessary and sufficient condition that $\lambda$ be an $\ell$-core is that $\ell \nmid h_{(a,c)}^\lambda$ for all $(a,c) \in \lambda$ (see \cite{JK}).
\end{remark}

Removable $\ell$-rim hooks which are flat (i.e. those whose boxes all sit in one row) will be called \textit{horizontal $\ell$-rim hooks}. Removable $\ell$-rim hooks which are not flat will be called \textit{non-horizontal $\ell$-rim hooks}.

\begin{definition}\label{lpartition}
An \textit{$\ell$-partition} is an $\ell$-regular partition which
\begin{itemize}
\item Has no non-horizontal $\ell$-rim hooks.
\item After removing any number of horizontal $\ell$-rim hooks, in any order, the remaining diagram has no non-horizontal $\ell$-rim hooks. 
\end{itemize}
\end{definition}
\begin{example}  Any $\ell$-core is also an $\ell$-partition. \end{example}

\begin{example}
$(5,4,1)$ is a 6-core, hence a 6-partition. It is a 2-partition, but not a 2-core. It is not a 3-, 4-, 5-, or 7-partition. It is an $\ell$-core for $\ell > 7$.
$$ 
	\tableau{ \mbox{} &  \mbox{} &  \mbox{} &  \mbox{} &  \mbox{}\\
	 \mbox{} &  \mbox{} &  \mbox{} &  \mbox{}\\
	 \mbox{}\\}
$$
\end{example}

We turn our attention to the representation theory of $H_n(q)$ at an $\ell^{th}$ root of unity. As we mentioned in Chapter \ref{sec:Intro},
it is known that the Specht module $S^{\lambda}$ indexed by an $\ell$-regular partition $\lambda$  is irreducible  if and only if 
$$\begin{array}{lccrr} (\star) &  \nu_{\ell}(h_{(a,c)}^\lambda) = \nu_{\ell}(h_{(b,c)}^\lambda) & \textrm{for all pairs $(a,c)$, $(b,c) \in \lambda$} \end{array}$$
(see \cite{JM}). Partitions which satisfy $(\star)$ have been called in the literature $(\ell,0)$-Carter partitions.    So, an equivalent condition for the irreducibility of the Specht 
module indexed by an $\ell$-regular partition is that the hook lengths in any column of the partition $\lambda$ are either all divisible by $\ell$ or none of them are.

\subsection{Main results of Chapter \ref{sec:LabelForChapter2}}
In Section \ref{new_def_ell_part} we show the equivalence of $\ell$-partitions and $(\ell,0)$-Carter partitions (see Theorem \ref{maintheorem}). Section \ref{?} gives a different classification of $\ell$-partitions  which allows us to give an explicit formula for a generating function for the number of $\ell$-partitions with respect to the statistic of a partitions first part. In Section \ref{??} we give a crystal theoretic interpretation of $\ell$-partitions. There we explain where in the crystal graph $B(\Lambda_0)$ one can expect to find $\ell$-partitions (see Theorems \ref{top_and_bottom},  \ref{other_cases} and \ref{second_from_bottom}). In Section \ref{new_proof}, we give a representation theoretic proof of Theorem \ref{top_and_bottom}.
%New Section
\section{l-partitions}\label{new_def_ell_part}
We now claim that a partition is an $\ell$-partition if and only if it satisfies $(\star)$ (We drop the $(\ell,0)$-Carter partition notation and abbreviate it by $(\star)$). To prove this, we will first need two lemmas which tell us when we can add/remove horizontal $\ell$-rim hooks to/from a diagram.
\subsection{Equivalence of the combinatorics}
\begin{lemma}\label{lemma1}
For $\lambda$ a partition which does not satisfy ($\star$), if we add a horizontal $\ell$-rim hook to $\lambda$ to form a new partition $\mu$, $\mu$ will also not satisfy ($\star$).
\end{lemma}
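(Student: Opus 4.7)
Let $r$ denote the row to which the horizontal $\ell$-rim hook is added, so that $\mu_r = \lambda_r + \ell$ and $\mu_i = \lambda_i$ otherwise; the requirement that $\mu$ be a valid partition forces $\lambda'_c = r-1$ for every $c$ with $\lambda_r < c \leq \lambda_r + \ell$. I would pick an offending pair $(a,c), (b,c)$ witnessing the failure of $(\star)$ in $\lambda$ and analyze column $c$ according to where it lies relative to the band $(\lambda_r, \lambda_r + \ell]$.

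Two of the three resulting cases are easy. If $c \leq \lambda_r$, a direct computation gives $h^\mu_{(i,c)} = h^\lambda_{(i,c)}$ for $i \neq r$ and $h^\mu_{(r,c)} = h^\lambda_{(r,c)} + \ell$, so the $\nu_\ell$-values in column $c$ are preserved and $(a,c), (b,c)$ remains offending in $\mu$. If $c > \lambda_r + \ell$, column $c$ is completely untouched by the addition and the same pair again works.

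The main obstacle is the ``middle'' band $\lambda_r < c \leq \lambda_r + \ell$. Here necessarily $a, b \leq r-1$ and, because the column picks up the new box $(r,c)$, every old hook length shifts up by one: $h^\mu_{(a,c)} = h^\lambda_{(a,c)} + 1$ for $a < r$, so the original pair may cease to be offending. To recover, I would reparametrize via the residues $\gamma_a := (\lambda_a - a) \bmod \ell$: one checks that $\ell \mid h^\lambda_{(a,c)}$ iff $\gamma_a \equiv c - r \pmod \ell$, $\ell \mid h^\mu_{(a,c)}$ iff $\gamma_a \equiv c - r - 1 \pmod \ell$, while $\ell \mid h^\mu_{(r,c)}$ iff $c \equiv \lambda_r + 1 \pmod \ell$. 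The failure of $(\star)$ at column $c$ of $\lambda$ then amounts to the residue class $c - r$ being \emph{partially represented} (occurring, but not exclusively) among $\{\gamma_1, \ldots, \gamma_{r-1}\}$. Since the band $(\lambda_r, \lambda_r + \ell]$ contains exactly one representative of every residue mod $\ell$, there is a unique $c' \in (\lambda_r, \lambda_r + \ell]$ with $c' \equiv c + 1 \pmod \ell$; then $c' - r - 1 \equiv c - r \pmod \ell$, the same class is still partially represented, and a two-case check on whether $c' \equiv \lambda_r + 1 \pmod \ell$ (which controls the $\nu_\ell$-status of the new box $(r, c')$) produces an offending pair of $\mu$ in column $c'$.

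The hardest step will be this last one: keeping the residue bookkeeping straight and confirming that, regardless of whether $(r, c')$ itself has an $\ell$-divisible hook length, the partially represented residue among $\{\gamma_1, \ldots, \gamma_{r-1}\}$ always yields a genuine offending pair of $\mu$ in column $c'$.
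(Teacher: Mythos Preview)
Your argument is correct, and the underlying mechanism in the hard case---shifting the offending column by one within the band $(\lambda_r,\lambda_r+\ell]$---is exactly what the paper does.  The organization, however, is different and worth comparing.

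The paper splits cases according to where the added row $i$ sits relative to $a$ and $b$ (above both, between, equal to one, below both), and only in the subcase ``$i>b$ and the box $(i,c)$ lies in the added hook'' does it need to change columns: it moves to $c+1$, or to $c-\ell+1$ when $c$ is the rightmost new box.  A one-line computation then shows $h^{\mu}_{(a,c')}$ equals $h^{\lambda}_{(a,c)}$ (or $h^{\lambda}_{(a,c)}+\ell$), so the same pair $(a,c'),(b,c')$ witnesses the failure.  No residue bookkeeping is used.

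Your decomposition by the position of $c$ relative to $(\lambda_r,\lambda_r+\ell]$ is arguably cleaner on the easy side: your first case $c\le\lambda_r$ absorbs the paper's Cases~1, 2, and half of Case~3 in one stroke.  On the hard side you rebuild the same column shift via the residues $\gamma_a=(\lambda_a-a)\bmod\ell$, which is correct but heavier than necessary.  In particular, your final ``two-case check'' on whether $\ell\mid h^{\mu}_{(r,c')}$ is not needed: once you know the class $c-r$ is partially represented among $\gamma_1,\dots,\gamma_{r-1}$, an offending pair already exists among rows $1,\dots,r-1$ of column $c'$, regardless of what happens in row $r$.  Dropping that check and replacing the residue calculus with the direct identity $h^{\mu}_{(a,c')}=h^{\lambda}_{(a,c)}$ (or $+\,\ell$) recovers the paper's proof almost verbatim.
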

\begin{proof} If $\lambda$ does not satisfy ($\star$), it means that somewhere in the partition there are two boxes $(a,c)$ and $(b,c)$ with $\ell$ dividing exactly one of $h_{(a,c)}^{\lambda}$ and $h_{(b,c)}^{\lambda}$. We will assume $a < b$. Here we prove the case that $\ell \mid h_{(a,c)}^{\lambda}$ and $\ell \nmid h_{(b,c)}^{\lambda}$, the other case being similar. 

\subsubsection*{Case 1}
It is easy to see that adding a horizontal $\ell$-rim hook in row $i$ for $i < a$ or $a<i<b$ will not change the hook lengths in the boxes $(a,c)$ and $(b,c)$. In other words, $h_{(a,c)}^{\lambda}$ = $h_{(a,c)}^{\mu}$ and $h_{(b,c)}^{\lambda} = h_{(b,c)}^{\mu}$. 

\subsubsection*{Case 2}
 If the horizontal $\ell$-rim hook is added to row $a$, then $h_{(a,c)}^{\lambda} + \ell = h_{(a,c)}^{\mu}$ and $h_{(b,c)}^{\lambda} = h_{(b,c)}^{\mu}$.  Similarly if the new horizontal $\ell$-rim hook is added in row b, $h_{(a,c)}^{\lambda} = h_{(a,c)}^{\mu}$ and $h_{(b,c)}^{\lambda} + \ell = h_{(a,c)}^{\mu}$. Still, $\ell \mid h_{(a,c)}^{\mu}$ and $\ell \nmid h_{(b,c)}^{\mu}$.

\subsubsection*{Case 3}
 Suppose the horizontal $\ell$-rim hook is added in row $i$ with $i>b$. If the box $(i,c)$ is not in the added $\ell$-rim hook then $h_{(a,c)}^{\lambda}$ = $h_{(a,c)}^{\mu}$ and $h_{(b,c)}^{\lambda} = h_{(b,c)}^{\mu}$. If the box $(i,c)$ is in the added $\ell$-rim hook, then there are two sub-cases to consider. 
 If $(i,c)$ is the rightmost box of the added $\ell$-rim hook then $\ell \mid h_{(a,c- \ell + 1)}^{\mu}$ and $\ell \nmid h_{(b,c- \ell + 1)}^{\mu}$. 
 Otherwise $(i,c)$ is not at the end of the added $\ell$-rim hook, in which case $\ell \mid h_{(a,c + 1)}^{\mu}$ and $\ell \nmid h_{(b,c + 1)}^{\mu}$. In all cases, $\mu$ does not satisfy $(\star)$.
\end{proof}

\begin{example}

 Let $\lambda$ = $(14,9,5,2,1)$ and $\ell = 3$. This partition does not satisfy ($\star$). For instance, looking at boxes $(2,3)$ and $(3,3)$, we see that $3 \mid h_{(3,3)}^{\lambda} =3$ but $3 \nmid h_{(2,3)}^{\lambda} = 8$. Let $\lambda[i]$ denote the partition obtained when adding a horizontal $\ell$-rim hook to the $i$th row of $\lambda$ (when it is still a partition). Adding a horizontal $3$-rim hook in row 1 will not change  $h_{(2,3)}^{\lambda}$ or $h_{(3,3)}^{\lambda}$ (Case 1 of Lemma \ref{lemma1}).  Adding a horizontal 3-rim hook to row 2 will make $h_{(2,3)}^{\lambda[2]} = 11$, which is equal to $h_{(2,3)}^{\lambda}$ mod 3 (Case 2 of Lemma \ref{lemma1}). Adding in row 3 is also Case 2. Adding a horizontal 3-rim hook to row 4 will make $h_{(2,3)}^{\lambda[4]} = 9$ and $h_{(3,3)}^{\lambda[4]}=4$, but one column to the right, we see that now $h_{(2,4)}^{\lambda[4]} = 8$ and $h_{(3,4)}^{\lambda[4]} = 3$ (Case 3 of Lemma \ref{lemma1}).
$$\begin{array}{cc} \tableau{18 & 16 & 14 & 13 & 12 & 10 & 9 & 8 & 7 & 5 & 4 & 3 & 2 & 1\\
12&10&8&7&6&4&3&2&1\\
7&5&3&2&1\\
3&1\\
1} & 
\linethickness{2.5pt}
\put (-226,-37){\line(0,1){37}}
\put (-208,-37){\line(0,1){37}}
\put (-226,-36){\line(1,0){19}}
\put (-226,-18){\line(1,0){19}}

\put (-226,-1){\line(1,0){19}}

\end{array}$$
\end{example}

\begin{lemma}\label{lemma2}
Suppose ${\lambda}$ does not satisfy ($\star$). Let $a,b,c$ be such that $\ell$ divides exactly one of $h_{(a,c)}^{\lambda}$ and $h_{(b,c)}^{\lambda}$ with $a<b$. Suppose $\nu$ is a partition obtained from $\lambda$ by removing a horizontal $\ell$-rim hook, and that $(b,c) \in \nu$. Then $\nu$ does not satisfy ($\star$).
\begin{proof} All cases are done similar to the proof of Lemma \ref{lemma1}. The only case which does not apply is when you remove a horizontal strip from row $b$ and remove the box $(b,c)$. \end{proof} 
\end{lemma}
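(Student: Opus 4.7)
The plan is to mirror the case analysis of Lemma~\ref{lemma1}. Let $r$ denote the row of the removed horizontal $\ell$-rim hook, whose boxes are $(r, \lambda_r - \ell + 1), \ldots, (r, \lambda_r)$. Horizontality of the hook forces $\lambda_{r+1} \leq \lambda_r - \ell$, whence $\lambda'_y = r$ for every $y \in [\lambda_r - \ell + 1, \lambda_r]$. I may assume without loss of generality that $\ell \mid h_{(a,c)}^\lambda$ and $\ell \nmid h_{(b,c)}^\lambda$ (the opposite divisibility is symmetric). The crucial bookkeeping is that for any $(x,y)\in\nu$, $h_{(x,y)}^\nu = h_{(x,y)}^\lambda + \delta_x + \delta_y$, where $\delta_x = -\ell$ if $x = r$ (else $0$) and $\delta_y = -1$ if $y \in [\lambda_r - \ell + 1, \lambda_r]$ (else $0$).

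First I would dispatch the sub-cases where the original witness survives in $\nu$. If $r \notin \{a,b\}$ and $c \notin [\lambda_r - \ell + 1, \lambda_r]$, neither relevant hook length changes. If $r = a$, then the assumption $(b,c)\in\lambda$ with $b > a$ rules out $(a,c)$ lying in the removed hook, because that would force $\lambda'_c = a < b$. Hence $c \leq \lambda_a - \ell$, so $h_{(a,c)}^\nu = h_{(a,c)}^\lambda - \ell$ and $h_{(b,c)}^\nu = h_{(b,c)}^\lambda$, preserving divisibility modulo $\ell$. The case $r = b$ is analogous: the hypothesis $(b,c)\in\nu$ directly forces $c \leq \lambda_b - \ell$, and the excluded sub-case $r = b$ with $c > \lambda_b - \ell$ is precisely the one highlighted in the lemma's statement.

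The substantive case is $r \notin \{a,b\}$ with $c \in [\lambda_r - \ell + 1, \lambda_r]$. Then $\lambda'_c = r$ forces $a, b \leq r$, hence $r > b > a$; in particular both $(a,c)$ and $(b,c)$ lie in $\nu$, but each of their hook lengths drops by $1$, possibly destroying the witness. Since $\lambda'_y = r$ is constant on the removed hook's column range, one has $h_{(x,y)}^\lambda = (\lambda_x - x) + (r + 1 - y)$ for $x \in \{a,b\}$ and $y \in [\lambda_r - \ell + 1, \lambda_r]$, so the hook lengths $h_{(x,y)}^\lambda$ traverse $\ell$ consecutive integers as $y$ varies. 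I define $c' := c - 1$ when $c > \lambda_r - \ell + 1$, and $c' := \lambda_r$ when $c = \lambda_r - \ell + 1$; in either case $c' \in [\lambda_r - \ell + 1, \lambda_r]$ and $c' \equiv c - 1 \pmod{\ell}$, so for $x \in \{a,b\}$ one gets $h_{(x,c')}^\nu = h_{(x,c')}^\lambda - 1 \equiv h_{(x,c)}^\lambda \pmod{\ell}$. Thus exactly one of $h_{(a,c')}^\nu, h_{(b,c')}^\nu$ is divisible by $\ell$, and $(a,c'), (b,c') \in \nu$ because $c' \leq \lambda_r \leq \lambda_b \leq \lambda_a$ and $r \notin \{a,b\}$.

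The main obstacle is the boundary sub-case $c = \lambda_r - \ell + 1$: the naive shift $c - 1$ falls outside the removed hook's column range, and one must instead jump cyclically to $c' = \lambda_r$, exploiting that the $\ell$ hook-length values along row $x$ over the removed columns form a complete residue system modulo $\ell$. The impossibility of $r < a$ or $a < r < b$ within the substantive case (both force $\lambda'_c = r < b$, contradicting $(b,c)\in\lambda$) and the easy verification that the shifted boxes lie in $\nu$ round out the argument, which otherwise follows the three-case template of Lemma~\ref{lemma1} with the direction of column shifts reversed.
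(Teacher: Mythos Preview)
Your proof is correct and follows exactly the approach the paper intends: mirror the three-case analysis of Lemma~\ref{lemma1}, with the column shift reversed (to $c-1$, wrapping to $\lambda_r$ at the left boundary) since you are removing rather than adding. The paper's own proof is a single sentence deferring to Lemma~\ref{lemma1}; you have simply written out the details, including the observation (implicit in Remark~\ref{remark1}) that the new witness stays in the same rows $a,b$.
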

\begin{remark}\label{remark1} In the proof of Lemma \ref{lemma1} we have also shown that when adding a horizontal $\ell$-rim hook to a partition which does not satisfy $(\star)$, the violation to ($\star$) occurs in the same rows as the original partition. It can also be shown in Lemma \ref{lemma2} that when removing a horizontal $\ell$-rim hook (in the cases above), the violation will stay in the same rows as the original partition. This will be useful in the proof of Theorem \ref{maintheorem}.
\end{remark}
\begin{example}
$(5,4,1)$ does not satisfy ($\star$) for $\ell=3$. The boxes $(1,2)$ and $(2,2)$ are a violation of $(\star)$. Removing a horizontal 3-rim hook will give the partition $(5,1,1)$ which does satisfy ($\star$). Note that this does not violate Lemma \ref{lemma2}, since the removed horizontal $3$-rim hook contains the box $(2,2)$.
$$\begin{array}{cccc}
	\tableau{7&5&4&3&1\\
		5&3&2&1\\
		1}&$\:\:\:\:\:\:\:\:\,\,\,\,$
		&
	\tableau{7&4&3&2&1\\
         		2\\
		1}
		&
		\linethickness{2.5pt}
		\put (-220,18){\line(1,0){18}}		
		\put (-220,0){\line(1,0){18}}
		\put (-220,-18){\line(1,0){18}}
		\put (-220,-18){\line(0,1){36}}
		\put (-202,-18){\line(0,1){36}}

		\end{array}
$$
\end{example}

\indent

\begin{theorem}\label{maintheorem}
A partition is an $\ell$-partition if and only if it satisfies ($\star$).
\end{theorem}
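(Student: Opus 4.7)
My plan is to prove both implications using Lemmas~\ref{lemma1} and~\ref{lemma2} together with Remark~\ref{remark1}. For the direction $(\star) \Rightarrow \ell$-partition, assume $\lambda$ satisfies $(\star)$. First, $\lambda$ must be $\ell$-regular: if it had $\ell$ consecutive equal rows of common length $k$, the cells in column $k$ of those rows would carry the consecutive hook lengths $1, 2, \dots, \ell$, exactly one of which is divisible by $\ell$, violating $(\star)$. Second, $\lambda$ cannot contain a non-horizontal $\ell$-rim hook, since such a hook lives at a cell $(x,y)$ with $h_{(x,y)}^{\lambda} = \ell$ and $(x+1,y) \in \lambda$, forcing $h_{(x+1,y)}^{\lambda} = \ell - 1 - (\lambda_x - \lambda_{x+1}) \in \{1, \dots, \ell - 1\}$ and thus a column-$y$ violation. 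Third, by the contrapositive of Lemma~\ref{lemma1}, removing a horizontal $\ell$-rim hook from a $(\star)$-partition preserves $(\star)$; iterating, every partition reached from $\lambda$ by successive horizontal rim hook removals satisfies $(\star)$, so the second step prohibits any non-horizontal rim hook in it. Hence $\lambda$ is an $\ell$-partition.

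For the converse direction $\ell$-partition $\Rightarrow (\star)$, I argue the contrapositive by induction on $|\lambda|$: whenever $\lambda$ violates $(\star)$, I produce a (possibly empty) sequence of horizontal $\ell$-rim hook removals from $\lambda$ ending at a partition carrying a non-horizontal $\ell$-rim hook. If $\lambda$ itself has such a hook (in particular when $\lambda$ is not $\ell$-regular, since the bottom of any block of $\ell$ consecutive equal rows supplies a vertical $\ell$-rim hook), the empty sequence suffices. Otherwise fix a witness $(a,c), (b,c)$ with $a < b$ to the failure of $(\star)$ and remove a horizontal $\ell$-rim hook of $\lambda$ that does not contain $(b,c)$; Lemma~\ref{lemma2} and Remark~\ref{remark1} guarantee that the resulting $\nu$ still violates $(\star)$ with witness in rows $a, b$, so the inductive hypothesis (applied to $\nu$, noting $|\nu| < |\lambda|$) furnishes a sequence from $\nu$ that extends to one from $\lambda$ by prepending the current removal.

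The technical heart is justifying the existence of a horizontal rim hook avoiding $(b,c)$ in the preceding step. Since each row supports at most one horizontal $\ell$-rim hook, the bad scenario is that there is a unique such rim hook, lying in row $b$ and spanning columns $[\lambda_b - \ell + 1, \lambda_b]$ with $c$ in that interval. Using only the standing assumption that $\lambda$ has no non-horizontal $\ell$-rim hook, I claim $\lambda_i - \lambda_{i+1} = \ell - 1$ for every $1 \le i < b$: if $i_0 < b$ were the largest index with $\lambda_{i_0} - \lambda_{i_0+1} \le \ell - 2$, then $\lambda_{i_0+1} \ge \lambda_{i_0} - \ell + 2$ so row $i_0 + 1$ reaches column $\lambda_{i_0} - \ell + 2$, whereas row $i_0 + 2$ does not (either because $i_0 + 1 = b$ and $\lambda_{b+1} \le \lambda_b - \ell$, or because the maximality of $i_0$ forces $\lambda_{i_0 + 1} - \lambda_{i_0 + 2} = \ell - 1$ and hence $\lambda_{i_0+2} \le \lambda_{i_0} - \ell + 1$); the cell $(i_0, \lambda_{i_0} - \ell + 2)$ therefore has arm $\ell - 2$ and leg exactly one, producing a non-horizontal $\ell$-rim hook and the desired contradiction. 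With these row differences in hand, $h_{(a,c)}^{\lambda} - h_{(b,c)}^{\lambda} = (\lambda_a - \lambda_b) + (b - a) = (b-a)\ell \equiv 0 \pmod{\ell}$, so $h_{(a,c)}^{\lambda} \equiv h_{(b,c)}^{\lambda} \pmod{\ell}$ and the pair cannot witness a $(\star)$-violation, ruling out the bad scenario and completing the induction.
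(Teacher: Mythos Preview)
Your proof is correct.  For the direction $(\star) \Rightarrow \ell$-partition, you and the paper argue essentially the same way (you directly, via the contrapositive of Lemma~\ref{lemma1}; the paper via the contrapositive of the whole implication, using Lemma~\ref{lemma1} in its forward form).  The real difference is in the converse.  The paper splits into two cases according to which of $h_{(a,c)}^{\lambda}$, $h_{(b,c)}^{\lambda}$ is divisible by $\ell$, reduces Case~2 to Case~1, takes $b=a+1$ without loss of generality, and then iterates Lemma~\ref{lemma2} until forced to remove a rim hook through $(b,c_k)$; at that stuck state it explicitly exhibits a non-horizontal $\ell$-rim hook via a hook-length analysis of $h_{(a,d)}^{\lambda^{(k)}}$.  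Your argument instead runs a clean strong induction on $|\lambda|$ and handles the stuck state (your ``bad scenario'') by a uniform row-difference computation showing $h_{(a,c)}^{\lambda} \equiv h_{(b,c)}^{\lambda} \pmod \ell$, so no valid witness can land there --- eliminating both the divisibility case split and the reduction to adjacent rows.  The paper's endgame is local to the two rows $a,\,a{+}1$, whereas yours establishes the global structure $\lambda_i-\lambda_{i+1}=\ell-1$ for all $i<b$; both are short, but your route is the more uniform of the two.
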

\begin{proof} 
{\sloppy Suppose $\lambda$ is not an $\ell$-partition. We may remove horizontal $\ell$-rim hooks from $\lambda$ until we obtain a partition $\mu$ which has a non-horizontal $\ell$-rim hook.

}

{\sloppy
We label the upper rightmost box of the non-horizontal $\ell$-rim hook $(a,c)$ and lower leftmost box $(b,d)$ with $a < b$. 
Then $h_{(a,d)}^{\mu} = \ell$ and $h_{(b,d)}^{\mu} < \ell$, so $\mu$ does not satisfy ($\star$). 
From Lemma \ref{lemma1}, since $\lambda$ is obtained from $\mu$ by adding horizontal $\ell$-rim hooks, $\lambda$ also does not satisfy ($\star$).
}

Conversely, suppose  $\lambda$ does not satisfy ($\star$). Let $(a,c), (b,c) \in \lambda$ be such that $\ell$ divides exactly one of $h_{(a,c)}^{\lambda}$ and $h_{(b,c)}^{\lambda}$.  Let us assume that $\lambda$ is an $\ell$-partition and we will derive a contradiction.

\subsubsection*{Case 1} Suppose that $a<b$ and that $\ell \mid h_{(a,c)}^{\lambda}$. Then without loss of generality we may assume that $b=a+1$.  By the equivalent characterization of $\ell$-cores mentioned in Remark \ref{divisibility}, there exists at least one removable $\ell$-rim hook in $\lambda$ . By assumption it must be horizontal.
 If an $\ell$-rim hook exists which does not contain the box $(b,c)$ then let $\lambda^{(1)}$ be $\lambda$ with this $\ell$-rim hook removed. By Lemma \ref{lemma2}, since we did not remove the $(b,c)$ box, $\lambda^{(1)}$ will still not satisfy ($\star$).  Then there are boxes $(a, c_1)$ and $(b, c_1)$ for which $\ell \mid h_{(a,c_1)}^{\lambda^{(1)}}$ but $\ell \nmid h_{(b,c_1)}^{\lambda^{(1)}}$. 
By Remark \ref{remark1} above, we can assume that the violation to $(\star)$ is in the same rows $a$ and $b$ of $\lambda^{(1)}$. We apply the same process as above repeatedly until we must remove a horizontal $\ell$-rim hook from the partition $\lambda^{(k)}$ which contains the $(b,c_k)$ box, and in particular we cannot remove a horizontal $\ell$-rim hook from row $a$. Let $d$ be so that $h_{(b,d)} = 1$. Such a $d$ must exist since we can remove a horizontal $\ell$-rim hook from this row. Since $(b,c_k)$ is removed from $\lambda^{(k)}$ when we remove the horizontal $\ell$-rim hook, $h_{(b,c_k)}^{\lambda^{(k)}}  < \ell$ ($\ell$ does not divide $h_{(b,c_k)}^{\lambda^{(k)}}$ by assumption, so in particular $h_{(b,c_k)}^{\lambda^{(k)}} \neq \ell$). Note that $h_{(a,c_k)}^{\lambda^{(k)}} = h_{(b,c_k)}^{\lambda^{(k)}} + h_{(a,d)}^{\lambda^{(k)}} -1$, $\ell \mid h_{(a,c_k)}^{\lambda^{(k)}}$ and $\ell \nmid h_{(b,c_k)}^{\lambda^{(k)}}$, so $\ell \nmid (h_{(a,d)}^{\lambda^{(k)}} -1)$. If $h_{(a,d)}^{\lambda^{(k)}} -1 > \ell$ then we could remove a horizontal $\ell$-rim hook from row $a$, which we cannot do by assumption. Otherwise $h_{(a,d)}^{\lambda^{(k)}} < \ell$. Then a non-horizontal $\ell$-rim hook exists starting at the rightmost box of the $a^{th}$ row, going left to $(a,d)$, down to $(b,d)$ and then left. This is a contradiction as we have assumed that $\lambda$ was an $\ell$-partition.
\subsubsection*{Case 2} Suppose that $a<b$ and that $\ell \mid h_{(b,c)}^{\lambda}$. We will reduce this to Case 1. Without loss of generality we may assume that $b = a+1$ and that $\ell \mid h_{(n,c)}^{\lambda}$ for all $n>a$, since otherwise we are in Case 1. Let $m$ be so that $(m,c) \in \lambda$ but $(m+1,c) \not\in \lambda$. Then because $h_{(m,c)}^{\lambda} \geq \ell$, the list $h_{(a,c)}^{\lambda}, h_{(a, c+1)}^{\lambda} = h_{(a,c)}^{\lambda} -1 , \ldots ,h_{(a, c+\ell-1)}^{\lambda} = h_{(a,c)}^{\lambda} -\ell+1$ consists of $\ell$ consecutive integers. Hence one of them must be divisible by $\ell$. Suppose it is $h_{(a, c+i)}^{\lambda}$. Note $\ell \nmid h_{(m, c+i)}^{\lambda}$, since $h_{(m, c+i)}^{\lambda} = h_{(m,c)}^{\lambda} - i$ and $\ell \mid h_{(m,c)}^{\lambda}$. Then we may apply Case 1 to the boxes $(a,c+i)$ and $(m,c+i)$.

\end{proof}

\begin{remark} This result can actually be obtained using a more general result of James and Mathas (\cite{JM}, Theorem 4.20), where they classified which $S^{\lambda}$ remain irreducible for $\lambda$ $\ell$-regular. However, we have included this proof to emphasize the simplicity of the theorem and its simple combinatorial proof in this context. \end{remark}

\begin{remark}  When $q$ is a primitive $\ell$th root of unity, and $\lambda$ is an $\ell$-regular partition, the Specht module $S^{\lambda}$ of $H_n(q)$ is irreducible if and only if  $\lambda$ is an $\ell$-partition.
This follows from what was said above concerning the James and Mathas result on  the equivalence of ($\star$) and irreducibility of Specht modules, and Theorem \ref{maintheorem}.
\end{remark}

\section{Generating functions}\label{?}
Let $\mathcal{L}_{\ell} = \{\lambda \in \mathcal{P} : \lambda\textrm{ is an }\ell\textrm{-partition} \}$. In this section, we count $\ell$-partitions via generating functions. We study the generating function with the statistic being the first part of the partition. Let $B_{\ell}(x) = \sum_{k=0}^{\infty} b_k^{\ell} x^k$ where $b_k^{\ell} = \# \{ \lambda: \lambda_1 = k, \lambda \in \mathcal{L}_{\ell}  \}$ is the number of $\ell$-partitions with the first part of the partition equal to $k$, i.e. $B_{\ell}(x) = \sum_{\lambda \in \mathcal{L}_{\ell}} x^{\lambda_1}$.

\subsection{Counting cores}\label{counting_ell_cores}
We will count $\ell$-cores first, with respect to the statistic of the first part of the partition. Let $$C_{\ell}(x) = \sum_{k=0}^{\infty} c_k^{\ell} x^k$$ where $c_k^{\ell} =  \#\{ \lambda \in \mathcal{C}_{\ell} : \lambda_1 = k\}$. Note that this does not depend on the size of the partition, only its first part. Also, the empty partition is considered as the unique partition with first part $0$, and is always a core, so that $c_0^{\ell} =1$ for every $\ell$.

\begin{example}\label{2_cores}

For $\ell = 2$, all 2-cores are staircases, i.e. are of the form $\lambda = (k,k-1,\dots,2,1)$. Hence $C_2 (x) = \sum_{k=0}^{\infty} x^k = \frac{1}{1-x}$.
\end{example}
\begin{example}
For $\ell = 3$, the first few cores are $$\emptyset, (1), (1,1), (2), (2,1,1), (2,2,1,1), (3,1),(3,1,1),(3,2,2,1,1),(3,3,2,2,1,1), \dots $$ so $C_3(x) = 1+ 2x +  3x^2 + 4x^3 + \hdots$
\end{example}

For a partition $\lambda = (\lambda_1,...,\lambda_s)$ with $\lambda_s>0$, the \textit{$\beta$-numbers }$(\beta_1,...,\beta_s)$ of $\lambda$ are defined to be the hook lengths of the first column (i.e. $\beta_i = h_{(i,1)}^{\lambda}$). Note that this is a modified version of the $\beta$-numbers defined by James and Kerber in \cite{JK}, where all definitions in this section can be found (for the standard definition of $\beta$-numbers, see Chapter \ref{sec:LabelForChapter4}). We draw a diagram $\ell$ columns wide with the numbers $\{0,1,2,\dots,\ell-1\}$ inserted in the first row in order, $\{ \ell, \ell +1, \dots, 2\ell-1\}$ inserted in the second row in order, etc. Then we circle all of the $\beta$-numbers for $\lambda$. The columns of this diagram are called \textit{runners}, the circled numbers are called \textit{beads}, the uncircled numbers are called \textit{gaps}, and the diagram is called an \textit{abacus} . It is well known that a partition $\lambda$ is an $\ell$-core if and only if all of the beads lie in the last $\ell -1$ runners and there is no gap above any bead.

\begin{example}

 $\lambda = (4,2,2,1,1)$ has $\beta$-numbers $8,5,4,2,1$, and for $\ell =3$ our abacus has the first runner empty, the second runner has beads at 1 and 4, and the third runner has beads at 2, 5 and 8 (as pictured below). Hence this is a $3$-core.
\begin{center}
$
\begin{array}{lr}
\tableau{8 & 5 & 2 & 1\\
	5 & 2\\
	4 & 1\\
	2\\
	1}

&
\begin{picture}(80,40)
\put (10,0) {0}
\put (40,0){1}
\put (70,0){2}
\put (10,-15){3}
\put (40, -15){4}
\put (70,-15){5}
\put (10,-30){6}
\put (40,-30){7}
\put (70,-30){8}
\put (10,-45){9}
\put (38,-45){10}
\put (68,-45){11}
\put (72.5,-12){\circle{13}}
\put (72.5,3){\circle{13}}
\put (72.5,-27){\circle{13}}
\put (42.5,3){\circle{13}}
\put (42.5,-12){\circle{13}}
\put (42.5,-53){.}
\put (42.5,-57){.}
\put (42.5,-61){.}
\put (72.5,-53){.}
\put (72.5,-57){.}
\put (72.5,-61){.}
\put (11.5,-53){.}
\put (11.5,-57){.}
\put (11.5,-61){.}
\end{picture}
\end{array}
$
\end{center}

$$\begin{array}{c} \textrm{Young diagram and abacus of $\lambda = (4,2,2,1,1)$ } \end{array}$$
\end{example}

\begin{proposition}\label{ell_core_bijection}
There is a bijection between the set of $\ell$-cores with first part $k$ and the set of $(\ell-1)$-cores with first part $\leq k$.
\end{proposition}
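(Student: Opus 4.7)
The plan is to construct the bijection explicitly via the abacus description of cores recalled above. An $\ell$-core $\lambda$ is uniquely encoded by the tuple $(n_1, \ldots, n_{\ell-1}) \in \mathbb{Z}_{\geq 0}^{\ell-1}$, where $n_i$ is the number of beads on runner $i$, and the map I have in mind simply deletes one coordinate of this tuple. First I would establish the formula $\lambda_1 = (N-1)\ell + i^* - \sum_j n_j + 1$ (for $\lambda \neq \emptyset$), where $N = \max_j n_j$ and $i^* = \max\{i : n_i = N\}$. This comes from $\lambda_1 = \beta_1 - \operatorname{len}(\lambda) + 1$ together with the identification $\beta_1 = i^* + (N - 1)\ell$, which holds because among the candidate bead positions $i + (n_i - 1)\ell$ the maximum occurs on the runner with the largest $n_i$, ties broken by the largest index. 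Define $\Phi(\lambda)$ to be the $(\ell-1)$-core with abacus data $(n_1, \ldots, \widehat{n_{i^*}}, \ldots, n_{\ell-1})$, and set $\Phi(\emptyset) = \emptyset$.

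To show that $\mu := \Phi(\lambda)$ satisfies $\mu_1 \leq \lambda_1$, I would apply the analogous $(\ell-1)$-core formula with $M = \max_j m_j$ and $j^* = \max\{j : m_j = M\}$ and use $\sum m_j = \sum n_j - N$ to compute
\[
\lambda_1 - \mu_1 \;=\; (N - M)(\ell - 1) + (i^* - j^* - 1).
\]
When $N > M$ the first summand is $\geq \ell - 1$, which dominates the worst-case value $2 - \ell$ of the second and gives $\lambda_1 - \mu_1 \geq 1$. When $N = M$ at least one other coordinate of $(n_1, \ldots, n_{\ell-1})$ already equals $N$; by maximality of $i^*$ that occurrence sits strictly to its left, so $j^* < i^*$ and the expression is $\geq 0$.

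For the inverse $\Psi$, given $\mu$ with data $(m_1, \ldots, m_{\ell-2})$ and $\mu_1 \leq k$, the constraint $\lambda_1 = k$ rearranges to $N(\ell - 1) = k + \ell - 1 - i^* + \sum m_j$, whence $i^* \equiv k + \sum m_j \pmod{\ell - 1}$. Since $\{1, \ldots, \ell - 1\}$ represents each residue class modulo $\ell - 1$ exactly once, this pins $i^*$ down uniquely, and $N$ is then forced as a non-negative integer. The remaining, genuinely delicate step is to check that inserting $N$ at position $i^*$ produces a tuple in which $i^*$ is actually the largest index achieving the maximum, i.e. $N \geq m_j$ for $j < i^*$ and $N > m_j$ for $j \geq i^*$. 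Using $\sum m_j = (M-1)(\ell-1) + j^* - \mu_1 + 1$ one obtains
\[
(N - M)(\ell - 1) \;=\; (k - \mu_1) + 1 - (i^* - j^*),
\]
and a short modular-arithmetic argument, exploiting the residue condition and the range $i^* - j^* \in \{3-\ell, \ldots, \ell-2\}$, shows that the hypothesis $k \geq \mu_1$ forces $N \geq M$ when $j^* < i^*$ and $N > M$ when $j^* \geq i^*$, exactly the condition needed for $i^*$ to be the correct maximum index. This last verification is the main obstacle; once in hand, $\Phi$ and $\Psi$ are mutual inverses by construction.
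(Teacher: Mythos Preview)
Your bijection is exactly the paper's: remove the runner carrying the largest bead (equivalently, delete the coordinate $n_{i^*}$), and your verification that $\mu_1\le\lambda_1$ is correct. Where you diverge is in the inverse. You recover $i^*$ and $N$ by solving the congruence $i^*\equiv k+\sum m_j\pmod{\ell-1}$ and then argue via the identity $(N-M)(\ell-1)=(k-\mu_1)+1-(i^*-j^*)$ that the inserted coordinate really is the rightmost maximum; this works, but it is the laborious way around. The paper avoids all of this by describing the inverse directly on the abacus: in the $(\ell-1)$-abacus of $\mu$, locate the $k$th gap in reading order and splice in a new flush runner whose bottom bead sits immediately after that gap. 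Since $\mu_1\le k$, the $k$th gap lies at or beyond the last bead of $\mu$, so the inserted bead is automatically the largest; it has exactly $k$ gaps before it, so the resulting $\ell$-core has first part $k$; and removing that runner visibly recovers $\mu$. Thus the ``genuinely delicate step'' you flag disappears entirely with this combinatorial description, and the two maps are mutual inverses by inspection rather than by a modular-arithmetic case analysis.
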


\begin{proof}
Using the abacus description of cores, we describe our bijection as follows: 

Given an $\ell$-core with largest part $k$, remove the whole runner which contains the largest bead (the bead with the largest $\beta$-number). In the case that there are no beads, remove the rightmost runner. The remaining runners can be placed into an $\ell-1$ abacus in order. The remaining abacus will clearly have its first runner empty. This will correspond to an $(\ell - 1)$-core with largest part at most $k$. This map gives a bijection between the set of all $\ell$-cores with largest part $k$ and the set of all $(\ell-1)$-cores with largest part at most $k$.

To see that it is a bijection, we will give its inverse. Given the abacus for an $(\ell-1)$-core $\lambda$ and a $k \geq \lambda_1$, insert the new runner directly after the $k^{th}$ gap, placing a bead on it directly after the $k^{th}$ gap and at all places above that bead on the new runner.
\end{proof}

\begin{corollary}\label{ell_core_binom}
$c_k^{\ell} = \binom{k+\ell -2}{k}.$
\end{corollary}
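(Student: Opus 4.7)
The plan is to prove this by induction on $\ell$, using Proposition \ref{ell_core_bijection} to reduce to a smaller case, and then the hockey stick identity on binomial coefficients to collapse the resulting sum.

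For the base case $\ell = 2$, Example \ref{2_cores} shows that $c_k^2 = 1$ for every $k \geq 0$, which matches $\binom{k+0}{k} = 1$. For the inductive step, assume that $c_j^{\ell-1} = \binom{j+\ell-3}{j}$ for all $j \geq 0$. By Proposition \ref{ell_core_bijection}, $c_k^\ell$ equals the number of $(\ell-1)$-cores whose first part is at most $k$, which is
$$c_k^\ell = \sum_{j=0}^k c_j^{\ell-1} = \sum_{j=0}^k \binom{j+\ell-3}{j}.$$
The hockey stick identity then gives $\sum_{j=0}^k \binom{j+\ell-3}{j} = \binom{k+\ell-2}{k}$, completing the induction.

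The whole argument is essentially bookkeeping once Proposition \ref{ell_core_bijection} is in hand, so there is no real obstacle; the only minor point to check is that the bijection in the proposition really does include the empty partition correctly (it contributes $c_0^{\ell-1} = 1$ to the sum, matching $\binom{\ell-2}{0} = 1$ at $k=0$), which it does because the empty partition is an $(\ell-1)$-core with first part $0 \leq k$ for every $k \geq 0$. A one-line alternative, avoiding induction, would be to observe that the iterated application of Proposition \ref{ell_core_bijection} identifies $\ell$-cores with first part $k$ with weakly decreasing sequences $k \geq k_{\ell-1} \geq k_{\ell-2} \geq \cdots \geq k_2 \geq 0$ (each $k_i$ recording the first part of the $i$-core obtained after $\ell-i$ runner removals), and the number of such sequences is $\binom{k + \ell - 2}{k}$ by a standard stars-and-bars count.
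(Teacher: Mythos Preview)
Your proof is correct and follows essentially the same route as the paper: induction on $\ell$ with base case $\ell=2$ from Example~\ref{2_cores}, the recursion $c_k^\ell = \sum_{j=0}^k c_j^{\ell-1}$ from Proposition~\ref{ell_core_bijection}, and the hockey stick identity to collapse the sum. The brief alternative you sketch (iterating the bijection down to $1$-cores and counting weakly decreasing sequences) is a nice addition not present in the paper.
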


\begin{proof}  This proof is by induction on $\ell$. For $\ell = 2$, as the only $1$-core is the empty partition, by Proposition \ref{ell_core_bijection}\, $c_k^2 = 1 = \binom{k}{k}$. Note this was also observed in Example \ref{2_cores}. For the rest of the proof, we assume that $\ell >2$. 

It follows directly from Proposition \ref{ell_core_bijection} that $$(\sharp) \,\,\,\,\,\,\,c_k^{\ell} = \sum_{j=0}^k c_j^{\ell -1}.$$ 
Recall the fact that $\binom{\ell+k-2}{k} = \binom{\ell-3}{0} + \binom{\ell-2}{1}+ \dots + \binom{\ell+k-3}{k}$ for $\ell>2$. 
Applying our inductive hypothesis to all of the terms in the right hand side of $(\sharp)$ we get that  $c_k^\ell=  \sum_{j=0}^k c_j^{\ell-1}  = \sum_{j=0}^k \binom{\ell+j-3}{j}=  \binom{\ell+k-2}{k}$. 
Therefore, the set of all $\ell$-cores with largest part $k$ has cardinality $\binom{k+\ell-2}{k}$.
\end{proof}

\begin{remark}
The bijection above between $\ell$-cores with first part $k$ and $(\ell -1)$-cores with first part $\leq k$ has several other beautiful descriptions, using different interpretations of $\ell$-cores. In Chapter \ref{sec:LabelForChapter4} we explore these descriptions in detail.
\end{remark}

\begin{example}

Let $\ell = 3$ and $\lambda = (4,2,2,1,1)$. The abacus for $\lambda$ is:
 
\begin{center}
\begin{picture}(80,80)
\put (10,65) {0}
\put (40,65){1}
\put (70,65){2}
\put (10,50){3}
\put (40, 50){4}
\put (70,50){5}
\put (10,35){6}
\put (40,35){7}
\put (70,35){8}
\put (10,20){9}
\put (38,20){10}
\put (68,20){11}
\put (72.5,53){\circle{13}}
\put (72.5,68){\circle{13}}
\put (72.5,38){\circle{13}}
\put (42.5,68){\circle{13}}
\put (42.5,53){\circle{13}}
\put (42.5,12){.}
\put (42.5,8){.}
\put (42.5,4){.}
\put (72.5,12){.}
\put (72.5,8){.}
\put (72.5,4){.}
\put (11.5,12){.}
\put (11.5,8){.}
\put (11.5,4){.}
\end{picture}
\end{center}
The largest $\beta$-number is 8. Removing the whole runner in the same column as the 8, we get the remaining diagram with runners relabeled for $\ell = 2$
\begin{center}
\begin{picture}(80,80)
\put (10,65) {0}
\put (40,65){1}
\put (70,65){}
\put (10,50){2}
\put (40, 50){3}
\put (70,50){}
\put (10,35){4}
\put (40,35){5}
\put (70,35){}
\put (10,20){6}
\put (40,20){7}
\put (68,20){}
\put (72.5,53){\circle{13}}
\put (72.5,68){\circle{13}}
\put (72.5,38){\circle{13}}
\put (42.5,68){\circle{13}}
\put (42.5,53){\circle{13}}

\put (68,64){$\times$}
\put (68,49){$\times$}
\put (68,34){$\times$}
\put (68,19){$\times$}
\put (68,4){$\times$}

\put (42,12){.}
\put (42,8){.}
\put (42,4){.}
\put (10.5,12){.}
\put (10.5,8){.}
\put (10.5,4){.}
\end{picture}.
\end{center}

These are the $\beta$-numbers for the partition $(2,1)$, which is a 2-core with largest part $\leq 4$. 
\end{example}
 From the above reasoning, we obtain $C_{\ell}(x) = \sum_{k \geq 0} \binom{k+\ell-2}{k} x^k$ and so conclude the following.
 
 \begin{proposition}
 $$C_{\ell}(x) = \frac{1}{(1-x)^{\ell -1}}.$$
\end{proposition}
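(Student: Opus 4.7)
The plan is to combine Corollary \ref{ell_core_binom} with the standard negative binomial generating function identity. Having just established that $c_k^\ell = \binom{k+\ell-2}{k}$, the remaining task is purely a power series identity, and the proposition will follow without further combinatorics.

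First I would recall (or state) the identity
\[
\frac{1}{(1-x)^{n}} \;=\; \sum_{k \geq 0}\binom{k+n-1}{k} x^k,
\]
valid for any positive integer $n$. Applying this with $n = \ell-1$ gives exactly
\[
\frac{1}{(1-x)^{\ell-1}} \;=\; \sum_{k \geq 0}\binom{k+\ell-2}{k} x^k \;=\; \sum_{k \geq 0} c_k^{\ell}\, x^k \;=\; C_{\ell}(x),
\]
which is the claim.

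If one wants to include a self-contained verification of the negative binomial identity, the quickest route is induction on $\ell$ using the recursion $(\sharp)$ from the proof of Corollary \ref{ell_core_binom}. The base case $\ell=2$ is $C_2(x) = \sum_{k \geq 0} x^k = \frac{1}{1-x}$, already observed in Example \ref{2_cores}. For the inductive step, relation $(\sharp)$ says $c_k^\ell = \sum_{j=0}^k c_j^{\ell-1}$, which translates into the generating-function identity $C_\ell(x) = \frac{1}{1-x}\, C_{\ell-1}(x)$, since multiplication by $\frac{1}{1-x}$ implements the partial-sum transform on coefficients. By the inductive hypothesis $C_{\ell-1}(x) = (1-x)^{-(\ell-2)}$, so
\[
C_\ell(x) \;=\; \frac{1}{1-x}\cdot \frac{1}{(1-x)^{\ell-2}} \;=\; \frac{1}{(1-x)^{\ell-1}}.
\]

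There is no real obstacle here; this is essentially a bookkeeping step packaging Corollary \ref{ell_core_binom} into closed form. The only thing to be careful about is the indexing convention (binomial with upper index $k+\ell-2$ versus $k+\ell-1$), but that is handled cleanly by either appealing directly to the negative binomial series or by the one-line induction via the partial-sum operator $\frac{1}{1-x}$.
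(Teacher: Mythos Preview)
Your proposal is correct and matches the paper's approach exactly: the paper simply notes that Corollary~\ref{ell_core_binom} gives $C_{\ell}(x) = \sum_{k \geq 0} \binom{k+\ell-2}{k} x^k$ and states the proposition without further proof, implicitly invoking the negative binomial series. Your write-up is actually more thorough than the paper's, since you supply both the direct appeal to the standard identity and the inductive verification via the partial-sum operator.
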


\subsection{Decomposing l-partitions}\label{construct}
We now describe a decomposition of $\ell$-partitions. We will use this to build $\ell$-partitions from $\ell$-cores and extend our generating function to $\ell$-partitions.

\begin{lemma}\label{newcores}
Let $\lambda$ be an $\ell$-core and $r>0$ an integer. Then 
\begin{enumerate}
\item $\nu = (\lambda_1+r(\ell-1), \lambda_1 + (r-1)(\ell-1), \dots, \lambda_1 + (\ell-1), \lambda_1, \lambda_2, \dots)$ is an $\ell$-core;

\item $\mu = (\lambda_2, \lambda_3, \dots)$ is an $\ell$-core.

\end{enumerate}
\end{lemma}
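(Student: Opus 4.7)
My plan is to prove the two parts using different tools. Part (2) will fall out quickly from the hook-length characterization of Remark \ref{divisibility}, while part (1) becomes transparent on the abacus of $\beta$-numbers from Section \ref{counting_ell_cores}.

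First, for part (2): every box $(a,c)$ of $\mu$ is just the box $(a+1,c)$ of $\lambda$ with its row relabeled. The arm is unchanged because $\mu_a = \lambda_{a+1}$, and the leg is unchanged because the rows strictly below $(a+1,c)$ in $\lambda$ are literally the rows strictly below $(a,c)$ in $\mu$. Therefore $h_{(a,c)}^{\mu} = h_{(a+1,c)}^{\lambda}$, which is coprime to $\ell$ since $\lambda$ is an $\ell$-core. By Remark \ref{divisibility}, $\mu$ is an $\ell$-core.

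Next, for part (1): let $\lambda$ have $s$ nonzero parts, so its $\beta$-numbers are $\beta_i = \lambda_i + s - i$. The partition $\nu$ has $s+r$ nonzero parts, and a short calculation using $\nu_i = \lambda_1 + (r-i+1)(\ell-1)$ for $1 \le i \le r$ together with the identity $(s+r)-i = (s-1)+(r-i+1)$ shows that the $\beta$-numbers of $\nu$ are
$$\beta_1 + r\ell,\ \beta_1 + (r-1)\ell,\ \dots,\ \beta_1 + \ell,\ \beta_1,\ \beta_2,\ \dots,\ \beta_s.$$
Thus the abacus for $\nu$ is obtained from the abacus for $\lambda$ by adding $r$ new beads to the runner already occupied by $\beta_1$, at the $r$ consecutive positions directly above $\beta_1$. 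This extends the initial segment of beads on that one runner upward while leaving every other runner (including runner $0$) untouched, so both defining conditions of an $\ell$-core abacus are preserved and $\nu$ is an $\ell$-core.

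I do not anticipate any real obstacle. The only bookkeeping is the $\beta$-number computation in part (1), which collapses cleanly once the identity above is noticed. The degenerate case $\lambda = \emptyset$ is handled by the same picture after observing that $\nu$ then has $\beta$-numbers $\ell-1,\, 2\ell-1,\, \dots,\, r\ell-1$, filling an initial segment on runner $\ell-1$.
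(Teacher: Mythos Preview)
Your proof is correct. For part~(2) you give exactly the paper's argument via Remark~\ref{divisibility}.

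For part~(1) you take a different route from the paper. The paper argues directly with rim hooks: since $\nu_i - \nu_{i+1} = \ell-1$ for $1 \le i \le r$, no removable $\ell$-rim hook can have its top-right box in any of the first $r$ rows (the hook lengths there jump from at most $\ell-1$ to at least $\ell+1$ as one moves left across column $\nu_{i+1}+1$), and any $\ell$-rim hook supported entirely on rows $r+1$ and below would already be removable from $\lambda$. Your abacus argument is arguably cleaner---it avoids the implicit hook-length case analysis and dovetails with the machinery of Section~\ref{counting_ell_cores}---while the paper's argument stays entirely inside the Young-diagram picture and needs no $\beta$-number bookkeeping.

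One small cosmetic point: in the paper's abacus convention of Section~\ref{counting_ell_cores}, larger entries are drawn \emph{lower}, so the new beads $\beta_1 + \ell, \dots, \beta_1 + r\ell$ extend the runner containing $\beta_1$ downward rather than upward. The core condition ``no gap above any bead'' is still preserved for exactly the reason you give, so the mathematics is unaffected.
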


\begin{proof}
For $1 \leq i \leq r$, $\nu_i - \nu_{i+1} = \ell-1$, so the $i^{th}$ row of $\nu$ can never contain part of an $\ell$-rim hook. Because $\lambda$ is an $\ell$-core, $\nu$ cannot have an $\ell$-rim hook that is supported entirely on the rows below the $r^{th}$ row. Hence $\nu$ is an $\ell$-core.

For the second statement of the lemma, note the partition $\mu$ is simply $\lambda$ with its first row deleted. In particular, $h_{(a,b)}^\mu = h_{(a+1,b)}^\lambda$ for all $(a,b) \in \mu$, so that by Remark \ref{divisibility} it is an $\ell$-core.

\end{proof}

%We note that the partition obtained from an $\ell$-core by deleting its first column is also an $\ell$-core.

 We now construct a partition $\lambda$ from a triple of data $(\mu,r,\kappa)$ as follows.
 Let $\mu = (\mu_1, \dots , \mu_s)$ to be any $\ell$-core where $\mu_1 - \mu_2 \neq \ell - 1$. For an integer $r \geq 0$ we form a new $\ell$-core $\nu = (\nu_1, \dots \nu_r, \nu_{r+1}, \dots, \nu_{r+s})$ by 
attaching $r$ rows above $\mu$ so that:
$$\nu_r = \mu_1+ \ell-1,\, 
\nu_{r-1} = \mu_1+2(\ell-1),\,\dots \, ,\nu_1 = \mu_1 + r(\ell-1),$$ 
$$\nu_{r+i} = \mu_i \textrm{ for } i = 1, 2, \dots , s.$$
 By Lemma \ref{newcores}, $\nu$ is an $\ell$-core.

 Fix a partition $\kappa = (\kappa_1, \dots , \kappa_{r+1})$ with at most $(r+1)$ parts. Then the new partition $\lambda$ is obtained from $\nu$ by adding $\kappa_i$ horizontal $\ell$-rim hooks to row $i$ for every $i \in \{1, \dots, r+1\}$. In other words $\lambda_i = \nu_i + \ell \kappa_i$ for $i \in \{ 1, 2, \dots, r+1\}$ and $\lambda_i  = \nu_i$ for $i>r+1$. 
 
 From now on, when we associate $\lambda$ with the triple $(\mu,r, \kappa)$, we will think of $\mu \subset \lambda$ as embedded in the rows below the $r^{th}$ row in $\lambda$. We introduce the notation $\lambda \approx (\mu, r, \kappa)$ for this decomposition.

 \begin{theorem}\label{construction_theorem}
 Let $\mu$, $r$ and $\kappa$ be as above. Then $\lambda \approx (\mu,r,\kappa)$ is an $\ell$-partition. Conversely, every $\ell$-partition corresponds uniquely to a triple $(\mu,r,\kappa)$.
 \end{theorem}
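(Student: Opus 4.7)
The plan is to handle the two directions separately, relying on Lemma~\ref{newcores} and a reading of the $\ell$-abacus throughout.

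For the forward direction, I would show that $\lambda \approx (\mu, r, \kappa)$ is an $\ell$-partition by induction on the weight $|\kappa| := \sum_i \kappa_i$. The base case $|\kappa| = 0$ gives $\lambda = \nu$, which is an $\ell$-core by Lemma~\ref{newcores}, hence trivially an $\ell$-partition. For the inductive step, I read off $\lambda$'s abacus from the construction: let $t$ be the runner containing $\beta_1^\mu$; then forming $\nu$ stacks $r$ additional beads directly above $\mu$'s top bead on runner $t$ (keeping every runner saturated), while the $\kappa$-adjustment slides the top $r + 1$ beads on runner $t$ upward by $\kappa_1,\ldots, \kappa_{r+1}$ levels respectively. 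Using $\mu_1 - \mu_2 \ne \ell - 1$ together with $\mu$ being an $\ell$-core, the top beads on every other runner remain strictly below the position of the lowest shifted bead. A direct inspection then shows that every bead of $\lambda$ with an empty position one level below it must lie on runner $t$, and that the slide of such a bead down one level never crosses any bead on any other runner. Hence every removable $\ell$-rim hook of $\lambda$ is horizontal, and each removal corresponds to decrementing a single coordinate of $\kappa$, yielding $\lambda' \approx (\mu, r, \kappa')$ of smaller weight, to which the inductive hypothesis applies.

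For the converse and uniqueness, given $\lambda$ an $\ell$-partition, I recover the triple as follows. Let $\nu$ be the $\ell$-core of $\lambda$; since $\lambda$ is an $\ell$-partition, the reduction from $\lambda$ to $\nu$ uses only horizontal rim hooks. Define $r$ as the largest integer such that $\nu_i - \nu_{i+1} = \ell - 1$ for all $i = 1,\ldots, r$ (using the convention $\nu_j = 0$ for $j$ beyond the length of $\nu$), put $\mu := (\nu_{r+1}, \nu_{r+2}, \ldots)$, and $\kappa_i := (\lambda_i - \nu_i)/\ell$. Iterating Lemma~\ref{newcores}(2) shows $\mu$ is an $\ell$-core, and maximality of $r$ gives $\mu_1 - \mu_2 \ne \ell - 1$. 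Uniqueness is then transparent: $\nu$ is determined by $\lambda$, the pair $(\mu, r)$ is determined by $\nu$, and $\kappa$ by both.

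The main obstacle is checking that this extraction actually produces a legal triple: that $\lambda_i = \nu_i$ for $i > r + 1$, that $\lambda_i - \nu_i$ is a nonnegative multiple of $\ell$ for $i \le r + 1$, and that the resulting $\kappa$ is a partition with at most $r + 1$ parts. The key structural fact, which I would prove using Theorem~\ref{maintheorem}'s characterization $(\star)$ translated to the abacus, is that in any $\ell$-partition every runner except the one containing $\beta_1^\lambda$ has beads contiguous from the bottom, and moreover every gap on the exceptional runner lying below $\beta_1^\lambda$ lies strictly above the top beads of all other runners. This forces every horizontal rim hook removal in reducing $\lambda$ to $\nu$ to shift beads only on that single runner, so $\nu$ and $\lambda$ must agree in all rows $i > r + 1$; the divisibility of $\lambda_i - \nu_i$ by $\ell$ then counts rim hook additions in row $i$, and the weakly decreasing property of $\kappa$ follows from $\lambda$ being a partition together with $\nu_i - \nu_{i+1} = \ell - 1$ for $i \le r$.
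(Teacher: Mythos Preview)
Your proof is correct, but it takes a genuinely different route from the paper's.

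For the forward direction, the paper argues by contradiction using nothing more than row differences modulo~$\ell$: if $\rho$ is obtained from $\lambda$ by removing horizontal $\ell$-rim hooks, then $\rho_i-\rho_{i+1}\equiv -1\pmod\ell$ for $1\le i\le r$, and a non-horizontal rim hook starting in row~$j$ would force either $\rho_j-\rho_{j+1}>\ell-1$ (making the hook horizontal after all) or $\rho_j-\rho_{j+1}=\ell-1$ (making it non-removable). Your argument instead inducts on $|\kappa|$ via the abacus, carefully locating all beads and verifying that any bead with a gap immediately below it sits at level $\ge \beta_1^{\mu}/\ell + 1$ on runner~$t$, hence strictly above every bead on other runners. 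The paper's argument is shorter and avoids the abacus entirely; yours is more structural and makes the ``only horizontal hooks appear'' claim completely explicit.

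For the converse, the paper simply names $\kappa_i$, $r$, and $\mu$ and asserts $\lambda\approx(\mu,r,\kappa)$ without justifying that the horizontal hooks removed in passing to the core come only from rows $1,\ldots,r+1$, nor that $\kappa$ is a partition. You identify exactly these gaps and propose closing them via the abacus characterization of $\ell$-partitions derived from~$(\star)$. That structural lemma (all runners but one are flush, and gaps on the exceptional runner sit above every other bead) is correct and does the job; it is essentially the abacus translation of what the paper's forward argument establishes implicitly. So your converse is more complete than the paper's, at the cost of invoking Theorem~\ref{maintheorem} and some abacus bookkeeping that the paper sidesteps.
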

 \begin{proof}
 
Suppose $\lambda \approx (\mu,r, \kappa)$ were not an $\ell$-partition. Then after removal of some number of horizontal $\ell$-rim hooks we obtain a partition $\rho$ which has a removable non-horizontal $\ell$-rim hook. Note that for $1 \leq i \leq r$, $\lambda_i-\lambda_{i+1} \equiv -1 \mod \ell$, and likewise  $\rho_i-\rho_{i+1} \equiv -1 \mod \ell$. Suppose the non-horizontal $\ell$-rim hook had its rightmost topmost box in the $j^{th}$ row of $\rho$. Necessarily it is the rightmost box in that row. Clearly we must have $j\leq r$ since $\mu$ is an $\ell$-core. If $\rho_j -\rho_{j+1} > \ell-1$ then this $\ell$-rim hook must lie entirely in the $j^{th}$ row, i.e. be horizontal. If $\rho_j - \rho_{j+1} = \ell-1$ then the $\ell$-rim hook is clearly not removable.
 
 Conversely, if $\lambda$ is an $\ell$-partition, then let $\kappa_i$ denote the number of removable horizontal $\ell$-rim hooks which must be removed from row $i$ to obtain the $\ell$-core $\nu$ of $\lambda$. Let $r$ denote the index of the first row for which $\nu_r - \nu_{r+1} \neq \ell-1$. Let $\mu = (\nu_{r+1}, \dots)$. Then $\lambda \approx (\mu,r,\kappa)$. 
 \end{proof}
 
\begin{example}

 For $\ell = 3$, $\mu = (2,1,1)$ is a 3-core with $\mu_1-\mu_2 \neq 2$. We may add three rows ($r=3$) to it to obtain $\nu = (8,6,4,2,1,1)$, which is still a 3-core. Now we may add three horizontal $\ell$-rim hooks to the first row, three to the second, one to the third and one to the fourth ($\kappa = (3,3,1,1)$) to obtain the partition $\lambda = (17,15,7,5,1,1)$, which is a 3-partition.
$$\begin{array}{cc}
	\mu =\,\,\,\,\,\,\,\, 
	\tableau{
	         4&1\\
		2\\
		1}\,\,\,\,\,\,\,\,\,&
\linethickness{2.5pt}
\put (40,-37){\line(1,0){37}}
\put (40,-92){\line(0,1){56}}
\put (39, -91){\line(1,0){20}}
\put (58,-92){\line(0,1){38}}
\put (58, -55){\line(1,0){19}}
\put (76,-55){\line(0,1){19}}
	\nu = \,\,\,\,\,\,\,\,\,\tableau{13&10&8&7&5&4&2&1\\
         		10&7&5&4&2&1\\
		7&4&2&1\\
		4&1\\
		2\\
		1} \end{array}$$
$$\begin{array}{c} \mu \subset \nu \textrm{ is highlighted. } \end{array}$$
		
		$$\begin{array}{lr} \lambda = &
\linethickness{2.5pt} 
\put (306,0){\line(0,1){18}}
\put (252,0){\line(0,1){18}}
\put (198,0){\line(0,1){18}}
\put (144,0){\line(0,1){18}}
\put (270,-18){\line(0,1){18}}
\put (216,-18){\line(0,1){18}}
\put (162,-18){\line(0,1){18}}
\put (108,-18){\line(0,1){18}}
\put (126,-36){\line(0,1){18}}
\put (72,-36){\line(0,1){18}}
\put (90,-54){\line(0,1){18}}
\put (36,-54){\line(0,1){18}}
\put (36,-54){\line(1,0){54}}
\put (36,-36){\line(1,0){90}}
\put (72,-18){\line(1,0){198}}
\put (108,0){\line(1,0){198}}
\put (144,18){\line(1,0){162}}
\begin{comment}
\put (297,10){\circle{13}}
\put (279,10){\circle{13}}
\put (261,10){\circle{13}}
\put (243,10){\circle{13}}
\put (225,10){\circle{13}}
\put (207,10){\circle{13}}
\put (189,10){\circle{13}}
\put (171,10){\circle{13}}
\put (153,10){\circle{13}}
\put (261,-8){\circle{13}}
\put (243,-8){\circle{13}}
\put (225,-8){\circle{13}}
\put (207,-8){\circle{13}}
\put (189,-8){\circle{13}}
\put (171,-8){\circle{13}}
\put (153,-8){\circle{13}}
\put (135,-8){\circle{13}}
\put (117,-8){\circle{13}}
\put (117,-26){\circle{13}}
\put (99,-26){\circle{13}}
\put (81,-26){\circle{13}}
\put (81,-44){\circle{13}}
\put (63,-44){\circle{13}}
\put (45,-44){\circle{13}}
\end{comment}
	\tableau{22&19&18&17&16&14&13&11&10&9&8&7&6&5&4&2&1\\
			19&16&15&14&13&11&10&8&7&6&5&4&3&2&1\\
			10&7&6&5&4&2&1\\
			7&4&3&2&1\\
			2\\
			1}\end{array}
$$
$$\begin{array}{c} \textrm{The 3-partition constructed above, with cells contributed by }\kappa \textrm{ highlighted.}\end{array}$$
\end{example}

\begin{remark}
In the proofs of Theorems \ref{top_and_bottom} and \ref{second_from_bottom} we will prove that a partition is an $\ell$-partition by giving its decomposition into $(\mu,r,\kappa)$.
\end{remark}

\subsection{Counting l-partitions}
We derive a closed formula for our generating function $B_{\ell}$ by using our $\ell$-partition decomposition described above. First we note that if $$\begin{array}{lcr} \displaystyle C_{\ell}(x) = \sum_{\mu \in \mathcal{C}_{\ell}} x^{\mu_1} & \textrm{then}  & \displaystyle x^{\ell-1} C_{\ell}(x) = \sum_{\mu \in \mathcal{C}_{\ell} \,:\, \mu_1-\mu_2 \,= \,\ell -1} x^{\mu_1}. \end{array}$$ Therefore, $\displaystyle \sum_{\mu \in \mathcal{C}_{\ell} : \, \mu_1-\mu_2 \,\neq \,\ell-1} x^{\mu_1} = (1-x^{\ell-1}) C_{\ell}(x)$.  Hence the generating function for all cores $\mu$ with $\mu_1-\mu_2 \neq \ell-1$ is $\displaystyle \frac{1-x^{\ell-1}}{(1-x)^{\ell-1}}$.

We are now ready to prove our closed formula for $B_{\ell}$. First, recall that $B_{\ell}(x) = \sum b_n^{\ell} x^n$ where $b_n^{\ell}$ = $\{ \lambda \in \mathcal{L}_{\ell} : \lambda_1 = n\}$.
\begin{theorem} $$B_{\ell}(x) = \frac{1-x^{\ell-1}}{(1-x)^{\ell -1}(1-x^{\ell-1} - x^{\ell})}.$$
\end{theorem}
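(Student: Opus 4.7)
The plan is to use the decomposition $\lambda \approx (\mu,r,\kappa)$ from Theorem~\ref{construction_theorem} to rewrite $B_\ell(x)$ as a product of two generating functions, one accounting for $\mu$ and one accounting for $(r,\kappa)$. The key observation is that if $\lambda\approx(\mu,r,\kappa)$, then the first part of $\lambda$ is
\[
\lambda_1 = \mu_1 + r(\ell-1) + \ell\,\kappa_1,
\]
because the $r$ added ``staircase'' rows increase the first part by $r(\ell-1)$ above $\mu_1$, and then $\kappa_1$ horizontal $\ell$-rim hooks are added to the top row. Since the decomposition is a bijection, this splits the sum:
\[
B_\ell(x) = \left(\sum_{\mu\in\mathcal{C}_\ell,\, \mu_1-\mu_2\neq\ell-1} x^{\mu_1}\right)\cdot \left(\sum_{r\geq 0}\sum_{\kappa} x^{r(\ell-1)+\ell\kappa_1}\right),
\]
where the inner sum is over partitions $\kappa$ with at most $r+1$ parts.

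The first factor has already been computed in Section~\ref{counting_ell_cores}: it equals $(1-x^{\ell-1})C_\ell(x) = \dfrac{1-x^{\ell-1}}{(1-x)^{\ell-1}}$. It therefore remains to evaluate the second factor, call it $H(x)$. For each fixed $r$, a standard count shows that the number of partitions $\kappa$ with at most $r+1$ parts whose first part equals $k$ is the number of partitions fitting in an $r\times k$ box, namely $\binom{r+k}{r}$ (the case $k=0$ giving $\kappa=\emptyset$). Hence
\[
H(x) = \sum_{r\geq 0} x^{r(\ell-1)}\sum_{k\geq 0}\binom{r+k}{r} x^{\ell k}
     = \sum_{r\geq 0} \frac{x^{r(\ell-1)}}{(1-x^\ell)^{r+1}},
\]
using the identity $\sum_{k\geq 0}\binom{r+k}{r}y^k = (1-y)^{-(r+1)}$. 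Summing the geometric series in $r$ collapses this to
\[
H(x) = \frac{1}{1-x^\ell}\cdot\frac{1}{1-\tfrac{x^{\ell-1}}{1-x^\ell}} = \frac{1}{1-x^{\ell-1}-x^\ell}.
\]

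Multiplying the two factors yields the stated formula. The only non-routine step is step~(5) — identifying the generating function for $(r,\kappa)$ — and the main obstacle there is correctly counting partitions with at most $r+1$ parts and fixed first part $k$, then recognizing the resulting double sum as a geometric series. Everything else is assembly of pieces already established in the paper.
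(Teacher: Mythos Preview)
Your proof is correct and follows essentially the same approach as the paper: both use the decomposition $\lambda\approx(\mu,r,\kappa)$ to factor $B_\ell(x)$ as $(1-x^{\ell-1})C_\ell(x)$ times a generating function over $(r,\kappa)$, count partitions with at most $r+1$ parts and first part $k$ by $\binom{r+k}{r}$, and then sum the resulting geometric series in $r$. Your write-up actually makes the final geometric-series evaluation slightly more explicit than the paper does.
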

\begin{proof}
We will follow our construction of $\ell$-partitions from Section \ref{construct}. Note that if $\lambda \approx (\mu, r, \kappa)$, then the first part of $\lambda$ is ${\mu_1+\ell \kappa_1 + r (\ell-1)}$. 
Hence $\lambda$ contributes $x^{\mu_1+\ell \kappa_1 + r (\ell-1)}$ to $B_{\ell}$.

Fix a core $\mu$ with $\mu_1-\mu_2 \neq \ell-1$. Let $r$ and $\kappa_1$ be fixed non-negative integers. Let $\gamma_{r, \kappa_1}$ be the number of partitions with first part $\kappa_1$ and length less than or equal to $r+1$. $\gamma_{r, \kappa_1}$ counts the number of $\ell$-partitions with $r$ and $\kappa_1$ fixed that can be constructed from $\mu$. Note that $\gamma_{r, \kappa_1}$ is independent of what $\mu$ is. 

$\gamma_{r,\kappa_1}$ is the same as the number of partitions which fit inside a box of size height $r$ and width $\kappa_1$. Hence $\gamma_{r, \kappa_1} = \binom{r+\kappa_1}{r}$. Fixing $\mu$ and $r$ as above, the generating function for the number of $\ell$-partitions with core $(\mu,r,\emptyset)$ with respect to the number of boxes added to the first row is $\displaystyle \sum_{\kappa_1=0}^{\infty} \gamma_{r,\kappa_1} x^{\kappa_1 \ell} = \frac{1}{(1-x^{\ell})^{r+1}}$.

Now for a fixed $\mu$ as above, the generating function for the number of $\ell$-partitions which can be constructed from $\mu$ with respect to the number of boxes added to the first row is $\displaystyle \sum_{r=0}^{\infty} x^{r (\ell-1)} \frac{1}{(1-x^{\ell})^{r+1}}$. Multiplying through by $1-x^{\ell-1} - x^{\ell}$, one can check that

 $$\sum_{r=0}^{\infty} x^{r (\ell-1)} \frac{1}{(1-x^{\ell})^{r+1}} = \frac{1}{1-x^{\ell-1} - x^{\ell}}.$$

 Therefore $B_{\ell}(x)$ is just the product of the two generating functions $(1-x^{\ell-1})C_{\ell}(x)$ and $\frac{1}{1-x^{\ell-1} - x^{\ell}}$. Hence $$B_{\ell}(x) = \frac{1-x^{\ell-1}}{(1-x)^{\ell -1}(1-x^{\ell-1} - x^{\ell})}.$$

\end{proof}

\subsection{Counting l-partitions of a fixed weight and fixed core} Independently of the authors, Cossey, Ondrus and Vinroot have a similar construction of partitions associated with irreducible representations. In \cite{COV}, they gave a construction analogous to our construction on $\ell$-partitions from Section \ref{construct} for the case of the symmetric group over a field of characteristic $p$. After reading their work and noticing the similarity to our own, we decided to include the following theorem, which is an analogue of their theorem for symmetric groups. The statement is a direct consequence of our construction, so no proof will be included.
\begin{theorem}
For a fixed core $\nu$ satisfying $\nu_i-\nu_{i+1} = \ell-1$ for $i = 1,2, \dots r$ and $\nu_{r+1}-\nu_{r+2} \neq \ell-1$, the number of $\ell$-partitions of a fixed weight $w$ is the number of  partitions of $w$ with at most $r+1$ parts. 
The generating function for the number of $\ell$-partitions of a fixed $\ell$-core $\nu$ with respect to the statistic of the weight of the partition is thus $\displaystyle \prod_{i=1}^{r+1} \frac{1}{1-x^i}$. Hence the generating function for all $(\ell,0)$-Carter partitions with fixed core $\nu$ with respect to the statistic of the size of the partition is 
$\displaystyle \sum_{\lambda\in \mathcal{L}_{\ell} \textrm{ with core } \nu} x^{|\lambda|} = x^{|\nu|} \prod_{i=1}^{r+1} \frac{1}{1-x^{\ell i}}$.

\end{theorem}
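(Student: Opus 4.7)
The plan is to apply the decomposition $\lambda \approx (\mu,r,\kappa)$ established in Theorem \ref{construction_theorem}. If one fixes the $\ell$-core $\nu$ of $\lambda$, then the data $\mu$ and $r$ are already determined by $\nu$: indeed, $r$ is precisely the least index with $\nu_r - \nu_{r+1} \neq \ell-1$, and $\mu = (\nu_{r+1}, \nu_{r+2}, \dots)$. So the only freedom in choosing an $\ell$-partition $\lambda$ with core $\nu$ lies in the partition $\kappa = (\kappa_1, \dots, \kappa_{r+1})$, which ranges over all partitions with at most $r+1$ parts. By the construction, $\lambda_i = \nu_i + \ell \kappa_i$ for $i \leq r+1$ and $\lambda_i = \nu_i$ otherwise, so this correspondence $\lambda \leftrightarrow \kappa$ is a bijection.

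Next I would translate the relevant statistics across this bijection. Since each $\kappa_i$ records the number of horizontal $\ell$-rim hooks added to row $i$ of $\nu$ to produce $\lambda$, the total number of horizontal $\ell$-rim hooks one removes from $\lambda$ to return to $\nu$ is exactly $|\kappa| = \sum_{i=1}^{r+1} \kappa_i$. By definition this is the weight $w$ of $\lambda$. Consequently, counting $\ell$-partitions of weight $w$ with core $\nu$ is the same as counting partitions of $w$ with at most $r+1$ parts, proving the first assertion.

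The generating function claim then follows from the classical identity
\[
\sum_{\substack{\kappa \text{ partition} \\ \text{at most } r+1 \text{ parts}}} x^{|\kappa|} \;=\; \prod_{i=1}^{r+1} \frac{1}{1-x^i},
\]
obtained for instance by conjugating to partitions with largest part at most $r+1$. Finally, to pass to the size statistic $|\lambda|$, one uses $|\lambda| = |\nu| + \ell |\kappa|$, which gives
\[
\sum_{\lambda \in \mathcal{L}_\ell \text{ with core } \nu} x^{|\lambda|} \;=\; x^{|\nu|} \sum_{\kappa} x^{\ell|\kappa|} \;=\; x^{|\nu|} \prod_{i=1}^{r+1} \frac{1}{1-x^{\ell i}},
\]
where the substitution $x \mapsto x^\ell$ accounts for each added box in $\kappa$ contributing $\ell$ boxes to $\lambda$.

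There is essentially no obstacle here, since everything is immediate once Theorem \ref{construction_theorem} is in hand; the only point requiring a moment of care is verifying that the weight of $\lambda$ matches $|\kappa|$, which amounts to observing that the $\kappa_i$ horizontal $\ell$-rim hooks added to row $i$ in the construction are exactly the ones that will be stripped off when computing the core.
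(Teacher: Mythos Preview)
Your proposal is correct and matches the paper's approach exactly: the paper states that the result ``is a direct consequence of our construction, so no proof will be included,'' and you have spelled out precisely that deduction from Theorem~\ref{construction_theorem}. One tiny slip: you write that $r$ is the least index with $\nu_r - \nu_{r+1} \neq \ell-1$, but in fact the hypothesis gives $\nu_r - \nu_{r+1} = \ell-1$; the first index where the difference fails to be $\ell-1$ is $r+1$, so $r$ is determined as the largest index for which the equality holds (or $r=0$ if it never does).
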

\begin{example}
Let $\ell = 3$ and let $\nu = (6,4,2,1,1) \approx ((2,1,1),2,\emptyset)$ be a 3-core. Then the number of 3-partitions of weight $5$ with core $\nu$ is exactly the number of partitions of $5$ into at most $3$ parts. There are 5 such partitions ($(5)$, $(4,1)$, $(3,2)$, $(3,1,1)$, $(2,2,1)$). Therefore, there are $5$ such $\ell$-partitions. They are $(21,4,2,1,1),$  $(18,7,2,1,1),$  $(15,10,2,1,1),$  $(15,7,5,1,1)$ and $(12,10,5,1,1)$. 

$$\begin{array}{cc} \nu = & \tableau{\mbox{}&\mbox{}&\mbox{}&\mbox{}&\mbox{}&\mbox{}\\
\mbox{}&\mbox{}&\mbox{}&\mbox{}\\
\mbox{}&\mbox{}\\
\mbox{}\\
\mbox{}&
}
\end{array}$$
\end{example}

$$\begin{array}{c} \textrm{For } \nu \textrm{ above, } r = 2, \textrm{ so horizontal 3-rim hooks can be added} \\ \textrm{ to the first three rows.} \end{array}$$

\section{The crystal of the basic representation}\label{??}

There is a crystal graph structure on the set of $\ell$-regular partitions (see Section \ref{crystal_description} for a description of $B(\Lambda_0)$). In this section we examine the properties of $\ell$-partitions in this crystal graph. In particular, just as one can obtain $\ell$-cores by following an $i$-string from another $\ell$-core, one can obtain $\ell$-partitions by following an $i$-string from  another $\ell$-partition. However, more $\ell$-partitions exist. Sometimes $\ell$-partitions live one position before the end of an $i$-string (but nowhere else). In particular, there is not a nice interpretation in terms of the braid group orbits of nodes. In Theorem \ref{second_from_bottom} we characterize (combinatorially) where this happens.

\subsection{Crystal operators and l-partitions}\label{carterpartitioncrystal}
We first recall some well-known facts about the behavior of $\ell$-cores in this crystal graph $B(\Lambda_0)$. There is an action of the affine Weyl group $\widetilde{S_{\ell}}$ on the crystal such that the simple reflection $s_i$ reflects each $i$-string. In other words, $s_i$ sends a node $\lambda$ to 
$$\left\{ \begin{array}{lr}
\displaystyle
			  \widetilde{f}_{i}^{\varphi_i(\lambda) - \varepsilon_i(\lambda)} \lambda \;\;\;\;&  \varphi_i(\lambda) - \varepsilon_i(\lambda) >0\\
			  \widetilde{e}_{i}^{\varepsilon_i(\lambda) - \varphi_i(\lambda)} \lambda \;\;\;\;&  \varphi_i(\lambda) - \varepsilon_i(\lambda) <0\\
			  \lambda \;\;\;\; & \varphi_i(\lambda) - \varepsilon_i(\lambda) =0
			\end{array} \right.$$ 
			
The $\ell$-cores are exactly the $\widetilde{S_{\ell}}$-orbit of $\emptyset$, the highest weight node. This implies the following Proposition.
\begin{proposition} If $\mu$ is an $\ell$-core and $\varepsilon_i(\mu) \neq 0$ then $\varphi_i(\mu) = 0$ and $\widetilde{e}_{i}^{\varepsilon_i(\mu)}\mu$ is again an $\ell$-core. Furthermore, all $\widetilde{e}_{i}^{k}\mu$ for $0<k<\varepsilon_i(\mu)$ are not $\ell$-cores. Similarly, if $\varphi_i(\mu) \neq 0$ then $\varepsilon_i(\mu) = 0$ and $\widetilde{f}_{i}^{\varphi_i(\mu)}\mu$ is an $\ell$-core but $\widetilde{f}_i^{k} \mu$ is not for $0 < k < \varphi_i(\mu)$.
\end{proposition}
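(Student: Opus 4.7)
The proof hinges on a single key lemma: for any $\ell$-core $\mu$ and any residue $i$, at most one of $\varepsilon_i(\mu)$ and $\varphi_i(\mu)$ is nonzero. Granting this lemma, the proposition follows almost immediately from the reflection formula for $s_i$ stated just before the proposition, together with the fact that the $\ell$-cores form the $\widetilde{S_\ell}$-orbit of $\emptyset$.

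To establish the key lemma, I would work in the abacus model from Section \ref{counting_ell_cores}, after padding the $\beta$-numbers so that the total number of beads $N$ is a multiple of $\ell$. With this normalization, adding or removing a box becomes a single-step bead slide between adjacent positions, and the residue of the affected box agrees modulo $\ell$ with the runner of the bead's occupied destination. For an $\ell$-core $\mu$ each runner $r$ carries $k_r$ beads with no gap above any of them, and an addable residue-$i$ box corresponds to sliding a bead from runner $i-1$ into an empty slot on runner $i$, while a removable residue-$i$ box corresponds to the reverse slide. Flushness forces every admissible slide between runners $i-1$ and $i$ to go in one and the same direction, determined by the sign of $k_{i-1}-k_i$ (with a $\pm 1$ correction in the $i=0$ case from the $\ell{-}1 \to 0$ wrap-around). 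Consequently the unreduced $i$-signature of $\mu$ already consists of signs of a single type, no cancellation occurs when reducing, and exactly one of $\varphi_i(\mu)$ and $\varepsilon_i(\mu)$ is forced to vanish.

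With the key lemma in hand, suppose $\varepsilon_i(\mu) > 0$. Then $\varphi_i(\mu) = 0$, so the reflection formula gives $s_i\mu = \widetilde{e}_i^{\varepsilon_i(\mu)}\mu$, and this is an $\ell$-core since $\widetilde{S_\ell}$ preserves the set of cores. For an intermediate $\nu_k := \widetilde{e}_i^k \mu$ with $0 < k < \varepsilon_i(\mu)$, one computes $\widetilde{f}_i^k \nu_k = \mu$ while $\widetilde{f}_i\mu = 0$, so $\varphi_i(\nu_k) = k > 0$; similarly $\widetilde{e}_i^{\varepsilon_i(\mu)-k}\nu_k = \widetilde{e}_i^{\varepsilon_i(\mu)}\mu \neq 0$, so $\varepsilon_i(\nu_k) > 0$. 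The contrapositive of the key lemma then rules out $\nu_k$ being an $\ell$-core. The $\widetilde{f}_i$ half of the proposition is completely analogous, with the roles of $\varepsilon_i$ and $\varphi_i$ swapped.

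The main obstacle is really just the bookkeeping that translates box-residues into runner-indices in the key lemma, and in particular taming the $i=0$ wrap-around; after that, everything reduces to chasing definitions.
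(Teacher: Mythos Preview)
Your argument is correct and matches the paper's intended reasoning. The paper does not actually give a proof here: it recalls that $s_i$ reflects $i$-strings and that the $\ell$-cores are exactly the $\widetilde{S_\ell}$-orbit of $\emptyset$, then simply asserts that the proposition follows. Your key lemma---that an $\ell$-core has $\min(\varepsilon_i(\mu),\varphi_i(\mu))=0$---is precisely the link the paper leaves implicit (it is the statement that Weyl-orbit elements of the highest weight node are extremal), and your flush-abacus proof of it is sound and in the spirit of arguments the paper makes elsewhere (cf.\ Proposition~\ref{adding_to_core}, Lemma~\ref{cores}, and Proposition~\ref{p:action}). Once that lemma is in hand, your deduction via the reflection formula and the contrapositive for intermediate $\nu_k$ is exactly right.
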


In this chapter, given an $\ell$-partition $\lambda$, we will determine when $\widetilde{f}_{i}^{k}\lambda$ and $\widetilde{e}_i^k \lambda$ are also $\ell$-partitions.

The following remark will help us in the proofs of the upcoming Theorems \ref{top_and_bottom}, \ref{other_cases} and \ref{second_from_bottom}. 
\begin{remark}\label{residue_remark} Suppose $\lambda$ is a partition. Consider its Young diagram. If any $\ell$-rim hook has an upper rightmost box of residue $i$, then the lower leftmost box has residue $i+1\mod \ell$. Conversely, a hook length $h_{(a,b)}^{\lambda}$ is divisible by $\ell$ if and only if there is an $i$ so that the rightmost box of row $a$ has residue $i$, and the lowest box of column $b$ has residue $i+1\mod \ell$.
\end{remark}
In Lemma \ref{adding_to_l_partition} we will generalize Proposition \ref{adding_to_core} to $\ell$-partitions.
\begin{proposition}\label{adding_to_core}
Let $\lambda$ be an $\ell$-core, and suppose $0\leq i <\ell$. Then the $i$-signature for $\lambda$ is the same as the reduced $i$-signature.
\end{proposition}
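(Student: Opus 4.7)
The plan is to prove the stronger statement that for $\lambda$ an $\ell$-core and any residue $i$, $\lambda$ cannot have a removable $i$-box and an addable $i$-box simultaneously. This immediately implies the proposition, since the $i$-signature is then all $+$'s, all $-$'s, or empty, and in every such case it is trivially its own reduced form (no $-+$ pair can appear).

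I would argue by contradiction. Suppose $(r,\lambda_r)$ is a removable $i$-box and $(s,\lambda_s+1)$ is an addable $i$-box. The case $r=s$ is immediate, since $\lambda_r-r$ and $\lambda_r+1-r$ are consecutive integers and cannot both be $\equiv i\pmod{\ell}$. For $r<s$, I would exhibit the hook length of $(r,\lambda_s+1)$ as a positive multiple of $\ell$, contradicting Remark \ref{divisibility}. Removability at row $r$ gives $\lambda_r>\lambda_{r+1}\ge \lambda_s$, so $\lambda_r\ge\lambda_s+1$ and the box does lie in $\lambda$; addability at row $s$ forces $\lambda_{s-1}\ge\lambda_s+1$, which pins down the conjugate value $\lambda'_{\lambda_s+1}=s-1$. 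The hook length then collapses to $(\lambda_r-r)-(\lambda_s+1-s)\equiv i-i\equiv 0\pmod{\ell}$. The case $r>s$ is symmetric: one uses the box $(s,\lambda_r)$, observes that removability at row $r$ forces $\lambda'_{\lambda_r}=r$, and checks that the hook length reduces to $(\lambda_s+1-s)-(\lambda_r-r)\equiv 0\pmod{\ell}$.

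The only subtlety I anticipate is the boundary case in which $s$ is one past $len(\lambda)$, so $\lambda_s=0$ and the addable box creates an entirely new row. Here the same hook length computation applies verbatim, using $s-1=len(\lambda)$ in the conjugate evaluation, so the argument is uniform. Aside from that bookkeeping, the proof is a short and direct application of Remark \ref{divisibility} once the removability and addability hypotheses are translated into the appropriate strict inequalities between adjacent parts of $\lambda$.
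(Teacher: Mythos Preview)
Your proof is correct. The paper states this proposition without proof, treating it as a consequence of the preceding remark that $\ell$-cores form the $\widetilde{S_{\ell}}$-orbit of $\emptyset$; the direct hook-length approach you take is essentially what the paper uses in the very next lemma (the generalization to $\ell$-partitions), where Remark~\ref{residue_remark} is invoked to exhibit a hook divisible by $\ell$.

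One small remark on scope: to conclude that the $i$-signature equals its reduction you only need to rule out a removable $i$-box \emph{below} an addable $i$-box (your case $r>s$, since the signature is read from bottom to top and cancellation removes $-+$ pairs). Your symmetric argument establishes the stronger fact that an $\ell$-core never has an addable and a removable $i$-box simultaneously. This stronger statement is true and is exactly what the paper uses later (e.g.\ in the proof of Lemma~\ref{cores} and in Proposition~2.4.0.1), so proving it here is a bonus rather than a detour.
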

\begin{lemma}\label{adding_to_l_partition} Let $\lambda$ be an $\ell$-partition, and suppose $0 \leq i < \ell$. Then the $i$-signature for $\lambda$ is the same as the reduced $i$-signature.
\end{lemma}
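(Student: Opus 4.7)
The plan is to argue by contradiction: assume the $i$-signature of $\lambda$ has a $-$ preceding a $+$ somewhere in the bottom-left-to-top-right reading, and use this to exhibit a column of $\lambda$ violating $(\star)$; Theorem~\ref{maintheorem} then contradicts the hypothesis that $\lambda$ is an $\ell$-partition.

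First I would unpack what a non-reduced $i$-signature means geometrically. Each row contributes at most one symbol to the $i$-signature (a removable $i$-box at $(a,\lambda_a)$ and an addable $i$-box at $(a,\lambda_a+1)$ have residues differing by $1\bmod\ell$, so they cannot both equal $i$), and the reading from bottom-left to top-right amounts to listing the active rows in decreasing order of row index. Thus a non-reduced $i$-signature forces the existence of two rows $b<a$ such that row $a$ carries a removable $i$-box $(a,\lambda_a)$ and row $b$ carries an addable $i$-box $(b,\lambda_b+1)$. Any such pair will do, and I will focus on the column $c:=\lambda_a$.

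Next I would compute two hook lengths in column $c$. Since $b<a$ gives $\lambda_b\ge\lambda_a=c$, the box $(b,c)$ is in $\lambda$; and since $(a,\lambda_a)$ is removable we have $\lambda_{a+1}<\lambda_a$, so column $c$ of $\lambda$ terminates exactly at row $a$. Hence the leg length of $(b,c)$ is $a-b$ and
$h_{(b,c)}^{\lambda}=(\lambda_b-c)+(a-b)+1$.
The residue conditions $\lambda_a-a\equiv i$ and $\lambda_b+1-b\equiv i\pmod{\ell}$ subtract to give $\lambda_b-\lambda_a+a-b+1\equiv 0\pmod{\ell}$, so $\ell\mid h_{(b,c)}^{\lambda}$. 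Meanwhile $(a,c)=(a,\lambda_a)$ is a corner, so $h_{(a,c)}^{\lambda}=1$ is not divisible by $\ell$. The boxes $(b,c),(a,c)\in\lambda$ therefore violate $(\star)$, and Theorem~\ref{maintheorem} supplies the desired contradiction.

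The main thing to be careful about is the reading convention; I would cross-check against the worked example $\lambda=(8,5,4,1)$, $\ell=3$, $i=1$ preceding the lemma, whose $1$-signature $+-+-$ corresponds to rows $4,3,2,1$ in that order and exhibits precisely the forbidden $-+$ from $(a,b)=(3,2)$ (consistent with the fact, also visible from the column of hook lengths, that this $\lambda$ fails $(\star)$ and so is not an $\ell$-partition). Apart from that subtlety the proof is essentially residue arithmetic; structurally the lemma is the natural extension of Proposition~\ref{adding_to_core} from the class of $\ell$-cores to the larger class of $\ell$-partitions.
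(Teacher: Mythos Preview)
Your proof is correct and is essentially the same argument as the paper's: both locate a removable $i$-box below an addable $i$-box, then look in the column of the removable box to find a hook length divisible by $\ell$ sitting above the hook length $1$, violating $(\star)$. The only cosmetic difference is that you carry out the residue arithmetic explicitly to show $\ell\mid h_{(b,c)}^{\lambda}$, whereas the paper packages that step into Remark~\ref{residue_remark}, and you cite Theorem~\ref{maintheorem} explicitly for the final contradiction.
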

\begin{proof}
We need to show that there does not exist positions $(a,b)$ and $(c,d)$ such that $(a,b)$ is an addable $i$-box, $(c,d)$ is a removable $i$-box, and $c>a$. But if this were the case, then the hook length $h_{(a,d)}^\lambda$ would be divisible by $\ell$ (by Remark \ref{residue_remark}), but $\ell$ does not divide $h_{(c,d)}^\lambda =1$. Then $\lambda$ would violate $(\star)$, so it would not be an $\ell$-partition.
\end{proof}

\begin{remark}\label{adding_to_l_partition_remark}
As a consequence of Lemma \ref{adding_to_l_partition}, the action of the operators $\widetilde{e}_i$ and $\widetilde{f}_i$ are simplified in the case of $\ell$-partitions. Now applying successive $\widetilde{f}_{i}$'s to $\lambda$ corresponds to adding all addable boxes of residue $i$ from right to left. Similarly, applying successive $\widetilde{e}_{i}$'s to $\lambda$ corresponds to removing all removable boxes of residue $i$ from left to right.
\end{remark}
In the following Theorems \ref{top_and_bottom}, \ref{other_cases} and \ref{second_from_bottom}, we implicitly use Remark \ref{residue_remark} to determine when a hook length is divisible by $\ell$, and  Remark \ref{adding_to_l_partition_remark} when applying $\widetilde{e}_{i}$ and $\widetilde{f}_{i}$ to $\lambda$. 

\begin{remark} Suppose $\lambda \approx (\mu,r,\kappa)$. When viewing $\mu$ embedded in $\lambda$, we note that if a box $(a,b) \in \mu \subset \lambda$ has residue $i \mod \ell$ in $\lambda$, then it has residue $i-r \mod \ell$ in $\mu$. 
\end{remark}
Let $\lambda = (\lambda_1, \lambda_2, \dots )$ be a partition, and $r$ be any integer. We define $\overline{\lambda} = (\lambda_2, \lambda_3, \dots  )$, $\hat{\lambda} = (\lambda_1, \lambda_1, \lambda_2, \lambda_3, \dots )$ and $\lambda+1^r = (\lambda_1+1, \lambda_2+1, \dots, \lambda_r+1, \lambda_{r+1}, \dots )$, extending $\lambda$ by $r-len(\lambda)$ parts of size $0$ if $r > len(\lambda)$. We note that Lemma \ref{newcores} implies that $\overline{\lambda}$ is an $\ell$-core.

\subsection{l-partitions in the crystal}

\begin{theorem}\label{top_and_bottom}
Suppose that $\lambda$ is an $\ell$-partition and $0\leq i < \ell$. Then
\begin{enumerate}
\item\label{f} $\widetilde{f}_{i}^{\varphi} \lambda$ is an $\ell$-partition,
\item\label{e} $\widetilde{e}_{i}^{\varepsilon} \lambda$ is an $\ell$-partition.
\end{enumerate}
\end{theorem}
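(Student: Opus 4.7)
The plan is to combine the decomposition $\lambda \approx (\mu, r, \kappa)$ provided by Theorem \ref{construction_theorem} with Lemma \ref{adding_to_l_partition}. Because $\lambda$ is an $\ell$-partition, Lemma \ref{adding_to_l_partition} guarantees that its $i$-signature already coincides with its reduced $i$-signature, so $\widetilde{f}_i^{\varphi} \lambda$ is the partition obtained by adding \emph{every} addable $i$-box of $\lambda$ at once, and $\widetilde{e}_i^{\varepsilon} \lambda$ is obtained by removing every removable $i$-box at once. To show the outputs are again $\ell$-partitions I would construct explicit triples $(\mu',r',\kappa')$ for each and then invoke the converse direction of Theorem \ref{construction_theorem}.

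First I would pin down where the addable and removable $i$-boxes can live. A short residue computation using $\lambda_j = \mu_1 + (r-j+1)(\ell-1) + \ell\kappa_j$ for $1 \leq j \leq r+1$ shows that the rightmost box of each of the top $r+1$ rows has the same residue $t := \mu_1 - r - 1 \pmod{\ell}$. Consequently every removable $i$-box in the top $r+1$ rows has residue $t$, every addable $i$-box in those rows has residue $t+1$, and all remaining addable or removable $i$-boxes lie inside the embedded copy of $\mu$, where their residues in $\lambda$ equal their residues in $\mu$ shifted by $-r$.

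With this information I would run a three-way case analysis. When $i \notin \{t, t+1\}$, the operator acts only inside $\mu$, so $\widetilde{f}_i^\varphi \lambda \approx (\widetilde{f}_{i+r}^{\varphi_{i+r}(\mu)}\mu,\, r,\, \kappa)$ and symmetrically for $\widetilde{e}_i^\varepsilon$, using the fact that the crystal operators send $\ell$-cores to $\ell$-cores. When $i = t$, each top row contains a removable $i$-box while all addable $i$-boxes lie inside $\mu$, so $\widetilde{f}_i^\varphi$ only modifies $\mu$, and $\widetilde{e}_i^\varepsilon$ strips the rightmost box of every top row simultaneously, leaving a decomposition with the same $r$ and an appropriately shifted $\kappa$. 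When $i = t+1$, the situation is dual: $\widetilde{e}_i^\varepsilon$ acts only inside $\mu$, while $\widetilde{f}_i^\varphi$ simultaneously extends each of the top $r+1$ rows by one box and may also add further $i$-boxes inside $\mu$. In each case one reads off the new triple directly; if the resulting core $\mu'$ happens to satisfy $\mu'_1 - \mu'_2 = \ell-1$ one absorbs its first row into the top block by increasing $r$ by one and prepending a $0$ to $\kappa$, invoking Lemma \ref{newcores} to see that the remaining core is still an $\ell$-core.

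The main obstacle will be the bookkeeping in the mixed cases $i = t+1$ (for $\widetilde{f}_i^\varphi$) and $i = t$ (for $\widetilde{e}_i^\varepsilon$), where the operator acts on the top block and on $\mu$ at the same time and one must check that the assembled triple satisfies the normalization $\mu'_1 - \mu'_2 \neq \ell-1$ required by Theorem \ref{construction_theorem}. Once this bookkeeping is carried out for $\widetilde{f}_i^\varphi$, the argument for $\widetilde{e}_i^\varepsilon$ is entirely parallel with the roles of addable and removable boxes interchanged.
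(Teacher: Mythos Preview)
Your approach is essentially identical to the paper's: both proofs invoke the no-cancellation Lemma~\ref{adding_to_l_partition}, decompose $\lambda \approx (\mu,r,\kappa)$ via Theorem~\ref{construction_theorem}, and run a case analysis producing an explicit new triple. Your split into $i\notin\{t,t+1\}$, $i=t$, $i=t+1$ is a reparametrisation of the paper's split on whether the first row of $\mu$ has an addable $i$-box, and your ``absorption'' step (renormalising when $\mu'_1-\mu'_2=\ell-1$) is exactly what the paper isolates as its case~(c) with $\mu_1-\mu_2=\ell-2$.
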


\begin{proof} We will prove only \eqref{f}, as \eqref{e} is similar. Recall all addable $i$-boxes of $\lambda$ are conormal by Lemma \ref{adding_to_l_partition}. The proof of \eqref{f} relies on the decomposition of the $\ell$-partition as in Section \ref{construct}.  Let $\lambda \approx (\mu, r, \kappa)$. We break the proof of \eqref{f} into three cases:
\begin{enumerate}
    \renewcommand{\labelenumi}{(\alph{enumi})}

\item If the first row of $\mu$ embedded in $\lambda$ does not have an addable $i$-box then we cannot add an $i$-box to the first $r+1$ rows of $\lambda$. Hence $\varphi = \varphi_{i-r}(\mu)$. Therefore the decomposition of $\widetilde{f}_{i}^{\varphi} \lambda$ has the same $r$ and $\kappa$ as $\lambda$, and $\mu$ will be replaced by $\widetilde{f}_{{i-r}}^{\varphi} \mu$, 
which is still a core by Proposition \ref{adding_to_core}. Hence $\widetilde{f}_{i}^{\varphi} \lambda \approx ( \widetilde{f}_{{i-r}}^{\varphi} \mu, r, \kappa)$.

 \item If the first row of $\mu$ embedded in $\lambda$ does have an addable $i$-box and $\mu_1-\mu_2 < \ell-2$, then the first $r+1$ rows of $\lambda$ have addable $i$-boxes. Also some rows of $\mu$ will have addable $i$-boxes. $\widetilde{f}_i^{\varphi}$ adds an $i$-box to the first $r$ rows of $\lambda$, plus adds to any addable $i$-boxes in the core $\mu$. Note that $\varphi_{i-r}(\mu) = \varphi -r$.  Since $\mu_1 - \mu_2 < \ell-2$, the first and second rows of $\widetilde{f}_{i-r}^{\varphi-r} \mu$ differ by at most $\ell-2$. Therefore $\widetilde{f}_{i}^{\varphi} \lambda \approx (\widetilde{f}_{{i-r}}^{\varphi-r} \mu, r, \kappa) $.

\item If the first row of $\mu$ embedded in $\lambda$ does have an addable $i$-box and $\mu_1-\mu_2 = \ell-2$, then $\widetilde{f}_i^{\varphi}$ will add the addable $i$-box in the $r+1^{st}$ row (i.e. the first row of $\mu$). Since the $(r+2)^{nd}$ row does not have an addable $i$-box, we know that the $(r+1)^{st}$ and $(r+2)^{nd}$ rows of $\widetilde{f}_i^{\varphi}(\lambda)$ differ by $\ell-1$. Therefore $\widetilde{f}_{i}^{\varphi} \lambda \approx (\overline{\widetilde{f}_{{i-r}}^{\varphi-r} \mu}, r+1, \kappa)$ is an $\ell$-partition.
 \end{enumerate}
\end{proof}

\begin{lemma}
Let $\lambda$ be an $\ell$-partition. Then $\lambda$ cannot have one normal box and two conormal boxes of the same residue.
\end{lemma}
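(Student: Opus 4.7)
The plan is to derive a contradiction from Carter's criterion $(\star)$ (Theorem \ref{maintheorem}). Suppose $\lambda$ is an $\ell$-partition that has one normal $i$-box and two conormal $i$-boxes for some residue $i$. By Lemma \ref{adding_to_l_partition} the $i$-signature of $\lambda$ already equals its reduced $i$-signature, so ``normal'' and ``conormal'' coincide with ``removable'' and ``addable'' respectively, and every removable $i$-box sits strictly above every addable $i$-box. Call the removable $i$-box $(c,d)$ and the two addable $i$-boxes $(a_1,b_1)$, $(a_2,b_2)$ with $a_1<a_2$. A short check from addability gives $b_1>b_2$ (otherwise $\lambda_{a_2-1}\le\lambda_{a_1}=b_2-1$, making $(a_2,b_2)$ not addable) and $c<a_1<a_2$ (equality $c=a_1$ is ruled out by residues).

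I will pit two boxes in column $b_2$ against each other. The lowest box of column $b_2$ is $(a_2-1,b_2)$, since $\lambda_{a_2-1}\ge b_2>\lambda_{a_2}$, and it has residue $b_2-(a_2-1)=i+1$. The comparison boxes are $(c,b_2)$ and $(a_1,b_2)$, both of which lie in $\lambda$: for $(c,b_2)$ use $\lambda_c\ge\lambda_{a_1-1}\ge b_1>b_2$, and for $(a_1,b_2)$ use $\lambda_{a_1}=b_1-1\ge b_2$.

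Now Remark \ref{residue_remark} finishes the job. The rightmost box of row $c$ is the removable $i$-box $(c,d)$ of residue $i$, which paired with the column-$b_2$ low-residue $i+1$ yields $\ell\mid h^{\lambda}_{(c,b_2)}$. On the other hand, the rightmost box of row $a_1$ is $(a_1,b_1-1)$ of residue $i-1$; for $\ell\mid h^{\lambda}_{(a_1,b_2)}$ one would need $(i-1)+1\equiv i+1\pmod\ell$, that is $\ell\mid 1$, which is impossible. So column $b_2$ contains two boxes whose hook lengths have different divisibility by $\ell$, violating $(\star)$ and contradicting Theorem \ref{maintheorem}.

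The main obstacle, and the step to anticipate, is choosing the right diagnostic box $(a_1,b_2)$: the key observation is that the presence of the upper addable box forces row $a_1$ to end at residue $i-1$, while the lower addable box forces column $b_2$ to end at residue $i+1$, producing a residue offset of $2$ that is incompatible with $(\star)$ for every $\ell\ge 2$. Everything else is routine bookkeeping, verifying that the relevant boxes really do lie in $\lambda$ from the partition inequalities imposed by addability and removability.
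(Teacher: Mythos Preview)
Your proof is correct and follows essentially the same approach as the paper's own argument: both use the column of the lower (leftmost) conormal box and compare the hook lengths of the boxes in that column lying in the row of the normal box and in the row of the other conormal box, invoking Remark~\ref{residue_remark} to get the required divisibility/non-divisibility. Your write-up is considerably more detailed than the paper's three-line version, and there is a small typo in your parenthetical justification for $b_1>b_2$ (you wrote $\lambda_{a_1}=b_2-1$ where you meant $\lambda_{a_1}=b_1-1\le b_2-1$), but the logic is sound.
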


\begin{proof}\label{lemmaforproofofothercases}
Label any two of the conormal boxes $n_1$ and $n_2$, with $n_1$ to the left of $n_2$. Pick any normal box and label it $n_3$.  By Lemma \ref{adding_to_l_partition}, $n_3$ must lie to the right of $n_1$. Then the hook length in the column of $n_1$ and row of $n_3$ is a multiple of $\ell$, but the hook length in the column of $n_1$ and row of $n_2$ is not a multiple of $\ell$ by Remark \ref{residue_remark}.
\end{proof}

\begin{theorem}\label{other_cases}
Suppose that $\lambda$ is an $\ell$-partition. Then
\begin{enumerate}
\item\label{f_theorem} $\widetilde{f}_{i}^{k} \lambda$ is not an $\ell$-partition for $0<k < \varphi -1,$
\item\label{e_theorem} $\widetilde{e}_{i}^k \lambda$ is not an $\ell$-partition for $1<k< \varepsilon$.
\end{enumerate}
\end{theorem}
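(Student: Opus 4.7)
The plan is to compute the raw $i$-signature of $\widetilde{f}_i^k\lambda$ (and analogously of $\widetilde{e}_i^k\lambda$) explicitly, and then apply Lemma \ref{lemmaforproofofothercases}. Label the addable $i$-boxes of $\lambda$ by their rows $a_1 < a_2 < \cdots < a_\varphi$ (top to bottom) and the removable $i$-boxes by rows $b_1 < \cdots < b_\varepsilon$. By Lemma \ref{adding_to_l_partition} the $i$-signature of $\lambda$ already equals its reduction $+\cdots+-\cdots-$ (read from bottom-left to top-right), so every $a_j$ lies strictly below every $b_t$.

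For part (1), Remark \ref{adding_to_l_partition_remark} says that $\widetilde{f}_i^k$ simply fills the topmost $k$ addable $i$-boxes of $\lambda$, namely those at rows $a_1,\ldots,a_k$, each of which becomes a removable $i$-box in $\widetilde{f}_i^k\lambda$. A short residue computation -- the in-diagram neighbors of a cell of residue $i$ have residues $i\pm 1\pmod\ell$ -- shows that no other $i$-box elsewhere changes addable or removable status. Hence the raw $i$-signature of $\widetilde{f}_i^k\lambda$, read from bottom to top, is $\underbrace{+\cdots+}_{\varphi-k}\underbrace{-\cdots-}_{k}$, which contains no $-+$ subword and so coincides with its reduction. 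Thus $\widetilde{f}_i^k\lambda$ has exactly $\varphi-k$ conormal and $k$ normal $i$-boxes. For $1\le k\le \varphi-2$ this is at least one normal box together with at least two conormal boxes of the same residue, which by Lemma \ref{lemmaforproofofothercases} is impossible in an $\ell$-partition.

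Part (2) is parallel. Applying $\widetilde{e}_i^k$ to $\lambda$ removes the bottommost $k$ removable $i$-boxes, at rows $b_{\varepsilon-k+1},\ldots,b_\varepsilon$, and each vacated cell becomes a new addable $i$-box of residue $i$; the same residue bookkeeping shows no other $i$-box is affected. Hence the raw $i$-signature of $\widetilde{e}_i^k\lambda$, read from bottom to top, is $\underbrace{+\cdots+}_{\varphi+k}\underbrace{-\cdots-}_{\varepsilon-k}$, again already reduced. So $\widetilde{e}_i^k\lambda$ has $\varphi+k$ conormal and $\varepsilon-k$ normal $i$-boxes. For $2\le k\le \varepsilon-1$ we have $\varepsilon-k\ge 1$ and $\varphi+k\ge 2$, so Lemma \ref{lemmaforproofofothercases} again rules out $\widetilde{e}_i^k\lambda$ being an $\ell$-partition.

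The only step requiring genuine care is the local residue check underlying the clean signature formula: one has to verify that inserting (respectively deleting) a corner cell of residue $i$ cannot create or destroy any addable or removable $i$-box in any other row, the only cells whose status could a priori be perturbed having residues $i\pm 1\pmod\ell$. Once this bookkeeping is pinned down, both statements reduce to a count of $+$'s and $-$'s followed by a single invocation of Lemma \ref{lemmaforproofofothercases}; it is also worth noting that this argument correctly leaves the two endpoint cases $k=\varphi-1$ in part (1) and $k=1$ in part (2) undecided, exactly as needed so that Theorem \ref{second_from_bottom} can later characterize when those cases do yield $\ell$-partitions.
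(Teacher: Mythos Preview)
Your approach is exactly the paper's: count the conormal and normal $i$-boxes of $\widetilde{f}_i^k\lambda$ (resp.\ $\widetilde{e}_i^k\lambda$) and invoke Lemma~\ref{lemmaforproofofothercases}. The paper simply asserts ``there are at least two conormal $i$-boxes and at least one normal $i$-box'' and applies the lemma; you have supplied the detailed bookkeeping that justifies this assertion.

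One small correction to your write-up: in part~(1) you forgot to carry along the original $\varepsilon$ removable $i$-boxes of $\lambda$, which remain removable in $\widetilde{f}_i^k\lambda$ (they sit in rows above all the $a_j$). The reduced $i$-signature of $\widetilde{f}_i^k\lambda$ is therefore $\underbrace{+\cdots+}_{\varphi-k}\underbrace{-\cdots-}_{k+\varepsilon}$, not $\underbrace{+\cdots+}_{\varphi-k}\underbrace{-\cdots-}_{k}$. This does not affect your conclusion, since you only need at least one normal box and at least two conormal boxes.
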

\begin{proof}

If $0< k < \varphi-1$ then there are at least two conormal $i$-boxes in $\widetilde{f}_{i}^{k} \lambda$ and at least one normal $i$-box.  By Lemma \ref{lemmaforproofofothercases}, $\widetilde{f}_{i}^{k} \lambda$ is not an $\ell$-partition. The proof of \eqref{e_theorem} is similar to that of \eqref{f_theorem}.
\end{proof}

The above theorems told us the position of an $\ell$-partition relative to the $i$-string which it sits on in the crystal $B(\Lambda_0)$. If an $\ell$-partition occurs on an $i$-string, then both ends of the $i$-string are also $\ell$-partitions. Furthermore, the only places $\ell$-partitions can occur are at the ends of $i$-strings or possibly one position before the final node. The next theorem describes when this latter case occurs.

\begin{theorem}\label{second_from_bottom}
Suppose that $\lambda \approx (\mu,r,\kappa)$ is an $\ell$-partition. Then

\begin{enumerate}
\item\label{first}
If $\varphi > 1$ then $\widetilde{f}_{i}^{\varphi-1} \lambda$ is an $\ell$-partition if and only if:

 $(\dagger) \,\,\,\,\,\,\,\,\,\,\,\kappa_{r+1} = 0$, the first row of $\lambda$ has a conormal  $i$-box, and  $\varphi =r+1.$

\item\label{second} If $\varepsilon > 1$ then $\widetilde{e}_{i} \lambda$ is an $\ell$-partition if and only if:

\begin{center}
$ (\ddagger)\,\,\,\,\,\,\,\,\,\,\,$ the first row of  $\lambda$ has a conormal $(i+1)$-box and either 

$ \varepsilon = r  $ and $ \kappa_r = 0$,  or $ \varepsilon = r+1$  and  $\kappa_{r+1} = 0 .$

\end{center}
\end{enumerate}
\end{theorem}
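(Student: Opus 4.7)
My plan is to prove both parts by analyzing how $\widetilde{f}_i^{\varphi-1}$ (respectively $\widetilde{e}_i$) interacts with the decomposition $\lambda \approx (\mu,r,\kappa)$ of Section~\ref{construct}, using Remark~\ref{adding_to_l_partition_remark} to pin down which rows of $\lambda$ receive the added (respectively removed) boxes. The strategy parallels the case analysis in the proof of Theorem~\ref{top_and_bottom}: construct an explicit new decomposition when $(\dagger)$ or $(\ddagger)$ holds, and rule out the remaining cases by hook-length obstructions to $(\star)$.

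For the sufficiency in part~(1), assume $(\dagger)$. The first row having a conormal $i$-box forces $i \equiv \lambda_1 \pmod{\ell}$, and by the staircase residue computation used in case~(b) of the proof of Theorem~\ref{top_and_bottom}, each of rows $1,2,\dots,r+1$ of $\lambda$ carries an addable $i$-box. Since $\varphi = r+1$, no addable $i$-box lies in $\mu$ below its top row, so $\widetilde{f}_i^{\varphi-1}\lambda$ is obtained by filling the top $r$ of the $\varphi$ addable $i$-boxes, giving $\lambda' = \lambda + 1^r$. Using $\kappa_{r+1}=0$, one computes $\lambda'_r - \lambda'_{r+1} = \ell(\kappa_r+1)$ and $\lambda'_{r+1} - \lambda'_{r+2} = \mu_1 - \mu_2$, suggesting the explicit decomposition
\[
\lambda' \;\approx\; \bigl(\hat{\mu},\; r-1,\; (\kappa_1+1,\kappa_2+1,\dots,\kappa_r+1)\bigr).
\]
Since $\hat{\mu}_1-\hat{\mu}_2 = 0 \neq \ell-1$, the only nontrivial check is that $\hat{\mu}$ is an $\ell$-core; once this is known, Lemma~\ref{newcores} produces the staircase core and Theorem~\ref{construction_theorem} certifies $\lambda'$ as an $\ell$-partition. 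Establishing $\ell$-coreness of $\hat{\mu}$ is the main obstacle, and I plan to prove it via the abacus description of Section~\ref{counting_ell_cores}: the first-column hook lengths of $\hat{\mu}$ are those of $\mu$ together with an extra bead at $h_{(1,1)}^{\mu} + 1$, and the hypothesis that $\mu$ has a unique addable box of the relevant residue (which is precisely the meaning of $\varphi = r+1$ under the other parts of $(\dagger)$) is what guarantees that this new bead creates no gap above it.

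For the necessity in part~(1) I argue by contrapositive, ruling out the three failure modes of $(\dagger)$. If the first row of $\lambda$ has no conormal $i$-box, then $\widetilde{f}_i^{\varphi-1}$ acts only inside $\mu$; since $\mu$ is an $\ell$-core and $\ell$-cores form the $\widetilde{S_\ell}$-orbit of $\emptyset$, leaving one conormal box of the relevant residue of $\mu$ unfilled produces a non-core, which combined with the untouched $\kappa$-rim-hooks forces a hook-length violation of $(\star)$. If the first row has a conormal $i$-box but $\varphi > r+1$, then the unused bottommost conormal $i$-box of $\lambda'$ lies inside $\mu$, while an added box in the staircase becomes a removable $i$-box above it; by Remark~\ref{residue_remark} this configuration destroys $(\star)$ and contradicts Lemma~\ref{adding_to_l_partition}. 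Finally, if $\varphi = r+1$ and the first row has a conormal $i$-box but $\kappa_{r+1}>0$, comparing hook lengths in the column of the new box in row $r$ of $\lambda'$ with the column of the unused addable $i$-box in row $r+1$ exhibits mixed divisibility by $\ell$.

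Part~(2) follows by a dual argument. The ``conormal $(i+1)$-box in the first row'' condition reflects that the last box of each staircase row has residue $i$ precisely when the addable position in row~$1$ has residue $i+1$, so every staircase row carries a removable $i$-box. The two subcases of $(\ddagger)$ correspond to whether the good $i$-box lies in row $r$ (forcing $\varepsilon = r$, $\kappa_r=0$, and $\mu_1 = \mu_2$, so that $\mu$ contributes no removable boxes of the relevant residue) or at row $r+1$ or lower (forcing $\varepsilon = r+1$, $\kappa_{r+1}=0$, and $\mu$ contributing a unique removable box of the relevant residue). In each subcase I construct the decomposition of $\widetilde{e}_i\lambda$ explicitly: in the first, $\lambda''_r - \lambda''_{r+1} = \ell-2$ and the new core of $\lambda''$ absorbs two prepended rows of length $\mu_1+1$ into an enlarged $\mu''$; in the second, when the removed box lies in row $r+1$ of $\lambda$ itself, the difference $\lambda''_r - \lambda''_{r+1}$ becomes $\ell$ so a horizontal $\ell$-rim hook is freed in row $r$, yielding a staircase of length $r-1$ above the core $\widetilde{e}_{i+r}\mu$ with $\kappa$ suitably adjusted; when the removed box lies inside $\mu$ below row $r+1$, the embedded $\mu$ is simply replaced by $\widetilde{e}_{i+r}\mu$ inside the unchanged staircase. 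The $\ell$-coreness of each new core is again the main verification, carried out by the abacus argument from part~(1), and the necessity direction uses the same obstructions from Lemma~\ref{adding_to_l_partition} and Remark~\ref{residue_remark}.
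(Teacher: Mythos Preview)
Your sufficiency argument for part~(1) is essentially the paper's: construct the decomposition $\widetilde{f}_i^{\varphi-1}\lambda \approx (\widehat\mu, r-1, \kappa+1^r)$ and verify that $\widehat\mu$ is an $\ell$-core. The paper does the latter by comparing hook lengths of $\widehat\mu$ with those of $\widetilde{f}_i\mu$ and $\mu$ via Remark~\ref{divisibility}; your abacus argument is a reasonable alternative.

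The necessity arguments, however, have a genuine gap. In your case~(b) you assert that a removable $i$-box in the staircase sitting \emph{above} the unfilled addable $i$-box inside $\mu$ ``contradicts Lemma~\ref{adding_to_l_partition}.'' But that lemma forbids an \emph{addable} $i$-box above a \emph{removable} one, not the reverse; a removable $i$-box above an addable $i$-box is entirely consistent with $(\star)$ and occurs in every $\ell$-partition with $\varepsilon_i,\varphi_i>0$. Two $i$-boxes alone do not force mixed $\ell$-divisibility in a single column. The paper extracts a violation by introducing a \emph{third} box $n_1$ at the end of row $r+2$, whose residue $j\neq i$ makes the hook in row $r+2$ and the column of the conormal box $n_3$ indivisible by $\ell$, while the hook in the row of any normal $i$-box $n_2$ and that same column is divisible. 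Your case~(a) has the same defect: that $\widetilde f_{i-r}^{\,\varphi-1}\mu$ is a non-core gives a single hook divisible by $\ell$, which is not yet a failure of $(\star)$; the paper again uses a third box, the conormal $j$-box ($j\neq i$) at the end of row~$1$, treating $j=i+1$ separately. In your case~(c), ``comparing hook lengths in the column of the new box in row $r$ with the column of the unused addable $i$-box in row $r+1$'' compares two \emph{different} columns, which is not what $(\star)$ asks; the paper instead strips horizontal $\ell$-rim hooks from row $r$ of $\widetilde f_i^{\varphi-1}\lambda$ until rows $r$ and $r+1$ coincide and exhibits an explicit non-horizontal $\ell$-rim hook.

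For part~(2) the paper avoids your direct case analysis entirely: using Theorem~\ref{other_cases} one may assume $\varphi_i(\lambda)=0$, write $\widetilde e_i\lambda = \widetilde f_i^{\,\varepsilon-1}\bigl(\widetilde e_i^{\,\varepsilon}\lambda\bigr)$, and check that $\lambda$ satisfies $(\ddagger)$ if and only if the $\ell$-partition $\widetilde e_i^{\,\varepsilon}\lambda$ satisfies $(\dagger)$. This reduction is considerably shorter than rebuilding decompositions of $\widetilde e_i\lambda$ in several subcases and re-running the obstruction arguments.
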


\begin{proof}
We first prove \eqref{first} and then show how \eqref{second} can be derived from \eqref{first}. 

If $\lambda$ satisfies condition $(\dagger)$ then $\lambda$ differs from $\widetilde{f}_{i}^{\varphi-1} \lambda$ by one box in each of the first $r$ rows. Hence $(\widetilde{f}_{i}^{\varphi-1} \lambda)_r - (\widetilde{f}_{i}^{\varphi-1} \lambda)_{r+1}$ is a multiple of $\ell$, so that the first $r$ rows each have one more horizontal $\ell$-rim hook than they had in $\lambda$. After removing these horizontal $\ell$-rim hooks, we get the partition $(\widehat{\mu}, r-1, \emptyset)$. This decomposition is valid, as we will now show $\widehat{\mu}$ is an $\ell$-core. 
Since $\varphi_i(\mu)=1$, $\widetilde {f_i} \mu$ %macro?
is also an $\ell$-core and so in particular $\ell \nmid
h_{(1,b)}^{\widetilde {f_i} \mu}$
for $1 \le b \le \mu_1+1$.
Note that $h_{(1,b)}^{\widehat  \mu} =
h_{(1,b)}^{\widetilde {f_i} \mu}=
h_{(1,b)}^{\mu} +1 $
for  $1 \le b \le \mu_1$, and for $a > 1$,
$h_{(a,b)}^{\widehat  \mu} =
h_{(a,b)}^{\mu}$, yielding $\ell \nmid h_{(a,b)}^{\widehat  \mu}$ for
all
boxes $(a,b) \in \widehat  \mu$.  By Remark \ref{divisibility},
$\widehat  \mu$ is an $\ell$-core. It is then easy to see that  $\widetilde{f}_{i}^{\varphi-1} \lambda \approx (\widehat{\mu}, r-1, \kappa+1^r)$, so therefore $\widetilde{f}_{i}^{\varphi-1} \lambda$ is an $\ell$-partition.

Conversely:
\begin{enumerate} 
    \renewcommand{\labelenumi}{(\alph{enumi})}
\item If the first part of $\lambda$ has a conormal $j$-box, with $j \neq i$, call this box $n_1$. If $j = i+1$ then the box $(r+1, \lambda_{r+1})$ has residue $i$. If an addable $i$-box exists, say at $(a,b)$, it must be below the first $r+1$ rows. But then the hook length $h_{(r+1,b)}^{\mu}$ is divisible by $\ell$. This implies that $\mu$ is not a core. So we assume $j \neq i+1$. Then $\widetilde{f}_{i}^{\varphi-1} \lambda$ has at least one normal $i$-box $n_2$ and exactly one conormal $i$-box $n_3$ with $n_3$ left of $n_2$ left of $n_1$. The hook length of the box in the column of $n_3$ and row $n_2$ is divisible by $\ell$, but the hook length in the box of column $n_3$ and row $n_1$ is not (by Remark \ref{residue_remark}). By Theorem \ref{maintheorem}, $\widetilde{f}_{i}^{\varphi-1} \lambda$ is not an $\ell$-partition. 

\item By (a), we can assume that the first row has a conormal $i$-box. If $\varphi \neq r+1$ then row $r+2$ of  $\widetilde{f}_{i}^{\varphi-1} \lambda$ will end in a $j$-box, for some $j \neq i$. Call this box $n_1$. Also let $n_2$ be any normal $i$-box in $\widetilde{f}_{i}^{\varphi-1} \lambda$ and $n_3$ be the unique conormal $i$-box. Then the box in the row of $n_1$ and column of $n_3$ has a hook length which is not divisible by $\ell$, but the box in the row of $n_2$ and column of $n_3$ has a hook length which is (by Remark \ref{residue_remark}). By Theorem \ref{maintheorem}, $\widetilde{f}_{i}^{\varphi-1} \lambda$ is not an $\ell$-partition.

\item Suppose $\kappa_{r+1} \neq 0$. By $(a)$ and $(b)$, we can assume that $\varphi = r+1$ and that the first row of $\lambda$ has a conormal $i$-box. Then the difference between $\lambda$ and $\widetilde{f}_{i}^{\varphi-1} \lambda = \widetilde{f}_i^r \lambda$ is an added box in each of the first $r$ rows. Remove $\kappa_r - \kappa_{r+1} +1$ horizontal $\ell$-rim hooks from row $r$ of $\widetilde{f}_i^{\varphi-1} \lambda$. Call the remaining partition $\nu$. Then $\nu_r = \nu_{r+1} = \mu_1 + \ell \kappa_{r+1}$. Hence a removable non-horizontal $\ell$-rim hook exists in $\nu$ taking the rightmost box from row $r$ with the rightmost $\ell-1$ boxes from row $r+1$. Thus $\widetilde{f}_{i}^{\varphi-1}(\lambda)$ is not an $\ell$-partition.
\end{enumerate}

To prove \eqref{second}, we note that by Theorem \eqref{other_cases} that if $\varphi \neq 0$ and $\varepsilon>1$ then $\widetilde{e}_{i} \lambda = \widetilde{e}_{i}^2 \widetilde{f}_{i} \lambda$ cannot be an $\ell$-partition. Hence we only consider $\lambda$ so that $\varphi_i(\lambda) = 0$. But then $\widetilde{f}_{i}^{\varphi_i(\widetilde{e}_{i}^{\varepsilon}(\lambda))-1} \widetilde{e}_{i}^{\varepsilon} \lambda = \widetilde{f}_{i}^{\varepsilon-1} \widetilde{e}_{i}^{\varepsilon} \lambda = \widetilde{e}_{i} \lambda$. From this observation, it is enough to show that $\lambda$ satisfies $(\ddagger)$ if and only if $\widetilde{e}_{i}^{\varepsilon} \lambda$ satisfies ($\dagger$). The proof of this follows a similar line as the above proofs, so it will be left to the reader.
\end{proof}

\begin{example} Fix $\ell = 3$. Let $\lambda = (9,4,2,1,1) \approx ((2,1,1), 2, (1))$.
$$
\begin{array}{cc} \lambda = &
\tableau{0&1&2&0&1&2&0&1&2 \\ 2&0&1&2\\1&2\\0\\2}
\end{array}$$ 
Here $\varphi_0 (\lambda) = 3$. $\widetilde{f}_{0} \lambda = (10,4,2,1,1)$ is not a $3$-partition, but $\widetilde{f}_{0}^{2} \lambda = (10,5,2,1,1) \approx ((2,2,1,1), 1, (2,1))$ and $\widetilde{f}_{0}^{3} \lambda = (10,5,3,1,1) \approx ((1,1), 3, (1))$ are $3$-partitions.
\end{example}

\section{A representation-theoretic proof of Theorem \ref{top_and_bottom}}\label{new_proof}
This proof relies heavily on the work of Grojnowski, Kleshchev et al. We recall some notation from \cite{G} but repeat very few definitions below. 

\subsection{Definitions and preliminaries}
In the category $Rep_n$ of finite-dimensional representations of the finite Hecke algebra $H_n(q)$, we define the Grothendieck group $K(Rep_n)$ to be the group generated by isomorphism classes of finite-dimensional representations, with relations $[\mathcal{M}_1] +[\mathcal{M}_3] = [\mathcal{M}_2]$ if there exists an exact sequence $0 \to \mathcal{M}_1 \to \mathcal{M}_2 \to \mathcal{M}_3 \to 0$. This is a finitely generated abelian group with generators corresponding to the irreducible representations of $H_n(q)$. The equivalence class corresponding to the module $\mathcal{M}$ is denoted $[\mathcal{M}]$.

Just as $S_n$ can be viewed as the subgroup of $S_{n+1}$ consisting of permutations which fix $n+1$, $H_n(q)$ can be viewed as a subalgebra of $H_{n+1}(q)$ (the generators $T_1, T_2, \dots ,T_{n-1}$ generate a subalgebra isomorphic to $H_n(q)$). Let $\mathcal{M}$ be a finite-dimensional representation of $H_{n+1}(q)$. Then it makes sense to view $\mathcal{M}$ as a representation of $H_n(q)$. This module is called the \textit{restriction of }$\mathcal{M}$\textit{ to } $H_n(q)$, and is denoted $Res^{H_{n+1}(q)}_{H_n(q)} \mathcal{M}$.
Similarly, we can define the induced representation of $\displaystyle \mathcal{M}$ by $ Ind^{H_{n+1}(q)}_{H_n(q)} \mathcal{M} = H_{n+1}(q) \otimes_{H_n(q)} \mathcal{M}$. Just as $S_b \subset S_a$, we can also consider $H_b(q) \subset H_a(q)$ and define corresponding restriction and induction functors.
To shorten notation, $Res^{H_{a}(q)}_{H_b(q)}$ will be written $Res_b^a$, and $Ind^{H_{a}(q)}_{H_b(q)}$ will be written as $Ind_b^{a}$.

If $\lambda$ and $\mu$ are partitions, it is said that $\mu$ covers $\lambda$, (written $\mu \succ \lambda$) if the Young diagram of $\lambda$ is contained in the Young diagram of $\mu$ and $|\mu| = |\lambda| +1$. 

The following proposition  is well known and can be found in \cite{M}. 
\begin{proposition}\label{branching_rule} Let $\lambda$ be a partition of $n$ and $S^{\lambda}$ be the Specht module corresponding to $\lambda$. Then $$\displaystyle [Ind^{n+1}_{n} S^{\lambda}] =  \sum_{\mu \succ \lambda} [S^{\mu}].$$
\end{proposition}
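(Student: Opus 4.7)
The plan is to establish this branching rule by constructing an explicit $H_{n+1}(q)$-stable filtration of $\mathrm{Ind}_n^{n+1} S^\lambda$ whose successive quotients are the Specht modules $S^{\mu}$ with $\mu \succ \lambda$. Let me enumerate the partitions covering $\lambda$ as $\mu^{(1)}, \mu^{(2)}, \ldots, \mu^{(k)}$, ordered so that the added box $A_i = \mu^{(i)} \setminus \lambda$ lies in row $r_i$ with $r_1 < r_2 < \cdots < r_k$. The aim is then to produce a chain
$$0 = V_0 \subset V_1 \subset \cdots \subset V_k = \mathrm{Ind}_n^{n+1} S^\lambda$$
of $H_{n+1}(q)$-submodules with $V_i / V_{i-1} \cong S^{\mu^{(i)}}$ in $K(\mathrm{Rep}_{n+1})$, which immediately gives the desired identity.

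First I would recall that $H_{n+1}(q)$ decomposes as a free right $H_n(q)$-module with coset representatives $1, T_n, T_n T_{n-1}, \ldots, T_n T_{n-1} \cdots T_1$. Thus $\mathrm{Ind}_n^{n+1} S^\lambda$ has a vector-space basis indexed by pairs $(v_t, j)$, where $v_t$ runs over polytabloids of standard $\lambda$-tableaux and $j \in \{0, 1, \dots, n\}$ records a coset representative. The coset representative $T_n T_{n-1} \cdots T_{n-j+1}$ has the effect of ``shuffling'' the new entry $n+1$ into the tabloid, and for generic positions this corresponds geometrically to attaching $n+1$ to one of the addable corners of $\lambda$. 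I would define $V_i$ to be the span of those basis elements whose shuffled shape lies among $\mu^{(1)}, \ldots, \mu^{(i)}$.

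Next I would verify that $V_i$ is an $H_{n+1}(q)$-submodule. The generators $T_1, \ldots, T_{n-1}$ act only on the polytabloid factor and preserve each $V_i$ automatically. The new generator $T_n$ requires a direct computation using the quadratic relation $T_n^2 = (q-1)T_n + q$ and the braid relations: when $T_n$ acts on a basis vector associated to the corner $A_i$, the error terms produced correspond to moving $n+1$ into a corner $A_j$ with $r_j < r_i$, hence into $V_{i-1}$. Once the filtration is established, identifying $V_i/V_{i-1}$ with $S^{\mu^{(i)}}$ proceeds by matching bases: standard $\mu^{(i)}$-tableaux are in bijection with pairs (standard $\lambda$-tableau, corner $A_i$), and a direct comparison of the action of $T_w$ on both sides, using the Dipper–James polytabloid formulas, yields the isomorphism.

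The main obstacle is verifying that $T_n$ preserves the filtration in Step 2; this is where the Hecke deformation introduces terms absent in the $q=1$ case. The key point is that the chosen ordering $r_1 < r_2 < \cdots < r_k$ is compatible with the length function on coset representatives, so the extra $(q-1)T_n$ and $q$ contributions in $T_n^2$ always produce basis vectors associated to strictly higher (smaller-indexed) addable corners. A cleaner alternative, which avoids the detailed bookkeeping, is to first establish the dual restriction rule $[\mathrm{Res}_n^{n+1} S^\mu] = \sum_{\lambda \prec \mu} [S^\lambda]$ in the Grothendieck group (again by an explicit filtration, but now of a smaller module), then deduce the induction rule by a character computation in the generic semisimple case $H_n(q)_F$ over the field $F = \mathbb{Q}(q)$, and finally specialize using the integral form of $H_n$.
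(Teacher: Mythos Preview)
The paper does not prove this proposition at all; it simply states the result as well known and cites Mathas~\cite{M}. So your proposal is not competing with any argument in the paper, and the relevant question is whether your sketch stands on its own.

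Your overall strategy---build a Specht filtration of $\mathrm{Ind}_n^{n+1} S^\lambda$ indexed by the addable boxes of $\lambda$---is the standard one and is correct in outline. The weak point is your definition of $V_i$. You write that the basis of the induced module is indexed by pairs $(v_t, j)$ with $j \in \{0,\dots,n\}$, and then define $V_i$ as the span of ``those basis elements whose shuffled shape lies among $\mu^{(1)},\dots,\mu^{(i)}$.'' But a coset representative $T_n \cdots T_{n-j+1}$ does not by itself determine an addable corner: there are $n+1$ coset representatives and only $k$ addable boxes, so the phrase ``shuffled shape'' is not well-defined on this basis. The correct $V_i$ are not spans of subsets of your coset basis; one must either pass to the Murphy (cellular) basis, where the filtration \emph{is} by basis subsets, or define $V_i$ as kernels of the natural maps $\mathrm{Ind}\, S^\lambda \to \bigoplus_{j>i} S^{\mu^{(j)}}$ coming from adjunction. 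Without one of these fixes, your Step~2 verification that $T_n$ preserves $V_i$ cannot even be formulated.

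Your proposed alternative---prove the restriction rule first via an explicit filtration of $\mathrm{Res}\, S^\mu$, then deduce induction---is exactly how Mathas does it, and is indeed cleaner because the restriction filtration \emph{is} naturally defined by subsets of the standard basis (restrict to tableaux where $n+1$ sits in a fixed removable box or higher). I would recommend leading with that argument rather than presenting it as a fallback.
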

We consider functors $\widetilde{e}_{i}: Rep_n \to Rep_{n-1}$ and $\widetilde{f}_{i}: Rep_n \to Rep_{n+1}$ which commute with the crystal action on partitions in the following sense (see \cite{G} for definitions and details).
\begin{theorem}\label{tilda} Let $\lambda$ be an $\ell$-regular partition. Then:
\begin{enumerate}

\item $\widetilde{e}_{i} D^{\lambda} = D^{\widetilde{e}_{i} \lambda}$;
\item\label{f_tilda} $\widetilde{f}_{i} D^{\lambda} = D^{\widetilde{f}_{i} \lambda}$.
\end{enumerate}
\end{theorem}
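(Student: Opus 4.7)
The plan is to realize the functors $\widetilde{e}_i$ and $\widetilde{f}_i$ on $Rep_n$ via the $i$-restriction and $i$-induction constructions of Grojnowski, and then match the resulting action on isomorphism classes of irreducibles with the Misra-Miwa combinatorial crystal on $\ell$-regular partitions by invoking Kleshchev's modular branching theorem.

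First I would define, for each $i \in \mathbb{Z}/\ell\mathbb{Z}$, the exact functor $e_i: Rep_n \to Rep_{n-1}$ taking a module $\mathcal{M}$ to the generalized $q^i$-eigenspace (in a suitable normalization) of the Jucys-Murphy element $L_n \in H_n(q)$ acting on $Res^n_{n-1} \mathcal{M}$. Dually, define $f_i: Rep_n \to Rep_{n+1}$ using $Ind^{n+1}_n \mathcal{M}$ and the eigenspace decomposition of $L_{n+1}$. A standard Mackey filtration gives decompositions $Res^n_{n-1} = \bigoplus_{i} e_i$ and $Ind^{n+1}_n = \bigoplus_i f_i$. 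Combined with Proposition \ref{branching_rule}, at the level of Grothendieck groups the classes $[e_i S^\lambda]$ and $[f_i S^\lambda]$ can be computed directly from the residues of the boxes of the covering pairs $\mu \succ \lambda$ or $\lambda \succ \nu$, namely only those covers where the differing box has residue $i$.

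Next I would set $\widetilde{e}_i \mathcal{M} := \mathrm{soc}(e_i \mathcal{M})$ and $\widetilde{f}_i \mathcal{M} := \mathrm{head}(f_i \mathcal{M})$ (matching the conventions in \cite{G}), and invoke Kleshchev's modular branching theorem for Hecke algebras: for an irreducible $D^\lambda$, the module $e_i D^\lambda$ (when nonzero) has simple socle, isomorphic to $D^\mu$ for a unique $\ell$-regular partition $\mu$, and analogously for $f_i D^\lambda$. The task then reduces to showing that this $\mu$ coincides with $\widetilde{e}_i \lambda$ as defined combinatorially in Section \ref{crystal_description}. The cleanest route is Ariki's categorification theorem: the direct sum $\bigoplus_n K(Rep_n) \otimes_{\mathbb{Z}} \mathbb{C}$ carries an action of $\widehat{\mathfrak{sl}_\ell}$ under which it is isomorphic to the basic representation $L(\Lambda_0)$, with $[D^\lambda]$ corresponding to the element of the lower global crystal basis indexed by $\lambda$. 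Under this identification the Kashiwara operators on the canonical basis agree with the module-theoretic $\widetilde{e}_i, \widetilde{f}_i$, and the crystal they cut out on the set of $\ell$-regular partitions coincides with the Misra-Miwa crystal, proving both statements simultaneously.

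The main obstacle is this final identification. One option is to quote Ariki's theorem wholesale, which is deep but short to cite. The alternative, more self-contained route, is to argue directly by induction on $|\lambda|$: take the smallest $\lambda$ for which $\widetilde{e}_i D^\lambda \not\cong D^{\widetilde{e}_i \lambda}$ in the combinatorial sense, and derive a contradiction by analyzing the head and socle of the composition factors of $e_i S^\lambda$ via the reduced $i$-signature and the good $i$-box, using the commutation $\widetilde{f}_i \widetilde{e}_i = \mathrm{id}$ on the image of $\widetilde{e}_i$. This second approach is essentially what Grojnowski carries out in \cite{G} and is where all the genuinely hard work sits; for purposes of this chapter I would cite it and use the theorem as stated.
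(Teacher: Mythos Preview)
The paper does not prove this theorem at all: it is stated in Section~\ref{new_proof} as a background result imported from Grojnowski's preprint \cite{G} (and implicitly from Kleshchev's modular branching work), and is then used as a black box in the representation-theoretic proof of Theorem~\ref{top_and_bottom}. So there is no ``paper's own proof'' to compare against.

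Your proposal is a reasonable summary of how the result is actually established in the cited literature, and the two routes you outline---quoting Ariki's categorification theorem, or following Grojnowski's direct socle/head analysis---are indeed the standard ones. For the purposes of this thesis, the appropriate response is simply to cite \cite{G} (and optionally \cite{Kl2}, \cite{A}) rather than to reprove anything; your final sentence already says as much, and that matches exactly what the paper does.
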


We now consider the functors $f_i: Rep_n \to Rep_{n+1} $ and $e_i: Rep_n  \to Rep_{n-1} $ which refine induction and restriction (for a definition of these functors, especially in the more general setting of cyclotomic Hecke algebras, see \cite{G}). For a representation $\mathcal{M} \in Rep_n $ let $\varepsilon_i (\mathcal{M}) = \max\{k : {e}_{i} ^k \mathcal{M} \neq 0\}$ and 
$\varphi_i (\mathcal{M}) = \max\{k : {f}_{i}^k \mathcal{M} \neq 0\}$. Grojnowski concludes the following theorem.
\begin{theorem}\label{groj} Let $\mathcal{M}$ be a finite-dimensional representation of $H_n(q)$. Let $\varphi = \varphi_i (\mathcal{M})$ and $\varepsilon = \varepsilon_i (\mathcal{M})$.
\begin{enumerate}
\item\label{ind} $Ind_{n}^{n+1} \mathcal{M} =
 \bigoplus_i f_i  \mathcal{M}; \,\,\,\,\,\:\:\:\;\;\;\;\;\;  Res_{n}^{n+1} \mathcal{M} = 
 \bigoplus_i e_i \mathcal{M};$
%\item\label{epsilon} $\varepsilon_i^{\alpha}  (\widetilde{f}_{i }\mathcal{M}) = \varepsilon_i^{\alpha} (\mathcal{M}) + 1$
%\item\label{phi} $\varphi_i^{\alpha}  (\widetilde{f}_{i}\mathcal{M}) = \varphi_i^{\alpha} (\mathcal{M}) - 1$
%\item\label{f_star} $[f_i^{\alpha}  \mathcal{M}] = \varphi_i(\mathcal{M}) [\widetilde{f}_{i} \mathcal{M}] + \sum a_{\beta} [\mathcal{M}_{\beta}]$ where $\varepsilon_i^{\alpha} ( \mathcal{M}_{\beta}) < \varepsilon_i^{\alpha} ( \mathcal{M}) +1$
\item\label{f^phi} $[f_i^{\varphi} \mathcal{M}] = \varphi ! [\widetilde{f}_i^{\varphi} \mathcal{M}]; \,\,\,\,\,\,\,\,\,\,\,\,\:\:\: \,\,\,\,\,\,\,\:\:\: \,\,\,\,\,\,\,\:\:\: [e_i^{\varepsilon} \mathcal{M}] = \varepsilon ! [\widetilde{e}_i^{\varepsilon} \mathcal{M}]. $ 
\end{enumerate}
\end{theorem}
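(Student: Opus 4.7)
The plan is to establish both parts by exploiting the block decomposition of the category of finite-dimensional $H_{n+1}(q)$-modules and, for the factorial identity, the categorified quantum group structure on the Grothendieck group $K := \bigoplus_{n \geq 0} K(Rep_n)$.

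For Part~(1), I would start from the Jucys--Murphy elements $L_1, \dots, L_{n+1}$ of $H_{n+1}(q)$. These generate a commutative subalgebra whose symmetric functions are central. Since $L_{n+1}$ commutes with the subalgebra $H_n(q)$, it acts on $Ind_n^{n+1} \mathcal{M}$. The theory of affine Hecke algebras at an $\ell$-th root of unity shows the generalized eigenvalues of $L_{n+1}$ on any finite-dimensional module lie in $\{q^i : 0 \leq i < \ell\}$. Defining $f_i \mathcal{M}$ to be the generalized $q^i$-eigenspace (equivalently, the summand in the appropriate block) gives $Ind_n^{n+1} \mathcal{M} = \bigoplus_i f_i \mathcal{M}$ essentially by definition. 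The same strategy, applied to $L_n$ acting on $Res_n^{n+1} \mathcal{M}$, defines $e_i$ and yields the companion identity.

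For Part~(2), I would upgrade $K$ to a module over the quantum affine algebra $U_q(\widehat{sl_\ell})$ in which $[f_i]$ and $[e_i]$ realize the Chevalley generators; the classes $\{[S^\lambda]\}$ span a copy of the Fock space, and the irreducibles $[D^\lambda]$ match, at $q = 1$, the canonical basis indexed by the crystal $B(\Lambda_0)$ (Ariki's theorem). In the quantum group one has the defining relation $f_i^{\varphi} = \varphi!\, f_i^{(\varphi)}$ where $f_i^{(r)}$ is the divided power, and for an irreducible $D^\lambda$ at the top of an $i$-string (so $\varepsilon_i(\lambda) = 0$ and $\varphi_i(\lambda) = \varphi$), the divided power $f_i^{(\varphi)}$ lands precisely on the class of the opposite end $D^{\widetilde{f}_i^\varphi \lambda}$. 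This immediately yields $[f_i^\varphi D^\lambda] = \varphi!\, [\widetilde{f}_i^\varphi D^\lambda]$ for such irreducibles; reducing to the top of an $i$-string (via applying $e_i$) and iterating on composition length, together with exactness of $f_i$, gives the identity for an arbitrary finite-dimensional $\mathcal{M}$. The $e_i$ statement is symmetric.

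The main obstacle is pinning down the factor of $\varphi!$ exactly. The combinatorial operator $\widetilde{f}_i$ and the functor $f_i$ are fundamentally different objects: $f_i$ is an exact endofunctor on a whole category, while $\widetilde{f}_i$ is a priori only defined on irreducibles via the crystal. Translating between them is the substantive content of Grojnowski's construction. A more hands-on alternative would be an induction on $\varphi$: use Mackey's theorem to rewrite $f_i^\varphi$ as a sum of compositions of $Ind$ and $Res$ into the $q^i$-block, then argue that exactly $\varphi!$ orderings of adding the available addable $i$-boxes in a Specht filtration yield the same end partition $\widetilde{f}_i^\varphi \lambda$, producing the factorial directly from the combinatorics of the branching rule together with Theorem~\ref{tilda}.
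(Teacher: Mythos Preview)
The paper does not prove this theorem at all: it is quoted as a result of Grojnowski, with the phrase ``Grojnowski concludes the following theorem'' and a citation to \cite{G}, and no argument is given in the thesis. So there is no ``paper's own proof'' to compare against; the statement functions as a black box that is then applied in the representation-theoretic proof of Theorem~\ref{top_and_bottom}.

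That said, a few remarks on your sketch. Your treatment of Part~(1) is essentially the definition of the functors $e_i$ and $f_i$ as generalized eigenspace summands for the Jucys--Murphy element, so there is nothing to prove beyond unwinding the definitions; this is fine. For Part~(2), your first route via Ariki's theorem and the $U_q(\widehat{\mathfrak{sl}_\ell})$-module structure on the Fock space is in danger of being circular: Grojnowski's paper \cite{G} is precisely one of the sources that \emph{establishes} the crystal structure on $\bigoplus_n K(Rep_n)$ from the representation theory, and the identities in Part~(2) are part of that development, not a consequence of it. Invoking Ariki to prove Grojnowski reverses the logical flow. Your second route is closer in spirit to what Grojnowski actually does, but the content is not a Mackey-plus-branching count: in \cite{G} the operator $\widetilde{f}_i$ is \emph{defined} on modules (not just on partitions) as the head of $f_i$, and the factorial arises from analyzing the affine Hecke algebra $\widehat{H}_\varphi$ acting on $f_i^\varphi \mathcal{M}$, showing that as an $\widehat{H}_\varphi$-module it is free of rank one over a quotient isomorphic to the regular representation of $S_\varphi$, which has $\varphi!$ composition factors. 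The branching-rule heuristic you offer at the end (counting orderings of addable $i$-boxes) explains the $\varphi!$ for Specht modules, but does not by itself establish the identity for an arbitrary $\mathcal{M}$, nor the irreducibility of $\widetilde{f}_i^\varphi \mathcal{M}$ that makes the statement meaningful.
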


For a module $D^{\mu}$ the \textit{central character} $\chi(D^{\mu})$ can be identified with the multiset of residues of the partition $\mu$. The following theorem allows us to define $\chi(S^{\mu})$ as well.
\begin{theorem}\label{central}
All composition factors of the Specht module $S^{\lambda}$ have the same central character.
\end{theorem}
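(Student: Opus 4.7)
The plan is to use the Jucys--Murphy elements $L_1,\dots,L_n$ of $H_n(q)$. It is classical that the symmetric polynomials in $L_1,\dots,L_n$ lie in the center $Z(H_n(q))$, and that the central character of an irreducible module $D^\mu$ records the scalars by which such symmetric polynomials act; in our setting this corresponds exactly to the multiset $\{q^{c(\square)} : \square \in \mu\}$ of $q$-contents of boxes, which, since $q$ is a primitive $\ell$-th root of unity, is equivalent to the multiset of residues modulo $\ell$. So proving the theorem reduces to showing that every central element acts by a single scalar on $S^\lambda$, depending only on $\lambda$.

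First I would recall the Murphy (cellular) basis $\{v_t : t \text{ standard of shape } \lambda\}$ of $S^\lambda$, on which each $L_i$ acts upper-triangularly with diagonal entry $q^{c(i,t)}$, the $q$-content of the box of $t$ containing the letter $i$. Then for any symmetric polynomial $f$, the central operator $z := f(L_1,\dots,L_n)$ acts upper-triangularly on $S^\lambda$ with diagonal entry
$$f\bigl(q^{c(1,t)},\dots,q^{c(n,t)}\bigr),$$
and because $f$ is symmetric this value depends only on the multiset $\{q^{c(\square)} : \square \in \lambda\}$. Hence all diagonal entries coincide with a single scalar $c_\lambda(f)$ independent of $t$.

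Consequently $z - c_\lambda(f)\cdot\mathrm{id}$ is nilpotent on $S^\lambda$. But it is also central, so by Schur's lemma it acts as a scalar on every irreducible composition factor $D^\mu$; a scalar operator that is also nilpotent must be zero. Thus $z$ acts on $D^\mu$ as the scalar $c_\lambda(f)$, the same for every composition factor. Letting $f$ range over all symmetric polynomials, the central characters of the various $D^\mu$ occurring in $S^\lambda$ must agree, and under the identification of $\chi(D^\mu)$ with the multiset of residues of $\mu$, this forces every such $\mu$ to have the same residue multiset as $\lambda$. We may then unambiguously write $\chi(S^\lambda)$ for this common character.

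The main obstacle is justifying the triangular action of the $L_i$ on the Murphy basis with the claimed $q$-content eigenvalues. This is standard but genuinely technical, due to Murphy and developed systematically in Mathas's monograph on Iwahori--Hecke algebras; I would invoke it as a black box rather than reprove it. Once granted, the remainder of the argument is purely formal, relying only on Schur's lemma and the triangularity of $z$.
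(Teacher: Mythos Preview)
Your argument is correct and is essentially the standard proof via Jucys--Murphy elements that one finds in Mathas's monograph \cite{M}. However, you should be aware that the paper does not actually prove this theorem: it is stated without proof in Section~\ref{new_proof} as a background fact imported from the literature (the surrounding results are attributed to Grojnowski \cite{G} and Mathas \cite{M}), and is then used as a black box in the representation-theoretic proof of Theorem~\ref{top_and_bottom}. So there is no ``paper's own proof'' to compare against; your proposal supplies exactly the argument the cited references contain.

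One small remark on your write-up: to conclude that the common central character forces the \emph{multiset of residues} to agree (which is how the paper identifies $\chi$), you are implicitly using that the elementary symmetric polynomials in the $L_i$ separate multisets of $q$-contents. This is immediate, but worth saying explicitly since the paper's definition of central character is the residue multiset rather than the abstract homomorphism from $Z(H_n(q))$.
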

\begin{theorem}\label{character}
$\chi(f_i(D^{\lambda})) = \chi(D^{\lambda}) \cup \{ i \}; \;\;\;\;\;\;\;\; \chi(e_i(D^{\lambda})) = \chi(D^{\lambda}) \setminus  \{ i \}$. 
\end{theorem}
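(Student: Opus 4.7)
The plan is to deduce both equalities from the block decomposition of $H_n(q)$ by central characters and from Grojnowski's definition of the functors $f_i, e_i$ as block-refinements of the ordinary induction and restriction functors. Recall that the Jucys--Murphy elements of $H_n(q)$ generate a commutative subalgebra whose symmetric functions lie in the center, and the eigenvalues of the Jucys--Murphy elements on $D^\mu$ are (a permutation of) the residues of the boxes of $\mu$. This is what makes the identification $\chi(D^\mu) = \{\,\text{residues of boxes of }\mu\,\}$ well-posed, and in particular tells us that two irreducibles lie in the same block of $H_n(q)$ exactly when their residue multisets agree.

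The key step is Grojnowski's construction: for any $M \in Rep_n$, the induced module $Ind_n^{n+1} M$ decomposes as a direct sum over blocks of $H_{n+1}(q)$, and $f_i M$ is by definition the summand lying in the block with central character $\chi(M) \cup \{i\}$. Analogously, $e_i M$ is the summand of $Res_n^{n-1} M$ lying in the block with central character $\chi(M) \setminus \{i\}$ (understood to be $0$ if $i \notin \chi(M)$). In particular $f_i D^\lambda$ and $e_i D^\lambda$ each lie in a single block, so each has a well-defined central character, and the two equalities hold directly from this definition.

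To make the argument less definitional and instead derive it from the theorems already available in the excerpt, I would combine Theorem \ref{tilda}\eqref{f_tilda} with Theorem \ref{groj}\eqref{f^phi}. By Theorem \ref{tilda}\eqref{f_tilda}, $\widetilde{f}_i D^\lambda = D^{\widetilde{f}_i \lambda}$, and the multiset of residues of $\widetilde{f}_i \lambda$ is exactly $\chi(D^\lambda) \cup \{i\}$ since $\widetilde{f}_i$ appends the (single) cogood $i$-box. By Theorem \ref{groj}\eqref{f^phi} the class $[f_i^{\varphi} D^\lambda]$ is a positive integer multiple of $[\widetilde{f}_i^{\varphi} D^\lambda]$, so in particular the irreducible $D^{\widetilde{f}_i \lambda}$ appears as a composition factor of $f_i D^\lambda$. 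Since $f_i D^\lambda$ is a block summand of $Ind_n^{n+1} D^\lambda$ and lies in a single block of $H_{n+1}(q)$, Theorem \ref{central} (extended from Specht modules to arbitrary modules in a single block) forces every composition factor of $f_i D^\lambda$ to share the central character $\chi(D^{\widetilde{f}_i \lambda}) = \chi(D^\lambda) \cup \{i\}$. The argument for $e_i$ is symmetric, using the good $i$-box removed by $\widetilde{e}_i$.

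The main obstacle is the input from Grojnowski's work that we are implicitly assuming: namely, that the center of $H_n(q)$ really does separate blocks in this way and that blocks are indexed by multisets of residues (equivalently, by pairs of an $\ell$-core and a weight). Once this labeling of blocks is taken as given — as in \cite{G} — the theorem reduces to the observation that the cogood $i$-box added by $\widetilde{f}_i$ and the good $i$-box removed by $\widetilde{e}_i$ each have residue $i$, which is immediate from the combinatorial definition of these operators.
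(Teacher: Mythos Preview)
The paper does not supply its own proof of this theorem; it is stated in the preliminaries of Section~\ref{new_proof} as a result imported from Grojnowski's work \cite{G}, alongside Theorems~\ref{tilda}, \ref{groj}, and \ref{central}. Your first (definitional) explanation is exactly the content of the result as it appears in \cite{G}: the functors $f_i$ and $e_i$ are \emph{defined} as the block summands of induction and restriction corresponding to the central characters $\chi(D^\lambda)\cup\{i\}$ and $\chi(D^\lambda)\setminus\{i\}$, so the theorem is immediate from the definition once one accepts the identification of central characters with residue multisets.

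Your second, ``less definitional'' derivation has a gap. From $[f_i^{\varphi} D^{\lambda}] = \varphi!\,[\widetilde{f}_i^{\varphi} D^{\lambda}]$ you conclude that $D^{\widetilde{f}_i\lambda}$ is a composition factor of $f_i D^{\lambda}$, but this does not follow: the formula concerns $f_i^{\varphi}$, not $f_i$, and passing from the former to the latter already requires knowing how $f_i$ interacts with blocks, which is precisely what you are trying to establish. Moreover, invoking that $f_i D^\lambda$ ``is a block summand'' and ``lies in a single block'' is the definitional fact itself, so the argument becomes circular. The honest statement is simply that Theorem~\ref{character} is part of the package of definitions and basic properties in \cite{G}, and your first paragraph captures this correctly.
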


We are now ready to present a representation-theoretic proof of Theorem \ref{top_and_bottom}, which states that if $\lambda$ is an $\ell$-partition lying anywhere on an $i$-string in the crystal $B(\Lambda_0)$, then the extreme ends of the $i$-string through $\lambda$ are also $\ell$-partitions.
\subsection{A representation-theoretic proof of Theorem \ref{top_and_bottom}}

\begin{proof}[Alternate Proof of Theorem \ref{top_and_bottom}] Suppose $\lambda$ is an $\ell$-partition and $| \lambda| = n$. Recall that by the result of James and Mathas \cite{JM} combined with Theorem \ref{maintheorem}, $S^{\lambda} = D^{\lambda}$ if and only if $\lambda$ is an $\ell$-partition. Let $F$ denote the number of addable $i$-boxes of $\lambda$ and let $\nu$ denote the partition corresponding to $\lambda$ plus all addable $i$-boxes.

First, we induce $S^{\lambda}$ from $H_n(q)$ to $H_{n+F}(q)$. Applying  Proposition \ref{branching_rule} $F$ times yields $$\displaystyle [Ind_n^{n+F} S^{\lambda}] = \sum_{\mu_{F} \succ \mu_{F-1} \succ \dots \succ \mu_1 \succ \lambda} [S^{\mu_{F}}].$$
Note $[S^{\nu}]$ occurs in this sum with coefficient $F !$ (add the $i$-boxes in any order), and everything else in this sum has different central character than $S^{\nu}$. Hence the direct summand which has the same central character as $S^{\nu}$ (i.e. the central character of $\lambda$ with $F$ more $i$'s) is $F! [S^{\nu}]$ in $K(Rep_{n+F})$. 

We next  
apply \eqref{ind} from Theorem \ref{groj} $F$ times to obtain 
$$[Ind_n^{n+F} D^{\lambda}] = \bigoplus_{i_1, \dots, i_{F}} [f_{i_1} \dots f_{i_{F}} {D^{\lambda}}].$$

\sloppy{The direct summand with central character $\chi(S^{\nu})$ is $[f_i^F D^{\lambda}]$ in $K(Rep_{n+F})$. Since $\lambda$ is an $\ell$-partition, $S^{\lambda} = D^{\lambda}$, so $Ind_n^{n+F} S^{\lambda} = Ind_n^{n+f} D^{\lambda}$ and we have shown that $F![S^{\nu}] = [f_i^F D^{\lambda}]$.

}

Since $S^{\lambda} = D^{\lambda}$ and $f_i^{F} D^{\lambda} \neq 0$, we know that $F \leq \varphi$. Similarly, since $Ind_n^{n+F+1} S^{\lambda}$ has no composition factors with central character $\chi(S^{\lambda}) \cup \{ \underbrace{i, i, \dots , i}_{F+1}\},$  we know that $F \geq \varphi$. Hence $F = \varphi$. 

By part \ref{f^phi} of Theorem \ref{groj},  $[(f_i)^{\varphi} D^{\lambda}] = \varphi ! [\widetilde{f}_i^{\varphi} D^{\lambda}]$. Then by Theorem \ref{tilda}, $[S^{\nu}] = [D^{\widetilde{f}_i^{\varphi}\lambda}]$. 

Since $F=\varphi$, $\nu = \widetilde{f}_i^{\varphi} \lambda$, so in particular $\nu$ is $\ell$-regular and $S^{\nu} = D^\nu$. 
Hence $\widetilde{f}_{i}^{\varphi} \lambda = \nu$ is an $\ell$-partition.

  The proof that $\widetilde{e}_{i}^{\varepsilon_i(\lambda)} \lambda$ is an $\ell$-partition follows similarly, with the roles of induction and restriction changed in Proposition \ref{branching_rule}, and the roles of $e_i$ and $f_i$ changed in Theorem \ref{groj}.

\end{proof}

We do not yet have representation-theoretic proofs of our other Theorems \ref{other_cases} and \ref{second_from_bottom}. We expect an analogue of Theorem \ref{other_cases}  to be true for the Hecke algebra over a field of arbitrary characteristic. In Theorem \ref{second_from_bottom} the conditions $(\dagger)$ and $(\ddagger)$ will change for different fields, so any representation-theoretic proof of this theorem should distinguish between these different cases.

        % You should repeat this portion for each additional chapter 
        % as it's appropriate to your dissertation.

    % %------------------------------------------------------------------------------
    % 
    % %--------------------------------------------------------------- NEW CHAPTER --
    %
    % % -- [INSERT TITLE OF CHAPTER THREE]

    \newchapter{JM Partitions}{The Ladder Crystal and JM Partitions}{The Ladder Crystal and JM Partitions}
    \label{sec:LabelForChapter3}

        % Either type your chapter's text here or input a file 
        % containing it using the ``\input'' command.
        
        % You will probably want to split your chapter up into several
        % sections (with each section possibly even split up into
        % subsections), each of which can either be written directly
        % in this file or input from an external file as above. E.g.:

\section{Introduction}

The main goal of this chapter is to generalize results of Chapter \ref{sec:LabelForChapter2} 
to a larger class of partitions. An $\ell$-partition is an $\ell$-regular 
partitions for which the Specht module $S^{\lambda}$ is irreducible for the Hecke algebra 
$H_n(q)$ when $q$ is a primitive $\ell^{th}$ root of unity.  A generalized $\ell$-partition $\lambda$ 
is an $\ell$-regular partition for which there is a (not necessarily $\ell$-regular) partition $\mu$ 
so that $S^{\mu} = D^{\lambda}$. We noticed that within the crystal  $B(\Lambda_0)$ the generalized 
$\ell$-partitions satisfied rules similar to the rules given in Chapter \ref{sec:LabelForChapter2} for 
$\ell$-partitions. In order to prove this, we built a modified version of the crystal $B(\Lambda_0)$,
 which we labeled $B(\Lambda_0)^L$. The crystal $B(\Lambda_0)^L$ satisfies the following properties:
\begin{itemize}
\item The nodes of $B(\Lambda_0)^L$ are partitions, and there is an $i$-arrow from $\lambda$ to $\mu$ only when the difference $\mu \setminus \lambda$ is a box of residue $i$. 
\item If $\lambda$ is a weak $\ell$-partition with $D^{\lambda} = S^{\nu}$ for some partition $\nu$ then $\nu$ is a node of $B(\Lambda_0)^L$.
\item $B(\Lambda_0) \cong B(\Lambda_0)^L$ and this crystal  isomorphism yields an interesting 
bijection on the nodes.  The map being used for the isomorphism has been well studied \cite{JK}, but 
never before in the context of a crystal isomorphism.

\item There exists elementary combinatorial arguments which generalize the crystal theoretic results of $B(\Lambda_0)$ to $B(\Lambda_0)^L$.
\item The partitions which are nodes of $B(\Lambda_0)^L$ can be identified by a simple combinatorial condition.
\end{itemize}

The new description of the crystal $B(\Lambda_0)$ is in many ways more important than the theorems which were proven by the existence of it. Besides the fact that it is a useful tool in proving theorems about $B(\Lambda_0)$, our new description also gives a set of partitions (in bijection to $\ell$-regular partitions), which can be interpreted in terms of the representation theory of $H_n(q)$.

\subsection{Background}
We now require that $\ell \geq 3$. We denote the transpose of $\lambda$ by $\lambda'$. Sometimes the shorthand $(a^k)$ will be used to represent the rectangular partition which has $k$-parts, all of size $a$.

A removable $\ell$-rim hook contained entirely in a single column of the Young diagram of a partition will be called a \textit{vertical $\ell$-rim hook}. Removable $\ell$-rim hooks not contained in a single column will be called \textit{non-vertical $\ell$-rim hooks}.
Two $\ell$-rim hooks will be called \textit{adjacent} if there exists boxes in each which share an edge.

\begin{example} Let $\lambda = (3,2,1)$ and let $\ell=3$. Then the boxes $(1,2), (1,3)$ and $(2,2)$ comprise a (non-vertical, non-horizontal) $3$-rim hook. After removal of this $3$-rim hook, the remaining partition is $(1,1,1)$, which is a vertical $3$-rim hook. These two $3$-rim hooks are adjacent.
\end{example}
$$\tableau{ \mbox{} &\mbox{}&\mbox{}\\\mbox{}&\mbox{}\\\mbox{}}$$
\begin{example} Let $\lambda = (4,1,1,1)$ and $\ell=3$. Then $\lambda$ has two $3$-rim hooks (one horizontal and one vertical). They are not adjacent.
\end{example}
$$\tableau{\mbox{}&\mbox{}&\mbox{}&\mbox{}\\\mbox{}\\\mbox{}\\\mbox{}}$$

We study the representation theory of $H_n(q)$ and the condition on when a Specht module $S^\lambda$ is irreducible (we will always assume that $q \neq 1$ and that $q \in \mathbb{F}$ is a primitive $\ell^{th}$ root of unity in a field $\mathbb{F}$ of characteristic zero).

Fayers recently settled the following conjecture of James and Mathas, which is an extension of Carter's criterion to all partitions (not necessarily $\ell$-regular).
\begin{theorem}[Fayers, \cite{F}]\label{JM_irred} Suppose $\ell > 2$. Let $\lambda$ be a partition. Then $S^{\lambda}$ is reducible if and only if there exists boxes (a,b) (a,y) and (x,b) in the Young diagram of $\lambda$ for which:
\begin{itemize}
\item $\nu_{\ell}(h_{(a,b)}) =1$
\item $\nu_{\ell}(h_{(a,y)}) = \nu_{\ell}(h_{(x,b)})= 0$ 
\end{itemize}
\end{theorem}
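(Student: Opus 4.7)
The plan is to prove the two directions separately, with the main tool being Jantzen's sum formula for the Hecke algebra $H_n(q)$ at an $\ell$-th root of unity. Recall this formula expresses the sum of the Jantzen filtration layers as
$$\sum_{i \geq 1} [S^\lambda J_i] = \sum_{\mu \triangleright \lambda} J_{\lambda\mu}\,[S^\mu],$$
where $J_{\lambda\mu}$ is a signed combinatorial expression involving $\nu_\ell$ of hook lengths arising from rim-hook moves taking $\lambda$ up to $\mu$ in the dominance order. Since $S^\lambda$ is irreducible if and only if its Jantzen filtration is trivial, the criterion in the theorem reduces to a question of whether all coefficients $J_{\lambda\mu}$ vanish.

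For the easier direction (existence of the triple implies reducibility), I would exhibit a single uncanceled contribution to the Jantzen sum. The box $(a,b)$ with $\nu_\ell(h_{(a,b)}) = 1$ determines a canonical rim-hook move $\lambda \rightsquigarrow \mu$ whose weight is nonzero. The hypothesis $\nu_\ell(h_{(a,y)}) = \nu_\ell(h_{(x,b)}) = 0$ prevents the existence of matching moves in row $a$ or column $b$ that would otherwise cancel this contribution. Hence $J_{\lambda\mu} \neq 0$, forcing a nontrivial composition factor in $S^\lambda$. This direction was implicitly understood by James and Mathas and provided the motivation for their conjecture.

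The hard direction (no triple implies irreducibility) requires constructing, for each fixed $\mu \triangleright \lambda$, a sign-reversing involution on the set of moves contributing to $J_{\lambda\mu}$. Under the hypothesis, every box of $\lambda$ whose hook length is divisible by $\ell$ exactly once lies in a row or a column in which \emph{every} box has $\ell$-divisible hook length. I would exploit this rigidity as follows: given a move $m$ that traverses such a box $(a,b)$, pair it with the move $m'$ obtained by sliding the corresponding rim-hook along the $\ell$-divisible row or column through $(a,b)$. A careful accounting of the leg lengths shows $m$ and $m'$ carry opposite signs, so their contributions cancel. Iterating this pairing over all moves yields $J_{\lambda\mu} = 0$ for every $\mu$, and hence $S^\lambda$ is irreducible.

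The main obstacle will be ensuring that the pairing is well-defined as a global involution: a box may lie simultaneously in an $\ell$-divisible row and an $\ell$-divisible column, so one must fix a canonical choice of partner (say, always prefer row-slides unless the row has no $\ell$-divisible neighbor) and verify that the resulting map is its own inverse. A second obstacle is that $\lambda$ need not be $\ell$-regular here, so the crystal-theoretic labels $D^\lambda$ are not directly available; to handle this I would run an induction on $|\lambda|$ using James–Peel row and column removal to reduce the problem, stripping off rows or columns in which all hook lengths share the same $\nu_\ell$ value and applying the inductive hypothesis to the smaller partition. The restriction $\ell > 2$ enters because the parity calculations used in the sign-cancellation break down at $\ell = 2$, where extra fixed points of the involution would have to be analyzed separately.
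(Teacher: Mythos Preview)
The paper does not contain a proof of this statement. Theorem~\ref{JM_irred} is stated with the attribution ``Fayers, \cite{F}'' and is introduced with the sentence ``Fayers recently settled the following conjecture of James and Mathas\ldots''; it is quoted as an external result and then used as a black box (for instance in the proof of Theorem~\ref{top_and_bottom_weak}). There is therefore no ``paper's own proof'' against which to compare your proposal.

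That said, your sketch is broadly in the spirit of Fayers' actual argument in \cite{F}, which does go through the Jantzen--Schaper formula. A few cautions if you intend to flesh this out. The ``easy'' direction as you describe it is not quite right: the existence of a single box $(a,b)$ with $\nu_\ell(h_{(a,b)})=1$ does give a nonzero term in the Jantzen sum, but the conditions on $(a,y)$ and $(x,b)$ do not by themselves prevent cancellation against other moves landing at the same $\mu$; you need a more careful choice of $\mu$ or an explicit positivity argument. For the hard direction, your proposed involution of ``sliding along the $\ell$-divisible row or column'' is the right intuition, but making it a genuine involution is the entire content of Fayers' paper, and the row/column ambiguity you flag is real and not resolved by a simple tie-breaking rule. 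Fayers organizes the cancellation differently, and the induction via row and column removal (which you mention) carries more of the weight than the involution does. If your goal is only to use the theorem, citing \cite{F} as this paper does is appropriate.
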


A partition which has no such boxes is called an \textit{$(\ell,0)$-JM partition}. 
Equivalently, $\lambda$ is an $(\ell,0)$-JM partition if and only if the Specht module 
$S^\lambda$ is irreducible.

For $\lambda$ not necessarily $\ell$-regular, $S^{\lambda}$ is irreducible if and only if there exists an $\ell$-regular partition $\mu$ so that $S^{\lambda} \cong D^{\mu}$.

\subsection{Results of this chapter}
In Section \ref{removing} we give a new way of characterizing 
$(\ell,0)$-JM partitions by their removable $\ell$-rim hooks. In Section 
\ref{construct_JM} we use this result to count the number of $(\ell,0)$-JM partitions in a block. 
 In Section \ref{new_crystal} we give our new description of the crystal $B(\Lambda_0)$. Section
 \ref{regularization} is a brief introduction to the map on partitions known as regularization, 
which was first described by James. Section \ref{locked} gives a new procedure for finding an inverse
 for the map of regularization. Section \ref{extend} extends our crystal theorems from Chapter 
\ref{sec:LabelForChapter2} to the crystal $B(\Lambda_0)^L$. Section \ref{crystal_lemmas} 
reinterprets the classical crystal rules in the combinatorial framework of the new crystal rules.
 Section \ref{reg_and_crystal} contains the proof that the two descriptions of the crystal 
$B(\Lambda_0)$ are the same; the isomorphism being regularization. Section \ref{reg_and_crystal} 
transfers the crystal theorems on the new crystal to theorems on the old crystal by using this 
isomorphism.  In Section \ref{new_proof_JM} we give a purely representation theoretic proof of 
one of our main theorems (Theorem \ref{top_and_bottom_weak}). Lastly, in Section \ref{MullMap} 
we show some connections between the ladder crystal and the Mullineux map, which leads us to a 
nice classification of the partitions in $B(\Lambda_0)^L$. 

\section{Classifying JM partitions by their removable rim hooks}\label{removing}
\subsection{Motivation}
In this section we give a new description of $(\ell,0)$-JM partitions. This condition is related to how $\ell$-rim hooks are removed from a partition and is a generalization of Theorem 2.1.6 in Chapter \ref{sec:LabelForChapter2} about $\ell$-partitions. The condition we give (equivalent to the condition conjectured by James and Mathas and proved by Fayers) will be used in several proofs throughout this Chapter. 
\subsection{Removing rim hooks and JM partitions}
\begin{definition}\label{generalize_l_partition} Let $\lambda$ be a partition. Let $\ell > 2$. Then $\lambda$ is a \textit{generalized $\ell$-partition} if:
\begin{enumerate} 
\item It has no removable $\ell$-rim hooks other than horizontal and vertical $\ell$-rim hooks.
\item\label{gen_condition} If $R$ is a vertical (resp. horizontal) $\ell$-rim hook of $\lambda$ and $S$ is a horizontal (resp. vertical) $\ell$-rim hook of $\lambda \setminus R$, then $R$ and $S$ are not adjacent. 
%\item It has no horizontal $\ell$-rim hook adjacent to a vertical $\ell$-rim hook
\item After removing any sequence of horizontal and vertical $\ell$-rim hooks from the Young diagram of $\lambda$, the remaining partition satisfies (1) and (2).
\end{enumerate}
\end{definition}

\begin{example} \label{6111}
Let $\lambda = (3,1,1,1)$. Then $\lambda$ is not a generalized $3$-partition. $\lambda$ has a vertical $3$-rim hook $R$ containing the boxes $(2,1), (3,1), (4,1)$.  Removing R leaves a horizontal $3$-rim hook $S$ containing the boxes $(1,1), (1,2), (1,3)$. $S$ is adjacent to $R$, so $\lambda$ is not a generalized $3$-partition.

\begin{center}
$\tableau{ S & S& S \\ R \\ R \\ R}$
\end{center}
\end{example}

\begin{remark}
We will sometimes abuse notation and say that $R$ and $S$ in Example \ref{6111} are adjacent vertical and horizontal $\ell$-rim hooks. The meaning here is not that they are both $\ell$-rim hooks of $\lambda$ ($S$ is not an $\ell$-rim hook of $\lambda$), 
but rather that they are an example of a violation of condition \ref{gen_condition} from 
Definition \ref{generalize_l_partition}.
\end{remark}

We will need a few lemmas before we come to our main theorem of this section, which states that the notions of $(\ell,0)$-JM partitions and generalized $\ell$-partitions are equivalent.

\begin{lemma}\label{adding}
Suppose $\lambda$ is not an $(\ell,0)$-JM partition. Then a partition obtained from $\lambda$ by adding a horizontal or vertical $\ell$-rim hook is not an $(\ell,0)$-JM partition.
\end{lemma}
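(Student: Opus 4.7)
The plan is to mirror the case analysis used in Lemma \ref{lemma1} of Chapter \ref{sec:LabelForChapter2}, but starting from Fayers' criterion (Theorem \ref{JM_irred}) instead of the Carter--James--Mathas criterion. Since $\lambda$ fails to be $(\ell,0)$-JM, there exist boxes $(a,b), (a,y), (x,b) \in \lambda$ with $x > a$ and $y > b$ satisfying $\ell \mid h_{(a,b)}^{\lambda}$, $\ell \nmid h_{(a,y)}^{\lambda}$ and $\ell \nmid h_{(x,b)}^{\lambda}$. The goal is to exhibit an analogous violating triple inside the partition $\mu$ obtained after the rim hook addition.

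First I would reduce the vertical case to the horizontal case by conjugation. Hook lengths satisfy $h_{(i,j)}^{\lambda} = h_{(j,i)}^{\lambda'}$, so the $(\ell,0)$-JM condition is invariant under transpose, and adding a vertical $\ell$-rim hook to $\lambda$ is the same as adding a horizontal $\ell$-rim hook to $\lambda'$; so it suffices to treat the horizontal case. I would then classify by the row $r$ in which the horizontal hook is added, in parallel with the three cases of Lemma \ref{lemma1}. If $r \neq a$, $r \neq x$, and the added cells do not lie in columns $b$ or $y$, then all three witness hook lengths are unchanged and the same triple works. If $r = a$ or $r = x$, then the affected hook lengths grow by exactly $\ell$, preserving their residues mod $\ell$, so the same triple still witnesses a violation. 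Finally, if $r > \max(a,x)$ and the hook extends column $b$ or column $y$, the leg lengths at the witness boxes can change; depending on whether the rightmost cell of the new hook sits in the critical column or an interior cell does, the violating triple either persists at $(a,b),(a,y),(x,b)$ or slides one column to the right (interior case) or $\ell - 1$ columns to the left (rightmost case), exactly as in Case 3 of Lemma \ref{lemma1}.

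The main obstacle will be the bookkeeping in this last case, because there are now three witness boxes to track rather than two, and a single horizontal $\ell$-rim hook can simultaneously extend both column $b$ and column $y$ when the hook is long enough. The key observation that makes it manageable is that for any column $c$ touched by the new hook, the hook length $h_{(i,c)}$ at any box $(i,c)$ with $i < r$ changes either by $0$ or by $\ell$, and similarly the hook lengths along row $a$ or row $x$ shift by $\pm 1$ as one moves one column over. Therefore the pattern of $\nu_\ell$ values across a row--column cross is preserved in the sense demanded by Fayers' criterion, and one of a small finite list of triples near $(a,b),(a,y),(x,b)$ must still witness the failure of $(\ell,0)$-JM in $\mu$. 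Once the enumeration of these subcases is carried out (exactly as in the proof of Lemma \ref{lemma1}, with one extra column tracked), the lemma follows.
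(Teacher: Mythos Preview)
Your plan is the paper's approach: the paper's own proof is a one-line deferral to Lemma~\ref{lemma1}, and your transpose reduction cleanly handles the vertical case.  Two corrections are in order, though.

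First, the ordered witnesses $x>a$, $y>b$ do not come for free from Theorem~\ref{JM_irred}; this is exactly the content of Lemma~\ref{rearrange}, which you should invoke (its proof is independent of this lemma, so there is no circularity).

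Second, your ``key observation'' is misstated.  When a horizontal hook is added in row $r$ and the column $c$ satisfies $\lambda_r < c \le \lambda_r+\ell$, the hook length $h_{(i,c)}$ for any $i<r$ increases by exactly $1$, not by $0$ or $\ell$.  The residues mod $\ell$ are therefore \emph{not} preserved at the original column --- this is precisely why Case~3 of Lemma~\ref{lemma1} slides the witness to column $c+1$ (interior cell) or $c-\ell+1$ (rightmost cell), where one checks directly that $h_{(i,\,c+1)}^{\mu}=h_{(i,c)}^{\lambda}$, respectively $h_{(i,\,c-\ell+1)}^{\mu}=h_{(i,c)}^{\lambda}+\ell$.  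You described this sliding correctly in the paragraph before; just drop the incorrect heuristic and carry out that computation for the two affected columns $b$ and $y$ separately.
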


The proof is nearly identical to our lemma in Chapter \ref{sec:LabelForChapter2} about adding horizontal $\ell$-rim hooks to an $
\ell$-regular $(\ell,0)$-JM partition, so we refer the reader there. 

Then next lemma gives an equivalent definition of an $\ell$-core described in terms of hook lengths. 

\begin{lemma}\label{coredoesntdividel} A partition $\lambda$ is an $\ell$-core if and only if the hook length $h_{(a,b)}^{\lambda}$ is not divisible by $\ell$ for every box $(a,b)$ in the Young diagram of $\lambda$.
\end{lemma}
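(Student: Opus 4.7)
The plan is to invoke the abacus (equivalently, $\beta$-number) description of partitions already introduced in Section \ref{counting_ell_cores}. Write the first-column hook lengths of $\lambda$ as $\beta_1 > \beta_2 > \cdots > \beta_s$ and, after padding with a sufficient number of $0$'s, view these as the positions of ``beads'' on the nonnegative integers, with the remaining positions labeled ``gaps.'' The key classical fact I will use is that the multiset of hook lengths of $\lambda$ is exactly $\{\,\beta_i - g : g \text{ is a gap with } g < \beta_i\,\}$. Arranging the abacus with $\ell$ runners indexed by residue $\bmod\, \ell$, the divisibility $\ell \mid (\beta_i - g)$ holds precisely when the bead $\beta_i$ and the gap $g$ lie on the same runner.

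For the direction ($\Leftarrow$): if no hook length of $\lambda$ is divisible by $\ell$, then in particular no hook length equals $\ell$, so $\lambda$ has no removable $\ell$-rim hook, and hence $\lambda$ is an $\ell$-core by definition.

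For the direction ($\Rightarrow$): assume $\lambda$ is an $\ell$-core. It is standard (see \cite{JK}, and as already noted in the paragraph introducing the abacus in Section \ref{counting_ell_cores}) that this is equivalent to the abacus having no gap below any bead on the same runner, i.e., the beads on each runner form an initial segment of that runner. But then for every bead $\beta_i$ and every gap $g < \beta_i$, the bead and the gap cannot share a runner, so $\ell \nmid (\beta_i - g)$ for every such pair, and no hook length of $\lambda$ is divisible by $\ell$.

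I do not expect any real obstacle: the whole argument is just a matter of translating the $\ell$-core condition and the divisibility condition on hook lengths into the same statement about runners of the abacus. This lemma is essentially a restatement of Remark \ref{divisibility}, included here because the characterization by hook-length divisibility is used repeatedly in Section \ref{removing}.
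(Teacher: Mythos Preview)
Your proof is correct. The paper does not actually prove this lemma: it is stated without proof in Section~\ref{removing}, and as you note, it is simply a restatement of Remark~\ref{divisibility}, which cites \cite{JK} for the fact. So you have supplied a proof where the paper gives none; your abacus argument is the standard one and is entirely in line with the tools already introduced in Section~\ref{counting_ell_cores}.
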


The upcoming lemma simplifies the condition for being an $(\ell,0)$-JM partition and is used in the proof of Theorem \ref{main_theorem_JM}

\begin{lemma}\label{rearrange}
Suppose $\lambda$ is not an $(\ell,0)$-JM partition. Then there exist boxes $(c,d)$, $(c,w)$ and $(z, d)$ with $c < z$, $d < w$, and
 $\ell \mid h_{(c,d)}^{\lambda}$, $\ell \nmid h_{(c,w)}^{\lambda}, h_{(z,d)}^{\lambda}$.
 \end{lemma}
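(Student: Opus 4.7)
The plan is to apply Theorem \ref{JM_irred} to obtain boxes $(a,b), (a,y), (x,b) \in \lambda$ with $\ell \mid h_{(a,b)}^{\lambda}$ and $\ell \nmid h_{(a,y)}^{\lambda}, h_{(x,b)}^{\lambda}$, and then to rearrange them so that $y > b$ and $x > a$; if both inequalities already hold, setting $(c,d) = (a,b)$, $w = y$, $z = x$ finishes the proof.

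The conditions $\lambda_a = b$ and $\lambda'_b = a$ cannot both hold, since that would force $h_{(a,b)}^{\lambda} = 1$, contradicting $\ell \mid h_{(a,b)}^{\lambda}$ together with $\ell \geq 3$. The hook-length divisibility pattern is symmetric under transposition of $\lambda$, so it suffices to treat the case $y < b$, $x > a$; here $\lambda'_b > a$ is automatic from $(x,b) \in \lambda$ with $x > a$.

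In this reduced case I would walk rightward along row $a$ from column $y$ toward column $b$. Since hook lengths strictly decrease along a row and the walk passes from the non-divisible value $h_{(a,y)}^{\lambda}$ to the divisible value $h_{(a,b)}^{\lambda}$, there is a smallest column $d^* \in (y, b]$ with $\ell \mid h_{(a, d^*)}^{\lambda}$. When $d^* < b$ (equivalently, $(a, d^*{+}1) \in \lambda$), one can locate some $w > d^*$ in row $a$ with $\ell \nmid h_{(a, w)}^{\lambda}$, and then $(c,d) = (a, d^*)$ and $z = x$ produce the desired triple, using $\lambda'_{d^*} \geq \lambda'_b \geq x > a$ to guarantee $(z, d) \in \lambda$.

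The main obstacle is the degenerate sub-case in which $\lambda_a = b$, so $(a,b)$ sits on the eastern rim of $\lambda$ and row $a$ offers no room to the east. Here rows $a, a+1, \ldots, \lambda'_b$ all have length exactly $b$, forming a ``cliff'' in column $b$. The key observation is that the hook lengths along column $b-1$ of this cliff are $h_{(a'', b-1)}^{\lambda} = \lambda'_{b-1} - a'' + 2$ for $a'' = a, a+1, \ldots, \lambda'_b$, which is a block of $h_{(a,b)}^{\lambda}$ consecutive integers and therefore contains a multiple of $\ell$. A careful check using $\ell \geq 3$ shows one can pick such a box $(c, b-1)$ so that its east neighbor $(c, b) \in \lambda$ lies inside the cliff and its south neighbor $(c+1, b-1) \in \lambda$ still exists, with both hook lengths differing from $h_{(c, b-1)}^{\lambda}$ by $\pm 1$ and hence not divisible by $\ell$. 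Setting $(c, d) = (c, b-1)$, $w = b$, $z = c+1$ then produces the required ordered triple. This final cliff bookkeeping, handled by a small case analysis on $\lambda'_{b-1} - \lambda'_b$ modulo $\ell$, is the most delicate part of the argument and crucially uses the hypothesis $\ell \geq 3$.
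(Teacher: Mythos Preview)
Your argument has a real gap in what you call the easy case $d^* < b$. Choosing $d^*$ as the \emph{smallest} column in $(y,b]$ with $\ell \mid h^\lambda_{(a,d^*)}$ only guarantees non-divisibility for columns strictly left of $d^*$; it gives no information about columns to the right. For instance, with $\ell = 3$ and $\lambda = (3,3,3,2,2)$, the Fayers triple $(a,b)=(1,3)$, $y=1$, $x=2$ lies in your reduced case and yields $d^* = 2 < b$, yet the only column to the right is $b=3$ with $h^\lambda_{(1,3)}=3$, so no admissible $w > d^*$ exists in row $a$. In fact no valid triple with $c = a = 1$ exists in this example at all, so the strategy of keeping $c = a$ and only adjusting $d$, $w$, $z$ cannot succeed in general. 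You also never verify $\ell \nmid h^\lambda_{(x,d^*)}$ when $d^* \ne b$; knowing $\ell \nmid h^\lambda_{(x,b)}$ is not enough. Separately, your transposition reduction does not cover the case $y < b$, $x < a$, which is sent to itself under transposition.

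The paper's proof proceeds differently. After disposing of the case where column $b$ below row $a$ already supplies a suitable $z$, it assumes all such hooks are divisible by $\ell$; this forces a block of $\ell$ \emph{consecutive integers} to appear among the hook lengths of a nearby row (because the relevant column lengths are equal), and the triple is then extracted from that block using neighbors that differ by exactly $1$ or $2$. Your cliff construction has the same flavor but is invoked only in the special situation $\lambda_a = b$; the paper uses that mechanism as the main engine rather than as a degenerate sub-case, and crucially allows the row index $c$ of the final triple to differ from $a$.
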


\begin{proof}
The condition proven by Fayers \cite{F} says that there exist boxes $(a,b)$, $(a,y)$ and $(x, b)$ where $\ell \mid h_{(a,b)}^{\lambda}$ and $\ell \nmid h_{(a,y)}^{\lambda}, h_{(x,b)}^{\lambda}$. If $a<x$ and $b<y$ then we are done. The other cases follow below:

Case 1: That $x<a$ and $y<b$. Assume no triple exists satisfying the statement of the lemma. Then either all boxes to the right of the $(a,b)$ box will have hook lengths divisible by $\ell$, or all boxes below will. Without loss of generality, suppose that all boxes below the $(a,b)$ box have hook lengths divisible by $\ell$. Let $c<a$ be the largest integer so that $\ell \nmid h_{(c,b)}$. Let $z = c+1$. Then one of the boxes $(c,b+1), (c,b+2), \dots (c,b+\ell-1)$ has a hook length divisible by $\ell$. This is because the box $(h,b)$ at the bottom of column $b$ has a hook length divisible by $\ell$, so the hook lengths $h_{(c,b)}^{\lambda} = h_{(c,b+1)}^{\lambda} +1 = \dots = h_{(c, b+\ell-1)}^{\lambda} + \ell -1$. Suppose it is $(c,d)$. Then $\ell \nmid h_{(z,d)}^{\lambda}$ since $h_{(z,b)} = h_{(z,d)} + d-b$ and $d-b < \ell$.

 If $d \neq b+ \ell-1$ or $h_{(h,b)}^{\lambda} > \ell$ then letting $w = d+1$ gives $(c, w)$ to the right of $(c,d)$ so that $\ell \nmid h_{(c,w)}^{\lambda}$ (in fact $h_{(c,w)}^{\lambda} = h_{(c,d)}^{\lambda} - 1$).

If $d = b+\ell-1$ and $h_{(h,b)}^{\lambda} = \ell$ then there is a box in spot $(c, d+1)$ with hook length $h_{(c,d+1)}^{\lambda} = h_{(c,d)}^{\lambda} - 2$ since there must be a box in the position $(h-1, d+1)$, due to the fact that $\ell \mid h_{(h-1, b)}^\lambda$ and $h_{(h-1, b)}^\lambda > \ell$. Letting $w = d+1$ again yields $\ell \nmid h_{(c,w)}^\lambda$. Note that this requires that $\ell > 2$. In fact if $\ell =2$ we cannot even be sure that there is a box in position $(c, d+1)$. 

Case 2: That $x<a$ and $y>b$.
If there was a box $(n,b)$ ($n > a$) with a hook length not divisible by $\ell$ then we would be done. So we can assume that all hook lengths in column $b$ below row $a$ are divisible by $\ell$. Let $c<a$ be the largest integer so that $\ell \nmid h_{(c,b)}^{\lambda}$. Let $z = c+1$. Similar to Case 1 above, we find a $d$ so that $\ell \mid h_{(c,d)}^\lambda$. Then $\ell \nmid h_{(z,d)}^\lambda$ and by the same arguement as in Case 1, if we let $w = d+1$ then $\ell \nmid h_{(c,w)}^\lambda$. 

Case 3: That $x>a$ and $y<b$. Then transpose $\lambda$, apply Case 2, and transpose back.
\end{proof}
\begin{theorem}\label{main_theorem_JM} A partition is an $(\ell,0)$-JM partition if and only if it is a generalized $\ell$-partition.
\end{theorem}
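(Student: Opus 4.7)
The plan is to mirror the structure of Theorem~\ref{maintheorem} from Chapter~\ref{sec:LabelForChapter2}, now permitting both horizontal and vertical $\ell$-rim hook removals, and to prove each direction by contrapositive using Theorem~\ref{JM_irred} in the strengthened form of Lemma~\ref{rearrange}.

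For the direction that a partition failing to be generalized also fails to be $(\ell,0)$-JM, I would iteratively strip horizontal and vertical $\ell$-rim hooks from $\lambda$ until reaching a partition $\mu$ exhibiting one of the two forbidden features in Definition~\ref{generalize_l_partition}: (a) $\mu$ has a removable $\ell$-rim hook $R$ that is neither horizontal nor vertical, or (b) $\mu$ has a removable horizontal/vertical pair $(R,S)$ with $S$ adjacent to $R$ in $\mu\setminus R$. In case (a), if $(a,c)$ and $(b,d)$ are the upper-rightmost and lower-leftmost boxes of $R$, a direct hook-length calculation gives $h^\mu_{(a,d)} = (c-d)+(b-a)+1 = \ell$, while $h^\mu_{(a,c)}$ equals the number of boxes of $R$ in column $c$ (which is less than $\ell$ since $R$ is not vertical) and $h^\mu_{(b,d)}$ equals the number of boxes of $R$ in row $b$ (which is less than $\ell$ since $R$ is not horizontal); this is a Fayers--James--Mathas witness in $\mu$. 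In case (b), when for instance $R$ is vertical in column $c$ spanning rows $a,\dots,a+\ell-1$ and $S$ is the horizontal $\ell$-rim hook of $\mu\setminus R$ containing $(a-1,c)$, one first shows $\mu_{a-1} = c+\ell-1$ and then computes $h^\mu_{(a-1,c)} = 2\ell$ flanked by $h^\mu_{(a-1,c+1)} = \ell-1$ and $h^\mu_{(a+\ell-1,c)} = 1$; the remaining positions of $S$ (below $R$ or to either side) and the transposed case where $R$ is horizontal and $S$ vertical produce analogous pictures of a local rectangular block with corner hook $2\ell$. In every case $\mu$ has a JM-violating triple, and iterated application of Lemma~\ref{adding} propagates this failure back to $\lambda$.

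For the converse I would assume $\lambda$ is not $(\ell,0)$-JM and derive that it is not generalized. Using Lemma~\ref{rearrange}, fix a witness $(c,d),(c,w),(z,d)$ with $c<z$, $d<w$, $\ell\mid h^\lambda_{(c,d)}$ and $\ell\nmid h^\lambda_{(c,w)}, h^\lambda_{(z,d)}$. Suppose for contradiction that $\lambda$ is a generalized $\ell$-partition; by Lemma~\ref{coredoesntdividel} it is not an $\ell$-core, so it has a removable $\ell$-rim hook, and by hypothesis every such hook is horizontal or vertical. Following the inductive skeleton of Case~1 of the proof of Theorem~\ref{maintheorem}, I repeatedly remove horizontal or vertical $\ell$-rim hooks that do not meet the witness rows $c,z$ or witness columns $d,w$; a direct check (the analog of Lemma~2.2.2 for both rim-hook types, extending Remark~2.2.3) shows each resulting partition still fails JM in the same rows and columns and remains generalized. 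This process terminates by descent on size. When it halts, every admissible removable rim hook meets the witness; a careful row- and column-count at row $c$ and column $d$ of the terminal partition then produces either an $\ell$-rim hook running from the end of row $c$ diagonally down to column $d$ (hence neither horizontal nor vertical), or an adjacent vertical/horizontal pair straddling the witness boxes, in either case contradicting the generalized hypothesis.

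The hardest part will be the case analysis in the converse direction: a vertical removal can collapse the witness geometry in ways that a horizontal removal cannot, so the ``removal-invariance'' analog of Lemma~\ref{rearrange} for generalized partitions must handle both types of removal while preserving the configuration of boxes $(c,d),(c,w),(z,d)$ with the prescribed divisibility. Classifying the terminal partition once every admissible rim hook meets the witness, and extracting from it either a non-horizontal non-vertical $\ell$-rim hook or the specific adjacent pair, is the step that requires the most delicate bookkeeping; by contrast, the hook-length calculations in the forward direction are essentially formal refinements of the corner-hook lemma already used in Chapter~\ref{sec:LabelForChapter2}.
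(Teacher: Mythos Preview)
Your forward direction (not generalized $\Rightarrow$ not $(\ell,0)$-JM) is essentially the paper's argument, and your hook-length computations are in fact more careful than the paper's: for a non-horizontal, non-vertical removable $\ell$-rim hook with northeast box $(c,d)$, the paper asserts $h^{\mu}_{(c,d)}=1$, whereas the correct value is the number of boxes of the rim hook in column $d$, which is what you write. Either way this is $<\ell$, so the conclusion is unaffected. Your treatment of the adjacent horizontal/vertical case is also the paper's, just with one configuration worked out explicitly.

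For the converse, you take a genuinely different route from the paper, and the step you flag as ``most delicate'' is exactly where your approach runs into trouble. You try to mimic the Chapter~\ref{sec:LabelForChapter2} proof by stripping horizontal or vertical $\ell$-rim hooks that avoid the witness rows and columns, and then analyzing a terminal partition in which every admissible removal meets the witness. With two removal types in play this terminal classification is substantially harder than in the $\ell$-partition case, and you do not carry it out; it is not clear that the dichotomy ``non-horizontal non-vertical rim hook, or adjacent pair straddling the witness'' can be extracted cleanly from that endpoint.

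The paper sidesteps this entirely with a localization trick: given the witness $(a,b),(a,y),(x,b)$ from Lemma~\ref{rearrange}, pass to the sub-partition $\mu=\{(m,n)\in\lambda : m\geq a,\ n\geq b\}$. This $\mu$ inherits the generalized property from $\lambda$, and now the divisibility sits at the corner: $\ell\mid h^{\mu}_{(1,1)}$ while $\ell\nmid h^{\mu}_{(1,y-b+1)}$ and $\ell\nmid h^{\mu}_{(x-a+1,1)}$. Each horizontal or vertical removal changes $h^{\mu}_{(1,1)}$ by $0$, $1$, or $\ell$; the ``change by $1$'' case is disposed of by shifting the witness and restarting on a strictly smaller partition, and otherwise $\ell\mid h^{\mu}_{(1,1)}$ persists. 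Since $\mu$ is not an $\ell$-core one can keep removing until the box $(1,1)$ itself disappears, but removing $(1,1)$ by purely horizontal (resp.\ vertical) steps would force both flanking witnesses to be divisible by $\ell$ at some stage, so an adjacent horizontal/vertical pair must occur, contradicting generalized. The point is that anchoring the witness at $(1,1)$ turns the global ``which removals meet the witness'' bookkeeping into a single scalar (how $h_{(1,1)}$ changes), and this is the idea your plan is missing.
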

\begin{proof}
Suppose that $\lambda$ is not a generalized $\ell$-partition. Then remove non-adjacent horizontal 
and vertical $\ell$-rim hooks until you obtain a partition $\mu$ which has either a non-vertical
 non-horizontal $\ell$-rim hook, or adjacent horizontal and vertical $\ell$-rim hooks. If there is 
a non-horizontal, non-vertical $\ell$-rim hook in $\mu$, lets say the $\ell$-rim hook has
 southwest most box $(a,b)$ and northeast most box $(c,d)$. Then $\ell \mid h_{(c,b)}^{\mu}$ but 
 $\ell \nmid h_{(a,b)}^{\mu} , h_{(c,d)}^{\mu}$ since $h_{(a,b)}^\mu = h_{(c,d)}^\mu = 1$.
 We know $a \neq c$ and $b \neq d$ since our $\ell$-rim hook is non-horizontal and non-vertical. 
Therefore, $\mu$ is not an $(\ell,0)$-JM partition. By Lemma \ref{adding}, $\lambda$ is not an 
$(\ell,0)$-JM partition. Similarly, if $\mu$ has adjacent vertical and horizontal $\ell$-rim hooks,
 then let $(a,b)$ be the southwest most box in the vertical $\ell$-rim hook and let $(c,d)$ be the
 position of the northeast most box in the horizontal $\ell$-rim hook. Again, $\ell \mid h_{(c,b)}^{\mu}$
 but  $\ell \nmid h_{(a,b)}^{\mu} , h_{(c,d)}^{\mu}$. Therefore $\mu$ cannot be an $(\ell,0)$-JM 
partition, so $\lambda$ is not an $(\ell,0)$-JM partition.

Conversely, suppose $\lambda$ is not an $(\ell,0)$-JM partition, but that it is a generalized
 $\ell$-partition. Then by Lemma \ref{rearrange} there are boxes $(a,b)$, $(a,y)$ and $(x,b)$ 
with $a < x$ and $b < y$, which satisfy $\ell \mid h_{(a,b)}^{\lambda}$, and 
$\ell \nmid h_{(a,y)}^\lambda, h_{(x,b)}^\lambda$ . Form a new partition $\mu$ by taking all of the 
boxes $(m,n)$ in $\lambda$ such that $m \geq a$ and $n \geq b$. Since $\lambda$ was a generalized $\ell$-partition, 
$\mu$ must be also. By Lemma \ref{coredoesntdividel}, we know that there must exist a removable 
$\ell$-rim hook from $\mu$, since $\ell \mid h_{(1,1)}^{\mu}$. Since $\mu$ is a generalized 
$\ell$-partition, the $\ell$-rim hook must be either horizontal or vertical. 
If we remove a horizontal $\ell$-rim 
hook we can only change $h_{(1,1)}^\mu$ by either $0$, $1$ or $\ell$. 

If it was changed by 1, then we let $(x',1)$ denote the box in $\mu$ which corresponds to $(x,b)$ 
in $\lambda$. Letting $\nu$ denote the partition $\mu$ with the horizontal $\ell$-rim hook removed,
we then note that $\ell \mid h_{(1,\ell)}^\nu$, $\ell \nmid h_{(1,1)}^nu$ and 
$\ell \nmid h_{(x', \ell)}^nu$. This is because $h_{(1,\ell)}^\mu = 1 \mod \ell$, 
$h_{(1,1)}^nu = 0 \mod \ell$, $h_{(x',\ell)}^\mu = h_{(x',1)}^\mu + 1 mod \ell$ and the fact that 
removing the $\ell$-rim hook question will drop each of these hook lengths by exactly $1$. Hence we deduce
that $\nu$ is not an $(\ell,0)$-JM partition, but it is still a generalized $\ell$-partition. In this
 case we start this process over, with $\nu$ in place of $\lambda$.
We do the same for vertical $\ell$-rim hooks which change the hook length $h_{(1,1)}^\mu$ by $1$.

Now we may assume that removing horizontal or vertical 
$\ell$-rim hooks from $\mu$ will not change that $\ell$ divides $h_{(1,1)}^\mu$ (because removing 
each $\ell$-rim hook will change the hook length $h_{(1,1)}^mu$ by either $0$ or $\ell$). Therefore we 
can keep removing $\ell$-rim hooks until we have have removed box (1,1) entirely, in which case 
the remaining partition had a horizontal $\ell$-rim hook adjacent to a vertical $\ell$-rim hook 
(since both $(x,b)$ and $(a,y)$ must have been removed, the $\ell$-rim hooks could not have been 
exclusively horizontal or vertical). This contradicts $\mu$ being a generalized $\ell$-partition.
\end{proof}

\begin{example} Let $\lambda = (10,8,3,2^2, 1^5)$. Then $\lambda$ is a generalized 3-partition and a $(3,0)$-JM partition. $\lambda$ is drawn below with each hook length $h_{(a,b)}^\lambda$ written in the box $(a,b)$ and the possible removable $\ell$-rim hooks outlined. Also, hook lengths which are divisible by $\ell$ are underlined.

\begin{center}
$\tableau{ 19&13&10&8&7&\underline{6}&5&4&2&1\\
16&10&7&5&4&\underline{3}&2&1\\
10&4&1\\
8&2\\
7&1\\
5\\
4\\
\underline{3}\\
2\\
1
}
\linethickness{2.5pt}
\put (-180,-162){\line(0,1){54}}
\put (-162,-162){\line(0,1){54}}
\put (-180,-162){\line(1,0){18}}
\put (-180,-108){\line(1,0){18}}
\put (-54,0){\line(1,0){54}}
\put (-54,18){\line(1,0){54}}
\put (-54,0){\line(0,1){18}}
\put (0,0){\line(0,1){18}}
\put (-90,-18){\line(1,0){54}}
\put (-90,0){\line(1,0){54}}
\put (-90,-18){\line(0,1){18}}
\put (-36,-18){\line(0,1){18}}
$
\end{center}
\end{example}

\begin{example} $\lambda = (3,1,1,1)$ from Example \ref{6111} is not a $(3,0)$-JM partition and it is not a generalized $3$-partition. 
\end{example}

\section{Decomposition of JM partitions}\label{construct_JM} 
\subsection{Motivation} In Chapter \ref{sec:LabelForChapter2} we gave a decomposition of $\ell$-partitions. In this section we give a similar decomposition for all $(\ell,0)$-JM partitions. This decomposition is important for the proofs of the theorems in later sections. It also implies Theorem \ref{gen_func}, which counts all $(\ell,0)$-JM partitions of a given core and weight.

\subsection{Decomposing JM partitions}
Let $\mu$ be an $\ell$-core with $\mu_1-\mu_2 < \ell-1$ and $\mu_1'-\mu_2' < \ell-1$. Let $r,s \geq 0$. Let $\rho$ and $\sigma$ be partitions with $len(\rho) \leq r+1$ and $len(\sigma) \leq s+1$. If $\mu = \emptyset$ then we require at least one of $\rho_{r+1}, \sigma_{s+1}$ to be zero. Following the construction of Chapter \ref{sec:LabelForChapter2}, we construct a partition corresponding to $(\mu,r,s,\rho,\sigma)$ as follows. Starting with $\mu$, attach $r$ rows above $\mu$, with each row $\ell-1$ boxes longer than the previous. Then attach $s$ columns to the left of $\mu$, with each column $\ell-1$ boxes longer than the previous. This partition will be denoted $(\mu,r,s,\emptyset,\emptyset)$. Formally, if $\mu = (\mu_1,\mu_2,\dots,\mu_m)$ then $(\mu,r,s,\emptyset,\emptyset)$ represents the partition (which is an $\ell$-core):  
$$(s+\mu_1+r(\ell-1), s+\mu_1+ (r-1)(\ell-1), \dots, s+\mu_1+\ell-1, s+\mu_1, $$ $$s+\mu_2, \dots, 
s+\mu_m, s^{\ell-1}, (s-1)^{\ell-1}, \dots, 1^{\ell-1})$$
where $s^{\ell-1}$ stands for $\ell-1$ copies of $s$.
Now to the first $r+1$ rows attach $\rho_i$ horizontal $\ell$-rim hooks to row $i$. Similarly, to the first $s+1$ columns, attach $\sigma_j$ vertical $\ell$-rim hooks to column $j$. The resulting partition $\lambda$ corresponding to $(\mu,r,s,\rho,\sigma)$ will be
$$\lambda = (s+\mu_1+r(\ell-1)+ \rho_1 \ell, s+\mu_1+(r-1)(\ell-1)+ \rho_2 \ell, \dots, $$ $$s+\mu_1+ (\ell-1) + \rho_r 
\ell,  s+\mu_1+\rho_{r+1}\ell, s+\mu_2, s+\mu_3, \dots, $$ $$s+\mu_m,
(s+1)^{\sigma_{s+1}\ell}, s^{\ell-1+(\sigma_s - \sigma_{s+1})\ell}, (s-1)^{\ell-1 + (\sigma_{s-1} - \sigma_s)\ell}, \dots , 1^{\ell-1 + (\sigma_1-\sigma_2)\ell}).$$

We denote this decomposition as $\lambda \approx (\mu,r,s,\rho,\sigma)$.
\begin{example}
Let $\ell = 3$ and $(\mu,r,s,\rho,\sigma) = ((1),3,2,(2,1,1,1),(2,1,0))$. Then $((1),3,2, \emptyset, \emptyset)$ is drawn below, with $\mu$ framed.
\\

$\begin{array}{cc}

\linethickness{2.5pt}
\put (45,-36){\line(1,0){18}}
\put (45,-54){\line(1,0){18}}
\put (45,-54){\line(0,1){18}}
\put (63,-54){\line(0,1){18}}
\linethickness{1pt}
\put (10,-140){\line(1,0){36}}
\put (10,-140){\line(0,1){6}}
\put (46,-140){\line(0,1){6}}
\put (15,-136){$s = 2$}
\put (190,-36){\line(0,1){54}}
\put (190,-36){\line(-1,0){6}}
\put (190,18){\line(-1,0){6}}
\put (200,-10){$r = 3$}
&
\tableau{\mbox{}&\mbox{}&\mbox{}&\mbox{}&\mbox{}&\mbox{}&\mbox{}&\mbox{}&\mbox{}\\
\mbox{}&\mbox{}&\mbox{}&\mbox{}&\mbox{}&\mbox{}&\mbox{}\\
\mbox{}&\mbox{}&\mbox{}&\mbox{}&\mbox{}\\
\mbox{}&\mbox{}&\mbox{}\\
\mbox{}&\mbox{}\\
\mbox{}&\mbox{}\\
\mbox{}\\
\mbox{}
}
\end{array}$
\vspace{12 pt}

$((1),3,2,(2,1,1,1),(2,1,0))$ is drawn below, now with $((1),3,2, \emptyset, \emptyset)$ framed.

$\begin{array}{cc}
\linethickness{2.5pt}
\put (63,-36){\line(1,0){36}}
\put (45,-54){\line(1,0){18}}
\put (45,-90){\line(0,1){36}}
\put (63,-54){\line(0,1){18}}
\put (27,-90){\line(1,0){18}}
\put (27,-126){\line(0,1){36}}
\put (9,-126){\line(1,0){18}}
\put (9,-126){\line(0,1){144}}
\put (9,18){\line(1,0){162}}
\put (171,0){\line(0,1){18}}
\put (135,0){\line(1,0){36}}
\put (135,-18){\line(0,1){18}}
\put (99,-18){\line(1,0){36}}
\put (99,-36){\line(0,1){18}}

&
\tableau{\mbox{}&\mbox{}&\mbox{}&\mbox{}&\mbox{}&\mbox{}&\mbox{}&\mbox{}&\mbox{}&\mbox{}&\mbox{}&\mbox{}&\mbox{}&\mbox{}&\mbox{}\\
\mbox{}&\mbox{}&\mbox{}&\mbox{}&\mbox{}&\mbox{}&\mbox{}&\mbox{}&\mbox{}&\mbox{}\\
\mbox{}&\mbox{}&\mbox{}&\mbox{}&\mbox{}&\mbox{}&\mbox{}&\mbox{}\\
\mbox{}&\mbox{}&\mbox{}&\mbox{}&\mbox{}&\mbox{}\\
\mbox{}&\mbox{}\\
\mbox{}&\mbox{}\\
\mbox{}&\mbox{}\\
\mbox{}&\mbox{}\\
\mbox{}&\mbox{}\\
\mbox{}\\
\mbox{}\\
\mbox{}\\
\mbox{}\\
\mbox{}
}
\end{array}
$

\end{example}

\begin{theorem}\label{construct_JMs}
If $\lambda \approx (\mu,r,s,\rho,\sigma)$ (with at least one of $\rho_{r+1}, \sigma_{s+1} = 0$ if $\mu=\emptyset$), then $\lambda$ is an $(\ell,0)$-JM partition. Conversely, all such $(\ell,0)$-JM partitions are of this form.
\end{theorem}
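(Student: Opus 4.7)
The plan is to mirror the argument of Theorem~\ref{construction_theorem} from Chapter~\ref{sec:LabelForChapter2}, but now keeping track of both the top rows (where horizontal $\ell$-rim hooks will be added) and the leftmost columns (where vertical ones will be added), and to use the characterization of $(\ell,0)$-JM partitions as generalized $\ell$-partitions given by Theorem~\ref{main_theorem_JM}.

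For the forward direction, I would first verify that the ``bare'' partition $\nu := (\mu,r,s,\emptyset,\emptyset)$ is an $\ell$-core. This amounts to a two-sided version of Lemma~\ref{newcores}: by construction, for $1 \le i \le r$ we have $\nu_i - \nu_{i+1} = \ell-1$, so no $\ell$-rim hook of $\nu$ can have any box in its top $r$ rows; dually $\nu'_j - \nu'_{j+1} = \ell-1$ for $1 \le j \le s$ kills $\ell$-rim hooks in the leftmost $s$ columns; and what remains is $\mu$ itself, which is an $\ell$-core by assumption. (The hypotheses $\mu_1-\mu_2 < \ell-1$ and $\mu_1'-\mu_2' < \ell-1$ guarantee that the transition rows/columns between $\mu$ and the attached strips do not create a spurious $\ell$-rim hook.) Next I would add the $\rho_i$ horizontal and $\sigma_j$ vertical hooks and argue inductively: at every stage, the only removable $\ell$-rim hooks are horizontal ones in the top $r+1$ rows, vertical ones in the leftmost $s+1$ columns, or hooks lying inside the embedded $\ell$-core region, of which there are none. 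Because row differences in the top block and column differences in the left block remain congruent to $-1 \bmod \ell$ throughout, any removable rim hook must sit entirely within a single row of the top block or a single column of the left block, hence be horizontal or vertical.

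The key obstruction in this direction is ruling out an adjacent pair of a horizontal and a vertical $\ell$-rim hook after some sequence of removals, i.e.\ condition~(2) of Definition~\ref{generalize_l_partition}. Such a pair could in principle appear only near the corner where $\mu$ meets the added strips. Here one uses exactly that $\mu_1 - \mu_2 < \ell-1$ and $\mu_1' - \mu_2' < \ell-1$ (together with the extra condition that at least one of $\rho_{r+1},\sigma_{s+1}$ is $0$ when $\mu = \emptyset$): the row $r+1$ and column $s+1$ cannot simultaneously support a removable horizontal and a removable vertical $\ell$-rim hook that meet. I expect this bookkeeping near the corner to be the main technical point, and it is handled by a direct check of the hook lengths at positions $(r+1,\,s+1)$, $(r+1,\,s+\mu_1)$ and $(r+\mathrm{len}(\mu),\,s+1)$, using that $\mu$ is an $\ell$-core (Remark~\ref{divisibility}).

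For the converse, given an $(\ell,0)$-JM partition $\lambda$ I would peel off removable horizontal $\ell$-rim hooks from the top and removable vertical $\ell$-rim hooks from the left in any legal order; by Theorem~\ref{main_theorem_JM} (the generalized $\ell$-partition condition) every removable hook encountered is horizontal or vertical and never adjacent to one of the opposite type, so this terminates at an $\ell$-core $\nu$. Define $r$ to be the largest index with $\nu_1-\nu_2 = \cdots = \nu_r - \nu_{r+1} = \ell-1$, define $s$ dually via $\nu'$, and let $\mu$ be the partition obtained from $\nu$ by deleting its first $r$ rows and first $s$ columns; then $\mu$ is an $\ell$-core satisfying $\mu_1-\mu_2<\ell-1$ and $\mu_1'-\mu_2'<\ell-1$ by the maximality of $r$ and $s$. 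Letting $\rho_i$ (resp.\ $\sigma_j$) be the number of horizontal (resp.\ vertical) $\ell$-rim hooks that were removed from row $i$ of the top block (resp.\ column $j$ of the left block) gives the tuple $(\mu,r,s,\rho,\sigma)$. Uniqueness follows because the generalized $\ell$-partition condition forces the order of removals to be irrelevant for the multiset of removed hooks, so $\rho$ and $\sigma$ are well-defined partitions; the weakly decreasing property of $\rho$ and $\sigma$ is forced by the shape of $\lambda$, and when $\mu=\emptyset$ the requirement that one of $\rho_{r+1},\sigma_{s+1}$ vanish comes from the fact that otherwise the embedded corner would have carried an additional horizontal/vertical pair meeting at a single box, violating Definition~\ref{generalize_l_partition}(2).
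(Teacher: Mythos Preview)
Your proposal is correct and follows essentially the same route as the paper: both directions go through the equivalence of $(\ell,0)$-JM partitions with generalized $\ell$-partitions (Theorem~\ref{main_theorem_JM}), checking in the forward direction that the constructed $\lambda$ only ever has non-adjacent horizontal and vertical removable $\ell$-rim hooks, and in the converse peeling off such hooks to reach an $\ell$-core and then reading off $r,s,\mu,\rho,\sigma$. The paper's own proof is in fact considerably terser than yours (it dispatches the forward direction with ``clear by construction''), so your more explicit treatment of the corner bookkeeping and the $\mu=\emptyset$ edge case is a genuine expansion rather than a deviation.
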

\begin{proof}
If $\lambda \approx (\mu,r,s,\rho,\sigma)$ then it is clear by construction that $\lambda$ satisfies the criterion for a generalized $\ell$-partition (see Definition \ref{generalize_l_partition}). By Theorem \ref{main_theorem_JM}, $\lambda$ is an $(\ell,0)$-JM partition.

Conversely, if $\lambda$ is an $(\ell,0)$-JM partition then by Theorem \ref{main_theorem_JM} its only removable $\ell$-rim hooks are horizontal or vertical. Let $\rho_i$ be the number of removable horizontal $\ell$-rim hooks in row $i$ which are removed in going to the $\ell$-core of $\lambda$, and let $\sigma_j$ be the number of removable vertical $\ell$-rim hooks in column $j$ (since $\lambda$ has no adjacent $\ell$-rim hooks, these numbers are well defined). Once all $\ell$-rim hooks are removed, let $r$ (resp. $s$) be the number of rows (resp. columns) whose successive differences are $\ell-1$. Removing these topmost $r$ rows and leftmost $s$ columns leaves an $\ell$-core $\mu$. Then $\lambda \approx (\mu, r, s, \rho, \sigma)$.
\end{proof}

Further in the text, we will make use of Theorem \ref{construct_JMs}. Many times we will show that a partition $\lambda$ is an $(\ell,0)$-JM partition by giving an explicit decomposition of $\lambda$ into $(\mu,r,s,\rho,\sigma)$.

The following theorem follows directly from Theorem \ref{construct_JMs}. We have decided to include it after reading \cite{COV} and recognizing the similarity between their work and ours. Let $\Gamma_{\ell} (\nu) = \{ \lambda : \lambda \textrm{ is an } (\ell,0) \textrm{-JM partition with core } \nu \}$.

\begin{theorem}\label{gen_func} Suppose $\nu$ is an $\ell$-core, with 
\begin{itemize}
\item $\nu_1 - \nu_2 = \nu_2- \nu_3 = \dots =\nu_r-\nu_{r+1} = \ell-1$ and $\nu_{r+1}-\nu_{r+2} \neq \ell-1$,
\item $\nu'_1 - \nu'_2 = \nu'_2- \nu'_3 = \dots =\nu'_s-\nu'_{s+1} = \ell-1$ and $\nu'_{s+1}-\nu'_{s+2} \neq \ell-1$.
\end{itemize}
If $\nu$ also satisfies $\nu_{r+1} > s$
then the number of $(\ell,0)$-JM partitions of a fixed $\ell$-weight $w$ with core $\nu$, denoted $\Gamma_\ell(\nu)$, is the number of pairs of partitions $(\alpha,\beta)$, $len(\alpha) \leq r+1$, $len(\beta) \leq s+1$ whose size sums to $w$ ($|\alpha|+|\beta| = w$). The generating function for all $(\ell,0)$-JM partitions with core $\nu$ with respect to the statistic of the $\ell$-weight of the partition is thus
 $\displaystyle \prod_{i=0}^{r+1} \frac{1}{1-x^i} \prod_{j=0}^{s+1} \frac{1}{1-x^j}.$ Hence the generating function for all $(\ell,0)$-JM partitions for a fixed core $\nu$ as above with respect to the statistic of the size of the partition is 
$$\displaystyle \sum_{\lambda \in \Gamma_{\ell}(\nu)} x^{|\lambda|} = x^{|\nu|} \prod_{i=0}^{r+1} \frac{1}{1-x^{\ell i}} \prod_{j=0}^{s+1} \frac{1}{1-x^{\ell j}}.$$

If $\nu$ satisfies $\nu_{r+1}= s$ then the number of $(\ell,0)$-JM partitions of a fixed $\ell$-weight $w$ with core $\nu$ is the number of pairs of partitions $(\alpha,\beta)$, $len(\alpha) \leq r+1$, $len(\beta) \leq s$ 
whose sizes sum to $w$ plus the number of pairs of partitions $(\gamma,\delta)$, $len(\gamma) \leq r$, $len(\delta) \leq s+1$ whose sizes sum to $w$. 
The generating function for all $(\ell,0)$-JM partitions with core $\nu$ with respect to the statistic of the $\ell$-weight of the partition is thus 
$\displaystyle \left(\frac{1}{1-x^{r+1}} + \frac{1}{1-x^{s+1}}\right) \prod_{i=0}^{r} \frac{1}{1-x^i} \prod_{j=0}^{s} \frac{1}{1-x^j} 
.$ Hence the generating function for all $(\ell,0)$-JM partitions for a fixed core $\nu$ with respect to the statistic of the size of the partition is 
$$\displaystyle \sum_{\lambda \in \Gamma_{\ell}(\nu)} x^{|\lambda|} = x^{|\nu|} \left(\frac{1}{1-x^{\ell (r+1)}} + \frac{1}{1-x^{\ell (s+1)}}\right)\prod_{i=0}^{r} \frac{1}{1-x^{\ell i}} \prod_{j=0}^{s} \frac{1}{1-x^{\ell j}} .$$

\end{theorem}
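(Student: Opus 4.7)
The main tool is Theorem \ref{construct_JMs}, which establishes a bijection between $(\ell,0)$-JM partitions $\lambda$ and tuples $(\mu, r, s, \rho, \sigma)$ satisfying the stated length and compatibility conditions. The plan is to first extract $r$, $s$, and $\mu$ from the given $\ell$-core $\nu$, then count the allowed $(\rho, \sigma)$ pairs of each fixed $\ell$-weight $w$, and finally package these counts as a generating function.

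First I would observe that $r$, $s$, and $\mu$ are determined uniquely by $\nu$. From the construction preceding Theorem \ref{construct_JMs}, the core $\nu$ built from $(\mu,r,s,\emptyset,\emptyset)$ satisfies $\nu_i - \nu_{i+1} = \ell-1$ for $1 \leq i \leq r$ with $\nu_{r+1} - \nu_{r+2} \neq \ell-1$, so $r$ is the maximal length of the staircase of top rows of $\nu$, and by transposition $s$ is determined symmetrically from $\nu'$. The residual core $\mu$ is recovered by deleting these $r$ rows and $s$ columns from $\nu$. Since $\nu_{r+1} = s + \mu_1$ (with the convention $\mu_1 = 0$ if $\mu = \emptyset$), the dichotomy $\nu_{r+1} > s$ versus $\nu_{r+1} = s$ corresponds precisely to $\mu \neq \emptyset$ versus $\mu = \emptyset$, producing the two cases of the theorem.

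Next I would count the admissible $(\rho, \sigma)$. Because each horizontal (resp.\ vertical) $\ell$-rim hook attached in the construction contributes exactly $1$ to $|\rho|$ (resp.\ $|\sigma|$), the $\ell$-weight of $\lambda$ equals $|\rho| + |\sigma|$. In Case 1 ($\nu_{r+1} > s$, so $\mu \neq \emptyset$), Theorem \ref{construct_JMs} imposes no compatibility condition beyond $len(\rho) \leq r+1$ and $len(\sigma) \leq s+1$, so the count of weight-$w$ JM partitions with core $\nu$ is exactly the number of such pairs with $|\rho| + |\sigma| = w$. In Case 2 ($\nu_{r+1} = s$, so $\mu = \emptyset$), the extra constraint from Theorem \ref{construct_JMs} forces $\rho_{r+1} = 0$ or $\sigma_{s+1} = 0$; splitting the count according to which of these holds yields the two summands described in the theorem.

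Finally I would pass to generating functions via the standard identity that the generating function for partitions with at most $k$ parts, weighted by size, is $\prod_{i=1}^{k}(1-x^i)^{-1}$. Substituting $x \mapsto x^\ell$ (because each rim hook contributes $\ell$ boxes to $\lambda$) and multiplying by $x^{|\nu|}$ for the fixed core produces the displayed formulas in both cases. The argument is essentially bookkeeping on top of Theorem \ref{construct_JMs}; the only subtle point is the compatibility condition in Case 2, where one must carefully account for pairs with both $\rho_{r+1} = 0$ and $\sigma_{s+1} = 0$.
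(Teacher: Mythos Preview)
Your approach is exactly the paper's: the theorem is stated as following directly from Theorem~\ref{construct_JMs}, with no further proof given, and you have supplied precisely the bookkeeping that makes that derivation explicit.

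Your flagged concern about Case~2 is warranted and worth stating more sharply. The admissible tuples when $\mu=\emptyset$ are those with $\rho_{r+1}=0$ \emph{or} $\sigma_{s+1}=0$, so inclusion--exclusion gives
\[
\#\{\sigma_{s+1}=0\}+\#\{\rho_{r+1}=0\}-\#\{\rho_{r+1}=\sigma_{s+1}=0\},
\]
whereas the stated count is just the first two terms. Already at $\nu=\emptyset$ (so $r=s=0$) and weight $w=0$ the stated formula gives $1+1=2$ while the correct count is $1$. So the overcount is real; the displayed generating function in Case~2 is missing a subtracted term $\prod_{i=1}^{r}(1-x^i)^{-1}\prod_{j=1}^{s}(1-x^j)^{-1}$. (Note also the products in the statement are written from $i=0$, which should read $i=1$.) Your argument is sound; it is the formula, not your proof, that needs the correction.
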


\begin{example}
Let $\ell = 3$ and $\lambda = (3,1)$. Then $\lambda$ is a 3-core. $\lambda$ has $r = 1$ and $s = 0$. Suppose we are looking for the number of $(3,0)$-JM partitions with core $\lambda$ and size 13 (i.e. weight 3). There are as many such (3,0)-JM partitions as there are pairs of partitions $(\alpha, \beta)$ with $len(\alpha) \leq 2$, $len(\beta) \leq 1$ and $|\alpha| + |\beta| = 3$. There are 6 such pairs of partitions. They are $((3), \emptyset), ((2,1), \emptyset), ((2),(1)), ((1,1), (1)), ((1),(2))$ and $(\emptyset, (3))$. They correspond to the 6 $(3,0)$-JM partitions (12,1), (9,4), $(9, 1^4),$ $(6,4,1^3),$ $(6, 1^7)$ and $(3, 1^{10})$ respectively.
\end{example}

\section{The ladder crystal}\label{new_crystal}
\subsection{Ladders}
We first recall what a ladder is in regards to a partition. Let $\lambda$ be a partition and let $\ell>2$ be a fixed integer. For any box $(a,b)$ in the Young diagram of $\lambda$, the \textit{ladder} of $(a,b)$ is the set of all positions $(c,d)$ which satisfy $\frac{c-a}{d-b} = \ell-1$ and $c,d >0$.

\begin{remark}
The definition implies that two boxes in the same ladder will share the same residue. An $i$-ladder will be a ladder which has residue $i$.
\end{remark}
 
\begin{example}
Let $\lambda = (3,3,1)$, $\ell = 3$. Then there is a 1-ladder which contains the boxes $(1,2)$ and $(3,1)$, and a different 1-ladder which has the box $(2,3)$ in $\lambda$ and the boxes $(4,2)$ and $(6,1)$ not in $\lambda$. In the picture below, lines are drawn through the different 1-ladders.

\begin{center}
$ \begin{array}{cc}
\put (13,-38){\line(1,2){27}}
\put (68,17){\line(-1,-2){52}}

&
\tableau{0&1&2\\
2&0&1\\
1}
\end{array}$
\end{center}
\end{example}
\subsection{The ladder crystal}
We will construct a new crystal $B(\Lambda_0)^L$ recursively as follows. First, the empty partition $\emptyset$ is the unique highest weight node of our crystal. From $\emptyset$, we will build the crystal by applying the operators $\widehat{f}_{i}$ for $0\leq i < \ell$. We define $\widehat{f}_{i}$ to act on partitions, taking a partition of $n$ to a partition of $n+1$ (or 0) in the following manner. Given $\lambda \vdash n$, first draw all of the $i$-ladders of $\lambda$ onto its Young diagram. Label any addable $i$-box with a $+$, and any removable $i$-box with a $-$. Now, write down the word of $+$'s and $-$'s by reading from leftmost $i$-ladder to rightmost $i$-ladder and reading from top to bottom on each ladder. This is called the \textit{ladder $i$-signature} of $\lambda$. From here, cancel any adjacent $-+$ pairs in the word, until you obtain a word of the form $+\dots+-\dots-$. This is called the \textit{reduced ladder $i$-signature of $\lambda$}. All boxes associated to a $-$ in the reduced ladder $i$-signature are called \textit{ladder normal $i$-boxes} and all boxes associated to a $+$ in the reduced ladder $i$-signature are called \textit{ladder conormal $i$-boxes}. The box associated to the leftmost $-$ is called the \textit{ladder good $i$-box} and the box associated to the rightmost $+$ is called the \textit{ladder cogood $i$-box}. Then we define $\widehat{f}_{i} \lambda$ to be the partition $\lambda$ union the ladder cogood $i$-box. If no such box exists, then $\widehat{f}_{i} \lambda = 0$. Similarly, $\widehat{e_i} \lambda$ is the partition $\lambda$ with the ladder good $i$-box removed. If no such box exists, then $\widehat{e_i} \lambda = 0$. We then define $\widehat{\varphi}_i(\lambda)$ to be the number of ladder conormal $i$-boxes of $\lambda$ and $\widehat{\varepsilon}_i(\lambda)$ to be the number of ladder normal $i$-boxes. It is then obvious that $\widehat{\varphi_i}(\lambda) = \max \{ k : \widehat{f}_i^k \lambda \neq 0 \}$ and that $\widehat{\varepsilon_i}(\lambda) = \max \{ k : \widehat{e}_i^k \lambda \neq 0 \}$. For the rest of the Chapter, $\widehat{\varphi} = \widehat{\varphi}_i(\lambda)$ and $\widehat{\varepsilon} = \widehat{\varepsilon}_i(\lambda)$.

\begin{remark}
It remains to be shown that this directed graph is a crystal. To see this, we will show that it is isomorphic to $B(\Lambda_0)$. 
\end{remark}

\begin{example}\label{5311111}
Let $\lambda = (5,3,1,1,1,1,1)$ and $\ell = 3$. Then there are four addable 2-boxes for $\lambda$. In the leftmost 2-ladder (containing box (2,1)) there are no addable (or removable) 2-boxes. In the next 2-ladder (containing box (1,3)) there is an addable 2-box in box (3,2). In the next 2-ladder (containing box (2,4)), there are two addable 2-boxes, in boxes (2,4) and (8,1). In the last drawn 2-ladder (containing box (1,6)) there is one addable 2-box, in box (1,6). There are no removable 2-boxes in $\lambda$. Therefore the ladder 2-signature (and hence reduced ladder 2-signature) of $\lambda$ is $+_{(3,2)}+_{(2,4)}+_{(8,1)}+_{(1,6)}$ (Here, we have included subscripts on the $+$ signs so that the reader can see the correct order of the $+$'s). Hence ${\widehat{f}_2} \lambda = (6,3,1,1,1,1,1)$, $({\widehat{f}_2})^2 \lambda = (6,3,1,1,1,1,1,1)$, $({\widehat{f}_2})^3 \lambda = (6,4,1,1,1,1,1,1)$ and $({\widehat{f}_2})^4 \lambda = (6,4,2,1,1,1,1,1)$. $({\widehat{f}_2})^5 \lambda = 0$. See Figure \ref{new_crystal_figure} for the 2-string of $\lambda$. 

\begin{center}
$\begin{array}{cc}
\put (10,-29){\line(1,2){23.5}}
\put (10,-82){\line(1,2){50}}
\put (15,-126){\line(1,2){72}}
\put (41,-126){\line(1,2){72}}
\put (106,6){2}
\put (70,-12){2}
\put (34,-30){2}
\put (16, -120){2}

& \tableau{0&1&2&0&1\\
2&0&1\\
1\\
0\\
2\\
1\\
0
}
\end{array}$
\end{center}
\end{example}

%pictures

\begin{figure}\label{new_crystal_figure}
\includegraphics{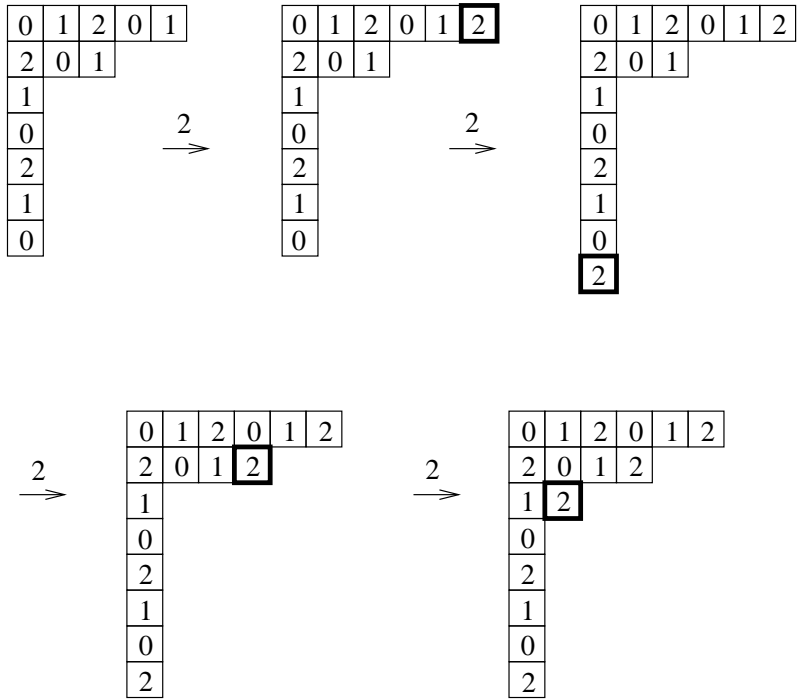}
\caption{The 2-string of the partition $(5,3,1,1,1,1,1)$ in $B(\Lambda_0)$ for $\ell = 3$ from Example \ref{5311111}}
\end{figure}

\begin{figure}
\includegraphics{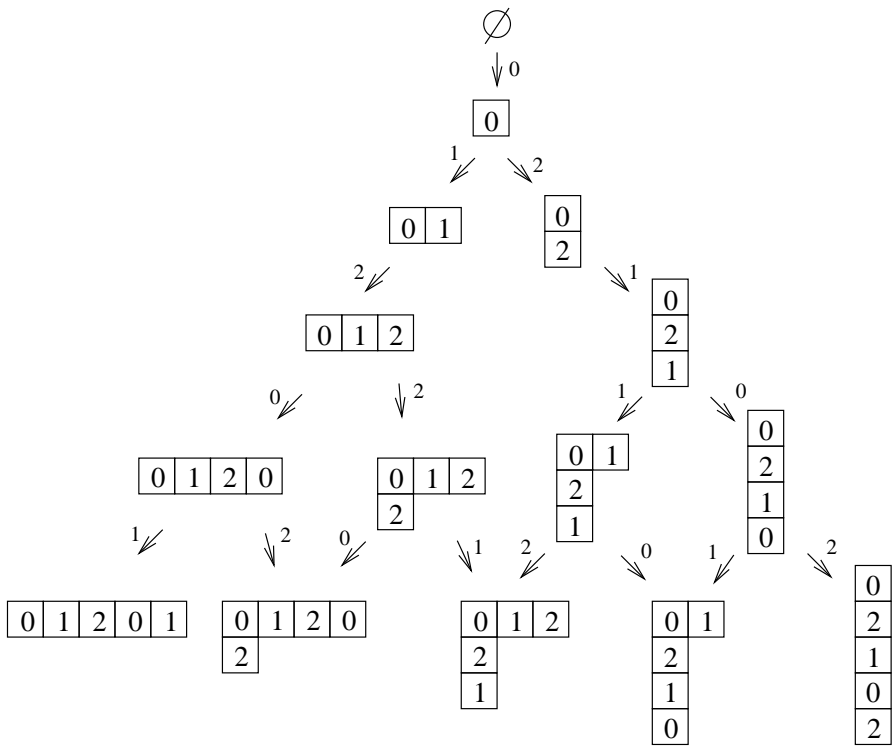}
\caption{The first 6 levels of $B(\Lambda_0)^L$ for $\ell =3 \;\;\;\;\;\;\;\;\;\;\;\;\;\;\;\;\;\;\;\;\;\;\;\;$ }
\end{figure}

From this description, it is not obvious that this is a crystal. However, we will soon show that it is isomorphic to $B(\Lambda_0)$. 
\begin{remark} We note that this crystal rule is well defined for all partitions. We will mainly be studying the connected component of $\emptyset$, which is what we will show is isomorphic to $B(\Lambda_0)$. 
\end{remark}

\begin{remark}
For the crystal expert: The weight function of this crystal is exactly the same as the weight function for $B(\Lambda_0)^L$. Explicitly, the weight of $\lambda$ is $\Lambda_0 - \sum c_i \alpha_i$ where $c_i$ is the number of boxes of $\lambda$ with residue $i$. Throughout this Chapter we will suppress the weight function as it is irrelevant to the combinatorics involved.
\end{remark}

We end this section by proving a simple property of $\widehat{e}_i$ and $\widehat{f}_i$.
\begin{lemma}\label{e_inverse_f} $\widehat{e}_i \lambda = \mu$ if and only if $\widehat{f}_i \mu = \lambda$.
\end{lemma}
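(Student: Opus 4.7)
The plan is to prove both implications by (i) showing that removing the ladder good $i$-box of $\lambda$ alters the ladder $i$-signature at exactly one position, flipping a $-$ to a $+$ there, and (ii) proving a bracket-matching fact that forces this new $+$ to be the rightmost unmatched one in the resulting signature, hence the cogood $+$.

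For step (i), suppose $\mu = \lambda \setminus \{(a,b)\}$ where $(a,b)$ is the ladder good $i$-box of $\lambda$. Since the $i$-ladders and the top-to-bottom ordering within each ladder depend only on the ambient grid $\mathbb{Z}_{>0}^2$ and not on the partition, I need only compare which residue-$i$ positions are addable or removable in $\lambda$ versus $\mu$. Deleting the single box at $(a,b)$ can only change addability or removability for boxes in rows $a-1$, $a$, $a+1$ with column index close to $b$. A direct case check shows that the only such position of residue $i$ is $(a,b)$ itself, since the other relevant neighbours have column indices forcing residues $i\pm 1$ or $i\pm 2$, all of which differ from $i$ modulo $\ell$ (using $\ell\geq 3$). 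Hence the ladder $i$-signatures of $\lambda$ and $\mu$ agree as sequences except at the position of $(a,b)$, where $-$ becomes $+$.

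For step (ii), view the signature as a bracket word by treating each $-$ as an open bracket and each $+$ as a close bracket, so that the reduction rule $-+\to\emptyset$ becomes standard cancellation of matched pairs. The good $i$-box then corresponds to the leftmost unmatched $-$, and the cogood $i$-box to the rightmost unmatched $+$. The key claim I need is: if $p$ is the leftmost unmatched $-$ of a $\pm$-sequence, then flipping the character at $p$ from $-$ to $+$ yields a new sequence in which $p$ is the rightmost unmatched $+$, and every other match is preserved. A stack-algorithm trace handles this: the leftmost-unmatched assumption forces the stack to be empty just before $p$ is processed, and $p$ being unmatched forces the stack above $p$ to stay strictly positive whenever a $+$ is seen after $p$. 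Together these imply that all matches of the original sequence live entirely inside $\{1,\ldots,p-1\}$ or inside $\{p+1,\ldots,n\}$, so flipping $p$ preserves those matches and creates exactly one new unmatched $+$, which must be the rightmost.

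Combining the two steps, if $\widehat{e}_i\lambda=\mu$ then $\mu$'s ladder $i$-signature has its rightmost unmatched $+$ at $(a,b)$, so $(a,b)$ is the cogood $i$-box of $\mu$ and $\widehat{f}_i\mu=\lambda$. The converse is obtained by applying the mirror of (i) and (ii) with the roles of $\widehat{e}_i$ and $\widehat{f}_i$ interchanged: adding $(a,b)$ to $\mu$ flips the signature at $(a,b)$ from $+$ to $-$, and reversing the sequence together with swapping $+\leftrightarrow -$ turns the flip lemma of (ii) into the statement that flipping the rightmost unmatched $+$ to $-$ makes it the leftmost unmatched $-$. I expect the main obstacle to be step (i); it is the only place where the specific geometry of Young diagrams and residues enters, and it requires a careful residue bookkeeping across the three affected rows. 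Step (ii) is a general combinatorial fact that underlies many crystal arguments and is routine once the stack trace is written down.
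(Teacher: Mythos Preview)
Your proposal is correct and follows essentially the same approach as the paper's proof, just with far more detail. The paper's argument is a two-sentence sketch: it asserts that flipping the rightmost $+$ to a $-$ (or vice versa) produces ``no additional cancelation,'' so the new symbol becomes the leftmost $-$ (respectively rightmost $+$) of the reduced signature. Your step~(i) makes explicit the residue check that only the box $(a,b)$ itself changes status in the ladder $i$-signature, and your step~(ii) supplies the bracket/stack argument that the paper leaves implicit in the phrase ``no additional cancelation.''
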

\begin{proof}
In the reduced $i$-signature of $\mu$, the rightmost $+$ will be changed by $\widehat{f}_i$ to a $-$. The resulting word will have no additional cancelation. Therefore, the reduced $i$-signature of $\lambda = \widehat{f}_i \mu$ will have its leftmost $-$ corresponding to the same box. Thus $\widehat{e}_i \lambda = \mu$. The other direction is similar.
\end{proof}

\section{Regularization}\label{regularization}

\subsection{The operation of regularization} In this section we describe a map from the set of partitions to the set of $\ell$-regular partitions. The map is called regularization and was first defined by James (see \cite{J}). For a given $\lambda$, move all of the boxes up to the top of their respective ladders. The result is a partition, and that partition is called the \textit{regularization} of $\lambda$, and is denoted $\mathcal{R}_\ell \lambda$. Although $\mathcal{R}_\ell$ depends on $\ell$, we will usually just write $\mathcal{R}$. The following theorem contains facts about regularization originally due to James \cite{J} (see also \cite{JM}).
\begin{theorem}\label{reg_prop} Let $\lambda$ be a partition. Then
\begin{itemize}
\item $\mathcal{R} \lambda$ is $\ell$-regular
\item $\mathcal{R} \lambda = \lambda$ if and only if $\lambda$ is $\ell$-regular.
\item If $\lambda$ is $\ell$-regular and $D^{\lambda} \cong S^{\nu}$ for some partition $\nu$, then $\mathcal{R} \nu = \lambda$.
\end{itemize}
\end{theorem}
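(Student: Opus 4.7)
The three parts have quite different flavors. Parts (1) and (2) are purely combinatorial consequences of the definition of regularization, while (3) is the deep representation-theoretic theorem of James. My plan is to argue (1) and (2) directly from the definition and to cite James for (3).

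For (1), I would argue by contradiction. Suppose $\mathcal{R}\lambda$ has $\ell$ equal rows, say rows $a, a+1, \ldots, a+\ell-1$ all of length $k \geq 1$. The box $(a+\ell-1, k)$ lies in $\mathcal{R}\lambda$, while the position $(a, k+1)$ lies one step up the same ladder (up $\ell-1$ rows and right one column), and does \emph{not} lie in $\mathcal{R}\lambda$, since row $a$ has length only $k$. But by the very definition of regularization the boxes in each ladder are pushed as high as possible along that ladder, so if $(a+\ell-1, k) \in \mathcal{R}\lambda$ then the higher ladder position $(a, k+1)$, which is well inside the first quadrant, must also be present. This is the desired contradiction. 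The only mild subtlety is to verify that the push-to-top procedure never leaves a gap inside the first quadrant of a ladder; this one checks ladder by ladder, using that the procedure is local to each ladder.

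For (2), one direction is immediate from (1): if $\mathcal{R}\lambda = \lambda$ then $\lambda$ is $\ell$-regular. For the converse I assume $\lambda$ is $\ell$-regular and show that no box of $\lambda$ can be moved strictly higher in its ladder, whence $\mathcal{R}\lambda = \lambda$. Suppose $(a,b) \in \lambda$ with $a - (\ell-1) \geq 1$; I claim $(a-(\ell-1), b+1) \in \lambda$. If not, then $\lambda_{a-(\ell-1)} \leq b \leq \lambda_a$, and combining with the monotonicity $\lambda_{a-(\ell-1)} \geq \lambda_{a-(\ell-2)} \geq \cdots \geq \lambda_a$ forces the equalities $\lambda_{a-(\ell-1)} = \lambda_{a-(\ell-2)} = \cdots = \lambda_a$. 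This produces $\ell$ equal parts, contradicting $\ell$-regularity.

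Part (3) is James' regularization theorem. The strategy, due to James, is to establish a unitriangularity property for the decomposition matrix: $[S^\nu : D^{\mathcal{R}\nu}] = 1$, and $[S^\nu : D^\mu] = 0$ unless $\mu \trianglerighteq \mathcal{R}\nu$ in the dominance order. Granted this, if $S^\nu \cong D^\lambda$ is irreducible, then $D^\lambda$ is the unique composition factor of $S^\nu$; but $D^{\mathcal{R}\nu}$ is always a composition factor, and so one is forced to have $\lambda = \mathcal{R}\nu$. The hard part is establishing the unitriangularity --- it rests on James' careful analysis of the bilinear form on Specht modules and the induced submodule structure --- and I would simply cite it rather than reprove it here.
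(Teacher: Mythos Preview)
Your proposal is correct, but there is nothing to compare it against: the paper does not prove this theorem at all. It is stated as a collection of known facts ``originally due to James \cite{J} (see also \cite{JM})'' and then used freely thereafter. So your write-up actually supplies more than the paper does.

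Your arguments for (1) and (2) are sound. In (1), the key point you identify --- that in $\mathcal{R}\lambda$ the occupied positions on each ladder are precisely the topmost ones, so a filled box $(a+\ell-1,k)$ forces the higher ladder-position $(a,k+1)$ to be filled --- is exactly right, and the contradiction follows. In (2), your chain $\lambda_{a-(\ell-1)} \le b \le \lambda_a$ together with monotonicity forces $\ell$ equal parts, contradicting $\ell$-regularity; this cleanly shows every box of an $\ell$-regular $\lambda$ already sits at the top of its ladder. For (3) you correctly identify that the content is James' unitriangularity $d_{\nu,\mathcal{R}\nu}=1$ (stated later in the paper as Theorem~\ref{decomp}) together with the vanishing $d_{\nu,\mu}=0$ unless $\mu \trianglerighteq \mathcal{R}\nu$, and that citing it is the appropriate move. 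The paper itself relies on exactly this citation.
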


Regularization provides us with an equivalence relation on the set of partitions. Specifically, we say $\lambda \sim \mu$ if $\mathcal{R} \lambda = \mathcal{R} \mu$. The equivalence classes are called \textit{regularization classes}, and the class of a partition $\lambda$ is denoted $\mathcal{RC}(\lambda) := \{ \mu \in \mathcal{P} : \mathcal{R}\mu = \mathcal{R}\lambda \}$.

\begin{example}\label{reg_example} Let $\lambda = (2,2,2,1,1,1)$ and let $\ell = 3$. Then $\mathcal{R}\lambda = (3,3,2,1)$. Also, 
$$\mathcal{RC}(\lambda) = \{ (2, 2, 2, 1, 1, 1),
(2, 2, 2, 2, 1),
(3, 2, 1, 1, 1, 1),$$
$$
(3, 2, 2, 2),
(3, 3, 1, 1, 1),
(3, 3, 2, 1)    \}$$.

\begin{center}
$\begin{array}{lcr}
\tableau{0&1\\
2&0\\
1&2\\
0\\
2\\
1}
&
\displaystyle
\xrightarrow{\mathcal{R}}
&
\tableau{0&1&2\\
2&0&1\\
1&2\\
0}

\end{array}$
\end{center}

\end{example}

\section{Deregularization}\label{locked}

The goal of this section is to provide an algorithm for finding the smallest partition in dominance order in a given regularization class. It is nontrivial to show that a smallest partition exists. We use this result to show that our new description of the crystal $B(\Lambda_0)^L$ has nodes which are smallest in dominance order in their regularization class. All of the work of this section is inspired by Brant Jones of UC Davis, who gave the first definition of a locked box.

\subsection{Locked boxes}

We recall a partial ordering on the set of partitions of $n$. For two partitions $\lambda$ and $\mu$ of $n$, we say that $\lambda \leq \mu$ if $\sum_{j=1}^i \lambda_j \leq \sum_{j=1}^i \mu_j$ for all $i$. This order is usually called the \textit{dominance order}.

\begin{definition}
The $k^{th}$ ladder of $\lambda$ is the ladder which contains the box $(k,1)$ (this ladder necessarily has residue $-k+1 \mod \ell$). 
\end{definition}

Finding all of the partitions which belong to a regularization class is not easy. The definition of locked boxes below formalizes the concept that some boxes in a partition cannot be moved down their ladders if one requires that the new diagram remain a partition.

\begin{definition}
For a partition $\lambda$, we label boxes of $\lambda$ as \textit{locked} by the following procedure:
\begin{enumerate}
\item If a box $x$ has a locked box directly above it (or is on the first row) and every unoccupied space in the same ladder as $x$, lying below $x$, has an unoccupied space directly above it then $x$ is locked. Boxes locked for this reason are called type I locked boxes.
\item If a box $y$ is locked, then every box to the left of $y$ in the same row is also locked. Boxes locked for this reason are called type II locked boxes.

\end{enumerate}
Boxes which are not locked are called \textit{unlocked}.  \end{definition}

\begin{remark}
Locked boxes can be both type I and type II.
\end{remark}

\begin{example}
Let $\ell = 3$ and let $\lambda = (6,5,4,3,1,1)$. 
Then labeling the locked boxes for $\lambda$ with an $L$ and the unlocked boxes with a $U$ yields the picture below.

\begin{center}
$\tableau{L&L&L&U&U&U\\
L&L&L&U&U\\
L&L&U&U\\
L&L&U\\
L\\
L}
$
\end{center}
\end{example}

The following lemmas follow from the definition of locked boxes.
\begin{lemma}\label{locked_up}
If $(a,b)$ is locked and $a>1$ then $(a-1,b)$ is locked. Equivalently, all boxes which sit below an unlocked box in the same column are unlocked.

\end{lemma}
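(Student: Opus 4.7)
The plan is to argue by induction on the stage at which a box becomes locked. Stratify the locking procedure by letting $L_0 = \emptyset$ and $L_{n+1}$ be the union of $L_n$ with all boxes that satisfy a type I or type II locking condition when the known locked set is $L_n$. The set of locked boxes is $L = \bigcup_n L_n$, and this union stabilizes because the Young diagram is finite. I will prove by induction on $n$ the statement: for every $(a,b) \in L_n$ with $a > 1$, the box $(a-1,b)$ lies in $L$.

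The base case $n = 0$ is vacuous. For the inductive step, fix $(a,b) \in L_{n+1}$ with $a > 1$, and split into cases by the rule that added $(a,b)$. If $(a,b)$ is type I locked, then the defining condition for type I says that either $a = 1$ or the box $(a-1,b)$ is already locked; since $a > 1$, we conclude $(a-1,b) \in L_n \subseteq L$. If instead $(a,b)$ is type II locked, then there is some $c > b$ with $(a,c) \in L_n$. The inductive hypothesis applied to $(a,c)$ gives $(a-1,c) \in L$, that is, $(a-1,c) \in L_m$ for some $m$. But then $(a-1,b)$ sits to the left of the locked box $(a-1,c)$ in row $a-1$, so the type II rule places $(a-1,b) \in L_{m+1} \subseteq L$.

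The equivalent formulation is then just the contrapositive: if $(a-1,b)$ is unlocked, then every box $(a', b)$ with $a' \geq a$ (i.e.\ sitting below $(a-1,b)$ in the same column) must be unlocked as well, since otherwise the first locked such box would contradict what we have just proved.

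The only subtle point is making sure the induction is well-founded; this is handled by the stratification $L_0 \subseteq L_1 \subseteq \cdots$, which terminates after finitely many steps because $\lambda$ has only finitely many boxes. Everything else is a direct reading of the two clauses of the definition, so I do not expect any real obstacle beyond organizing the bookkeeping of the two locking rules.
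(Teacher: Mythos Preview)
Your proof is correct and follows essentially the same case split as the paper. The only organizational difference is that the paper avoids the explicit stratification: in the type II case it observes directly that one may take $(a,c)$ to be a \emph{type I} locked box (by chasing rightward along row $a$ to the rightmost locked box, which must be type I), so that $(a-1,c)$ is locked immediately from the type I definition and $(a-1,b)$ is then type II locked. Your induction on the stage $n$ accomplishes the same thing but makes the well-foundedness of the recursive definition explicit, which is arguably more careful.
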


\begin{proof}
If $(a,b)$ is a type I locked box then by definition $(a-1,b)$ is locked. If $(a,b)$ is a type II locked box and not type I then there exists a $c$ so that $(a,c)$ is a type I locked box with $c > b$. But then by definition of type I locked box, $(a-1,c)$ is locked. Then $(a-1,b)$ is  a type II locked box.
\end{proof}

\begin{lemma}\label{locked_up_ladder}
If there is a locked box in space $(\alpha, \beta)$ and there is a box in space $(\alpha - (\ell-1), \beta+1)$ then the box $(\alpha - (\ell-1), \beta+1)$ is locked. 
\end{lemma}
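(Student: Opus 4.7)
The plan is to prove the lemma by a double induction: an outer induction on the row index $\alpha$, and for fixed $\alpha$ an inner induction on the column index $\beta$ in decreasing order. The base case $\alpha<\ell$ is vacuous, since then $(\alpha',\beta'):=(\alpha-(\ell-1),\beta+1)$ has non-positive row and so is not a box of $\lambda$. For the inductive step I split into two cases based on which type of lock applies to $(\alpha,\beta)$.

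\textbf{Type I case.} I would verify both defining conditions of type I directly for $(\alpha',\beta')$. The ladder positions of $(\alpha',\beta')$ lying strictly below it consist of $(\alpha,\beta)$, which is occupied, together with the ladder positions of $(\alpha,\beta)$ lying strictly below it; hence the ``unoccupied-implies-unoccupied-above'' requirement for $(\alpha',\beta')$ is equivalent to the same requirement for $(\alpha,\beta)$, which holds by hypothesis. For the ``locked-box-directly-above'' requirement (when $\alpha'>1$), note that $(\alpha'-1,\beta')=(\alpha-\ell,\beta+1)$ lies in $\lambda$ because $\lambda_{\alpha'-1}\geq \lambda_{\alpha'}\geq \beta'$, and the outer inductive hypothesis applied to $(\alpha-1,\beta)$ (which is locked by the type I assumption on $(\alpha,\beta)$, using $\alpha\geq \ell>1$) yields that $(\alpha-\ell,\beta+1)$ is locked.

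\textbf{Type II but not type I case.} By definition of type II, the box $(\alpha,\beta+1)$ is locked. If $(\alpha',\beta'+1)=(\alpha-(\ell-1),\beta+2)$ lies in $\lambda$, then the inner inductive hypothesis applied to $(\alpha,\beta+1)$ shows that $(\alpha',\beta'+1)$ is locked, and $(\alpha',\beta')$ is then type II locked, as desired. The remaining edge case is $(\alpha',\beta'+1)\notin\lambda$; combined with the Young diagram bound $\lambda_{\alpha'}\geq \lambda_\alpha\geq \beta+1$ this forces $\lambda_{\alpha'}=\lambda_{\alpha'+1}=\cdots=\lambda_\alpha=\beta+1$, so that $(\alpha,\beta+1)$ is the rightmost box of row $\alpha$ and is therefore type I locked. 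The main obstacle of the proof is ruling out this edge case as incompatible with the hypothesis that $(\alpha,\beta)$ is not type I locked.

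To rule it out, I cascade the ``locked-box-directly-above'' requirement upward from $(\alpha,\beta+1)$: each of the boxes $(\alpha-j,\beta+1)$ for $j=0,1,\ldots,\ell-1$ is then locked and, being the rightmost box of its row, must be type I locked. Writing out the ladder condition for each of these $\ell$ boxes at ladder-depth $k=1$ and chaining the resulting implications yields $\lambda_r\geq \beta$ for every $r\in\{\alpha,\alpha+1,\ldots,\alpha+\ell-1\}$. Induction on $k$ using the same cascade at each depth produces $\lambda_{\alpha+k(\ell-1)}\geq \beta-(k-1)>\beta-k$ for every $k\geq 1$, so every ladder position strictly below $(\alpha,\beta)$ is in fact occupied and the ladder condition for $(\alpha,\beta)$ holds vacuously. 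Since $(\alpha-1,\beta)$ is type II locked (it lies to the left of the locked box $(\alpha-1,\beta+1)$), the ``locked-box-above'' requirement for $(\alpha,\beta)$ also holds, making $(\alpha,\beta)$ itself type I locked, a contradiction that closes the edge case and completes the induction.
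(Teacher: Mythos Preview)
Your proof is correct but takes a substantially different route from the paper's. The paper argues by a short contradiction using only Lemma~\ref{locked_up}: assuming $(\alpha-(\ell-1),\beta+1)$ unlocked, it locates the \emph{highest} unlocked box $(\gamma,\beta+1)$ in column $\beta+1$, observes that this box must fail the ladder clause of the type~I condition (since the box above it is locked or it is in the first row), transfers that same ladder failure down to $(\gamma+\ell-1,\beta)$ (same ladder, and the box to its right, if present, is unlocked by Lemma~\ref{locked_up}), and concludes via Lemma~\ref{locked_up} that $(\alpha,\beta)$, sitting weakly below $(\gamma+\ell-1,\beta)$, is unlocked---a contradiction. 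This is four or five lines.

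Your approach instead proceeds by a direct double induction, verifying the type~I conditions for $(\alpha',\beta')$ from those of $(\alpha,\beta)$ in the type~I case, and in the type~II case invoking the inner hypothesis on $(\alpha,\beta+1)$. The inductive structure is sound and the edge case (where $(\alpha',\beta'+1)\notin\lambda$) is handled correctly by your ladder cascade: the chain of contrapositives of the type~I ladder condition for the $\ell$ boxes $(\alpha-j,\beta+1)$ does force all ladder positions below $(\alpha,\beta)$ to be occupied, yielding the contradiction. What the paper's argument buys is brevity and a single uniform idea (pass to the highest unlocked box, then descend one column), whereas your argument buys a constructive verification of exactly which locking clause holds for $(\alpha',\beta')$ in each case; it is longer but self-contained and does not rely on Lemma~\ref{locked_up}.
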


\begin{proof}
To show this, suppose that $(\alpha- (\ell-1), \beta+1)$ is unlocked. Then let $(\gamma, \beta+1)$ be the highest unlocked box in column $\beta+1$. This is unlocked because it violates the type I locked condition. Then the box $(\gamma+\ell-1, \beta)$ violates the type I locked condition, and it will not have a locked box directly to the right of it ($(\gamma, \beta+1)$ is unlocked, so  $(\gamma+\ell-1, \beta+1)$ will be unlocked if it is occupied, by Lemma \ref{locked_up}). Hence $(\gamma+\ell-1, \beta)$ is unlocked, so $(\alpha, \beta)$ must be unlocked since it sits below $(\gamma+\ell-1, \beta)$ (by Lemma \ref{locked_up}), a contradiction.
\end{proof}

For two partitions $\lambda$ and $\mu$ in the same regularization class, there are many ways to move the boxes in $\lambda$ on their ladders to obtain $\mu$. We define an \textit{arrangement of} $\mu$ \textit{from} $\lambda$ to be an assignment of each box in the Young diagram of $\lambda$ to a box in the Young diagram of $\mu$. An arrangement will be denoted by a set of ordered pairs $(x,y)$ with $x \in \lambda$ and $y \in \mu$, where both $x$ and $y$ are in the same ladder, and each $x \in \lambda$ and $y \in \mu$ is used exactly once. Such a pair $(x,y)$ denotes that the box $x$ from $\lambda$ is moved into position $y$ in $\mu$. $x$ is said to \textit{move up} if the position $y$ which it is paired with is higher in the ladder than $x$ is. Similarly, $x$ is said to \textit{move down} if the position $y$ is lower in the ladder than $x$ is.  $x$ is said to \textit{stay put} if the position $y$ in $\mu$ has the same coordinates as position $x$ in $\lambda$. 

\begin{remark} We introduce the notation $\mathcal{B}(p)$ to denote the position paired with $p$ in an arrangement $\mathcal{B}$, i.e. $(p, \mathcal{B}(p)) \in \mathcal{B}$. 
\end{remark}

\begin{example}
$\lambda = (3,3,1,1,1)$ and $\mu = (2,2,2,2,1)$ are in the same regularization class when $\ell = 3$ (see Example \ref{reg_example}). One possible arrangement of $\mu$ from $\lambda$ would be $\mathcal{B} = \{   ((1,1),(1,1)), ((1,2),(1,2)),((1,3),(5,1)),((2,1),(2,1)),$ $ ((2,2),(2,2)), ((2,3),(4,2)), ((3,1),(3,1)),((4,1),(4,1)), ((5,1),(3,2))\}$. This corresponds to moving the labeled boxes from $\lambda$ to $\mu$  in the corresponding picture below. Note that $(5,1)$ moves up, $(1,3)$ and $(2,3)$ move down, and all other boxes stay put. 
\begin{center}

$\begin{array}{cc}
\tableau{1&2&3\\
4&5&6\\
7\\
8\\
9}
&
\tableau{1&2\\
4&5\\
7&9\\
8&6\\
3}
\end{array}$
\end{center}
\end{example}

\subsection{Algorithm for finding the smallest partition in a regularization class}
 For any partition $\lambda$, to find the smallest partition (with respect to dominance order) in a regularization class we first label each box of $\lambda$ as either locked or unlocked as above. Then we create an arrangement $\mathcal{S}_\lambda$ which moves all unlocked boxes down their ladders, while keeping these unlocked boxes in order (from bottom to top), while locked boxes do not move. The partition obtained from following the arrangement $\mathcal{S}_\lambda$ will be denoted $\mathcal{S} \lambda$. It is unclear that this algorithm will yield the smallest partition in $\mathcal{RC}(\lambda)$, or even that $\mathcal{S} \lambda$ is a partition. In this subsection, we resolve these issues.

\begin{proposition}\label{moveboxesdown} Let $\lambda$ and $\mu$ be partitions in the same regularization class. Then there exists an arrangement of $\mu$ from $\lambda$ such that all of the boxes in $\lambda$ which are locked do not move down.
\end{proposition}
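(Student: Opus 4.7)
The plan is to construct the desired arrangement explicitly and then verify it does what we want. Since $\mathcal{R}\lambda=\mathcal{R}\mu$, every ladder $L$ contains the same number $N$ of boxes of $\lambda$ as of $\mu$; listing the boxes of $\lambda\cap L$ from top to bottom as $p^{\lambda}_{1},\ldots,p^{\lambda}_{N}$ and similarly $p^{\mu}_{1},\ldots,p^{\mu}_{N}$ for $\mu$, the \emph{canonical sorted arrangement} pairs $p^{\lambda}_{k}$ with $p^{\mu}_{k}$ in every ladder. Under this pairing, a box $p^{\lambda}_{k}=p_{s}$ moves down in $L$ precisely when $n_{\mu}^{\leq s}(L)<n_{\lambda}^{\leq s}(L)$, where $n_{\nu}^{\leq s}(L)$ counts the boxes of $\nu$ among the top $s$ positions of $L$. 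So the proposition reduces to proving the \emph{key inequality}
\[
n_{\mu}^{\leq s}(L)\geq n_{\lambda}^{\leq s}(L)\quad\text{whenever } p_{s}\in\lambda\cap L \text{ is locked in }\lambda.
\]

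To prove the key inequality in the type I locked case, I would argue by contradiction. Suppose $p_{s}=(a,b)$ is type I locked and the inequality fails; then equality of ladder totals in $L$ gives a position $p_{t}=(a_{t},b_{t})\in(\mu\cap L)\setminus\lambda$ with $t>s$. Because $\mu$ is a Young diagram, the whole column segment $(1,b_{t}),\ldots,(a_{t},b_{t})$ lies in $\mu$. Condition~(2) of the type I locked hypothesis at $p_{s}$ forces $(a_{t}-1,b_{t})\notin\lambda$, and hence $\lambda'_{b_{t}}\leq a_{t}-2$. Thus column $b_{t}$ of $\mu$ contains at least two more boxes than column $b_{t}$ of $\lambda$, and each of the extra boxes $(j,b_{t})$ with $\lambda'_{b_{t}}<j\leq a_{t}$ lies in a distinct ladder. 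Combining the resulting ladder-wise excesses in $\mu$ with the equality $|\lambda\cap L''|=|\mu\cap L''|$ for each such ladder $L''$, together with the column-chain consequence of condition~(1) at $p_{s}$ (which forces $(1,b),\ldots,(a,b)\in\lambda$), I expect to extract a global counting contradiction. A type II locked box $(a,b)$ has a type I locked box $(a,c)$ with $c>b$ in its row by the definition, and Lemma~\ref{locked_up_ladder} then reduces the inequality at the ladder position of $(a,b)$ to the already-proved one at the ladder position of $(a,c)$.

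The hard part will be turning the "column excess versus ladder equality" sketch into a clean proof. Concretely, each extra column-$b_{t}$ box of $\mu$ forces a compensating box of $\lambda\setminus\mu$ in the corresponding ladder, and one must show that these compensating boxes cannot all fit inside a Young diagram consistent with the column chain forced by condition~(1) at $p_{s}$. I expect Lemmas~\ref{locked_up} and~\ref{locked_up_ladder} to be the crucial tools for locating the compensating boxes, and an induction on the row index (equivalently on $a_{t}$) to terminate the argument. Once the key inequality is established in every ladder at every locked position, the canonical sorted arrangement by construction sends no locked box of $\lambda$ downward, completing the proof.
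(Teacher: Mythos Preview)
Your strategy---pairing boxes on each ladder by rank (the \emph{canonical sorted arrangement}) and then reducing the proposition to the ``key inequality'' $n_{\mu}^{\leq s}(L)\geq n_{\lambda}^{\leq s}(L)$ at every locked position---is a genuinely different approach from the paper's.  The paper never attempts to show that a specific arrangement works.  Instead it argues extremally: assuming every arrangement moves some locked box down, it selects an arrangement $\mathcal{D}$ for which the highest--rightmost offending locked box $x_{\mathcal{D}}$ is as low and as far left as possible, and then produces a swap that contradicts this extremality.  The two cases (type~II and type~I at $x_{\mathcal{D}}$) are handled by delicate local analysis of the ladders near $x_{\mathcal{D}}$.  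Your approach, if it succeeds, would be cleaner and would yield the stronger statement that the sorted arrangement itself works.

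However, the proof you sketch has real gaps.  For the type~I case you identify the correct starting observation---a box $(a_t,b_t)\in\mu\setminus\lambda$ below $(a,b)$ with $(a_t-1,b_t)\notin\lambda$---but the promised ``global counting contradiction'' is never executed.  The difficulty is that the compensating boxes of $\lambda\setminus\mu$ forced on the ladders through column $b_t$ can sit almost anywhere on those ladders; pinning them down requires essentially the same kind of intricate case analysis the paper carries out, and your outline does not indicate how Lemmas~\ref{locked_up} and~\ref{locked_up_ladder} would accomplish this.

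The type~II reduction is more seriously flawed.  You claim Lemma~\ref{locked_up_ladder} reduces the key inequality at the ladder of a type~II locked box $(a,b)$ to the key inequality at the ladder of a type~I locked box $(a,c)$ with $c>b$.  But these are \emph{different} ladders: the positions at or above $(a,b)$ on its ladder are $(a-k(\ell-1),\,b+k)$, while those for $(a,c)$ are $(a-k(\ell-1),\,c+k)$.  For any partition $\nu$ one has $n_{\nu}^{\leq s}(L_c)\leq n_{\nu}^{\leq s}(L_b)$ (the $L_c$ positions lie to the right of the $L_b$ positions in the same rows), so from $n_{\mu}^{\leq s}(L_c)\geq n_{\lambda}^{\leq s}(L_c)$ you only get $n_{\mu}^{\leq s}(L_b)\geq n_{\lambda}^{\leq s}(L_c)$, not the desired $n_{\mu}^{\leq s}(L_b)\geq n_{\lambda}^{\leq s}(L_b)$.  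Lemma~\ref{locked_up_ladder} tells you that lockedness propagates one step up a ladder when the next box is present, but it does not bridge the gap between the two ladder counts; indeed, lockedness is \emph{not} upward-closed along a ladder across gaps (e.g.\ with $\ell=3$ and $\lambda=(3,2,1,1,1)$, the box $(5,1)$ is locked while $(1,3)$ on the same ladder is unlocked).  So the reduction you propose does not go through, and the type~II case needs its own argument of comparable depth to the type~I case.
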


\begin{proof}

To find a contradiction, we suppose that any arrangement of $\mu$ from $\lambda$ must have at least one locked box in $\lambda$ which moves down. 

For any arrangement $\mathcal{C}$, we let $x_\mathcal{C}$ denote the highest rightmost locked box in $\lambda$ which moves down in $\mathcal{C}$. Since all arrangements of $\mu$ from $\lambda$ have at least one locked box in $\lambda$ which moves down, $x_\mathcal{C}$ is defined for all $\mathcal{C}$.

Among all arrangements of $\mu$ from $\lambda$, let $\mathcal{D}$ be one which has $x_{\mathcal{D}}$ in the lowest, leftmost position. Let $x= x_{\mathcal{D}} = (a,b)$. 

We will exhibit a box $w$ on the same ladder as $x$, such that $\mathcal{D}(w)$ is equal to or above $(a,b)$ in $\mu$ and either $w$ is below $x$ or $w$ is above $x$ and unlocked. If such a box exists, then letting $\mathcal{A} = (\mathcal{D} \setminus \{(x, \mathcal{D}(x)), (w, \mathcal{D}(w))\}) \cup \{(x, \mathcal{D}(w)), (w, \mathcal{D}(x)) \}$ will yield a contradiction, as $x_{\mathcal{A}}$ will be in a position to the left of and/or below $x$, which contradicts our choice of $\mathcal{D}$. 

If there exists a $w$ in the same ladder as $x$, below $x$, with $\mathcal{D}(w)$ at or above $(a,b)$ in $\mu$ then we are done. So now we assume that every box below $x$ in the ladder for $x$ does not move on or above $x$.

There are two cases to consider:

\vspace{10pt}
 \noindent\textbf{Case I:} $x$ is a type II locked box and not a type I locked box. 

In this case, there is a locked box directly to the right of $x$ (definition of a type II lock), which we label $y$. 

If $y$ stays put according to $\mathcal{D}$, (i.e. $(y, y) \in \mathcal{D}$) then some box $z$ in the same ladder as $x$ in $\lambda$ must move into the position for $x$ according to $\mathcal{D}$, i.e. $(z,x) \in \mathcal{D}$. Let $w=z$. Since $w$ will move into position $x$, $w$ is above $x$, so it must be unlocked (because $x = x_{\mathcal{D}}$ was the highest locked box which moved down according to $\mathcal{D}$) and we are done.

If $y$ moves according to $\mathcal{D}$ then it must move up (since it is to the right of $x$ and locked).

 If there are any boxes $\tilde{x}$ in the ladder of $x$, above $x$, which are at the end of their row, then $\tilde{x}$ is unlocked (it is not a type II lock because there are no boxes to the right of it, and it can't be a type I lock because $x$ is not a type I lock). If $\tilde{x}$ were to move according to $\mathcal{D}$ to some box on or above $x$, then we could use $w = \tilde{x}$ and be done. So otherwise we assume all such boxes move down below $x$ according to $\mathcal{D}$.

Similarly, if there are any boxes $\hat{x}$ above $x$, in the ladder for $x$, which are directly to the left of a box $\hat{y}$ in the ladder for $y$, and the box $\hat{y}$ moves down according to $\mathcal{D}$, then the box $\hat{x}$ must be unlocked (its not a type II lock because $\hat{y}$ is unlocked, being above $x$, and its not a type I lock because $x$ is not). If $\hat{x}$ moves to a position on or above $x$, then we could use $w=\hat{x}$ and be done. Otherwise, we must assume that all such $\hat{x}$ move down below $x$. 

Assuming we cannot find any $w$ by these methods, we let
\begin{itemize}
\item $k$ denote the number of boxes on the ladder of $x$, above $(a,b)$, in $\lambda$,
\item $j$ denote the number of boxes on the ladder of $y$, above $(a,b)$, in $\lambda$,
\item $k'$ denote the number of boxes on the ladder of $x$, above $(a,b)$, in $\mu$,
\item $j'$ denote the number of boxes on the ladder of $y$, above $(a,b)$, in $\mu$,
\item $m$ denote the number of boxes on the ladder of $y$, above $(a,b)$, in $\lambda$ which move down (i.e. the number of boxes which are of the form $\hat{y}$ above).
\end{itemize}

The number of boxes of the form $\tilde{x}$ is then $k-j$. Also, $j' \geq j-m +1$, since the $m$ boxes need not move below position $(a,b)$, but the box in position $y$ moves above position $(a,b)$. $k' \geq k - m - (k-j) = j-m $ since the number of boxes in ladder $x$ will go down by at least $m$ for the boxes of the form $\hat{x}$ and $k-j$ for the boxes of the form $\tilde{x}$ (by assumption boxes in the ladder for $x$ cannot move up from below). Hence $k' \leq j-m < j-m+1 \leq j'$.   This is a contradiction, as there must be at least as many boxes above $x$ on the ladder of $x$  as there are on the ladder of $y$ for $\mu$ to be a partition.

\vspace{10pt}

\noindent
\textbf{Case II:} $x$ is a type I locked box.

We will show that this case cannot occur. Specifically we will show that the assumption leads to a contradiction of $\lambda$ being a partition.

Since $x$ is a type I lock, the number of boxes on the ladder for $x$ on or below row $a$ in $\lambda$ must be strictly greater than the number of boxes on the ladder of $(a-1,b)$ on or below row $a$. Since $x$ is moving down, some box in the ladder of $(a-1,b)$ above row $a$ must move down (since no box on the ladder for $x$, below $x$, moves above $x$, by assumption). Since $x$ is the highest locked box which can move down, this must be an unlocked box. Hence there is an unlocked box in the ladder of $(a-1,b)$ which is in a row above $x$. 

The existence of an unlocked box in the ladder for $(a-1,b)$ above row $a$ implies that there is an empty position on the ladder for $(a-1,b)$ somewhere above row $a$. This is implied by Lemma \ref{locked_up_ladder} above, since if every space on the ladder $(a-1,b) \in \lambda$ above row $a$  was occupied, then all of those boxes would be locked. 

Let $m$ be the column which contains the lowest such empty position.
  The coordinates for such a position would then be $(a-1-(m-b)(\ell-1),m)$. Let $(c_1, d_1)$ be the highest empty position on the ladder for $x$ below $x$ (we know a space must exist since $x$ is moved down). Since $x$ is a type I lock, $(c_1-1, d_1)$ is also an empty position. We know that $(a-1,b)$ is locked since $x$ is a type I lock. If $(a-1, b)$ is also a type I lock then the space $(c_1-2, d_1)$ must also be empty. Continuing this, if $(a-(\ell-2), b)$ were a type I lock then $(c_1- (\ell-1), d_1)$ would have to be empty, but this would contradict the fact that there should be a box in $(c_1-(\ell-1),d_1 +1)$ since it is in the ladder for $x$ and $(c_1, d_1)$ was chosen to be the the highest empty position on the ladder for $x$ below $x$. 

So there exists a $k_1$ so that $(a-k_1,b)$ is a type II lock, with $k_1 \leq \ell-2$. Since $(a-k_1, b)$ is a type II lock, there exists a type I locked box in a position $(a-k_1, n_1)$ with $b < n_1 < m$. Let $(c_2, d_2)$ denote the highest empty position in the ladder for $(a-k_1,n_1)$ below $(a-k_1, n_1)$ ($(c_2, d_2)$ exists because the position in column $d_1$ on the ladder for $(a-k_1, n_1)$ is empty). Since $(a-k_1,n_1)$ is a type I lock, the box $(c_2-1, d_2)$ is also empty. Continuing as above, if $(a-k_1-(\ell-2), n_1)$ were a type I lock, then $(c_2- (\ell-1), d_2)$ would be empty, which contradicts our choice of $(c_2,d_2)$. Hence there exists a $k_2 \leq \ell-2$ so that $(a-k_1-k_2 , n_1)$ is a type II lock. This implies that there is a type I lock in some position $(a-k_1-k_2, n_2)$ with $n_1 < n_2 < m$. Continuing this, we get a sequences for $k$ and $n$ with each $k_i \leq \ell-2$ and $b < n_1 < n_2 < \dots < n_i < m$. 

I claim that each of the boxes $(a-\sum_i{k_i}, n_i)$ are in a ladder below the ladder of $(a-1,b)$. If this is the case then the sequences for $k$ and $n$ would eventually have to produce a type I locked box in column $m$ or greater, below row $a-1-(m-b)(\ell-1)$.  This contradicts that $\lambda$ is a partition, since there is no box is position $(a-1-(m-b)(\ell-1),m)$.

To show the claim, we just note that each successive type I locked box comes from moving up at most $\ell-2$ and to the right at least one space. These are clearly in a ladder below the ladder of $(a-1, b)$, since ladders move up $\ell-1$ boxes each time they move one box to the right.

\end{proof}

\begin{corollary}\label{smallestpartition}
Fix $n\in \mathbb{N}$. Suppose for any partition $\lambda \vdash n$, $\mathcal{S} \lambda$ is also a partition. Then $\mathcal{S} \lambda$ is the smallest partition (in dominance order) in the regularization class of $\lambda$. Futhermore, all boxes of $\mathcal{S} \lambda$ are locked.
\end{corollary}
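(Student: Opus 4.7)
The plan is to establish the second part of the corollary first (that every box of $\mathcal{S}\lambda$ is locked), and then derive the minimality from it using Proposition~\ref{moveboxesdown}.

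Given that every box of $\mathcal{S}\lambda$ is locked, the minimality follows quickly. Since $\mathcal{S}\lambda$ is obtained from $\lambda$ only by moving boxes within their ladders, $\mathcal{S}\lambda \in \mathcal{RC}(\lambda)$. For any $\mu \in \mathcal{RC}(\lambda) = \mathcal{RC}(\mathcal{S}\lambda)$, I apply Proposition~\ref{moveboxesdown} with $\mathcal{S}\lambda$ playing the role of $\lambda$: this yields an arrangement $\mathcal{A}$ of $\mu$ from $\mathcal{S}\lambda$ in which no locked box of $\mathcal{S}\lambda$ moves down. By assumption every box of $\mathcal{S}\lambda$ is locked, so no box moves down at all under $\mathcal{A}$; each box stays put or moves up. For each $i$, the boxes of $\mathcal{S}\lambda$ in rows $\leq i$ then inject under $\mathcal{A}$ into boxes of $\mu$ in rows $\leq i$, yielding $\sum_{j \leq i} (\mathcal{S}\lambda)_j \leq \sum_{j \leq i} \mu_j$, which is exactly $\mathcal{S}\lambda \leq \mu$ in dominance order.

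For the key claim, I verify that every box $(a,b) \in \mathcal{S}\lambda$ is locked in $\mathcal{S}\lambda$ by case analysis on how $(a,b)$ arises under $\mathcal{S}_\lambda$. If $(a,b)$ was a locked box of $\lambda$ fixed by $\mathcal{S}_\lambda$, its type I or type II locking condition in $\lambda$ should persist in $\mathcal{S}\lambda$: since pushing only moves unlocked boxes down their ladders, the positions strictly above $(a,b)$ on its ladder become (weakly) more empty, and locked boxes directly above or to the right of $(a,b)$ remain present in $\mathcal{S}\lambda$, so an inductive argument on the partial order of positions maintains the condition. If instead $(a,b)$ is the new position of a formerly unlocked box pushed into place, I argue $(a,b)$ is type~I locked in $\mathcal{S}\lambda$ using that $(a,b)$ occupies one of the lowest positions of its ladder in $\mathcal{S}\lambda$ and that the unlocked boxes on each ladder are packed from the bottom up; any empty position below $(a,b)$ on its ladder must then lie strictly below every occupied position of the ladder, so the partition condition (together with arguments similar to Lemma~\ref{locked_up_ladder}) forces the position directly above it to be empty as well.

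The main obstacle is this second step. Verifying that all boxes of $\mathcal{S}\lambda$ are locked requires careful bookkeeping of how the push reshapes each ladder, because the locking status in $\mathcal{S}\lambda$ depends on the new occupancy, which can differ substantially from that of $\lambda$. In particular, Case~(ii) is delicate: one must rule out configurations in $\mathcal{S}\lambda$ where an empty position on a ladder has an occupied position directly above it (situated below $(a,b)$), which would break the type~I condition. I expect this to follow from the "as far down as possible" property built into $\mathcal{S}_\lambda$ together with the assumed partition-shape property of $\mathcal{S}\lambda$, but making the argument airtight is the technical heart of the proof.
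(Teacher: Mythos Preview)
Your approach reverses the logical order of the paper's proof, and this reversal turns an easy argument into a hard one.

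The paper proves minimality \emph{first}, directly from the definition of $\mathcal{S}_\lambda$: given any $\mu\in\mathcal{RC}(\lambda)$, Proposition~\ref{moveboxesdown} furnishes an arrangement from $\lambda$ to $\mu$ in which only unlocked boxes move down. Since $\mathcal{S}_\lambda$ moves every unlocked box as far down its ladder as possible (keeping their relative order), one gets $\mathcal{S}\lambda \leq \mu$ immediately. Only \emph{after} establishing minimality does the paper conclude that every box of $\mathcal{S}\lambda$ is locked, and this is then a one-line contradiction: if some box of $\mathcal{S}\lambda$ were unlocked, the hypothesis of the corollary says $\mathcal{S}(\mathcal{S}\lambda)$ is again a partition, and it would be strictly smaller than $\mathcal{S}\lambda$, contradicting minimality.

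You instead try to prove the locked property directly by a case analysis on how each box of $\mathcal{S}\lambda$ arises, and then deduce minimality from it. Your deduction of minimality from the locked property (via Proposition~\ref{moveboxesdown} applied to $\mathcal{S}\lambda$) is fine. But your Case~(ii)---showing that a pushed-down box is type~I locked in $\mathcal{S}\lambda$---is precisely where you admit the argument is ``delicate'' and leave it as a sketch. This is a genuine gap: the locking status in $\mathcal{S}\lambda$ depends on the global occupancy pattern after the push, and your outlined argument does not actually rule out the bad configurations. The paper sidesteps this difficulty entirely by proving things in the other order, so that the locked property becomes a free consequence rather than the technical heart.
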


\begin{proof}
Let $\mu$ be any partition in the regularization class of $\lambda$. Then by Proposition \ref{moveboxesdown} above, we can choose an arrangement $\mathcal{B}$ so that all of the boxes which we must move down from $\lambda$ to form $\mu$ are unlocked boxes. But the arrangement $\mathcal{S}_\lambda$ moves all unlocked boxes as far down as they can go, so $\mathcal{S}\lambda \leq \mu$. For the second statement, assume some box of $\mathcal{S} \lambda$ is unlocked. Then $\mathcal{S}_{\mathcal{S} \lambda}$ must move some boxes down. Following the arrangement $\mathcal{S}_{\mathcal{S} \lambda}$ yields another partition $\mathcal{S}^2\lambda$ which is smaller than $\mathcal{S} \lambda$, which contradicts the first statement23 of this corollary. 
\end{proof}

\begin{proposition}\label{algo_yields_partition}
Let $\lambda$ be a partition. Then $\mathcal{S}\lambda$ is a partition.
\end{proposition}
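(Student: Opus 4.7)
The plan is to verify directly that $\mathcal{S}\lambda$ satisfies the Young-diagram condition: whenever $(a,b)\in\mathcal{S}\lambda$ with $a>1$ (resp.\ $b>1$) one has $(a-1,b)\in\mathcal{S}\lambda$ (resp.\ $(a,b-1)\in\mathcal{S}\lambda$). The argument proceeds ladder by ladder. First I need a precise picture of $\mathcal{S}\lambda$ on a single ladder $\Lambda$. The boxes of $\lambda\cap\Lambda$ form a contiguous segment of ladder positions with no gaps, since a gap between $(r,c)\in\lambda$ and $(r-2(\ell-1),c+2)\in\lambda$ would force $\lambda_{r-(\ell-1)}\leq c$ and $\lambda_{r-2(\ell-1)}\geq c+2$, contradicting weak decrease. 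Setting $n_\Lambda=|\lambda\cap\Lambda|$ and letting $m_\Lambda$ be the number of locked boxes on $\Lambda$, Lemma \ref{locked_up_ladder} places the $m_\Lambda$ locked boxes at the top of this segment. Under $\mathcal{S}_\lambda$ the locked boxes stay fixed while the unlocked boxes, in their original relative order, slide to the $n_\Lambda-m_\Lambda$ lowest positions of $\Lambda$. So $\mathcal{S}\lambda\cap\Lambda$ is the disjoint union of these bottom positions with the original locations of the locked boxes, and in particular the target positions are all distinct.

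Suppose for contradiction that $\mathcal{S}\lambda$ fails to be a partition; pick an offending $(a,b)\in\mathcal{S}\lambda$ with $(a-1,b)\notin\mathcal{S}\lambda$, the west-neighbor case being analogous. If $(a,b)$ is locked in $\lambda$ then Lemma \ref{locked_up} makes $(a-1,b)$ locked, hence in $\mathcal{S}\lambda$, a contradiction. So $(a,b)$ sits at position $b$ from the bottom of its ladder $\Lambda$ and $b\leq n_\Lambda-m_\Lambda$. Let $\Lambda'$ be the ladder containing $(a-1,b)$; its positions are exactly the row-above shifts of those of $\Lambda$, and from $\lambda_{a-1-j(\ell-1)}\geq\lambda_{a-j(\ell-1)}$ one obtains $n_{\Lambda'}\geq n_\Lambda$. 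The heart of the argument is then the following claim: $n_{\Lambda'}-m_{\Lambda'}\geq b$, or else $(a-1,b)$ is already a locked box of $\lambda$. Granted this, position $b$ on $\Lambda'$ is either filled by an unlocked box in $\mathcal{S}\lambda$ or is a locked box that stayed put, so $(a-1,b)\in\mathcal{S}\lambda$, contradicting the assumption.

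The decisive claim is established by a careful analysis of the type I lock condition. If some position $p$ on $\Lambda'$ strictly above $(a-1,b)$ is locked while its row-below counterpart on $\Lambda$ is unlocked, unpacking the type I definition shows that either the $\Lambda$-counterpart must in fact be locked (which bounds $m_{\Lambda'}\leq m_\Lambda+(n_{\Lambda'}-n_\Lambda)$ and yields the claim), or else $(a-1,b)$ itself satisfies the type I criterion: its row-above neighbor is locked by Lemma \ref{locked_up}, and the ``every empty space below has an empty space above it'' requirement on the ladder of $(a-1,b)$ is inherited from the corresponding condition at $(a,b)$ on $\Lambda$. The west-neighbor case uses the ladder one column to the left of $\Lambda$, whose positions shift $\Lambda$'s up by $\ell-1$ rows together with one extra position in column $b-1$, and runs through the same analysis.

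The main obstacle I anticipate is precisely this lock comparison between adjacent ladders, since the type I condition quantifies over all empty ladder positions below $x$ and so is global to $\lambda$ rather than a local property at a single box. If a direct comparison proves too delicate, a fallback is to process ladders in order of increasing index, maintaining the invariant that the already-processed portion of $\mathcal{S}\lambda$ forms a valid Young sub-diagram, and using $\mathcal{R}\lambda$ (a partition by Theorem \ref{reg_prop}) together with Proposition \ref{moveboxesdown} to pin down where the current ladder's unlocked boxes must land.
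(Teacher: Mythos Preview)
Your proof proposal has a fundamental error at the very first step. You claim that the boxes of $\lambda\cap\Lambda$ on any ladder $\Lambda$ form a contiguous segment, arguing that a gap between $(r,c)\in\lambda$ and $(r-2(\ell-1),c+2)\in\lambda$ would force $\lambda_{r-(\ell-1)}\leq c$ and $\lambda_{r-2(\ell-1)}\geq c+2$, ``contradicting weak decrease.'' But these inequalities are perfectly compatible with weak decrease, since $r-2(\ell-1)<r-(\ell-1)$ means $\lambda_{r-2(\ell-1)}\geq\lambda_{r-(\ell-1)}$ is exactly what weak decrease says. A concrete counterexample: for $\ell=3$ and $\lambda=(4,1,1,1,1)$, the ladder $\{(1,3),(3,2),(5,1)\}$ has $(1,3),(5,1)\in\lambda$ but $(3,2)\notin\lambda$.

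This error cascades. Your description of $\mathcal{S}\lambda$ on a ladder---locked boxes at the top of a contiguous segment, unlocked boxes sliding to the $n_\Lambda-m_\Lambda$ lowest ladder positions---is also wrong. Locked boxes can sit \emph{below} unlocked ones on a ladder when there is a gap. Take $\ell=3$, $\lambda=(3,2,1,1,1)$: on the ladder $\{(1,3),(3,2),(5,1)\}$, the box $(5,1)$ is type~I locked while $(1,3)$ is unlocked, so the single unlocked box does not slide to the lowest ladder position (which is occupied by a locked box) but to the gap at $(3,2)$. Your entire comparison between adjacent ladders $\Lambda$ and $\Lambda'$ is built on this incorrect picture, so the ``decisive claim'' about $n_{\Lambda'}-m_{\Lambda'}\geq b$ has no foundation. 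You also acknowledge yourself that the type~I lock comparison is the delicate step and do not actually carry it out; the phrase ``unpacking the type~I definition shows that either\ldots'' is a placeholder, not an argument.

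For reference, the paper's proof proceeds by induction on $|\lambda|$: one removes a carefully chosen box from $\lambda$, applies the inductive hypothesis (together with Corollary~\ref{smallestpartition}, which assumes the proposition holds for smaller sizes) to obtain a partition, and then argues that reinserting the box at the lowest available ladder position still yields a partition. The case analysis is on whether the last box of the first row is locked, and the reinsertion step is where the real work lies.
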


\begin{proof}

 There are two possibilities we must rule out: That an unlocked box in $\lambda$ which was moved down via the arrangement  $\mathcal{S}_\lambda$ has an empty position directly to the left, or that it has an empty position directly above. If we show that the algorithm never leaves an empty position above a moved unlocked box, then 
as a consequence we can easily prove that we get no empty positions to the left of a moved box.
  
Suppose the algorithm never leaves an empty position above a moved box. If there was an empty position $x$ directly to the left of a moved box $\mathcal{S}_\lambda(y)$ then below $x$ there would be a box $z$ on the same ladder as $\mathcal{S}_\lambda(y)$. If $z$ was not moved, then it must have been locked, which implies all of the boxes above it were locked, including the box which was in position $x$, contradicting that position $x$ is empty. Otherwise, $z$ moved, so our assumption implies that $z$ has no empty position above it. Applying this procedure again, we can determine that there must be a box directly above the box directly above $z$. Applying this procedure will eventually imply that there must be a box in position $x$. Therefore our goal is to show that moving down all unlocked boxes produces no box below an empty position.
 
  We prove this by induction on $n$, the number of boxes in $\lambda$. The $n = 1$ case is clear. We assume that if $\eta$ is a partition of $k<n$ then $\mathcal{S}\eta$ has no box below an empty position (and hence is a partition by the previous paragraph). We let $\lambda$ be any partition of $n$ and we will show that $\mathcal{S} \lambda$ has no box below an empty position. 
 
 The inductive proof is broken into three cases. 

The first case is that there exists an $i$ so that $\lambda_1 = \lambda_i$ and the box $x = (i, \lambda_i)$ is locked. Then all of the boxes in the first $i$ rows are also locked. Let $\mu = \lambda \setminus \{\textrm{first $i$ rows} \}$.  Boxes are locked in $\mu$ if and only if they are locked in $\mu \subset \lambda$. Since $|\mu| < |\lambda|$, $\mathcal{S} \mu$ has no box below an empty position. We append the first $i$ rows back on top of $\mathcal{S} \mu$ to form $\mathcal{S} \lambda$. Hence $\mathcal{S} \lambda$ has no box below an empty position. 

The second case is when there does not exist such an $i$. Let $j$ be so that $\lambda_1 = \lambda_j \neq \lambda_{j+1}$. If $j >1$ then we let $x$ be the box $(j,\lambda_j)$ and let $y = (j-1, \lambda_{j-1})$. Let $\mu = \lambda \setminus \{x \}$. Let $\nu = \lambda \setminus \{x,y \}$. Note that boxes in $\mu$ (and $\nu$) are locked if and only if they are locked in $\lambda$. Since $|\mu| < |\lambda|$, we can form a partition $\mathcal{S}\mu$ by bringing down all unlocked boxes. Now we place $x$ into the lowest empty position on the ladder of $x$ in $\mathcal{S}\mu$. If it is not a partition, it is because there is an empty position above where $x$ was placed. If this is the case, then the box $y$ was moved down below where $x$ was pushed down, and $y$ has a box $z$ below it (if it didn't then $x$ would move below $y$). But then in $\nu$ when we move down all of the boxes we would have $z$ below an empty position (where $y$ is in $\mu$). But $|\nu| < |\lambda|$, so $\mathcal{S}\nu$ should not have any empty positions.

Lastly, if $j = 1$ then we let $x = (1, \lambda_1)$. Since $x$ is unlocked, there is at least one empty position in the ladder for $x$ which has a box above it. Let $\mu = \lambda \setminus \{x \}$. Boxes are locked in $\mu$ if and only if they were locked in $\lambda$. We move all the unlocked boxes of $\mu$ down; by induction and Corollary \ref{smallestpartition} this is the smallest partition in the regularization class of $\mu$, denoted $\mathcal{S} \mu$. Since the number of boxes on the ladder for $x$ was at most the number of boxes on the ladder directly above $x$ in $\lambda$ (this is because $x$ cannot be a type I lock), the number of boxes on the ladder for $x$ is strictly less than the number on the ladder above $x$ in $\mu$. Therefore, there exists an empty position on the ladder for $x$ directly below a box in $\mathcal{S} \mu$. We let $(a,b)$ be the lowest such empty position. If $(a,b)$ is the lowest empty position in $\mathcal{S} \mu$ on the ladder for $x$, then moving $x$ into that space will yield a partition which is obtained from moving $x$ into the lowest empty position in its ladder. If not, then there must be an empty position on the ladder for $x$ directly below an empty position, all below row $a$. Let $(c,d)$ be the highest such empty position on the ladder for $x$ below $(a,b)$.  Let $(m_0,d)$ be the lowest box in column $d$ ($m_0$ is at least $c-\ell$, since there is a box in the space $(c-(\ell-1), d+1)$). Let $(m_1, b)$ be the box in column $b$ in the same ladder as $(m_0+1,d)$. By Corollary \ref{smallestpartition}, all of the boxes of $\mathcal{S}\mu$ are locked. But $(m_1,b)$ has the space $(m_0+1,d)$ in the same ladder, so if $(m_1,b)$ is locked it must be because a box to the right of it is locked (i.e. it is a type II locked box). Call this box $(m_1, b_1)$. We continue by letting $(m_2, b_1)$ be the box in the same ladder as $(m_0+1,d)$ in column $b_1$. Similarly, since this box is locked, there must be a box to the right of it which is locked. That box will be $(m_2, b_2)$. This process must eventually conclude at step $(m_k, b_{k-1})$ where $b_{k-1}$ is less than the column of $x$ (since every box in $\mu$ has a column value less than that of $x$) and the space $(m_k, b_{k-1} +1)$ is empty. We just need to show that the box $(m_k, b_{k-1})$ is actually a box in the diagram of $\lambda$ (i.e. that $m_k$ is at least 1).  In fact, $(m_0+1, d)$ is either in the ladder directly to the left of $x$ or below this ladder. Hence $(m_k, b_{k-1})$ is either in the ladder left of $x$ or below. It must also be in a column strictly to the left of $x$. Therefore $(m_k, b_{k-1})$ is a box in $\lambda$. But then the box $(m_k, b_{k-1})$ must be unlocked, since it has no locked boxes to the right and the empty position $(m_0+1, d)$ in the same ladder below it. This contradicts all of the boxes of $\mathcal{S} \mu$ being locked.

\end{proof}

\begin{example}
Continuing from the example above $(\lambda = (6,5,4,3,1,1)$ and $\ell = 3)$, we move all of the unlocked boxes down to obtain the smallest partition in $\mathcal{RC} (\lambda)$, which is $\mathcal{S} \lambda = (3,3,2,2,2,2,2,1,1,1,1)$. The boxes labeled $L$ are the ones which were locked in $(6,5,4,3,1,1)$ (and did not move). 

\begin{center}
$\tableau{L&L&L\\
L&L&L\\
L&L\\
L&L\\
L & \mbox{} \\
L & \mbox{}\\
 \mbox{}& \mbox{}\\
 \mbox{}\\
 \mbox{}\\
 \mbox{}\\
 \mbox{}}
$ \end{center}

\end{example}

\begin{theorem}
$\mathcal{S} \lambda$ is the unique smallest partition in its regularization class with respect to dominance order. It can be classified as being the unique partition (in its regularization class) which has all locked boxes.  
\end{theorem}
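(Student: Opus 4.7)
The plan is short: essentially all of the work has already been done in Proposition~\ref{moveboxesdown}, Proposition~\ref{algo_yields_partition}, and Corollary~\ref{smallestpartition}, and the theorem just packages these together, with a small additional uniqueness argument for the second characterization.

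First I would note that Proposition~\ref{algo_yields_partition} guarantees $\mathcal{S}\lambda$ is actually a partition, so Corollary~\ref{smallestpartition} applies and tells us two things: that $\mathcal{S}\lambda$ is smaller in dominance order than \emph{every} other partition in $\mathcal{RC}(\lambda)$ (not merely a minimal element), and that every box of $\mathcal{S}\lambda$ is locked. Uniqueness as the smallest element is then automatic from antisymmetry of the dominance partial order: if $\nu \in \mathcal{RC}(\lambda)$ also satisfied $\nu \leq \mu$ for every $\mu \in \mathcal{RC}(\lambda)$, then $\nu \leq \mathcal{S}\lambda$ and $\mathcal{S}\lambda \leq \nu$ would force $\nu = \mathcal{S}\lambda$.

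For the second statement, I would prove uniqueness of the all-locked partition by a clean two-line squeeze. Suppose $\mu \in \mathcal{RC}(\lambda)$ has every box locked. Then the arrangement $\mathcal{S}_\mu$ moves nothing (it only moves unlocked boxes), so $\mathcal{S}\mu = \mu$. Since $\mathcal{RC}(\mu) = \mathcal{RC}(\lambda)$, applying Corollary~\ref{smallestpartition} to $\mu$ shows $\mathcal{S}\mu = \mu$ is $\leq$ every partition in $\mathcal{RC}(\lambda)$, in particular $\mu \leq \mathcal{S}\lambda$. Combining this with $\mathcal{S}\lambda \leq \mu$ from the first part yields $\mu = \mathcal{S}\lambda$.

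There is no real obstacle here; the work is all in the preceding propositions. The one point worth emphasizing in the write-up is that the definition of $\mathcal{S}$ depends only on the locked/unlocked labeling of the input partition, so the equation $\mathcal{S}\mu = \mu$ in the previous paragraph is immediate from the hypothesis that every box of $\mu$ is locked, and does not need any further verification.
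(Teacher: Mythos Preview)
Your proposal is correct and follows essentially the same approach as the paper, which simply cites Corollary~\ref{smallestpartition} and Proposition~\ref{algo_yields_partition}. Your write-up is actually slightly more careful than the paper's one-line proof: you spell out the uniqueness of the all-locked partition via the observation that $\mathcal{S}\mu=\mu$ whenever $\mu$ has all boxes locked, which the paper leaves implicit.
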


\begin{proof}
This follows from Corollary \ref{smallestpartition} and Proposition \ref{algo_yields_partition}. 
\end{proof}

\subsection{The nodes of the ladder crystal are smallest in dominance order}

The nodes of $B(\Lambda_0)^L$ have been defined recursively by applying the operators $\widehat{f}_{i}$. 
We now give a nonrecursive description which determines when a partition is a node of $B(\Lambda_0)^L$. 

\begin{proposition} Let $\lambda$ be a partition of $n$. Let $\mathcal{RC}(\lambda)$ be its regularization class. If $\lambda$ is a node of $B(\Lambda_0)^L$ then $\lambda$ is the smallest partition in $\mathcal{RC}(\lambda)$ with respect to dominance order.
\end{proposition}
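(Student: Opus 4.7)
The plan is to prove by induction on $n = |\lambda|$ the stronger statement that every node of $B(\Lambda_0)^L$ has all of its boxes locked. The proposition then follows at once from the preceding theorem's characterization of $\mathcal{S}\lambda$ as the unique element of $\mathcal{RC}(\lambda)$ in which every box is locked.

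The base case $\lambda = \emptyset$ is vacuous. For the inductive step, write $\lambda = \widehat{f}_i\mu$ and let $x = (a_x, b_x)$ denote the ladder cogood $i$-box adjoined to $\mu$; by hypothesis every box of $\mu$ is locked. Two verifications remain: (a) each box of $\mu$ is still locked in $\lambda$, and (b) the new box $x$ is itself locked in $\lambda$. Since type II locking cascades leftward within a row, part (a) reduces to showing that for every row of $\lambda$ other than row $a_x$, the rightmost box of that row is type I locked in $\lambda$; row $a_x$ is taken care of by (b).

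For (b), the first type I criterion is immediate: either $a_x = 1$, or $(a_x-1, b_x) \in \mu$ because $x$ is addable, and in the latter case $(a_x-1, b_x)$ remains locked in $\lambda$ by (a). For the second criterion I argue by contradiction: an unoccupied $(c,d) \in L_x$ below $x$ with $(c-1,d)$ occupied in $\lambda$ would force $(c-1,d)\in\mu$ (since $(c-1,d)=x$ would put $(a_x+1,b_x)$ in $L_x$, which is impossible), and $(c-1,d)$ would then be the corner of a row, hence a removable $i$-box contributing a $-$ strictly after $x$ in the reading order of the ladder $i$-signature; such an un-cancelled $-$ after $x$ contradicts $x$ being the rightmost surviving $+$ of the reduced signature.

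Part (a) is the principal obstacle. Let $z = (a_z, \mu_{a_z})$ be rightmost in its row of $\mu$, so $z$ is type I locked in $\mu$. The condition that the box above $z$ is locked in $\lambda$ is handled by a top-down induction on rows using (a). The ladder condition for $z$ can fail in $\lambda$ only if the newly-occupied cell $x$ lies directly above an unoccupied cell of $L_z$ below $z$, which forces $L_z$ to be the $(i{-}1)$-ladder through $(a_x+1, b_x)$ with $z$ situated strictly above $(a_x+1, b_x)$. To rule out this configuration, I examine $w = (a_z-1, \mu_{a_z})$: a short calculation shows $w$ lies in the very ladder $L_x$ of $x$, and its type I locking in $\mu$ forces $\mu_{a_z-1} > \mu_{a_z}$ (otherwise $w$'s ladder condition in $\mu$ would already fail at $(a_x, b_x)$, since $(a_x-1, b_x) \in \mu$). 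But then $(a_z, \mu_{a_z}+1)$ is an addable $i$-box lying in an $i$-ladder whose bottom-leftmost cell is strictly to the right of that of $L_x$; it contributes a $+$ to the reading order after $x$, and I expect to rule out its cancellation by a detailed analysis of which removable $i$-boxes can appear between $x$ and $(a_z, \mu_{a_z}+1)$ in the reading order. This final configuration check is the delicate part of the argument.
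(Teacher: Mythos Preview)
Your overall strategy---induction on $|\lambda|$, proving the stronger statement that every node has all boxes locked---matches the paper's proof, and your decomposition into (a) old boxes stay locked and (b) the new box $x$ is locked is exactly the paper's case split.

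However, your argument for (b) contains a real error. You claim that an unoccupied $(c,d)\in L_x$ below $x$ with $(c-1,d)$ occupied makes $(c-1,d)$ a removable $i$-box. But $(c-1,d)$ has residue $d-(c-1)\equiv i+1\pmod\ell$, so it never enters the $i$-signature; moreover, even a genuine uncancelled $-$ after $x$ would not contradict $x$ being cogood, since the reduced signature $+\cdots+-\cdots-$ allows surviving $-$'s after the last $+$. What you actually need is a surviving $+$ after $x$. The paper obtains this by showing that $(c,d)$ itself is addable: the box $(a_x,b_x-1)$ left of $x$ has nothing to its right in $\mu$, so it must be type~I locked, and its ladder condition then forces $(c,d-1)$ to be occupied. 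This produces a $+$ at $(c,d)$ on $L_x$ below $x$, and since $\mu$ is smallest in its class there is no $-$ between $x$ and $(c,d)$ on $L_x$, so this $+$ survives---the desired contradiction.

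Your setup for (a) is correct and the addable box $(a_z,\mu_{a_z}+1)$ you identify is the one the paper uses. The cancellation check you flag as ``delicate'' is completed in the paper by two observations: any removable $i$-box on $L_x$ below $x$ would already violate the type~I ladder condition for $z$ in $\mu$ (contradicting the inductive hypothesis), and the inductive hypothesis that $\mu$ is dominance-smallest rules out any $-$ above a $+$ on the ladder of $(a_z,\mu_{a_z}+1)$.
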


\begin{proof} 
The proof is by induction on the number of boxes in $\lambda$. Suppose we have a partition in $B(\Lambda_0)^L$ which is smallest in its regularization class. Equivalently, all of the boxes of $\lambda$ are locked. We want to show that $\widehat{f_i} \lambda$ is still smallest in its regularization class. Let $x = \widehat{f_i} \lambda \setminus \lambda$, so that $\widehat{f_i}\lambda$ is just $\lambda$ with the addable $i$-box $x$ inserted. There are two cases to consider.

The first is that the insertion of $x$ into $\lambda$ makes $x$ into an unlocked box. Since the position $y$ above $x$ must be locked (or $x$ is in the first row), there must exist an empty position $x'$ in the same ladder as $x$ which has a box $y'$ directly above $x'$ in the ladder of $y$. Also, the box $z$ to the left of $x$ must be a type I lock, since $x$ was not in $\lambda$ (implying $z$ could not be type II). But then the position $z'$ to the left of $x'$ must have a box, since there is a box in the position above $z'$ (the box to the left of $y'$). Since $\lambda$ was smallest in dominance order, it could not have had a $+$ below a $-$ on the same ladder (otherwise the $-$ could be moved down to the $+$, contradicting that $\lambda$ was smallest in dominance order). Hence there is no $-$ between the two $+'s$ in positions $x$ and $x'$ of $\lambda$. This means that the crystal rule would have chosen to add to $x'$ instead of $x$, a contradiction.  

The second case is that the insertion of $x$ into $\lambda$ unlocks a box. If this is the case then let $y$ be the position above $x$. The only case to consider is adding the box $x$ unlocks some box above the row of $x$, on the ladder directly below $x$. Let $z$ denote the lowest such box. If there is a box $x'$ directly above $z$ and no box directly to the right of $x'$ then $x'$ would have been unlocked in $\lambda$, since the box $y$ is in $\lambda$ but $x$ is not. This contradicts the assumption that $\lambda$ was the smallest. 

The only other possibility is that the box $x'$ above $z$  has a box directly to the right of it (this still works even if $z$ is in the first row of the partition).  If there was a box to the right of $z$, then that box would be locked since it was locked in $\lambda$. But then $z$ would be locked independently of the addition of $x$. So $z$ is at the end of its row. The space one box to the right of $z$ (let us name it $w$) must therefore be empty and have the same residue as $x$. It is an addable box, so it will contribute a $+$ to the $i$-ladder-signature. There are no $-$ boxes on the ladder for $x$ below $x$, because if there was a removable $i$-box on the same ladder as $x$ below $x$ then the box $z$ would have been unlocked in $\lambda$. Therefore,  no $-$ will cancel the $+$ from space $w$. Since $w$ is on the ladder past the ladder for $x$, this yields a contradiction as $\widehat{f_i}$ should have added a box to position $w$.

\end{proof}

One can view $B(\Lambda_0)$ as having nodes $\{ \mathcal{RC}(\lambda) : \lambda \vdash n, n\geq 0 \}$. The usual model of $B(\Lambda_0)$ takes the representative $\mathcal{R} \lambda \in \mathcal{RC}(\lambda)$, which happens to be the largest in dominance order. Here, we will take a different representative of $\mathcal{RC}(\lambda)$, the partitions $\mathcal{S} \lambda$, which are smallest in dominance order. One must then give a description of the edges of the crystal graph. We will show in Section \ref{reg_and_crystal} that the crystal operators $\widehat{f}_i$ and $\widehat{e}_i$ are the correct operators to generate the edges of the graph. In other words, we will show that the crystal $B(\Lambda_0)^L$ constructed above is indeed isomorphic to $B(\Lambda_0)$.

\section{Extending theorems to the ladder crystal}\label{extend}

\subsection{Motivation}

In this section, we prove analogues of Theorems \ref{top_and_bottom} and \ref{other_cases} for our newly defined crystal $B(\Lambda_0)^L$, with $(\ell,0)$-JM partitions taking the place of $\ell$-partitions.
We start with a few lemmas about $B(\Lambda_0)^L$. 

\subsection{Crystal theoretic results for the ladder crystal and JM partitions}
Our goal will be to show that all $(\ell,0)$-JM partitions are in $B(\Lambda_0)^L$. First we prove that all $\ell$-cores are in $B(\Lambda_0)^L$.

\begin{lemma}\label{cores}
If $\lambda$ is an $\ell$-core, then $\widehat{\varphi} = \varphi$ and $\widehat{f}_i^{\hat{\varphi}} \lambda = \tilde{f}_i^{\varphi} \lambda$. Hence all $\ell$-cores are nodes of $B(\Lambda_0)^L$.
\end{lemma}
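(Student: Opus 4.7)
The plan is to first handle the trivial case $\varphi_i(\lambda) = 0$: then there are no addable $i$-boxes, so the ladder $i$-signature has no $+$'s, forcing $\widehat{\varphi} = 0 = \varphi$, and both $\widehat{f}_i^{\widehat{\varphi}}\lambda$ and $\widetilde{f}_i^{\varphi}\lambda$ equal $\lambda$. Assume now $\varphi > 0$; by the proposition at the start of Section~\ref{carterpartitioncrystal} we then have $\varepsilon_i(\lambda) = 0$, so $\lambda$ has no removable $i$-boxes. Hence the ladder $i$-signature consists of exactly one $+$ for each addable $i$-box and no $-$'s at all, no cancellation is possible, and $\widehat{\varphi} = \varphi$.

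Enumerate the addable $i$-boxes of $\lambda$ as $p_1, p_2, \ldots, p_\varphi$ in ladder order. The main claim is the inductive statement
\[
\widehat{f}_i^{k}\lambda \;=\; \lambda \cup \{p_\varphi, p_{\varphi-1}, \ldots, p_{\varphi-k+1}\}, \qquad 0 \le k \le \varphi,
\]
which for $k=\varphi$ identifies $\widehat{f}_i^{\varphi}\lambda$ with $\lambda$ together with all of its addable $i$-boxes, and the latter is precisely $\widetilde{f}_i^{\varphi}\lambda$ by the same proposition of Section~\ref{carterpartitioncrystal}. For the inductive step write $\mu := \widehat{f}_i^{k-1}\lambda$. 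I would then verify three facts about $\mu$: (i) each previously added $p_j$ with $j \ge \varphi-k+2$ is a removable $i$-box of $\mu$ (it sits at the end of its row in $\mu$, since no two addable $i$-boxes of $\lambda$ lie in the same row); (ii) each remaining $p_j$ with $j \le \varphi-k+1$ is still addable, because its defining conditions $\mu_a = b-1$ and $\mu_{a-1}\ge b$ persist under weakly-increasing row lengths; and (iii) no further addable or removable $i$-boxes appear, because adding a box of residue $i$ can only create new addable positions of residues $i\pm 1$, and no existing non-removable $i$-box of $\lambda$ becomes removable after row lengths only grow. Thus the support of the ladder signature of $\mu$ is exactly the set $\{p_1,\ldots,p_\varphi\}$, decorated with $+$'s for $j \le \varphi-k+1$ and $-$'s for $j \ge \varphi-k+2$.

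The heart of the argument is that the global reading of this signature is $+\cdots+-\cdots-$ with no cancellation: by the definition of ladder order, the $p_j$'s sharing a common ladder have consecutive indices, with smaller-index $p_j$'s topmost on that ladder, so any partially-converted ladder reads $+$'s above $-$'s, while ladders lying entirely to the right of the rightmost still-addable $p_j$ contribute only $-$'s. The reduced signature therefore retains $p_{\varphi-k+1}$ as its rightmost $+$, so $\widehat{f}_i$ adjoins $p_{\varphi-k+1}$ to $\mu$, completing the induction. The main obstacle is precisely this bookkeeping step about consecutive indices on a shared ladder, which amounts to carefully unwinding the leftmost-to-rightmost, top-to-bottom reading convention. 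Finally, the concluding statement that every $\ell$-core is a node of $B(\Lambda_0)^L$ follows by induction on $|\lambda|$: any nonempty $\ell$-core is of the form $\widetilde{f}_j^{\varphi_j(\mu)}\mu$ for a smaller $\ell$-core $\mu$ (again by the proposition of Section~\ref{carterpartitioncrystal}), and by the identity just established this also equals $\widehat{f}_j^{\widehat{\varphi}_j(\mu)}\mu$, which lies in $B(\Lambda_0)^L$ by the inductive hypothesis applied to $\mu$.
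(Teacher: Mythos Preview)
Your proof is correct and follows essentially the same approach as the paper: both rest on the fact that an $\ell$-core has either all addable or all removable $i$-boxes for each residue $i$, so that $\widehat{f}_i^{\widehat\varphi}$ and $\widetilde{f}_i^{\varphi}$ each simply adjoin every addable $i$-box. Your version is more careful in verifying inductively that the intermediate partitions $\widehat{f}_i^{k}\lambda$ retain the expected ladder signature $+^{\varphi-k}-^{k}$, whereas the paper dispatches this with the single phrase ``just in different relative orders.''
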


\begin{proof}
$\ell$-cores have either all addable or all removable $i$-boxes for any fixed $i$. Therefore, for a core $\lambda$ we have that $\varphi = \widehat{\varphi}$. Thus $\widehat{f}_i^{\varphi}$ and $\tilde{f}_i^{\varphi}$ will both add all addable $i$-boxes to $\lambda$, just in different relative orders.

For the last statement, recall that applying the crystal operators $\widetilde{f}_i^{\varphi}$ will generate all $\ell$-cores (starting with the empty partition $\emptyset$).
\end{proof}

The following lemma is a well known recharacterization of $\ell$-cores.
\begin{lemma}\label{hook_length_divisible}
A box $x$ has a hook length divisible by $\ell$ if and only if there exists a residue $i$ 
so that the last box in the row of $x$ has residue $i$ and the last box in the column of $x$ 
has residue $i+1$. In particular, any partition which has such a box $x$ is not an $\ell$-core.
\end{lemma}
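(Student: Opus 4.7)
The plan is to carry out a direct calculation relating the hook length of $x$ to the residues at the end of its row and column. Let $x = (a,b)$, and write $r_{\text{row}}$ for the residue of the last box $(a, \lambda_a)$ in the row of $x$ and $r_{\text{col}}$ for the residue of the last box $(\lambda'_b, b)$ in the column of $x$. By the definition of residue (column minus row, modulo $\ell$), we have $r_{\text{row}} \equiv \lambda_a - a \pmod{\ell}$ and $r_{\text{col}} \equiv b - \lambda'_b \pmod{\ell}$, and by the usual formula for a hook length we have $h_{(a,b)}^{\lambda} = (\lambda_a - b) + (\lambda'_b - a) + 1$.

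Combining these, a routine subtraction gives $r_{\text{row}} - r_{\text{col}} \equiv (\lambda_a - a) - (b - \lambda'_b) = h_{(a,b)}^{\lambda} - 1 \pmod{\ell}$. Therefore $\ell \mid h_{(a,b)}^{\lambda}$ if and only if $r_{\text{col}} \equiv r_{\text{row}} + 1 \pmod{\ell}$. Setting $i := r_{\text{row}}$, this is precisely the equivalence asserted in the lemma: the last box in the row of $x$ has residue $i$ and the last box in the column of $x$ has residue $i+1$.

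For the final sentence of the lemma, the "in particular" is immediate from Remark \ref{divisibility}, which states that $\lambda$ is an $\ell$-core if and only if $\ell \nmid h_{(a,c)}^{\lambda}$ for every box $(a,c)$ of $\lambda$. Hence if such a distinguished box $x$ exists, then $\lambda$ has a hook length divisible by $\ell$ and so is not an $\ell$-core. There is no real obstacle here; the entire content of the lemma is packaged in a one-line residue computation, so the only thing requiring care is correctly identifying the sign conventions for residues and the contributions of $\lambda_a$ and $\lambda'_b$ to the hook length.
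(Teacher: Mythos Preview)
Your proof is correct. The paper does not actually supply a proof of this lemma; it is stated as a well-known recharacterization of $\ell$-cores and is essentially a restatement of Remark~\ref{residue_remark}, so your direct residue computation is exactly the kind of verification the paper leaves to the reader.
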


The upcoming lemma is used only in the proof of Lemma \ref{cancelation}. 

\begin{lemma}\label{nodes_JM}
If $\lambda$ is an $(\ell,0)$-JM partition then for every $0\leq i < \ell$, there is 
no $i$-ladder in the Young diagram of $\lambda$ which has a $-$ above a $+$.
\end{lemma}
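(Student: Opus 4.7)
The plan is to prove the contrapositive: if some $i$-ladder of $\lambda$ contains a removable $i$-box $(c,d)$ strictly above an addable $i$-box $(a,b)$, then $\lambda$ fails the $(\ell,0)$-JM criterion of Theorem~\ref{JM_irred}, contradicting the hypothesis. The ladder relation combined with $c<a$ gives $d>b$ and $a-c=(\ell-1)(d-b)$; set $k=d-b\ge 1$. Since $\lambda_c=d>b$, the box $(c,b)$ belongs to $\lambda$, and since $(a,b)$ is addable, column $b$ has height exactly $a-1$. Computing the hook length directly,
\[
h_{(c,b)}^{\lambda} \;=\; (\lambda_c - b) + (\lambda'_b - c) + 1 \;=\; (d-b) + (a-1-c) + 1 \;=\; k + (a-c) \;=\; k\ell,
\]
so $\ell \mid h_{(c,b)}^{\lambda}$, while $h_{(c,d)}^{\lambda}=1$ is not divisible by $\ell$ since $(c,d)$ is removable.

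To produce the forbidden triple in the JM criterion, I need some $x\in\{c+1,\dots,a-1\}$ with $\ell\nmid h_{(x,b)}^{\lambda}$. By Lemma~\ref{hook_length_divisible}, since the bottom box of column $b$ is $(a-1,b)$ of residue $i+1$, divisibility $\ell \mid h_{(x,b)}^{\lambda}$ holds precisely when the last box of row $x$ has residue $i$, i.e.\ $\lambda_x \equiv d + (x-c) \pmod{\ell}$. So it suffices to find a row $x$ in $\{c+1,\dots,a-1\}$ whose last box has residue different from $i$.

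Suppose for contradiction that the residue constraint $\lambda_x \equiv d+(x-c) \pmod{\ell}$ holds for every $x\in\{c,c+1,\dots,a-1\}$. I claim inductively that $\lambda_{c+j}\le d - j(\ell-1)$ for $j=0,1,\dots,a-1-c$. The base case $j=0$ is just $\lambda_c = d$. For the inductive step, the residues of $\lambda_{c+j}$ and $\lambda_{c+j+1}$ (namely $d+j$ and $d+j+1$ modulo $\ell$) are distinct, forcing $\lambda_{c+j+1} < \lambda_{c+j}$; the residue condition then forces the drop to be at least $\ell-1$, yielding $\lambda_{c+j+1} \le \lambda_{c+j} - (\ell-1)$. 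Specializing to $j=a-1-c = k(\ell-1)-1$ and using $\lambda_{a-1} \ge b = d-k$ gives
\[
(k(\ell-1)-1)(\ell-1) \;\le\; k,
\]
which rearranges to $k\,\ell\,(\ell-2) \le \ell-1$. This fails for every $\ell\ge 3$ and $k\ge 1$, a contradiction. Hence some $x\in\{c+1,\dots,a-1\}$ satisfies $\ell\nmid h_{(x,b)}^{\lambda}$, and the triple $(c,b),(c,d),(x,b)$ witnesses a violation of the $(\ell,0)$-JM condition, contradicting the assumption that $\lambda$ is an $(\ell,0)$-JM partition. The principal obstacle is establishing the inductive bound $\lambda_{c+j} \le d - j(\ell-1)$; once the residue obstruction is translated into this arithmetic form, the final numerical contradiction is immediate from $\ell\ge 3$.
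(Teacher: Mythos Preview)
Your proof is correct and takes a genuinely different route from the paper's. The paper exploits the structural decomposition $\lambda \approx (\mu,r,s,\rho,\sigma)$ of Theorem~\ref{construct_JMs}: it splits into the cases $\mu=\emptyset$ and $\mu\neq\emptyset$, and in the latter case observes that a $-$ above a $+$ on an $i$-ladder would force the core $\mu$ to have its first row ending in residue $i$ and first column ending in residue $i+1$, contradicting $\ell \nmid h_{(1,1)}^{\mu}$. Your argument instead works directly from the hook-length definition of $(\ell,0)$-JM partitions: you compute $h_{(c,b)}^{\lambda}=k\ell$ straight from the ladder relation, and then produce the third box $(x,b)$ by the inductive bound $\lambda_{c+j}\le d-j(\ell-1)$, which forces a numerical contradiction with $\lambda_{a-1}\ge b$ when $\ell\ge 3$. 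Your approach is more self-contained---it never invokes the decomposition theorem---while the paper's is shorter because that machinery has already been built. One small point worth making explicit in your write-up: the box $(x,b)$ you find does lie in $\lambda$ since $\lambda_b'=a-1$, and the range $\{c+1,\dots,a-1\}$ is nonempty because $a-c=k(\ell-1)\ge 2$.
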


\begin{proof}
Suppose $\lambda$ is an $(\ell,0)$-JM partition. Then $\lambda \approx (\mu,r,s,\rho,\sigma)$.
 First suppose that $\mu = \emptyset$. Let $j$ be the residue of the addable box in the first
 row of $\lambda$. Then all removable boxes have residue either $j-1$ (in the first $r$ rows)
 or $j+1$ (in the first $s$ columns), so no ladder contains  both a $-$ and a $+$.

Suppose $\mu \neq \emptyset$. The only possible way of having a $-$ above a $+$ in a ladder is
 to have a removable box of residue $j$ on or above row $r+1$ and an addable box of residue $j$ 
on or to the left of column $s+1$. But then the core $\mu$ has a box of residue $j$ at the end of its
 first row (row $r+1$ of $\lambda$) and a box of residue $j+1$ at the end of its first column 
(on column $s+1$ of $\lambda$). This implies that $\mu$ isn't a core, by Lemma \ref{hook_length_divisible},
since $\ell \mid h_{(1,1)}^\mu$.
\end{proof}

The following Lemma \ref{cancelation} will be used in this section for proving our crystal theorem generalizations for $(\ell,0)$-JM partitions.

\begin{lemma}\label{cancelation} Let $\lambda$ be an $(\ell,0)$-JM partition. Then its ladder 
$i$-signature is the same as the reduced ladder $i$-signature. In other words, there is no 
$-+$ cancelation in the ladder $i$-signature of $\lambda$. 
\end{lemma}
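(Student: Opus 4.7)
The plan is to combine Lemma \ref{nodes_JM} with Fayers' criterion (Theorem \ref{JM_irred}) and the characterization of $(\ell,0)$-JM partitions as generalized $\ell$-partitions (Theorem \ref{main_theorem_JM}). I plan to parameterize the $i$-ladders by the invariant $K=a+(\ell-1)b$ for any box $(a,b)$ on the ladder, so that reading leftmost-to-rightmost corresponds to increasing $K$ and top-to-bottom within a fixed ladder corresponds to increasing row index.

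First, by Lemma \ref{nodes_JM}, within each $i$-ladder the sign pattern is already of the form $+^{p}-^{q}$. So any $-+$ cancellation in the full $i$-signature must involve a removable $i$-box $x=(a,b)$ and an addable $i$-box $y=(c,d)$ lying in distinct ladders with $K_{x}<K_{y}$. I plan to assume such a pair exists and derive a contradiction. The common residue of $x$ and $y$ gives $(d-b)+(a-c)=\ell m$ for some $m\in\mathbb{Z}$. Combining the removability of $x$ (so $(a+1,b),(a,b+1)\notin\lambda$), the addability of $y$ (so $\lambda_{c}=d-1$ and, if $c>1$, $\lambda_{c-1}\geq d$), and the inequality $K_{y}>K_{x}$, I will rule out every quadrant arrangement except (Case A) $c<a$ and $d>b$, or its mirror (Case B) $c>a$ and $d<b$; the degenerate cases $c=a$ or $d=b$ fail the residue condition since $\ell\geq 3$. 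Case B will reduce to Case A by transposing $\lambda$, which preserves the $(\ell,0)$-JM property and exchanges the two configurations.

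In Case A the box $(c,b)$ lies in $\lambda$, and a direct calculation yields $h^{\lambda}_{(c,b)}=(d-b)+(a-c)=\ell m$ with $m\geq 1$, so $\nu_{\ell}(h^{\lambda}_{(c,b)})=1$. Since $x$ is removable its hook length is $1$, so column $b$ contains a $\nu_{\ell}=0$ box distinct from $(c,b)$. Fayers' criterion then forces every box in row $c$ to have $\nu_{\ell}=1$. Applied to the rightmost box $(c,d-1)=(c,\lambda_{c})$, whose hook length is $1+(\lambda'_{d-1}-c)$, this forces $\lambda'_{d-1}-c\equiv -1\pmod{\ell}$ and hence $\lambda'_{d-1}\geq c+\ell-1$. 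Consequently $\lambda$ contains a flat rectangular region of at least $\ell$ consecutive rows of exact length $d-1$, namely rows $c,c+1,\ldots,c+\ell-1$, bounded above by a row of length at least $d$.

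The final step will use Theorem \ref{main_theorem_JM} to derive a contradiction: this flat region, together with the strictly longer row just above it and the strictly shorter row below, should produce a removable $\ell$-rim hook in $\lambda$ (or in a partition obtained from $\lambda$ by successive removals of horizontal and vertical $\ell$-rim hooks) that is neither purely horizontal nor purely vertical. The hardest part of the proof will be this combinatorial bookkeeping: after removing all horizontal $\ell$-rim hooks possible from row $c-1$ and any vertical $\ell$-rim hooks possible from the flat region (each of which must be non-adjacent to any complementary hook, by the generalized $\ell$-partition property), one reaches a residual partition in which the corner adjacent to the flat region forces a non-horizontal, non-vertical removable $\ell$-rim hook, contradicting Theorem \ref{main_theorem_JM} and completing the proof.
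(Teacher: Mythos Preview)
Your setup, the reduction to Cases A and B, and your application of Fayers' criterion to force every hook in row $c$ to be divisible by $\ell$ are all correct and track the paper's argument closely. The problem is your proposed final step.

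The rim-hook argument you sketch cannot work as stated, because it never uses the ladder inequality $K_x < K_y$. Without that inequality there is nothing to contradict: for $\ell=3$ take $\lambda=(4,1,1,1)$ with removable $0$-box $(a,b)=(4,1)$ and addable $0$-box $(c,d)=(2,2)$. This is a Case~A configuration, $h^{\lambda}_{(2,1)}=3$ is divisible by $\ell$, all of row~$2$ has $\nu_\ell=1$, and you obtain exactly the flat region you describe (rows $2,3,4$ of length~$1$) bounded above by row~$1$ of length $4\geq d$. Yet $\lambda$ is a perfectly good generalized $3$-partition: the horizontal hook $\{(1,2),(1,3),(1,4)\}$ and the vertical hook $\{(2,1),(3,1),(4,1)\}$ are not adjacent, and removing both leaves the $3$-core $(1)$. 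In this example $K_x=K_y=6$, so there is in fact no $-+$ cancellation; the point is that your proposed finish cannot detect this, so it cannot be a valid finish for the general case either.

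You are one line away from the paper's completion. Since $h^{\lambda}_{(c,k)}-h^{\lambda}_{(c,k+1)}=1+(\lambda'_k-\lambda'_{k+1})$ and both hooks are divisible by $\ell$, you get $\lambda'_k-\lambda'_{k+1}\geq \ell-1$ for each $b\leq k\leq d-2$; together with $\lambda'_{d-1}\geq c+\ell-1$ (which you already derived) and $\lambda'_b=a$, this gives $a-c\geq(\ell-1)(d-b)$, i.e.\ $K_x\geq K_y$, contradicting the assumed ladder ordering. The paper phrases the same computation contrapositively: if every $h^{\lambda}_{(c,j)}$ for $b<j<d$ were divisible by $\ell$ then $(a,b)$ could not lie on a ladder to the left of $(c,d)$; hence some $h^{\lambda}_{(c,j)}$ is not divisible by $\ell$, and then $(c,b),(c,j),(a,b)$ violate Fayers' criterion directly. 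Either phrasing finishes the proof without any appeal to Theorem~\ref{main_theorem_JM}.
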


\begin{proof}
Suppose there is a $-+$ cancelation in the ladder $i$-signature of an $(\ell,0)$-JM partition $\lambda$.
 By Lemma \ref{nodes_JM}, it must be that a removable $i$-box occurs on a ladder to the left of a ladder
 which contains an addable $i$-box. Suppose the removable $i$-box is in box $(a,b)$ and the addable 
$i$-box is in box $(c,d)$. We will suppose that $a > c$ (the case $a<c$ 
is similar). Then $\ell \mid h_{(c,b)}^{\lambda}$. Also $h_{(a,b)}^{\lambda} = 1$ since $(a,b)$ is a 
removable box. If all of $h_{(c,k)}^{\lambda}$ are divisible by $\ell$ for 
$k \in \{b+1, b+2, \dots, d-1 \}$, then the box $(a,b)$ could not be in a ladder to the left of the 
ladder of $(c,d)$. Therefore there is a $b<j<d$ so that $\ell \nmid h_{(c,j)}^{\lambda}$. Hence 
$\lambda$ is not an $(\ell,0)$-JM partition.
\end{proof}

\subsection{Generalizations of the crystal theorems to the ladder crystal}
We will now prove analogues of Theorems \ref{top_and_bottom} and \ref{other_cases} for $(\ell,0)$-JM partitions in the ladder crystal $B(\Lambda_0)^L$.
\begin{theorem}\label{top_and_bottom_JM}
Suppose that $\lambda$ is an $(\ell,0)$-JM partition and $0\leq i < \ell$. Then
\begin{enumerate}
\item\label{f_JM} $\widehat{f}_{i}^{\widehat{\varphi}} \lambda$ is an $(\ell,0)$-JM partition,
\item\label{e_JM} $\widehat{e}_{i}^{\widehat{\varepsilon}} \lambda$ is an $(\ell,0)$-JM partition.
\end{enumerate}
\end{theorem}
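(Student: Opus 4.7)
The plan is to reduce the theorem to the decomposition of $(\ell,0)$-JM partitions given in Theorem \ref{construct_JMs}, together with Lemma \ref{cancelation}, and then to perform a case analysis on where the addable (respectively removable) $i$-boxes sit relative to the decomposition $\lambda \approx (\mu,r,s,\rho,\sigma)$. This strategy mirrors the proof of the $\ell$-partition version (Theorem \ref{top_and_bottom}), but with the column-staircase data $s,\sigma$ added to the bookkeeping.

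First I would invoke Lemma \ref{cancelation} to see that when $\lambda$ is an $(\ell,0)$-JM partition the ladder $i$-signature has no cancellation, so $\widehat{f}_i^{\widehat{\varphi}}\lambda$ is obtained from $\lambda$ by adding every addable $i$-box simultaneously, and $\widehat{e}_i^{\widehat{\varepsilon}}\lambda$ by removing every removable $i$-box. This reduces both parts of the theorem to showing that performing the corresponding global operation preserves the class of $(\ell,0)$-JM partitions, which by Theorem \ref{construct_JMs} is the same as exhibiting a decomposition $(\mu',r',s',\rho',\sigma')$ for the result.

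Next, writing $\lambda \approx (\mu,r,s,\rho,\sigma)$, I would classify the addable $i$-boxes by location: (a) at the end of one of rows $1,\dots,r$ of the row-staircase above $\mu$; (b) at the end of one of columns $1,\dots,s$ of the column-staircase to the left of $\mu$; (c) at the end of the first row or the foot of the first column of $\mu$ embedded in $\lambda$; or (d) strictly inside $\mu$. Because the end-residues of rows $1,\dots,r+1$ (respectively columns $1,\dots,s+1$) form strictly decreasing (resp. increasing) cyclic runs mod $\ell$, at most one of the $r$ row-staircase rows can contribute, and likewise at most one of the $s$ column-staircase columns. For the additions to the core part, Lemma \ref{cores} ensures that the simultaneous addition of all addable $i$-boxes to $\mu$ yields another $\ell$-core $\widetilde{f}_i^{\varphi_{i-r+s}(\mu)}\mu$. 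In the generic case $\mu_1-\mu_2 < \ell-2$ and $\mu_1'-\mu_2' < \ell-2$, the new data $(\mu',r',s',\rho',\sigma')$ has $r'=r$, $s'=s$, $\mu'$ the updated core, and $\rho',\sigma'$ padded by $1$'s in the positions corresponding to the newly filled staircase rows and columns. In the boundary cases $\mu_1-\mu_2=\ell-2$ or $\mu_1'-\mu_2'=\ell-2$, the added $i$-box in location (c) absorbs the first row or column of $\mu'$ into an extended staircase, so $r'=r+1$ (resp. $s'=s+1$) and $\mu'$ shrinks accordingly to $\overline{\widetilde{f}_i^{\bullet}\mu}$; an argument parallel to the one in the proof of Theorem \ref{top_and_bottom} (case (c)) shows this is still an $\ell$-core. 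In every case, the quintuple $(\mu',r',s',\rho',\sigma')$ satisfies the hypotheses of Theorem \ref{construct_JMs} (including the edge constraint when $\mu'=\emptyset$), so $\widehat{f}_i^{\widehat{\varphi}}\lambda$ is an $(\ell,0)$-JM partition.

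For part \eqref{e_JM}, I would run the dual case analysis on removable $i$-boxes: a removable $i$-box can sit at the end of a row in the row-staircase, at the foot of a column in the column-staircase, at the foot of $\sigma$ or the end of $\rho$, or inside $\mu$. The outcome is analogous; $\widehat{e}_i^{\widehat{\varepsilon}}\lambda$ is realized by another quintuple $(\mu'',r'',s'',\rho'',\sigma'')$ satisfying Theorem \ref{construct_JMs}. The main obstacle throughout will be the careful bookkeeping in the boundary cases — in particular when $\mu=\emptyset$ so that the constraint $\rho_{r+1}\sigma_{s+1}=0$ must be preserved, and when the addition (or removal) of an $i$-box causes the row-staircase and column-staircase to interact through the $(r+1,s+1)$ corner of the core. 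Verifying that the extended quintuple still satisfies all of the required inequalities, and never produces a forbidden adjacent pair of horizontal and vertical $\ell$-rim hooks as in Definition \ref{generalize_l_partition}, is where the argument requires the most care.
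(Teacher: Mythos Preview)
Your overall strategy is right: use Lemma \ref{cancelation} to see that $\widehat{f}_i^{\widehat{\varphi}}$ simply adds every addable $i$-box, and then exhibit a decomposition $(\mu',r',s',\rho',\sigma')$ of the result via Theorem \ref{construct_JMs}. But the case analysis you propose rests on a miscomputation of the residues in the staircase part of the decomposition. You claim that the end-residues of rows $1,\dots,r+1$ form a strictly decreasing cyclic run mod $\ell$, so that at most one staircase row has an addable $i$-box. In fact the opposite is true: since $\lambda_j-\lambda_{j+1}\equiv \ell-1 \equiv -1 \pmod{\ell}$ for $1\le j\le r$, the residues of the boxes $(j,\lambda_j+1)$ for $j=1,\dots,r+1$ are all \emph{equal}. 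Hence either \emph{all} of rows $1,\dots,r+1$ have an addable $i$-box (namely when the first row does), or \emph{none} of them do. The same holds for the column staircase. Your description of $\rho'$ as ``padded by $1$'s in the positions corresponding to the newly filled staircase rows'' therefore cannot be correct, and the boundary analysis you sketch (a single row being absorbed) does not reflect what actually happens.

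Once you correct this, the natural case split is exactly the one the paper uses: let $m$ and $n$ be the residues of the addable boxes in the first row and first column of $\lambda$, and distinguish the cases $m\neq i\neq n$, $m=i\neq n$ (and its transpose $m\neq i = n$), and $m=i=n$. The paper then does something more efficient than redoing all of the bookkeeping from scratch: in the case $m=i\neq n$ it strips off the column staircase to form the $\ell$-partition $\nu\approx(\mu,r,0,\rho,\emptyset)$ and invokes Theorem \ref{top_and_bottom} directly, then reattaches $(s,\sigma)$. This reuse of the $\ell$-partition result is what saves you from having to separately verify the boundary subcases $\mu_1-\mu_2=\ell-2$, etc., that you flagged as ``the main obstacle''. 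Your direct approach can be made to work after the residue correction, but it duplicates the casework already done in the proof of Theorem \ref{top_and_bottom}.
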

\begin{proof}
We will prove \eqref{f_JM}; \eqref{e_JM} follows similarly. Suppose that $\lambda \approx 
(\mu,r,s,\rho,\sigma)$ has an addable $m$-box in the first row, and an addable $n$-box in the 
first column for two residues $m,n$. If $m\neq i \neq n$ then $\widehat{f}_i^{\widehat{\varphi}}$
 will only add boxes to the core $\mu$ in the Young diagram of $\lambda$. But 
$\widehat{f}_{i-r+s}^{\widehat{\varphi}} \mu$ will again be a core, by Lemma \ref{cores}. 
Hence $\widehat{f}_i^{\widehat{\varphi}} \lambda \approx (\widehat{f}_{i-r+s}^{\varphi} \mu , r,s,\rho, \sigma)$.

If $m =  i \neq n$ then the partition $\nu \approx (\mu, r, 0, \rho, \emptyset)$ is an $\ell$-partition.
 From Lemma \ref{cancelation}, we have no cancelation of $-+$ in $\lambda$, so that 
$\widehat{f}_{i-s} ^{\widehat{\varphi}_{i-s} (\nu)} \nu = \tilde{f}_{i-s} ^{\varphi_{i-s} (\nu)} \nu$. 
By Theorem \ref{top_and_bottom}, $\widehat{f}_{i-s} ^{\widehat{\varphi}_{i-s} (\nu)} \nu $ is an 
$\ell$-partition. Say 
$\widehat{f}_{i-s} ^{\widehat{\varphi}_{i-s} (\nu)}\nu \approx (\mu', r',0, \rho', \emptyset)$. 
Then $\widehat{f}_i^{\widehat{\varphi}} \lambda \approx (\mu',r',s,\rho',\sigma)$. 
A similar argument works when $m \neq i = n$ by using that the transpose of 
$(\mu, 0, s , \emptyset, \sigma)$ is an $\ell$-partition. 

We now suppose that $m = i = n$. 
In this case, $\lambda$ has an addable $i$-box in the first $r+1$ rows and $s+1$ columns. It may also have
addable $i$-boxes within the core $\mu$. $\lambda$ has no removable $i$-boxes. Thus we get 
$\widehat{f}_i^{\widehat{\varphi}} \lambda \approx (\widehat{f}_{i-r+s}^{\widehat{\varphi}_{i-r+s}(\mu)} \mu, r, s, \rho, \sigma)$ 
is an $(\ell,0)$-JM partition.
\end{proof}

\begin{theorem}\label{other_cases_JM}
Suppose that $\lambda$ is an $(\ell,0)$-JM partition. Then
\begin{enumerate}
\item\label{f_theorem_JM} $\widehat{f}_{i}^{k} \lambda$ is not an $(\ell,0)$-JM partition for $0<k < \widehat{\varphi} -1,$
\item\label{e_theorem_JM} $\widehat{e}_{i}^k \lambda$ is not an $(\ell,0)$-JM partition for $1<k< \widehat{\varepsilon}$.
\end{enumerate}
\end{theorem}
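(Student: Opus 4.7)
The proof parallels that of Theorem \ref{other_cases}. I would first establish a ladder-crystal analog of Lemma \ref{lemmaforproofofothercases}: \emph{an $(\ell,0)$-JM partition cannot simultaneously possess two ladder conormal $i$-boxes and a ladder normal $i$-box of the same residue $i$}. Granting this analog lemma, both parts of the theorem follow from a direct signature count.

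By Lemma \ref{cancelation}, the ladder $i$-signature of $\lambda$ is already reduced, of the form $+_1 \cdots +_{\widehat{\varphi}}\, -_1 \cdots -_{\widehat{\varepsilon}}$. Each application of $\widehat{f}_i$ converts the rightmost surviving $+$ into a $-$ at the same reading position (the newly added box of residue $i$ has neighbors of residues $i\pm 1$, so it affects no other $i$-box's addable/removable status), and no new cancelation is created. After $k$ iterations the signature of $\widehat{f}_i^k\lambda$ thus has $\widehat{\varphi}-k$ $+$'s and $\widehat{\varepsilon}+k$ $-$'s, so for $0 < k < \widehat{\varphi}-1$ there are $\widehat{\varphi}-k \geq 2$ ladder conormal and $\geq 1$ ladder normal $i$-boxes in $\widehat{f}_i^k\lambda$; the analog lemma forces it out of the $(\ell,0)$-JM class, proving part \eqref{f_theorem_JM}. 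The symmetric count for $\widehat{e}_i^k$ (each $\widehat{e}_i$ turns the leftmost $-$ into a $+$ at the same reading position) yields $\widehat{\varphi}+k \geq 2$ conormal and $\widehat{\varepsilon}-k \geq 1$ normal $i$-boxes for $1 < k < \widehat{\varepsilon}$, giving part \eqref{e_theorem_JM} from the same lemma.

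The main obstacle is establishing the analog lemma itself. Given ladder conormal $i$-boxes $n_1, n_2$ with $n_1$ preceding $n_2$ in reading order, and a ladder normal $i$-box $n_3$, all of residue $i$, in an $(\ell,0)$-JM partition $\mu$: Lemma \ref{cancelation} places $n_3$ after both $n_1$ and $n_2$ in reading order, while Lemma \ref{nodes_JM} forbids any $-$ above a $+$ on a common ladder. I would then perform a case analysis on which pairs among $n_1, n_2, n_3$ share a common ladder and, using Remark \ref{residue_remark} together with Lemma \ref{hook_length_divisible}, locate three positions $(a,b), (a,y), (x,b) \in \mu$ satisfying $\nu_\ell(h_{(a,b)}^\mu) = 1$ and $\nu_\ell(h_{(a,y)}^\mu) = \nu_\ell(h_{(x,b)}^\mu) = 0$, contradicting Theorem \ref{JM_irred}. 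The hard part is that the ladder reading order does not directly mirror row-and-column geometry, so translating the signature-level constraints into a bad JM triple requires care: concretely, one expects the column of an appropriately chosen conormal box (among $n_1, n_2$) meeting the row of $n_3$ to pin down a box of hook length divisible by $\ell$, while the same column meeting a nearby row through the other conormal yields a neighboring hook length not divisible by $\ell$.
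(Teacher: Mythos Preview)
Your overall strategy matches the paper's exactly: reduce both parts to the lemma that no $(\ell,0)$-JM partition can carry two ladder conormal $i$-boxes together with one ladder normal $i$-box, and then deduce the theorem by counting $+$'s and $-$'s in the signature of $\widehat{f}_i^k\lambda$ (resp.\ $\widehat{e}_i^k\lambda$). That is precisely what the paper does.

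Where you diverge is in the case analysis for the lemma itself. You propose splitting according to which pairs among $n_1,n_2,n_3$ share a ladder, and you worry that translating ladder reading order into row--column geometry will be delicate. The paper sidesteps this entirely: once you know there exist one removable $i$-box $n_1$ and two addable $i$-boxes $n_2,n_3$ (with $n_2$ in a row above $n_3$), the ladder structure becomes irrelevant. The paper's three cases are purely by \emph{row position} of $n_1$ relative to $n_2$ and $n_3$ (above both, between, below both), and in each case Remark~\ref{residue_remark} immediately produces the bad triple. For instance, when $n_1$ lies in a row between $n_2$ and $n_3$: the box in the row of $n_1$ and column of $n_3$ has hook divisible by $\ell$, the box in the row of $n_2$ and column of $n_3$ does not, and $h_{n_1}=1$. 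No appeal to Lemma~\ref{nodes_JM} or to ladder-sharing is needed. So your concern about the ``hard part'' is unfounded; the case analysis is shorter and more geometric than you anticipate.
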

\begin{proof}
We will prove \eqref{f_theorem_JM}, and we claim that \eqref{e_theorem_JM} is similar. 
We will prove this by showing that there must exist boxes $(a,b)$ , $(a,y)$ and $(x,b)$ 
in the Young diagram of $\lambda$ such that $\ell \mid h_{(a,b)}^\lambda$ and 
$\ell \nmid h_{(a,y)}^\lambda, \ell \nmid h_{(x,b)}^\lambda$. Since $0< k < \widehat{\varphi}-1$ 
there are at least two ladder conormal $i$-boxes in $\widehat{f}_{i}^{k} \lambda$ and at least one 
ladder normal $i$-box. Label the normal box $n_1$.  Label the conormal boxes $n_2$ and $n_3$ 
(without loss of generality, $n_2$ is in a row above $n_3$). We will show that any partition
 with at least one ladder normal $i$-box and at least two ladder conormal $i$-boxes cannot
 be an $(\ell,0)$-JM partition. There are three cases to consider.

The first case is that $n_1$ is above $n_2$ and $n_3$. Then the hook length in the row of $n_1$
 and column of $n_3$ is divisible by $\ell$, but the hook length in the row of $n_2$ and column
 of $n_3$ is not. Also, the hook length for box $n_1$ is 1, which is not divisible by $\ell$. 

The second case is that $n_1$ is in a row between the row of $n_2$ and $n_3$. In this case, $\ell$
 divides the hook length in the row of $n_1$ and column of $n_3$. Also $\ell$ does not divide the
 hook length in the row of $n_2$ and column of $n_3$, and the hook length for the box $n_1$ is 1. 

The last case is that $n_1$ is below $n_2$ and $n_3$. In this case, $\ell$ divides the hook length
 in the column of $n_1$ and row of $n_2$, but $\ell$ does not divide the hook length in the column
 of $n_3$ and row of $n_2$. Also the hook length for the box $n_1$ is 1. 

\end{proof}

\subsection{All JM partitions are nodes of the ladder crystal}

This short subsection contains the proof that all $(\ell,0)$-JM partitions occur as nodes of $B(\Lambda_0)^L$. This surprising fact is extremely important to the proofs of Theorems \ref{top_and_bottom_weak} and \ref{other_cases_weak}. 

\begin{theorem}\label{irreducible_nodes} If $\lambda$ is an $(\ell,0)$-JM partition then $\lambda$ is a node of $B(\Lambda_0)^L$.

\begin{proof}
The proof is by induction on the size of a partition. If the partition has size zero then it is the empty partition which is an $(\ell,0)$-JM partition and is a node of the crystal $B(\Lambda_0)^L$.

Suppose $\lambda \vdash n$ is an $(\ell,0)$-JM partition. Let $i$ be a residue so that $\lambda$ has at least one ladder conormal box of residue $i$. We can find such a box since no $-+$ cancellation exists by Lemma \ref{cancelation}. 
Define $\mu$ to be $\widehat{e}_i^{\widehat{\varepsilon}} \lambda$. Then $\mu \vdash (n-\hat{\varepsilon})$ is an $(\ell,0)$-JM partition by Theorem \ref{top_and_bottom_JM}, of smaller size than $\lambda$. By induction $\mu$ is a node of $B(\Lambda_0)^L$. But $\widehat{f}_i^{\widehat{\varepsilon}} \mu= \widehat{f}_i^{\widehat{\varepsilon}} \widehat{e}_i^{\widehat{\varepsilon}} \lambda = \lambda$ by Lemma \ref{e_inverse_f}, so $\lambda$ is a node of $B(\Lambda_0)^L$.
\end{proof}

\end{theorem}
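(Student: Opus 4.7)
The plan is to proceed by strong induction on $|\lambda|$. The base case $|\lambda| = 0$ is immediate, since $\emptyset$ is declared to be the highest weight node of $B(\Lambda_0)^L$ by construction. For the inductive step, given a nonempty $(\ell,0)$-JM partition $\lambda$, I would produce a smaller $(\ell,0)$-JM partition $\mu$ together with a residue $i$ and an integer $k \geq 1$ such that $\widehat{f}_i^k \mu = \lambda$. Invoking the inductive hypothesis on $\mu$ then places $\lambda$ in $B(\Lambda_0)^L$.

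The concrete strategy is to pull boxes off with $\widehat{e}_i$. First I would select a residue $i$ such that $\widehat{\varepsilon}_i(\lambda) \geq 1$, i.e.\ $\lambda$ possesses at least one ladder normal $i$-box. Since $\lambda$ is nonempty it has at least one removable box; let $i$ be the residue of such a box, which contributes a $-$ to the ladder $i$-signature. By Lemma \ref{cancelation}, for an $(\ell,0)$-JM partition the ladder $i$-signature coincides with its reduced ladder $i$-signature, so no cancellation removes this $-$, and hence $\widehat{\varepsilon}_i(\lambda) \geq 1$ as required. Now set $\mu := \widehat{e}_i^{\widehat{\varepsilon}}\lambda$; Theorem \ref{top_and_bottom_JM} guarantees that $\mu$ is again an $(\ell,0)$-JM partition, and clearly $|\mu| < |\lambda|$.

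To close the induction I would apply the inductive hypothesis to conclude that $\mu$ is a node of $B(\Lambda_0)^L$, and then iterate Lemma \ref{e_inverse_f} $\widehat{\varepsilon}$ times to conclude $\widehat{f}_i^{\widehat{\varepsilon}}\mu = \lambda$. This exhibits $\lambda$ as reached from $\emptyset$ by a sequence of $\widehat{f}_j$'s, so $\lambda$ is a node of $B(\Lambda_0)^L$.

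There is no serious obstacle here: the heavy lifting has already been done by Theorem \ref{top_and_bottom_JM}, which certifies that descending to the top of the $i$-string (via $\widehat{e}_i^{\widehat{\varepsilon}}$) keeps us inside the class of $(\ell,0)$-JM partitions, and by Lemma \ref{cancelation}, which ensures that every removable box actually shows up as a ladder normal box so the descent is nontrivial. The only delicate point is making sure to choose $i$ via a removable box of $\lambda$ (producing a normal box), rather than via an addable box of $\lambda$ (which would only produce a conormal box and would not allow us to shrink $\lambda$).
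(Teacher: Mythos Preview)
Your proposal is correct and follows essentially the same route as the paper: induct on $|\lambda|$, pick a residue $i$ with a ladder normal box, set $\mu = \widehat{e}_i^{\,\widehat{\varepsilon}}\lambda$, invoke Theorem~\ref{top_and_bottom_JM} to keep $\mu$ an $(\ell,0)$-JM partition, apply the inductive hypothesis, and climb back up via Lemma~\ref{e_inverse_f}. Your care in choosing $i$ from a \emph{removable} box (so as to obtain a normal box and guarantee $\widehat{\varepsilon}\geq 1$) is exactly right; the paper's printed proof actually says ``conormal'' at that step, which appears to be a slip, since one needs $\widehat{\varepsilon}_i(\lambda)\geq 1$ for the induction to make progress.
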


\section{Reinterpreting the crystal rule}\label{crystal_lemmas}

In this short section we prove some lemmas necessary for our main theorem (that the crystals $B(\Lambda_0)$ and $B(\Lambda_0)^L$ are isomorphic). We also reinterpret the crystal rule on the classical $B(\Lambda_0)$ in terms of the new crystal rule.

\subsection{Two lemmas needed for crystal isomorphism}
\begin{lemma}\label{nodes}
If $\lambda \in B(\Lambda_0)^L$ then for every $0\leq i < \ell$, there is no $i$-ladder in the Young 
diagram of $\lambda$ which has a $-$ above a $+$ (i.e. there do not exist boxes $(a,b)$ and $(c,d)$ on
 the same $i$-ladder of $\lambda$ so that $(a,b)$ is removable, $(c,d)$ is addable and $a < c$). 
\end{lemma}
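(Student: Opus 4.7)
The plan is to invoke the characterization of the nodes of $B(\Lambda_0)^L$ established in Section \ref{locked}, where it was shown that every node $\lambda$ of $B(\Lambda_0)^L$ is the unique smallest element of its regularization class in dominance order, and that this smallest element is precisely the partition in which every box is locked. Given this, I would prove the contrapositive of the statement: if some $i$-ladder of $\lambda$ contains a removable $i$-box strictly above an addable $i$-box, then $\lambda$ has an unlocked box, so $\lambda$ cannot be a node of $B(\Lambda_0)^L$.

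Concretely, suppose $x_1 = (a,b) \in \lambda$ is a removable $i$-box and $x_2 = (c,d) \notin \lambda$ is an addable $i$-box lying on the same $i$-ladder with $a < c$. I would check directly that $x_1$ itself cannot be locked. Because $x_1$ is removable, neither $(a+1,b)$ nor $(a,b+1)$ lies in $\lambda$; in particular there is no box of $\lambda$ immediately to the right of $x_1$ in its row, so no box to the right of $x_1$ in its row is locked, and hence $x_1$ is not type II locked. To rule out type I, observe that $a \geq 1$ forces $c > 1$, so the addability of $x_2$ forces $(c-1,d) \in \lambda$. But then $x_2$ is an unoccupied position on the ladder of $x_1$ lying below $x_1$ whose directly-above neighbor $(c-1,d)$ is occupied, which violates the type I locked condition for $x_1$.

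Thus $x_1$ is neither type I nor type II locked, contradicting the fact that every box of a node of $B(\Lambda_0)^L$ is locked. The main obstacle is really just having the characterization from Section \ref{locked} available; once that is in hand the argument is a short verification against the definition of locked boxes, and no additional induction on the number of boxes or analysis of the crystal operators $\widehat{f}_i$, $\widehat{e}_i$ is required.
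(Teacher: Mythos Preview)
Your proof is correct and uses essentially the same idea as the paper's: both invoke the characterization of nodes of $B(\Lambda_0)^L$ from Section~\ref{locked} as the partitions minimal in dominance order in their regularization class. The only difference is cosmetic---the paper uses the ``smallest in dominance order'' formulation directly, observing that one can move the removable box $(a,b)$ down to the addable position $(c,d)$ to produce a strictly smaller partition in the same regularization class, while you use the equivalent ``all boxes locked'' formulation and verify that $(a,b)$ fails both lock conditions; since Section~\ref{locked} shows these two descriptions coincide, the arguments are interchangeable.
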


\begin{proof}
If there was a $-$ box above an empty $+$ box on the same ladder, then one could move that $-$ box down 
to the $+$ space to form a new partition in the same regularization class. But this cannot happen since
 the nodes of $B(\Lambda_0)^L$ are smallest in dominance order.
\end{proof}

\begin{lemma}\label{ell_regular_nodes}
If $\lambda \in B(\Lambda_0)$ then for every $0\leq i < \ell$, there is no $i$-ladder in the Young diagram of $\lambda$ which has a $+$ above a $-$ (i.e. there do not exist boxes $(a,b)$ and $(c,d)$ on the same $i$-ladder of $\lambda$ so that $(a,b)$ is addable, $(c,d)$ is removable and $a < c$). 
\end{lemma}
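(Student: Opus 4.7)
The plan is to prove this by contradiction, by constructing from a violating pair an element of the same regularization class that is strictly larger in dominance order than $\lambda$. Since the excerpt has stated that $\mathcal{R}\lambda$ is the largest partition in its regularization class (and $\mathcal{R}\lambda = \lambda$ precisely when $\lambda$ is $\ell$-regular), such a partition would contradict $\lambda \in B(\Lambda_0)$.

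More precisely, suppose for contradiction that $(a,b)$ is an addable $i$-box and $(c,d)$ is a removable $i$-box of $\lambda$ with $a < c$, so that both lie on a common $i$-ladder; by the ladder relation, $d < b$. I would define
\[
\mu = (\lambda \setminus \{(c,d)\}) \cup \{(a,b)\}.
\]
The first routine step is to verify that $\mu$ is a Young diagram. Writing out the row lengths, $\mu$ agrees with $\lambda$ except that $\mu_a = \lambda_a + 1$ and $\mu_c = \lambda_c - 1$. Since $(a,b)$ is addable we have $\lambda_{a-1} > \lambda_a$, and since $(c,d)$ is removable we have $\lambda_c > \lambda_{c+1}$; these two inequalities together with $a < c$ make the row-length sequence of $\mu$ weakly decreasing. (The only slightly delicate subcase is $c = a+1$, which works because $\lambda_a \geq \lambda_{a+1}$ gives $\mu_a = \lambda_a + 1 \geq \lambda_{a+1} - 1 + 1 = \mu_{a+1}+1$.) No other row lengths are affected.

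Next I would observe that $\mu$ belongs to the same regularization class as $\lambda$: we have only shifted a single box within one ladder, so $\mu$ and $\lambda$ have the same number of boxes on each ladder, and therefore $\mathcal{R}\mu = \mathcal{R}\lambda$. Finally, since $a < c$ and the partial sums of row lengths for $\mu$ differ from those of $\lambda$ only in rows $a, a+1, \dots, c-1$, where they are each one larger, we have $\mu > \lambda$ in dominance order.

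This contradicts $\lambda$ being $\ell$-regular, since $\ell$-regular partitions are exactly the maxima in dominance order of their regularization classes (they are fixed by $\mathcal{R}$, and every partition satisfies $\mu \leq \mathcal{R}\mu$ by Theorem~\ref{reg_prop}). The main obstacle is not really conceptual but bookkeeping: one must carefully check the edge cases in showing that $\mu$ is still a partition (in particular when $c = a+1$, when $a = 1$, or when $b = 1$), and invoke the fact, already used elsewhere in the excerpt, that regularization depends only on the ladder-content multiset so that an intra-ladder swap preserves $\mathcal{RC}(\lambda)$.
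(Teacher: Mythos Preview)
Your proof is correct but takes a genuinely different route from the paper's. The paper argues directly from the definition of $\ell$-regularity: it observes that the rim of $\lambda$ from the box $(a,b-1)$ down to $(c,d)$ contains exactly $(b-d)\ell$ boxes (this comes from the ladder relation $c-a = (\ell-1)(b-d)$ together with the shape of a rim), yet spans only $b-d$ columns, so by pigeonhole some column contains at least $\ell$ row-endings, contradicting $\ell$-regularity.

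Your argument instead moves the removable box up the ladder into the addable slot and invokes the dominance-maximality of $\ell$-regular partitions within their regularization classes. This is cleaner conceptually and parallels the proof of the companion Lemma~\ref{nodes} (where dominance-\emph{minimality} of ladder-crystal nodes is used). One small citation issue: Theorem~\ref{reg_prop} as stated does not include the inequality $\mu \leq \mathcal{R}\mu$; that fact is asserted later in Section~\ref{locked} (``$\mathcal{R}\lambda \in \mathcal{RC}(\lambda)$ \ldots\ happens to be the largest in dominance order'') and is in any case elementary, since regularization only moves boxes upward on ladders. The paper's pigeonhole argument has the advantage of being entirely self-contained at this point in the exposition, while your approach highlights the duality between Lemmas~\ref{nodes} and~\ref{ell_regular_nodes}.
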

\begin{proof}
Since $\lambda$ is in $B(\Lambda_0)$, it is $\ell$-regular. Suppose such positions $(a,b)$ and $(c,d)$ exist. Then the rim hook starting at box $(a, b-1)$ and following the border of $\lambda$ down to the $(c,d)$ box will cover exactly $(b-d) \ell$ boxes. However, this border runs over only $b-d$ columns, so there must exist a column which contains at least $\ell$ boxes at the ends of their rows. This contradicts $\lambda$ being $\ell$-regular. 
\end{proof}

\subsection{Reinterpreting the classical crystal rule}

This short theorem proves that the classical crystal rule can be interpreted in terms of ladders. In fact 
the only difference between the classical rule and the ladder crystal rule is that ladders 
are read bottom to top instead of top to bottom.

\begin{theorem}\label{regular_rule}
The $i$-signature (and hence reduced $i$-signature) of an $\ell$-regular partition $\lambda$ in 
$B(\Lambda_0)$ can be determined by reading from its leftmost ladder to rightmost ladder, bottom to top.
\end{theorem}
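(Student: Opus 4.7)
The plan is to describe both readings as total orders on the set of addable and removable $i$-boxes of $\lambda$, and to show that these two total orders coincide when $\lambda$ is $\ell$-regular. The classical $i$-signature visits rows from bottom to top; since each row contributes at most one addable or removable $i$-box (in column $\lambda_a$ or $\lambda_a+1$), this is simply the order by strictly decreasing row index. The ladder reading orders boxes first by ladder (leftmost to rightmost), then bottom to top within each ladder. Assigning to each position the invariant $L(a,b) = a + (\ell-1)b$, which is constant along a ladder and strictly smaller on ladders further left, and observing that within a ladder ``bottom to top'' means visiting boxes in order of decreasing $a$, the ladder order places $(a_1,b_1)$ before $(a_2,b_2)$ exactly when either $L(a_1,b_1) < L(a_2,b_2)$, or the two boxes share a ladder and $a_1 > a_2$. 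The theorem therefore reduces to showing that for any two signature $i$-boxes with $a_1 < a_2$, one has $L(a_2,b_2) \le L(a_1,b_1)$, i.e. $d := a_2 - a_1 \le (\ell-1)(b_1-b_2) =: (\ell-1)D$.

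First I would verify $D \ge 1$ by a short case analysis on whether each box is addable or removable, using $\lambda_{a_1} \ge \lambda_{a_2}$. The only case where $b_1 \le b_2$ could occur in principle is $(a_1,b_1)$ removable and $(a_2,b_2)$ addable; there $b_1 < b_2$ would force $\lambda_{a_1} = \lambda_{a_2}$, hence $\lambda_{a_1+1} = \lambda_{a_1}$, contradicting removability of $(a_1,b_1)$. In the other three cases the residue equality $b_1 - a_1 \equiv b_2 - a_2 \pmod{\ell}$ rules out $b_1 = b_2$.

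The main step, which I expect to be the technical obstacle, is the inequality $d \le (\ell-1)D$. I would combine two ingredients. First, the $\ell$-regularity of $\lambda$ bounds the number of rows of each fixed length by $\ell-1$, so among rows $a_1, a_1+1, \dots, a_2$ (whose lengths lie among the $\lambda_{a_1}-\lambda_{a_2}+1$ integer values between $\lambda_{a_2}$ and $\lambda_{a_1}$) one obtains $d+1 \le (\ell-1)(\lambda_{a_1}-\lambda_{a_2}+1)$; when $(a_1,b_1)$ is removable one sharpens this using $\lambda_{a_1+1} < \lambda_{a_1}$ to drop one of the available length values. Converting $\lambda_{a_1}-\lambda_{a_2}$ into $D$ in each of the four addable/removable combinations yields uniformly the bound $d+D \le \ell D + \ell - 2$. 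Second, the residue condition $b_1-a_1 \equiv b_2-a_2 \pmod{\ell}$ rewrites as $d+D \equiv 0 \pmod{\ell}$, so $d+D$ is a positive multiple of $\ell$ that is at most $\ell D + \ell - 2$. The largest such multiple is $\ell D$, so $d+D \le \ell D$, giving $d \le (\ell-1)D$.

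Putting these together, whenever $a_1 < a_2$ the classical reading places $(a_2,b_2)$ before $(a_1,b_1)$, and the inequality $d \le (\ell-1)D$ shows $L(a_2,b_2) \le L(a_1,b_1)$. When the inequality is strict the ladder reading agrees directly; at equality the two boxes share a ladder, and the within-ladder convention bottom-to-top still places $(a_2,b_2)$ first because $a_2 > a_1$. Either way the two total orders agree on every pair, so reading the signs bottom-to-top along each ladder from left to right produces exactly the classical $i$-signature, and consequently the same reduced $i$-signature.
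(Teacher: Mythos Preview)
Your proof is correct and follows the same route as the paper: both show that the ladder order and the row order on the addable/removable $i$-boxes coincide, the key inequality coming from $\ell$-regularity. The paper compresses this into a single sentence (``by the regularity of $\lambda$, $x$ will be in a row below $y$''); you have supplied the case analysis and arithmetic that the paper omits.

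Two minor imprecisions worth noting, neither fatal. First, in Step~1 the claim that ``the residue equality rules out $b_1=b_2$'' in the other three cases is not quite how those cases close: it is the addability/removability conditions (e.g.\ $\lambda_{a_1+1}<\lambda_{a_1}$ or $\lambda_{a_2-1}>\lambda_{a_2}$) together with monotonicity that force $D\ge 1$ there, and the case $(a_1,b_1)$ removable, $(a_2,b_2)$ addable with $D=0$ is not handled in Step~1 at all. Second, with only the removability sharpening you state, the bound in the $(R,A)$ case is $d+D\le \ell D+\ell-1$, not $\ell D+\ell-2$. Both issues are absorbed by the main step: since $\ell D+\ell-1<\ell(D+1)$ and $d+D\equiv 0\pmod\ell$ with $d+D\ge 1$, one still concludes $d+D\le \ell D$, hence $d\le(\ell-1)D$ and in particular $D\ge 1$.
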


\begin{proof}
If positions $x$ and $y$ contain $+'s$ (or $-'s$) of the same residue with the ladder 
of $x$ to the left of the ladder of $y$, then by the regularity of $\lambda$, $x$ will 
be in a row below $y$. Hence reading the $i$-signature up ladders from left to right is 
equivalent to reading up the rows of $\lambda$ from bottom to top.

\end{proof}

\begin{example}
$\lambda = (6,5,3,3,2,2,1)$ and $\ell = 3$. Suppose we wanted to find the 2-signature for $\lambda$. 
Then we could read from leftmost ladder (in this case, the leftmost ladder relevant to the 2-signature
 is the one which contains position (8,1)) to rightmost ladder. Inside each ladder we read from bottom 
to top . The picture below shows the positions which 
correspond to addable and removable 2-boxes, with their ladders. The 2-signature is $+---$.

\begin{center}
$\begin{array}{cc}
\tableau{\mbox{}&\mbox{}&\mbox{}&\mbox{}&\mbox{}&-\\
\mbox{}&\mbox{}&\mbox{}&\mbox{}&\mbox{}\\
\mbox{}&\mbox{}&\mbox{}\\
\mbox{}&\mbox{}&-\\
\mbox{}&\mbox{}\\
\mbox{}&-\\
\mbox{}}
&

\put(-112,-122){$+$}

\put (-112,-122){\line(1,2){70}}
\put (-85,-122){\line(1,2){70}}

\end{array}
$
\end{center}
\end{example}

\section{Regularization and crystal isomorphism}\label{reg_and_crystal}
The results of this section come from ideas originally sketched out with Steve Pon in the summer of 2007.
\subsection{Crystal isomorphism}\label{crystal_isomorphism}
We will now prove that our crystal $B(\Lambda_0)^L$ from Section \ref{new_crystal} is indeed isomorphic 
to the crystal $B(\Lambda_0)$.  We first classify positions in a Young diagram. Let $\lambda$ be
 an arbitrary partition and suppose $x = (i,j)$ with $i,j \geq 0$. We use the convention that
 $x$ is in $\lambda$ if either of $i,j$ are not positive. We say the type of $x$ with respect
 to $\lambda$ is:
\begin{enumerate}
    \renewcommand{\labelenumi}{(\alph{enumi})}
\item if the box $x$ is in $\lambda$ and the box $(i+1, j+1)$ is in $\lambda$.
\item if the box $x$ is in $\lambda$, the boxes $(i+1. j)$ and $(i,j+1)$ are in $\lambda$ and $(i+1, j+1)$ is not in $\lambda$.
\item if the box $x$ is in $\lambda$, $j>0$, the box $(i+1, j)$ is in $\lambda$ and $(i,j+1)$ is not in $\lambda$.
\item if the box $x$ is in $\lambda$, the box $(i, j+1)$ is in $\lambda$ and $(i+1,j)$ is not in $\lambda$.
\item if the box $x$ is in $\lambda$ and $(i+1,j), (i,j+1)$ are not in $\lambda$.
\item if $j=0$ and the box $(i,1)$ is not in $\lambda$.
\item if the box $x$ is not in $\lambda$, and $(i-1,j-1)$ is of type $(e)$.
\item if the box $x$ is not in $\lambda$, and $(i-1,j-1)$ is of type $(c)$.
\item if the box $x$ is not in $\lambda$, and $(i-1,j-1)$ is of type $(d)$.
\item if the box $x$ is not in $\lambda$, and $(i-1,j-1)$ is of type $(b)$.
\item if the box $x$ is not in $\lambda$, and $(i-1,j-1)$ is of type $(f)$ or not in $\lambda$.
\end{enumerate}
\begin{remark}
A box $x$ is removable if and only if it is type $(e)$ and addable if and only if it is type $(j)$
\end{remark}
\begin{example}
Let $\lambda = (4,2,1,1)$. Then the boxes are labeled by type in the following picture. 

\begin{center}
$\begin{array}{cc}
\tableau{a&b&d&fe\\
b&e\\
c\\
e} &

\put(-93, -83){f}
\put(-75, -83){k}
\put(-57, -83){k}
\put(-39, -83){k}
\put(-21, -83){k}
\put(-3, -83){k}
\put(15, -83){k}
\put(-93, -65){f}
\put(-93, -47){b}
\put(-93, -29){a}
\put(-93, -11){a}
\put(-93, 7){a}
\put(-93, 25){a}
\put(-75, 25){a}
\put(-57, 25){a}
\put(-39, 25){a}
\put(-21, 25){b}
\put(-3, 25){d}
\put(15, 25){d}
\put (-3,7){j}
\put (15,7){i}
\put (-39,-11){j}
\put (-21,-11){i}
\put (-3,-11){g}
\put (15,-11){k}
\put (-57,-29){j}
\put (-39,-29){g}
\put (-21,-29){k}
\put (-3,-29){k}
\put (15,-29){k}
\put (-57,-47){h}
\put (-39,-47){k}
\put (-21,-47){k}
\put (-3,-47){k}
\put (15,-47){k}
\put (-75,-65){j}
\put (-57,-65){g}
\put (-39,-65){k}
\put (-21,-65){k}
\put (-3,-65){k}
\put (15,-65){k}

\end{array}$
\end{center}
\end{example}

\begin{definition}
The $k^{th}$ ladder of $\lambda$ will refer to all of the boxes $(i,j)$ of $\lambda$ which are on the same ladder as $(k,1)$.
\end{definition}

\begin{definition}
The number of boxes of type $\alpha$ in ladder $k$ of the partition $\lambda$ will be denoted $n_\alpha^k (\lambda)$. 
\end{definition}

The following lemma is the basis of the proof that the  two crystals $B(\Lambda_0)$ and $B(\Lambda_0)^L$ are isomorphic. 

\begin{lemma}\label{ladderlemma}  Let $\lambda$ be a node of $B(\Lambda_0)^L$. Let $\mathcal{L}$ be
 the $k^{th}$ ladder of $\lambda$
and $\mathcal{K}$ be the $k^{th}$ ladder of $\mathcal{R}\lambda$. Let $i$ be the residue of the 
ladders $\mathcal{L}$ and $\mathcal{K}$. Then the number of ladder-normal $i$-boxes on $\mathcal{L}$ 
is the same as the number of normal $i$-boxes on $\mathcal{K}$. Similarly, the number of 
ladder-conormal $i$-boxes on $\mathcal{L}$ is the same as the number of conormal $i$-boxes 
on $\mathcal{K}$. In particular, the ladder-good (ladder-cogood) $i$-box of $\lambda$ lies on
 the same ladder as the good (cogood) $i$-box of $\mathcal{R} \lambda$.
\end{lemma}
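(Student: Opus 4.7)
The plan is to compare the two global $i$-signatures ladder-by-ladder and show their reduced forms agree on each ladder.

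First I will set up the reading conventions. By Lemma~\ref{nodes}, on each $i$-ladder of $\lambda$ no $-$ lies above any $+$, so reading the ladder $i$-signature top-to-bottom within each ladder gives a within-ladder block $+^a-^b$ that is already reduced. By Lemma~\ref{ell_regular_nodes}, on each $i$-ladder of the $\ell$-regular partition $\mathcal{R}\lambda$ no $+$ lies above any $-$; by Theorem~\ref{regular_rule} the classical $i$-signature of $\mathcal{R}\lambda$ can be read ladder-by-ladder, bottom-to-top within each ladder, so the within-ladder block is likewise an already reduced $+^{a'}-^{b'}$. Since regularization preserves the multiset of residues of boxes, it preserves the global difference $\varphi_i-\varepsilon_i$, but the raw per-ladder blocks $(a,b)$ and $(a',b')$ need not coincide in general.

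The heart of the proof is to show that the ladder-$i$-signature of $\lambda$ and the classical $i$-signature of $\mathcal{R}\lambda$ differ only by insertions (or deletions) of cancellable $-+$ pairs in which the $-$ lies on a strictly earlier $i$-ladder than its matching $+$. If this is established, then after global reduction the surviving $-$'s and $+$'s in both signatures come from exactly the same ladders in equal numbers, since any cancellation introduced by regularization eliminates both signs of a $-+$ pair on two distinct ladders. I plan to prove this by decomposing regularization into a sequence of elementary single-box moves---each moving a box at $(a,b)$ up its ladder to $(a-(\ell-1),b+1)$ whenever the resulting diagram is a partition---and analyzing each move locally. Such a move potentially changes contributions only on the five ladders $k, k\pm 1, k\pm(\ell-1)$ around the moved box's ladder $k$. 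The move is valid precisely when $\mu'_{b+1}=\mu'_b-\ell$, and under this rigid local configuration a finite case check verifies that the net change to the global signature is at most the insertion of a single $-+$ pair with the $-$ preceding the $+$ in ladder order.

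The main obstacle is the case analysis for the elementary moves: one must enumerate the possible local configurations at the five affected ladders and verify in each case that the effect on the signature is either no change or a single forward-placed $-+$ pair. The cleanest organization of the analysis uses the conjugate-partition statistic $g^\mu(j) := \mu'_j + (j-1)(\ell-1)$. An elementary move swaps the pair of values $(g(b), g(b+1))=(v,v-1)$ into $(v-1,v)$ while leaving all other $g$-values unchanged, so the change on each of the five affected ladders reduces to a local computation involving only $g(b-1), v-1, v, g(b+2)$. Iterating the elementary moves carries $\lambda$ into $\mathcal{R}\lambda$ and preserves ladder-wise the distribution of surviving $-$ and $+$ signs in the reduced signature, completing the proof. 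The ``in particular'' statement about good and cogood $i$-boxes follows immediately: they correspond to the leftmost surviving $-$ and the rightmost surviving $+$ in the common reduced word, which by this argument lie on the same ladder in $\lambda$ and in $\mathcal{R}\lambda$.
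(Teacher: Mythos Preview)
Your strategy is genuinely different from the paper's, and as written it has a real gap that is not just the unperformed ``finite case check'' but a structural problem with the single-move invariant you are trying to maintain.

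Consider an elementary move of a box of residue $j$ from $(a,b)$ to $(a-(\ell-1),b+1)$, and look at the $j$-signature itself (so $i=j$). Before the move $(a-(\ell-1),b+1)$ is addable and $(a,b)$ is removable; after the move these switch. Both lie on the \emph{same} $j$-ladder and are adjacent on it, so the within-ladder word changes from $+-$ to $-+$. This is not an insertion of a cancellable $-+$ pair; it is a swap that converts a surviving $+-$ into a cancelling $-+$. So after one move the reduced $j$-signature has genuinely changed, and your proposed invariant ``reduced signature is preserved move-by-move'' is already false. Whatever happens at the end must come from cancellation of effects across \emph{many} moves, which your move-local analysis cannot see. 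There is a second problem even when $i=j\pm 1$: the $-$ and $+$ you create live on consecutive $i$-ladders $k-(\ell-1)$ and $k+1$, but there can be other $\pm$ symbols between them (coming from the tails of those two ladders), so the inserted pair need not be adjacent. Inserting a non-adjacent forward $-+$ pair does \emph{not} preserve the reduced word in general; e.g.\ inserting $-$ before and $+$ after the word $+-$ yields $-+-+$, which reduces to the empty word rather than to $+-$. Your appeal to ``its matching $+$'' presupposes exactly the conclusion you need, namely that the inserted $-$ is matched to the inserted $+$ in the bracket pairing.

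The paper avoids intermediates entirely. It classifies positions on a fixed ladder into a handful of local types and proves directly that regularization increases the number of removable (type $e$) and addable (type $j$) boxes on ladder $k$ by exactly $\min(n_c^k,n_d^k)$ and $\min(n_h^k,n_i^k)$ respectively. The key identity $\min(n_c^k,n_d^k)=\min(n_h^{k+\ell},n_i^{k+\ell})$ then shows that the extra $-$'s at the end of ladder $k$'s block in $\mathcal{R}\lambda$ are immediately followed by the same number of extra $+$'s at the start of ladder $k+\ell$'s block, so they cancel \emph{adjacently} in the reduced signature. Lemmas~\ref{nodes} and~\ref{ell_regular_nodes} are used only at the two endpoints $\lambda$ and $\mathcal{R}\lambda$, never at intermediates. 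If you want to salvage your approach, you would need a global argument that tracks the cumulative effect of all moves on each ladder simultaneously, which essentially reduces to the paper's static type-counting anyway.
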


\begin{proof}
Suppose we are looking at the boxes on $\mathcal{L}$. Then note that:
\begin{itemize}
\item $n_b^k (\lambda) + \min (n_c^k (\lambda), n_d^k (\lambda) ) = n_b^k (\mathcal{R} \lambda)$.

\item $n_e^k (\lambda) + \min (n_c^k (\lambda), n_d^k (\lambda) ) = n_e^k (\mathcal{R} \lambda)$.

\end{itemize}
This follows from the fact that a box of type $(c)$ and a box of type $(d)$ on $\mathcal{L}$ 
 will combine under regularization to form a box of type $(b)$ and a box of type $(e)$ on $\mathcal{K}$. 

Similarly, 
\begin{itemize}
\item $n_g^k (\lambda) + \min (n_h^k (\lambda), n_i^k (\lambda) ) = n_g^k (\mathcal{R} \lambda)$.

\item $n_j^k (\lambda) + \min (n_h^k (\lambda), n_i^k (\lambda) ) = n_j^k (\mathcal{R} \lambda)$.
\end{itemize}

Lemma \ref{nodes} implies that there is no cancelation in the ladder $i$-signature on $\mathcal{L}$
 (i.e. no type $(e)$ node above a type $(j)$ node).
Similarly, Lemma \ref{ell_regular_nodes} implies that there is no cancelation in the $i$-signature on $\mathcal{K}$
 (i.e. no type $(e)$ node below a type $(j)$ node).

We also note that a box of type $(b)$ on the $k^{th}$ ladder of a partition 
implies that there is an addable $i$-box on the $(k+\ell)^{th}$ 
ladder, and that a box of type $(g)$ on the $k^{th}$ ladder of a partition 
implies that there is a removable $i$-box on the $(k-\ell)^{th}$ ladder. 

\begin{comment}
Due to regularization, the partition $\mathcal{R} \lambda$ will have 
$n_e^k(\lambda)+min(n_c^k(\lambda), n_d^k(\lambda))$ many $-$'s on the $k^{th}$ ladder, 
and $n_b^k(\lambda) +min(n_c^k(\lambda), n_d^k(\lambda))$ many $+$'s on the $(k+\ell)^{th}$ 
ladder. Since the $-$'s on the $k^{th}$ ladder
are below the $+$'s on the $(k+\ell)^{th}$ ladder, 
the extra $min(n_c^k(\lambda), n_d^k(\lambda))$ $-$'s on
 the $k^{th}$ ladder of $\mathcal{R} \lambda$ will cancel with the extra 
$min(n_c^k(\lambda), n_d^k(\lambda))$ $+$'s from the $(k+\ell)^{th}$ ladder of 
$\mathcal{R}\lambda$ by Lemma \ref{regular_rule}. 

Similarly, $\mathcal{R} \lambda$ will 
have $min(n_h^k(\lambda), n_i^k(\lambda))$ extra $-$'s on the $(k-\ell)^{th}$ ladder, and 
$min(n_h^k(\lambda), n_i^k(\lambda))$ extra $+$'s on the $k^{th}$ ladder. These will 
cancel in reading the $i$-signature for $\mathcal{R} \lambda$. Therefore, the number of normal 
or conormal boxes on a ladder is unchanged by regularization. 
\end{comment}

$\mathcal{K}$ will contribute $\underbrace{+ \dots +}_{n_j^k(\lambda)} \underbrace{- \dots -}_{n_e^k(\lambda)}$ to the ladder $i$-signature of $\lambda$. 
$\mathcal{L}$ will contribute 
$\underbrace{+ \dots +}_{n_j^k(\lambda)}  \underbrace{+ \dots +}_{\min(n_h^k(\lambda), n_i^k(\lambda))} \underbrace{- \dots -}_{n_e^k(\lambda)} \underbrace{- \dots -}_{\min(n_c^k(\lambda), n_d^k(\lambda))}$ 
to the $i$-signature of $\mathcal{R} \lambda$. 
Since $\min (n_c^k(\lambda), n_d^k(\lambda)) = min(n_h^{k+\ell}(\lambda), n_i^{k+\ell}(\lambda))$, the 
$\min (n_c^k(\lambda), n_d^k(\lambda))$ $-$'s 
will cancel with the $\min (n_h^{k+\ell}(\lambda), n_i^{k+\ell}(\lambda))$ $+$'s 
in the $k+\ell^{th}$ ladder, leaving just the $n_e^k(\lambda)$ $-$'s in the $k^{th}$ ladder. 
Similarly, only the $n_j^k(\lambda)$ $+$'s will count in the $i$-signature of $\mathcal{R} \lambda$.
Hence the number of normal and conormal boxes on any ladder is unchanged by regularization.

Since there are the same number of
 (co)normal boxes on each ladder of $\mathcal{R} \lambda$ as there are ladder-(co)normal boxes in each
 ladder of $\lambda$, the (co)good box of $\mathcal{R}\lambda$ must be on the same ladder as the
 ladder (co)good box of $\lambda$.
\end{proof}

\begin{corollary}
Let $\lambda$ be a node in $B(\Lambda_0)^L$. Then $\widehat{\varphi}_i(\lambda) = \varphi_i(\mathcal{R}\lambda)$ and $\widehat{\varepsilon}_i(\lambda) = \varepsilon_i(\mathcal{R}\lambda)$.
\end{corollary}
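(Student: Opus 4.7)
The plan is to observe that this corollary is essentially a bookkeeping consequence of Lemma \ref{ladderlemma}, obtained by summing the per-ladder equality over all $i$-ladders. Recall that $\widehat{\varphi}_i(\lambda)$ is defined to be the total number of ladder-conormal $i$-boxes of $\lambda$, and this total is obtained by summing the ladder-conormal $i$-boxes contributed by each individual $i$-ladder $\mathcal{L}$ of $\lambda$. Similarly, $\varphi_i(\mathcal{R}\lambda)$ is the total number of conormal $i$-boxes of $\mathcal{R}\lambda$, and all addable and removable $i$-boxes of $\mathcal{R}\lambda$ sit on $i$-ladders, so this total also decomposes as a sum of contributions from the individual $i$-ladders $\mathcal{K}$ of $\mathcal{R}\lambda$.

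First, I would note that for each integer $k \geq 1$ the $k^{th}$ ladder $\mathcal{L}$ of $\lambda$ and the $k^{th}$ ladder $\mathcal{K}$ of $\mathcal{R}\lambda$ carry the same residue (namely $-k+1 \bmod \ell$), so they are paired in a natural way. By Lemma \ref{ladderlemma}, applied separately to each ladder $\mathcal{L}$ of residue $i$, the number of ladder-conormal $i$-boxes on $\mathcal{L}$ equals the number of conormal $i$-boxes on the corresponding ladder $\mathcal{K}$ of $\mathcal{R}\lambda$. Summing this equality over all ladders of residue $i$ yields
\[
\widehat{\varphi}_i(\lambda) \;=\; \sum_{\mathcal{L}\text{ of residue }i} \#\{\text{ladder-conormal }i\text{-boxes on }\mathcal{L}\} \;=\; \sum_{\mathcal{K}\text{ of residue }i} \#\{\text{conormal }i\text{-boxes on }\mathcal{K}\} \;=\; \varphi_i(\mathcal{R}\lambda).
\]
The identical argument, using the ladder-normal half of Lemma \ref{ladderlemma}, gives $\widehat{\varepsilon}_i(\lambda) = \varepsilon_i(\mathcal{R}\lambda)$.

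There is no real obstacle here, since Lemma \ref{ladderlemma} does all of the heavy lifting; the only small point to verify carefully is that every addable or removable $i$-box of either $\lambda$ or $\mathcal{R}\lambda$ actually lies on some $i$-ladder (so that nothing is missed when we decompose the sums ladder-by-ladder), but this is immediate since every box in any partition sits on a unique ladder, and only boxes on an $i$-ladder can be addable or removable $i$-boxes. Thus the corollary follows at once from Lemma \ref{ladderlemma}.
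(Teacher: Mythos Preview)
Your proof is correct and matches the paper's intended approach exactly: the paper states this as an immediate corollary of Lemma~\ref{ladderlemma} with no separate proof, and your argument---summing the per-ladder equalities of (co)normal counts over all $i$-ladders---is precisely the one-line deduction that justifies it.
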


\begin{theorem}\label{crystal_commutes}
Regularization commutes with the crystal operators. In other words:

\begin{enumerate}
\item $\mathcal{R} \circ \widehat{f}_i = \tilde{f}_i \circ \mathcal{R}$,
\item $\mathcal{R} \circ \widehat{e}_i = \tilde{e}_i \circ \mathcal{R}$.
\end{enumerate}

\end{theorem}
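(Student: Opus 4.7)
The plan is to prove (1) in detail and then deduce (2). The crucial tool is the following uniqueness observation, which is essentially a restatement of the definition of $\mathcal{R}$: an $\ell$-regular partition $\tau$ is determined by its multiset of \emph{ladder-box-counts} $(n_k(\tau))_{k \ge 1}$, where $n_k(\tau)$ is the number of boxes of $\tau$ lying on the $k$th ladder. Indeed, in an $\ell$-regular partition the $n_k$ boxes on ladder $k$ must occupy the $n_k$ topmost positions of that ladder, for otherwise the partition $\mathcal{R}\tau$ obtained by pushing them up would differ from $\tau$. A second trivial observation that we will use constantly is that $\mathcal{R}$ preserves ladder-box-counts, since it only moves boxes within their own ladders: $n_k(\mathcal{R}\lambda) = n_k(\lambda)$ for all $k$.

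Now fix a node $\lambda$ of $B(\Lambda_0)^L$ and set $\mu = \mathcal{R}\lambda$. By Lemma~\ref{ladderlemma}, $\lambda$ has a ladder-cogood $i$-box if and only if $\mu$ has a cogood $i$-box, and when they exist they lie on a common ladder $\mathcal{L}$. If neither exists, both $\widehat{f}_i\lambda$ and $\widetilde{f}_i\mu$ equal $0$ and (1) is trivial. Otherwise write $\widehat{f}_i\lambda = \lambda \cup \{x\}$ and $\widetilde{f}_i\mu = \mu \cup \{y\}$ with $x, y \in \mathcal{L}$. Then $\widehat{f}_i\lambda$ and $\widetilde{f}_i\mu$ have ladder-box-counts agreeing with $\lambda$ and $\mu$ respectively in every ladder other than $\mathcal{L}$, where each is larger by one. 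Since $\lambda$ and $\mu = \mathcal{R}\lambda$ have identical ladder-box-counts, so do $\widehat{f}_i\lambda$ and $\widetilde{f}_i\mu$.

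Both $\widetilde{f}_i\mu$ and $\mathcal{R}(\widehat{f}_i\lambda)$ are $\ell$-regular (the former because $y$ is cogood for the Misra--Miwa crystal on $\mu$, the latter by definition of $\mathcal{R}$), and they share ladder-box-counts, so the uniqueness observation forces $\mathcal{R}(\widehat{f}_i\lambda) = \widetilde{f}_i(\mathcal{R}\lambda)$, which is (1). For (2), suppose first that $\widehat{e}_i\lambda \ne 0$ and set $\nu = \widehat{e}_i\lambda$; by Lemma~\ref{e_inverse_f} we have $\widehat{f}_i\nu = \lambda$, hence by (1) $\widetilde{f}_i(\mathcal{R}\nu) = \mathcal{R}\lambda$, and applying the classical relation $\widetilde{e}_i\widetilde{f}_i = \mathrm{id}$ (valid when $\widetilde{f}_i \ne 0$ on nodes of $B(\Lambda_0)$) gives $\mathcal{R}(\widehat{e}_i\lambda) = \mathcal{R}\nu = \widetilde{e}_i(\mathcal{R}\lambda)$. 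The remaining case $\widehat{e}_i\lambda = 0$ is handled directly by Lemma~\ref{ladderlemma}, which tells us $\widetilde{e}_i(\mathcal{R}\lambda) = 0$ as well. The substantive content is entirely in Lemma~\ref{ladderlemma}; the main potential pitfall is the uniqueness step, so I would take care to spell out that the topmost positions on each ladder form a partition exactly when an $\ell$-regular partition with those counts exists, and to explicitly verify the boundary case $\widehat{f}_i\lambda = 0$.
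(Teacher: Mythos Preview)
Your proof of (1) is correct and follows the same line as the paper, though you make explicit what the paper leaves implicit: the reason ``same ladder'' is enough is that an $\ell$-regular partition is determined by its ladder-box-counts. This is a useful clarification.

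Your proof of (2), however, has a gap. You set $\nu = \widehat{e}_i\lambda$ and then invoke (1) for $\nu$. But the argument for (1) relies on Lemma~\ref{ladderlemma}, whose proof uses Lemma~\ref{nodes} and hence requires the partition in question to be a node of $B(\Lambda_0)^L$. At this point in the paper it is \emph{not} known that $\widehat{e}_i$ applied to a node yields a node: that closure is essentially equivalent to the crystal isomorphism you are trying to establish, so assuming it is circular. (Equivalently, you would need to show that $\nu$ is still smallest in its regularization class, which is not obvious---removing the ladder-good box can alter the addable/removable status of neighbouring boxes.)

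The paper avoids this by proving (2) symmetrically to (1): Lemma~\ref{ladderlemma} already asserts that the ladder-\emph{good} $i$-box of $\lambda$ lies on the same ladder as the good $i$-box of $\mathcal{R}\lambda$, so your ladder-count uniqueness argument applies verbatim with $\widehat{e}_i$ and $\widetilde{e}_i$ in place of $\widehat{f}_i$ and $\widetilde{f}_i$. Replace your derivation of (2) from (1) with this direct symmetric argument and the proof is complete.
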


\begin{proof}
(1) will follow from Lemma \ref{ladderlemma}, since applying an $\widehat{f}_i$ to a partition $\lambda$ will place an $i$-box in the same ladder of $\lambda$ as $\tilde{f}_i$ places in $\mathcal{R} \lambda$. (2) follows similarly.
\end{proof}

\begin{corollary}\label{crystalisisom}
The crystal $B(\Lambda_0)$ is isomorphic to $B(\Lambda_0)^L$.
\end{corollary}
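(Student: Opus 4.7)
The natural candidate for the isomorphism is the restriction of regularization, $\Phi := \mathcal{R}\vert_{B(\Lambda_0)^L}$. By Theorem~\ref{reg_prop}, $\mathcal{R}\lambda$ is an $\ell$-regular partition for any $\lambda$, so $\Phi$ lands in the vertex set of $B(\Lambda_0)$; Theorem~\ref{crystal_commutes} says precisely that $\Phi$ intertwines $\widehat{f}_i$ with $\tilde f_i$ and $\widehat{e}_i$ with $\tilde e_i$. Hence once I verify that $\Phi$ is a bijection on vertices, it is automatically a crystal isomorphism, and the only real work is bijectivity.

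For injectivity, I will invoke the proposition (already established in Section~\ref{locked}) that every node of $B(\Lambda_0)^L$ is the unique minimal element (in dominance order) of its regularization class. Thus if $\lambda,\lambda' \in B(\Lambda_0)^L$ satisfy $\mathcal{R}\lambda = \mathcal{R}\lambda'$, they lie in the same regularization class and so must coincide with the unique dominance-minimal element of that class.

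For surjectivity, I will proceed by induction on $n = |\mu|$ over $\mu \in B(\Lambda_0)$. The empty partition is a node of $B(\Lambda_0)^L$ (it is the highest-weight node used to generate the crystal) and $\Phi(\emptyset)=\emptyset$. Given $\mu \neq \emptyset$ in $B(\Lambda_0)$, pick any $i$ with $\tilde e_i \mu \neq 0$ and set $\mu' := \tilde e_i \mu$, which is $\ell$-regular and smaller. By the inductive hypothesis, $\mu' = \Phi(\lambda')$ for some $\lambda' \in B(\Lambda_0)^L$. Applying the commutation relation of Theorem~\ref{crystal_commutes} gives
\[
\Phi(\widehat{f}_i \lambda') \;=\; \mathcal{R}(\widehat{f}_i \lambda') \;=\; \tilde f_i(\mathcal{R}\lambda') \;=\; \tilde f_i \mu' \;=\; \mu,
\]
and $\widehat{f}_i \lambda'$ is a node of $B(\Lambda_0)^L$ since $B(\Lambda_0)^L$ is closed under $\widehat{f}_i$ by construction. (I should check that $\widehat{f}_i\lambda' \neq 0$: this uses that $\widehat{\varphi}_i(\lambda') = \varphi_i(\mathcal{R}\lambda') = \varphi_i(\mu') > 0$, which is the corollary of Lemma~\ref{ladderlemma} stated just before Theorem~\ref{crystal_commutes}.) This produces a preimage of $\mu$ under $\Phi$ and finishes the induction.

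The only step that required genuine effort was Theorem~\ref{crystal_commutes}, which is already done; the remainder, as above, is a short bookkeeping argument built on the minimality characterization of nodes of $B(\Lambda_0)^L$ and on lifting $\tilde f_i$-chains from $\emptyset$ to $\widehat{f}_i$-chains via the commutation square.
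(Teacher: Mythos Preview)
Your proof is correct and follows essentially the same approach as the paper: both use $\mathcal{R}$ as the isomorphism, with Theorem~\ref{crystal_commutes} supplying the intertwining and the minimality proposition from Section~\ref{locked} supplying injectivity. The only minor difference is that for surjectivity the paper simply names $\mathcal{S}$ as the inverse, whereas you spell out an explicit induction lifting $\tilde f_i$-paths to $\widehat f_i$-paths; your version is arguably more self-contained, since the paper's assertion that $\mathcal{S}$ lands in $B(\Lambda_0)^L$ implicitly relies on exactly the inductive argument you wrote out.
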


\begin{proof}
The map $\mathcal{R} : B(\Lambda_0)^L \to B(\Lambda_0)$ gives the isomorphism.
The map $\mathcal{S}: B(\Lambda_0) \to B(\Lambda_0)^L$ described in Section \ref{locked} is the inverse of $\mathcal{R}$.
The other crystal isomorphism axioms are routine to check.
\end{proof}

\begin{example}
Let $\lambda = (2,1,1,1)$ and $\ell = 3$. Then $\mathcal{R} \lambda = (2,2,1)$. Also $\widehat{f}_2 \lambda = (2,1,1,1,1)$ and $\tilde{f}_2 (2,2,1) = (3,2,1)$. But $\mathcal{R} (2,1,1,1,1) = (3,2,1)$. 

\begin{center}
$
\begin{array}{cc} (2,1,1,1) \xrightarrow{\widehat{f}_2} (2,1,1,1,1)\\
\\
(2,2,1)\;\;\;\; \xrightarrow{\widetilde{f}_2} \;\;\;\;(3,2,1) 
&
\put(-30,48){\vector(0,-1){18}}
\put(-100,48){\vector(0,-1){18}}
\put(-120,35){$\mathcal{R}$}
\put(-20,35){$\mathcal{R}$}
\end{array}$
\end{center}

\end{example}

\subsection{Crystal rules for weak l-partitions}

We first recall the following result due to James, which says that $D^{\mathcal{R}\lambda}$ occurs with  multiplicity one in $S^\lambda$.

\begin{theorem}[James \cite{J2}]\label{decomp} Let $\lambda$ be any partition. Then
$d_{\lambda, \mathcal{R}\lambda} = 1$.
\end{theorem}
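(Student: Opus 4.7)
The plan is to argue by induction on $n = |\lambda|$, using the refined restriction functor $e_i$ from Section \ref{new_proof} together with the crystal isomorphism between $B(\Lambda_0)$ and $B(\Lambda_0)^L$ established in Corollary \ref{crystalisisom} (in particular, the fact from Theorem \ref{crystal_commutes} that regularization commutes with the crystal operators).

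The base case $n = 0$ is immediate, since $S^\emptyset = D^\emptyset$. For the inductive step with $\lambda \vdash n \geq 1$, I would pick a residue $i$ with $\widetilde{e}_{i} \mathcal{R}\lambda \neq 0$; such $i$ exists because $\mathcal{R}\lambda \neq \emptyset$ is $\ell$-regular. Restriction of $S^\lambda$ from $H_n(q)$ to $H_{n-1}(q)$ has a Specht filtration with factors $S^{\lambda \setminus y}$ over removable boxes $y$ of $\lambda$ (Proposition \ref{branching_rule}); separating the direct-sum decomposition $\operatorname{Res}_n^{n-1} = \bigoplus_i e_i$ of Theorem \ref{groj}(\ref{ind}) by central character identifies $e_i S^\lambda$ with the summand coming from removable $i$-boxes. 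By the inductive hypothesis, each $S^{\lambda \setminus y}$ contains $D^{\mathcal{R}(\lambda \setminus y)}$ with multiplicity exactly one. One then compares with $e_i D^{\mathcal{R}\lambda}$, whose composition factors are controlled by Theorem \ref{groj}(\ref{f^phi}) and whose distinguished factor is $D^{\widetilde{e}_{i} \mathcal{R}\lambda}$. Equating the multiplicity of $D^{\widetilde{e}_{i} \mathcal{R}\lambda}$ on both sides of the computation of $[e_i S^\lambda]$ yields a linear equation from which $d_{\lambda, \mathcal{R}\lambda} = 1$ can be extracted.

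The main obstacle is the combinatorial bookkeeping: one must identify precisely which removable $i$-boxes $y$ of $\lambda$ satisfy $\mathcal{R}(\lambda \setminus y) = \widetilde{e}_{i} \mathcal{R}\lambda$, and verify that the count matches the multiplicity dictated by Theorem \ref{groj}(\ref{f^phi}). This should follow from the box-type classification (a)--(k) and the ladder lemma (Lemma \ref{ladderlemma}) of Section \ref{crystal_isomorphism}, which already govern how removable and addable $i$-boxes are transported under regularization. A secondary subtlety is ruling out unwanted contributions: one needs every composition factor $D^\nu$ of $e_i D^{\mathcal{R}\lambda}$ with $\nu \neq \widetilde{e}_{i}\mathcal{R}\lambda$ to have $\varepsilon_i(\nu) < \varepsilon_i(\widetilde{e}_{i}\mathcal{R}\lambda)$, an input from the Grojnowski--Kleshchev theory that was already used implicitly in the alternate proof of Theorem \ref{top_and_bottom} in Section \ref{new_proof}.
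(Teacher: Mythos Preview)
The paper does not prove this theorem; it is quoted as a classical result of James \cite{J2} and used as a black box.  James' original argument is direct and elementary: it exhibits an explicit nonzero homomorphism $S^{\mathcal{R}\lambda}\to S^{\lambda}$ (equivalently, works with the bilinear form on the permutation module) and then uses the dominance unitriangularity of the decomposition matrix to conclude multiplicity one.  No branching or crystal theory is involved.

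Your proposed inductive route via $e_i$ is genuinely different, but the gap you label a ``secondary subtlety'' is in fact the heart of the matter, and I do not see how to close it with the tools you cite.  Write $[e_i S^{\lambda}]=\sum_{\mu} d_{\lambda,\mu}[e_i D^{\mu}]$ and take the coefficient of $D^{\widetilde{e}_i\mathcal{R}\lambda}$.  The Grojnowski--Kleshchev input tells you that $[e_i D^{\mu}:D^{\widetilde{e}_i\mathcal{R}\lambda}]$ can be nonzero in two ways: either $\widetilde{e}_i\mu=\widetilde{e}_i\mathcal{R}\lambda$ (forcing $\mu=\mathcal{R}\lambda$), or $\varepsilon_i(\mu)>\varepsilon_i(\mathcal{R}\lambda)$.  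Nothing you have assumed rules out the second possibility for some other composition factor $D^{\mu}$ of $S^{\lambda}$, so the right-hand side is $d_{\lambda,\mathcal{R}\lambda}\cdot\varepsilon_i(\mathcal{R}\lambda)$ plus an uncontrolled nonnegative sum.  To kill that sum you would need something like ``$d_{\lambda,\mu}\neq 0\Rightarrow \varepsilon_i(\mu)\leq\varepsilon_i(\mathcal{R}\lambda)$ for a well-chosen $i$,'' which is essentially the companion statement $d_{\lambda,\mu}\neq 0\Rightarrow \mu\trianglerighteq\mathcal{R}\lambda$ --- of the same depth as the theorem you are trying to prove.

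There is a parallel problem on the left-hand side.  An arbitrary $\lambda$ is not a node of either $B(\Lambda_0)$ or $B(\Lambda_0)^L$, so Theorem~\ref{crystal_commutes} does not directly compute $\mathcal{R}(\lambda\setminus y)$ for a removable $i$-box $y$ of $\lambda$.  Different removable $i$-boxes of $\lambda$ lie on different ladders, and for those $y$ not on the ladder of the good $i$-box one has $\mathcal{R}(\lambda\setminus y)\neq\widetilde{e}_i\mathcal{R}\lambda$; the corresponding $d_{\lambda\setminus y,\,\widetilde{e}_i\mathcal{R}\lambda}$ are again uncontrolled by the inductive hypothesis alone.  So the ``linear equation'' you obtain has unknown nonnegative terms on both sides, and $d_{\lambda,\mathcal{R}\lambda}=1$ cannot be extracted from it without the very dominance input that James' direct proof supplies.
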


We can now prove our generalizations of Theorem \ref{top_and_bottom} and Theorem \ref{other_cases}.

\begin{theorem}\label{top_and_bottom_weak}
Suppose that $\lambda$ is a weak $\ell$-partition and $0\leq i < \ell$. Then
\begin{enumerate}
\item $\widetilde{f}_{i}^{\varphi} \lambda$ is a weak $\ell$-partition,
\item $\widetilde{e}_{i}^{\varepsilon} \lambda$ is a weak $\ell$-partition.
\end{enumerate}
\end{theorem}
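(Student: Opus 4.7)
The plan is to reduce this statement about weak $\ell$-partitions in the classical crystal $B(\Lambda_0)$ to the corresponding statement about $(\ell,0)$-JM partitions in the ladder crystal $B(\Lambda_0)^L$, which has already been established as Theorem \ref{top_and_bottom_JM}, and then transport the conclusion back via the crystal isomorphism $\mathcal{R}$ of Corollary \ref{crystalisisom}.

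First I would set up the translation. Assume $\lambda$ is a weak $\ell$-partition, so there exists a (not necessarily $\ell$-regular) partition $\nu$ with $S^\nu \cong D^\lambda$. Since $S^\nu$ is irreducible, Theorem \ref{JM_irred} tells us $\nu$ is an $(\ell,0)$-JM partition, and the third bullet of Theorem \ref{reg_prop} gives $\mathcal{R}\nu=\lambda$. By Theorem \ref{irreducible_nodes}, $\nu$ is then a node of $B(\Lambda_0)^L$. The corollary to Lemma \ref{ladderlemma} identifies $\widehat{\varphi}_i(\nu) = \varphi_i(\mathcal{R}\nu) = \varphi_i(\lambda) = \varphi$, so applying the ladder crystal operator $\widehat{f}_i^{\widehat{\varphi}}$ makes sense and produces a partition of the same size as $\widetilde{f}_i^\varphi\lambda$.

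Next I would apply the key combinatorial input. By Theorem \ref{top_and_bottom_JM}\eqref{f_JM}, the partition $\nu' := \widehat{f}_i^{\widehat{\varphi}}\nu$ is again an $(\ell,0)$-JM partition. Theorem \ref{crystal_commutes} then yields
\begin{equation*}
\mathcal{R}\nu' \;=\; \mathcal{R}\bigl(\widehat{f}_i^{\widehat{\varphi}}\nu\bigr) \;=\; \widetilde{f}_i^{\varphi}\mathcal{R}\nu \;=\; \widetilde{f}_i^{\varphi}\lambda .
\end{equation*}
Since $\nu'$ is $(\ell,0)$-JM, its Specht module $S^{\nu'}$ is irreducible, hence isomorphic to $D^\mu$ for some $\ell$-regular partition $\mu$. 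Applying Theorem \ref{reg_prop}(3) once more forces $\mu=\mathcal{R}\nu'=\widetilde{f}_i^\varphi\lambda$. In particular $\widetilde{f}_i^\varphi\lambda$ is $\ell$-regular and $D^{\widetilde{f}_i^\varphi\lambda}\cong S^{\nu'}$, so $\widetilde{f}_i^\varphi\lambda$ is a weak $\ell$-partition. This proves part (1).

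For part (2), I would run the identical argument with $\widehat{e}_i$ in place of $\widehat{f}_i$, using Theorem \ref{top_and_bottom_JM}\eqref{e_JM} and the $\widetilde{e}_i$-case of Theorem \ref{crystal_commutes}, together with $\widehat{\varepsilon}_i(\nu)=\varepsilon_i(\lambda)=\varepsilon$. The only mildly delicate point, and the step I expect to flag as the main obstacle, is verifying the bridge $\widehat{\varphi}_i(\nu)=\varphi_i(\lambda)$ and its $\widehat{\varepsilon}$-analogue in this setting; but this is precisely the content of Lemma \ref{ladderlemma} and its corollary, so no new work is required. All the substantive difficulty has been absorbed into the construction of $B(\Lambda_0)^L$, the classification of $(\ell,0)$-JM partitions as its nodes, and the crystal isomorphism with $B(\Lambda_0)$; once those are in hand, the proof is essentially a diagram chase.
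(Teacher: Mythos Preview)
Your proposal is correct and follows essentially the same route as the paper's proof: translate to the ladder crystal via $\mathcal{R}\nu=\lambda$, invoke Theorem~\ref{top_and_bottom_JM} on the $(\ell,0)$-JM partition $\nu$, and push the result back through the crystal isomorphism of Theorem~\ref{crystal_commutes}. The only cosmetic difference is that the paper cites Theorem~\ref{decomp} (James' $d_{\lambda,\mathcal{R}\lambda}=1$) at the final step where you invoke Theorem~\ref{reg_prop}(3), but these yield the same conclusion.
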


\begin{proof} Let $\lambda$ be a weak $\ell$-partition. Then $D^{\lambda} = S^{\nu}$ for some $(\ell,0)$-JM
 partition $\nu$ with $\mathcal{R} \nu = \lambda$ (see Proposition \ref{reg_prop}). From Theorem 
\ref{irreducible_nodes} we know that $\nu \in B(\Lambda_0)^L$.
Corollary \ref{crystalisisom} implies that $\hat{\varphi}_i(\nu) = \varphi$. By Theorem 
\ref{top_and_bottom_JM}, $\widehat{f}_i^{\varphi} \nu$ is another $(\ell,0)$-JM partition. 
By Theorem \ref{crystal_commutes} we know that $\mathcal{R} \widehat{f}_i^{\varphi} \nu = \tilde{f}_i^{\varphi} \lambda$.
 Theorem \ref{decomp} then implies that $D^{\tilde{f}_i^{\varphi} \lambda} = S^{\widehat{f}_i^{\varphi} \nu}$
, since $S^{\widehat{f}_i^{\varphi} \nu}$ is irreducible by Theorem \ref{JM_irred}. 
Hence $\tilde{f}_i^{\varphi} \lambda$ is a weak $\ell$-partition. (2) follows similarly.
\end{proof}

\begin{theorem}\label{other_cases_weak}
Suppose that $\lambda$ is a weak $\ell$-partition. Then
\begin{enumerate}
\item $\widetilde{f}_{i}^{k} \lambda$ is not a weak $\ell$-partition for $0<k < \varphi -1,$
\item $\widetilde{e}_{i}^k \lambda$ is not a weak $\ell$-partition for $1<k< \varepsilon$.
\end{enumerate}
\end{theorem}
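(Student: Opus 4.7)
The plan is to mirror the proof of Theorem \ref{top_and_bottom_weak} by transferring the statement across the crystal isomorphism of Corollary \ref{crystalisisom} and then invoking the analogous combinatorial result for $(\ell,0)$-JM partitions, namely Theorem \ref{other_cases_JM}. Suppose $\lambda$ is a weak $\ell$-partition, so that by Proposition \ref{reg_prop} there is an $(\ell,0)$-JM partition $\nu$ with $\mathcal{R}\nu = \lambda$ and $D^\lambda = S^\nu$. By Theorem \ref{irreducible_nodes}, $\nu$ is a node of $B(\Lambda_0)^L$, and by the corollary to Lemma \ref{ladderlemma} we have $\widehat{\varphi}_i(\nu) = \varphi_i(\lambda) = \varphi$ and $\widehat{\varepsilon}_i(\nu) = \varepsilon_i(\lambda) = \varepsilon$.

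For part (1), argue by contradiction. Assume some $k$ with $0 < k < \varphi - 1$ has the property that $\widetilde{f}_i^k \lambda$ is a weak $\ell$-partition; then there exists an $(\ell,0)$-JM partition $\mu$ with $\mathcal{R}\mu = \widetilde{f}_i^k \lambda$ and $D^{\widetilde{f}_i^k \lambda} = S^\mu$. On the other side, since $k < \varphi = \widehat{\varphi}_i(\nu)$, the partition $\widehat{f}_i^k \nu$ is nonzero and lies in $B(\Lambda_0)^L$, and Theorem \ref{crystal_commutes} gives $\mathcal{R}(\widehat{f}_i^k \nu) = \widetilde{f}_i^k \mathcal{R}\nu = \widetilde{f}_i^k \lambda$. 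Since $\mu \in B(\Lambda_0)^L$ by Theorem \ref{irreducible_nodes} and $\mathcal{R}$ restricts to a bijection $B(\Lambda_0)^L \to B(\Lambda_0)$ by Corollary \ref{crystalisisom}, the two preimages of $\widetilde{f}_i^k \lambda$ must coincide: $\mu = \widehat{f}_i^k \nu$. But Theorem \ref{other_cases_JM}(1) asserts that $\widehat{f}_i^k \nu$ is \emph{not} an $(\ell,0)$-JM partition in the range $0 < k < \widehat{\varphi}_i(\nu) - 1 = \varphi - 1$, contradicting the choice of $\mu$.

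Part (2) is obtained by running the same argument with $\widetilde{f}_i, \widehat{f}_i, \varphi$ replaced by $\widetilde{e}_i, \widehat{e}_i, \varepsilon$ throughout, appealing to the $\widehat{e}_i$ clause of Theorem \ref{other_cases_JM} in place of the $\widehat{f}_i$ clause, and using that $\mathcal{R}$ commutes with $\widehat{e}_i$ as in Theorem \ref{crystal_commutes}.

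There is no real obstacle once Theorem \ref{other_cases_JM} and the crystal isomorphism are available; the one point that must be pinned down cleanly is that the node of $B(\Lambda_0)^L$ corresponding to $\widetilde{f}_i^k \lambda$ is forced to be exactly $\widehat{f}_i^k \nu$, and this is the sole content of the bijectivity of $\mathcal{R}: B(\Lambda_0)^L \to B(\Lambda_0)$ together with the fact that weak $\ell$-partitions come from $B(\Lambda_0)^L$-nodes via regularization.
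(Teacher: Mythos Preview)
Your proof is correct and follows essentially the same route as the paper: transfer to the ladder crystal via the $(\ell,0)$-JM partition $\nu$ with $\mathcal{R}\nu=\lambda$, invoke Theorem~\ref{other_cases_JM} to see that $\widehat{f}_i^k\nu$ is not an $(\ell,0)$-JM partition, and then use Theorem~\ref{irreducible_nodes} together with the bijectivity of $\mathcal{R}:B(\Lambda_0)^L\to B(\Lambda_0)$ to conclude that no $(\ell,0)$-JM partition lies in $\mathcal{RC}(\widetilde{f}_i^k\lambda)$. The paper phrases this directly rather than by contradiction, but the ingredients and logic are the same.
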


\begin{proof}
Let $\lambda$ be a weak $\ell$-partition. We must show that there does not exist an $(\ell,0)$-JM partition
$\mu$ in the regularization class of $\tilde{f}_i^k \lambda$. There exists an 
$(\ell,0)$-JM partition $\nu$ in $B(\Lambda_0)^L$ so that $D^{\lambda} = S^{\nu}$. By Theorem 
\ref{other_cases_JM}, $\widehat{f}_i^{k} \nu$ is not an $(\ell,0)$-JM partition. But by Theorem
 \ref{irreducible_nodes} we know all $(\ell,0)$-JM partitions occur in $B(\Lambda_0)^L$. Also, 
only one element of $\mathcal{RC}(\widetilde{f}_i^k \lambda)$ occurs in $B(\Lambda_0)^L$ and we
 know this is $\widehat{f}_i^k \nu$. Therefore no such $\mu$ can exist, so
 $\widetilde{f}_i^k \lambda$ is not a weak $\ell$-partition. (2) follows similarly.
\end{proof}

\section{Representation theoretic proof of Theorem \ref{top_and_bottom_weak}}\label{new_proof_JM}

This section is devoted to a representation theoretic proof of Theorem \ref{top_and_bottom_weak}. All definitions needed for this section can be found in \ref{new_proof}.

\indent T\footnotesize HEOREM \normalsize\ref{top_and_bottom_weak}.
\textit{Suppose that $\lambda$ is a weak $\ell$-partition and $0\leq i < \ell$. Then
\begin{enumerate}
\item $\widetilde{f}_{i}^{\varphi} \lambda$ is a weak $\ell$-partition,
\item $\widetilde{e}_{i}^{\varepsilon} \lambda$ is a weak $\ell$-partition.
\end{enumerate}
}

\begin{proof}[Alternate Proof of Theorem \ref{top_and_bottom_weak}] Suppose $\lambda$ is a weak $\ell$-partition and $| \lambda| = n$. Recall that $\lambda$ is a weak $\ell$-partition if and only if there is a $\mu$ so that $S^{\mu} = D^{\lambda}$. Let $F$ denote the number of addable $i$-boxes of $\mu$ and let $\nu$ denote the partition corresponding to $\mu$ plus all addable $i$-boxes.

First, we induce $S^{\mu}$ from $H_n(q)$ to $H_{n+F}(q)$. Applying  Proposition \ref{branching_rule} $F$ times yields $$\displaystyle [Ind_n^{n+F} S^{\mu}] = \sum_{\eta_{F} \succ \eta_{F-1} \succ \dots \succ \eta_1 \succ \mu} [S^{\eta_{F}}].$$ 
Projecting to the direct summand which has the same central character as $D^{\nu}$ (i.e. the central character of $\mu$ with $F$ more $i$'s), we get that $[S^{\nu}]$ occurs in this sum with coefficient $F !$ (add the $i$-boxes in any order). Everything else in this sum has a different central character. 

We next  
apply \eqref{ind} from Theorem \ref{groj} $F$ times to obtain 
$$[Ind_n^{n+F} D^{\lambda}] = \bigoplus_{i_1, \dots, i_{F}} [f_{i_1} \dots f_{i_{F}} {D^{\lambda}}].$$

Projecting again to the direct summand with the same central character as $\nu$, in $K(Rep)$ we get exactly $[(f_i)^F D^{\lambda}]$. Since $\lambda$ is a weak $\ell$-partition, $S^{\mu} = D^{\lambda}$, so $Ind_n^{n+F} S^{\mu} = Ind_n^{n+f} D^{\lambda}$ and we have shown that $F![S^{\nu}] = [(f_i)^F D^{\lambda}]$.

Since $S^{\mu} = D^{\lambda}$ and $(f_i)^{\varphi+1} D^{\lambda} = 0$, we know that $F \leq \varphi$ $(Ind_n^{n+\varphi+1} D^{\lambda}$ has no direct summand with the correct central character by Theorem \ref{character}). Similarly, since $Ind_n^{n+F+1} S^{\mu}$ has no composition factors with central character $\chi(D^{\lambda})$ union with $F+1$ many $i$'s, we know that $F \geq \varphi$. Hence $F = \varphi$. 

By part \ref{f^phi} of Theorem \ref{groj},  $[(f_i)^{\varphi} D^{\lambda}] = \varphi ! [\widetilde{f}_i^{\varphi} D^{\lambda}]$. Then by Theorem \ref{tilda}, $[S^{\nu}] = [D^{\widetilde{f}_i^{\varphi}\lambda}]$. 

Therefore $S^{\nu} = D^{\widetilde{f}_i^{\varphi} \lambda}$. Hence $\widetilde{f}_{i}^{\varphi} \lambda$ is a weak $\ell$-partition.

  The proof that $\widetilde{e}_{i}^{\varepsilon_i(\lambda)} \lambda$ is a weak $\ell$-partition follows similarly, with the roles of induction and restriction changed in Proposition \ref{branching_rule}, and the roles of $e_i$ and $f_i$ changed in Theorem \ref{groj}.

\end{proof}

\section{The Mullineux map}\label{MullMap}
 The operation on the category of $H_n(q)$ modules of tensoring with the sign module is a functor which takes irreducible modules to irreducible modules. For instance, when $q$ is not a root of unity, then $S^\lambda \otimes sign \cong S^{\lambda'}$.
  When $\lambda$ is an $\ell$-regular partition, and $D^\lambda$ denotes the irreducible module corresponding to $\lambda$ then $D^\lambda \otimes sign$ is some irreducible module $D^{m(\lambda)}$. This describes a map $m$ on $\ell$-regular partitions called the \textit{Mullineux map}. Recent results of Fayers \cite{F2} settle a conjecture of Lyle \cite{L2} which effectively computes the Mullineux map in certain cases by means of regularization and transposition. This section will highlight the interpretation of Fayers result in terms of the ladder crystal. It should be noted that Ford and Kleshchev gave a recursive construction for computing the Mullineux map \cite{FK}.
 
 \subsection{Statement of Fayers results}
 
 Since the Mullineux map is the modular analog of transposition, a natural  attempt to compute $m(\lambda)$ for an $\ell$-regular partition $\lambda$ would be to transpose $\lambda$ and then regularize. If a partition is not $\ell$-regular, we could similarly guess that $m(\mathcal{R}\lambda)$ was the composition of transposition and regularization. This is not always the case. However, a conjecture of Lyle \cite{L2}, which was proven recently by Fayers \cite{F2} gives a precise classification for when this holds. The definition below was taken from Fayers \cite{F2}.
 
 \begin{definition} An L-partition is a partition which has no box $(i,j)$ in the diagram of 
$\lambda$ such that $\ell \mid h_{i,j}^\lambda$ and either $\mathrm{arm}(i,j) < (\ell-1) * \mathrm{leg}(i,j)$ 
or $\mathrm{leg}(i,j) < (\ell-1) * \mathrm{arm}(i,j)$.
 \end{definition}
 
 \begin{theorem}\label{Lpart}[Fayers \cite{F2}] A partition is an L-partition if and only if 
$m(\mathcal{R} \lambda) = \mathcal{R} (\lambda')$. 
 \end{theorem}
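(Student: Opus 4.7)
The plan is to reformulate the theorem representation-theoretically and then argue both directions via induction. By Theorem \ref{decomp}, $D^{\mathcal{R}\lambda}$ appears as a composition factor of $S^\lambda$ with multiplicity exactly one, and moreover it is the distinguished such factor in dominance order. Tensoring the composition series of $S^\lambda$ with the one-dimensional sign module, which sends $D^\mu$ to $D^{m(\mu)}$ by definition of the Mullineux map, shows that $D^{m(\mathcal{R}\lambda)}$ appears with multiplicity one in $S^\lambda \otimes \mathrm{sign}$. Using the known relationship between $S^\lambda\otimes\mathrm{sign}$ and $S^{\lambda'}$ (which agree up to a duality that preserves composition factor multiplicities), and applying Theorem \ref{decomp} directly to $\lambda'$, both $D^{m(\mathcal{R}\lambda)}$ and $D^{\mathcal{R}(\lambda')}$ emerge as distinguished multiplicity-one composition factors of $S^{\lambda'}$. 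The theorem then reduces to characterizing precisely when these two distinguished factors coincide.

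For the forward implication, assume $\lambda$ is an L-partition. I would proceed by induction on $|\lambda|$, leveraging Theorem \ref{crystal_commutes}. Choose any residue $i$ with $\widehat{\varepsilon}_i(\lambda)>0$, let $\mu=\widehat{e}_i\lambda$, and verify three facts: (i) the L-partition property is preserved under removing the ladder good $i$-box, so that $\mu$ is again an L-partition of strictly smaller size; (ii) the analogous operation on the transpose side is given by applying the corresponding ladder operator of residue $-i\bmod\ell$, i.e.\ that $\widehat{e}_{-i}(\lambda')$ agrees with $\mu'$; and (iii) the Mullineux map intertwines the ordinary crystal operators via residue negation, namely $m\circ\widetilde{e}_i=\widetilde{e}_{-i}\circ m$, a known crystal-theoretic fact. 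Combining (i)--(iii) with the inductive hypothesis applied to $\mu$, together with Theorem \ref{crystal_commutes} to move $\mathcal{R}$ past the hats, reduces the equation $m(\mathcal{R}\lambda)=\mathcal{R}(\lambda')$ to the already-known $m(\mathcal{R}\mu)=\mathcal{R}(\mu')$.

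For the reverse implication, the plan is contrapositive: given a box $(a,b)$ with $\ell\mid h_{(a,b)}^\lambda$ violating the arm/leg inequality, extract an explicit discrepancy between $m(\mathcal{R}\lambda)$ and $\mathcal{R}(\lambda')$. The main obstacle lies here, since the Mullineux map is not combinatorially transparent. The cleanest route is via the $\ell$-abacus: regularization corresponds to sorting beads within each runner (pushing beads as high as possible), while the Mullineux map corresponds to reflecting runners via $i\mapsto -i\bmod\ell$ followed by a re-sort. The arm/leg condition at a hook of length divisible by $\ell$ precisely governs the bead-versus-gap pattern on a pair of adjacent runners, and a violation of the inequality produces exactly a pair for which the two compositions ``sort-then-reflect'' and ``reflect-then-sort'' disagree. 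Translating this abacus-level disagreement back to partitions pinpoints a row or column in which $m(\mathcal{R}\lambda)$ and $\mathcal{R}(\lambda')$ differ, completing the contrapositive. Making this abacus analysis rigorous is where the technical heart of the argument sits, and it is the step I expect to be the main difficulty.
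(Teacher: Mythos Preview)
The paper does not prove this theorem; it is quoted as a result of Fayers \cite{F2} and used as a black box (notably in the Corollary at the end of Section~\ref{MullMap}). So there is no ``paper's own proof'' to compare against.

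That said, your sketch has a genuine gap. In the reverse direction you assert that on the abacus the Mullineux map is ``reflecting runners via $i\mapsto -i\bmod\ell$ followed by a re-sort.'' This is false: runner reflection is the operation $\lambda\mapsto\lambda'$ (up to conventions) in the generic case, not the Mullineux map at a root of unity. The Mullineux map is genuinely more intricate---this is exactly why Ford--Kleshchev \cite{FK} and Fayers \cite{F2} are nontrivial results---and no simple sort/reflect description exists. Your whole contrapositive argument rests on this incorrect abacus model, so it does not go through.

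The forward direction is also incomplete. Claim (i), that removing the ladder-good $i$-box preserves the L-partition property, is not proved anywhere in this paper and is not obvious from the arm/leg inequality. Claim (ii), that $\widehat{e}_{-i}(\lambda')=(\widehat{e}_i\lambda)'$, asserts a transpose-equivariance for the ladder crystal operators that is not established here and would itself require work (ladders are not symmetric under transpose when $\ell>2$). Claim (iii) is correct and standard, but without (i) and (ii) the induction does not close.
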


 \subsection{All L-partitions occur as nodes in the ladder crystal}
 Now we prove that all L-partitions are nodes in the crystal $B(\Lambda_0)^L$. To do this, we will use an equivalent condition of L-partitions given by Lyle in \cite{L2}. 
 
 \begin{lemma} A partition $\lambda$ is an L-partition if and only if there does not exist a box $(i,j)$ in the diagram of $\lambda$ such that $\ell \mid h_{(i,j)}^\lambda$, and $\frac{h_{(i,j)}^\lambda}{\ell} \leq \min \{\mathrm{arm}(i,j), \mathrm{leg}(i,j) \}$.
 \end{lemma}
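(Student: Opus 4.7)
The plan is to translate Fayers' forbidden-box condition into the Lemma's forbidden-box condition by a direct arithmetic unpacking, using the hook-length identity $h_{(i,j)}^{\lambda} = \mathrm{arm}(i,j) + \mathrm{leg}(i,j) + 1$. Both the definition of an L-partition and the condition in the Lemma are universal statements ranging over boxes $(i,j)$ with $\ell \mid h_{(i,j)}^{\lambda}$, so it suffices to show that for each such box the two arithmetic descriptions of a forbidden box coincide.

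Fix a box $(i,j)$ with $\ell \mid h_{(i,j)}^{\lambda}$, and abbreviate $a = \mathrm{arm}(i,j)$, $l = \mathrm{leg}(i,j)$, and $k = h_{(i,j)}^{\lambda}/\ell$, so that $\ell k = a + l + 1$. I would first establish the single-sided equivalence
\[
k \leq a \iff l < (\ell-1)a.
\]
Indeed, multiplying through by $\ell$, the inequality $k \leq a$ is equivalent to $a + l + 1 \leq \ell a$, i.e.\ to $l + 1 \leq (\ell-1)a$; since $l$ and $(\ell-1)a$ are integers, this is in turn equivalent to $l < (\ell-1)a$. Interchanging the roles of arm and leg symmetrically yields $k \leq l \iff a < (\ell-1)l$.

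Taking the conjunction of the two one-sided equivalences gives
\[
k \leq \min\{a,l\} \iff a < (\ell-1)l \text{ and } l < (\ell-1)a,
\]
which is exactly the pair of inequalities appearing in Fayers' definition (the two halves together identifying a single "bad" box). Thus a box is forbidden by the Lemma's condition if and only if it is forbidden by the L-partition definition, and the two characterizations of L-partition agree.

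The proof is essentially routine once the hook-length identity is in hand; the only subtle point is the integer-rounding step converting $l+1 \leq (\ell-1)a$ into the strict inequality $l < (\ell-1)a$, which relies on $a$, $l$, and $\ell-1$ all being integers. There is no real combinatorial obstacle beyond this bookkeeping.
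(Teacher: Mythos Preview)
Your proof is correct and takes essentially the same approach as the paper: both directly unpack $h = \mathrm{arm} + \mathrm{leg} + 1$ and perform the same integer arithmetic, with you organizing it as a pair of one-sided biconditionals whose conjunction yields the result, while the paper argues each implication of the lemma separately.

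One remark worth making explicit: you read the forbidden-box condition in the definition of L-partition as the \emph{conjunction} of the two inequalities $\mathrm{arm} < (\ell-1)\,\mathrm{leg}$ and $\mathrm{leg} < (\ell-1)\,\mathrm{arm}$, whereas the definition as printed in this paper literally says ``either\dots or.'' Your reading is the intended one---with the disjunction the lemma is actually false, since for $\ell \geq 3$ every box with $\ell \mid h_{(i,j)}^\lambda$ satisfies at least one of the two strict inequalities (otherwise $a \geq (\ell-1)l \geq (\ell-1)^2 a$ forces $a=l=0$, contradicting $\ell \mid h$), which would collapse L-partition to $\ell$-core. The paper's own proof of this lemma also only goes through under the conjunction: from $a < (\ell-1)l$ alone it derives $h/\ell \leq l$ and then asserts $h/\ell \leq \min\{a,l\}$, which needs the second inequality as well.
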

 
 \begin{proof}
 Suppose that $\lambda$ is not an L-partition. Let $(i,j)$ be such that $\ell \mid h_{i,j}^\lambda$ and
 suppose that $\mathrm{arm}(i,j) < (\ell-1)* \mathrm{leg}(i,j)$ 
(the case where $\mathrm{leg}(i,j) < (\ell-1)*\mathrm{arm}(i,j)$ is similar). Then
 $h_{i,j}^\lambda = \mathrm{arm}(i,j) + \mathrm{leg}(i,j) +1 < \ell * \mathrm{leg}(i,j) +1$. 
So $\frac{h_{(i,j)}^\lambda}{\ell} <  \mathrm{leg}(i,j) + \frac{1}{\ell}$, or 
$\frac{h_{(i,j)}^\lambda}{\ell} \leq  \mathrm{leg}(i,j)$. Therefore $\frac{h_{(i,j)}^\lambda}{\ell} 
\leq  \min \{\mathrm{arm}(i,j), \mathrm{leg}(i,j)\}$. 
 
 Now suppose there exists a box $(i,j)$ so that $\ell \mid h_{(i,j)}^\lambda$, and 
$$\frac{h_{(i,j)}^\lambda}{\ell} \leq \min \{\mathrm{arm}(i,j), \mathrm{leg}(i,j) \}.$$ Let
 $k = \min \{\mathrm{arm}(i,j), \mathrm{leg}(i,j) \}$. If $k = \mathrm{arm}(i,j)$, then
 $h_{i,j}^\lambda \leq \ell*\mathrm{arm}(i,j)$, so 
$\mathrm{leg}(i,j) = h_{i,j}^\lambda - \mathrm{arm}(i,j) - 1 \leq (\ell-1)*\mathrm{arm}(i,j) -1$.
 Therefore $\mathrm{leg}(i,j) < (\ell-1)*\mathrm{arm}(i,j)$. The $k = \mathrm{leg}(i,j)$ case follows 
similarly. 
 \end{proof}
 
 \begin{theorem}\label{lylepart}
 All L-partitions occur as nodes of the crystal $B(\Lambda_0)^L$.
 \end{theorem}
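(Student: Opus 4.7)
The plan is to argue directly that every L-partition is the smallest partition (in dominance order) of its regularization class; by the characterization of Section \ref{locked} this is equivalent to having no unlocked boxes, and by Corollary \ref{crystalisisom} it is equivalent to being a node of $B(\Lambda_0)^L$.

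Assume for contradiction that $\lambda$ is an L-partition with an unlocked box, and choose $(c, d)$ to be the unlocked box of $\lambda$ with smallest $c$ and, among those, largest $d$. If $(c, d+1) \in \lambda$, it would be locked by this choice, making $(c, d)$ type II locked; so $d = \lambda_c$. If $c \geq 2$, the box $(c-1, d)$ lies in $\lambda$ and is locked by the choice of $(c, d)$, so the ``locked above'' hypothesis of the type I condition is satisfied; the case $c = 1$ makes that hypothesis vacuous. Thus the failure of $(c, d)$ to be type I locked must come from the second clause: there exists an integer $j \geq 1$ with $(c + j(\ell - 1), d - j) \notin \lambda$ and $(c + j(\ell - 1) - 1, d - j) \in \lambda$. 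A brief check rules out $d = 1$ (both $(1,1)$ and any box of the form $(c, 1)$ with $\lambda_c = 1$ and $(c-1,1)$ locked are type I locked, so the highest-then-rightmost unlocked box cannot occur in column $1$), hence $d - j \geq 1$ and $(c, d - j) \in \lambda$.

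I would then compute the relevant hook: since $\lambda_c = d$, one has $\mathrm{arm}(c, d-j) = j$, and since the partition constraint combined with $(c + j(\ell-1), d-j) \notin \lambda$ forces column $d - j$ to terminate at row $c + j(\ell-1) - 1$, one has $\mathrm{leg}(c, d - j) = j(\ell - 1) - 1$. Hence $h^\lambda_{(c, d-j)} = j + (j(\ell-1) - 1) + 1 = j\ell$, divisible by $\ell$ with quotient $h/\ell = j$. For $\ell \geq 3$ one has $j(\ell - 1) - 1 \geq j$, so $\min\{\mathrm{arm}, \mathrm{leg}\} = j$, and therefore $h/\ell \leq \min\{\mathrm{arm}, \mathrm{leg}\}$. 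By the equivalent form of the L-partition condition established in the preceding lemma, this is a violation at $(c, d - j)$, contradicting that $\lambda$ is an L-partition.

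The main obstacle is the careful bookkeeping of the locked-box case analysis---in particular, establishing that the highest-then-rightmost unlocked box $(c, d)$ sits at the end of its row and, when $c \geq 2$, below a locked box, so that the failure of type I lockedness is pinned to the ladder-below clause rather than the ``locked above'' clause, together with ruling out $d = 1$. Once that setup is in place, the computation of arm, leg, and hook at $(c, d - j)$ and the appeal to the equivalent L-condition are immediate.
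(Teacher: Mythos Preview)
Your proof is correct and follows essentially the same approach as the paper's: locate an unlocked box at the end of its row with a locked box above (or in row $1$), use the failure of the type~I condition to produce a ladder position $(c+j(\ell-1),\,d-j)$ with the box above occupied, and then compute $\mathrm{arm}=j$, $\mathrm{leg}=j(\ell-1)-1$, $h=j\ell$ at $(c,d-j)$ to violate the equivalent L-partition criterion. Your selection of the box by ``smallest $c$, then largest $d$'' is a mild streamlining of the paper's iterative procedure (which climbs columns and then jumps to row-ends repeatedly), but the core idea and computation are identical.
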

 
 \begin{proof}
 Suppose that $\lambda$ is not a node of the crystal $B(\Lambda_0)^L$. Then there exists an unlocked box somewhere in the diagram of $\lambda$, say at position $(a,b)$. Let $c$ be the smallest integer so that $(c,b)$ is unlocked (so that either $(c-1,b)$ is locked or $c =1$). Then the box $(c, \lambda_c)$ is also unlocked (if it were locked then $(c,b)$ would be a type II locked box). If there is an unlocked box above $(c, \lambda_c)$ then we will apply this procedure again (move to the highest unlocked box in column $\lambda_c$ and then the rightmost box in that row) until we can assume that $(c,\lambda_c)$ is unlocked and sits below a locked box (or possibly $c=1$). Then $(c, \lambda_c)$ must be unlocked from violating the type I lock condition, so that there exists a position $(i,j)$ in the same ladder as $(c, \lambda_c)$ such that $(i,j)$ is not in $\lambda$ but $(i-1,j)$ is in $\lambda$. Then the hook length $h_{(c,j)}^\lambda$ is divisible by $\ell$. In fact, $h_{(c,j)}^\lambda = \ell * \mathrm{arm}(c,j)$, 
since $(i,j)$ is in the same ladder as $(c, \lambda_c)$ and $(i,j)$ is the first non-occupied position in column $j$ in $\lambda$. But then 
$\frac{h_{(c,j)}^\lambda}{\ell} = \mathrm{arm}(c,j)$ and $\mathrm{leg}(c,j) = (\ell-1)*\mathrm{arm}(c,j) - 1 \geq arm(c,j)$ since $\ell > 2$. 
Hence $\frac{h_{(c,j)}^\lambda}{\ell} \leq min(\mathrm{arm}(c,j), \mathrm{leg}(c,j))$, 
which means $\lambda$ is not an L-partition.
 
  \end{proof}
 
 It was pointed out to the author by M. Fayers that a classification of the nodes of $B(\Lambda_0)^L$ can be described in terms of hook lengths and arm lengths. We now include this classification.
 
 \begin{theorem} A partition $\lambda$ belongs to the crystal $B(\Lambda_0)^L$ if and only if there does not exist a box $(i,j) \in \lambda$ such that $h_{(i,j)}^\lambda = \ell * \mathrm{arm}(i,j)$.
 \end{theorem}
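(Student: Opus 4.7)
The plan is to use the characterization established earlier in Section~\ref{locked}: the nodes of $B(\Lambda_0)^L$ are precisely the partitions whose boxes are all locked. Thus the theorem is equivalent to the statement that $\lambda$ has an unlocked box if and only if there exists $(i,j)\in\lambda$ with $h^{\lambda}_{(i,j)} = \ell\cdot\mathrm{arm}(i,j)$. I will prove the two directions separately; one is essentially the argument of Theorem~\ref{lylepart}, while the other is a direct construction from the hook-length condition.

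For the direction ``hook condition implies not a node'', suppose $(i,j)\in\lambda$ satisfies $h^{\lambda}_{(i,j)} = \ell a$, where $a = \mathrm{arm}(i,j)$. Since $h^{\lambda}_{(i,j)} = \mathrm{arm}(i,j) + \mathrm{leg}(i,j) + 1$, we get $\mathrm{leg}(i,j) = (\ell-1)a - 1$. I will pinpoint the box $x := (i, j+a)$, which is the rightmost box of row $i$. The position $p := (i + (\ell-1)a, j)$ is unoccupied, while $(i+(\ell-1)a - 1, j)$ is occupied (it is the bottom of column $j$). A direct check shows $x$ and $p$ lie on the same ladder. Since $p$ is an empty position below $x$ on the ladder with an occupied position directly above it, the type~I locked condition for $x$ fails. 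Because $x$ is the last box of its row, it cannot be type~II locked either. Hence $x$ is unlocked and $\lambda\notin B(\Lambda_0)^L$.

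For the converse, suppose $\lambda$ has an unlocked box. I will walk through the reduction already used in the proof of Theorem~\ref{lylepart}: starting from any unlocked box, one can move to an unlocked box of the form $(c, \lambda_c)$ (end of a row) such that the box directly above it is locked or $c=1$. This requires iterating ``go to the rightmost box of the current row, then jump up to the highest unlocked box in that new column''; the procedure terminates because columns strictly increase and rows strictly decrease. Since $(c,\lambda_c)$ is unlocked but not for type~II reasons and has a locked box above (or is in the first row), it must violate type~I: there is an empty position $(i,j)$ on its ladder, below it, with $(i-1,j)\in\lambda$. A computation using the ladder relation $i-c = (\ell-1)(\lambda_c - j)$ then yields $\mathrm{leg}(c,j) = i-1-c = (\ell-1)\mathrm{arm}(c,j) - 1$, so $h^{\lambda}_{(c,j)} = \ell\cdot \mathrm{arm}(c,j)$, as desired.

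The only real subtlety is the reduction step in the converse direction, that is, guaranteeing we can always locate an unlocked box at the end of a row whose row-above condition is satisfied. This is the same bookkeeping as in Theorem~\ref{lylepart}, and the hypothesis $\ell \geq 3$ (standing throughout the chapter) is what makes the ladder geometry produce an empty position rather than just a diagonal collision; otherwise both directions reduce to the elementary ladder computation $i-c = (\ell-1)(\lambda_c - j)$.
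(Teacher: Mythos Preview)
Your proof is correct and follows essentially the same approach as the paper's: both directions hinge on the equivalence between ``$\lambda$ is a node of $B(\Lambda_0)^L$'' and ``all boxes of $\lambda$ are locked,'' and both identify the end-of-row box $(i,\lambda_i)$ as the relevant unlocked box, using the ladder relation $i-c=(\ell-1)(\lambda_c-j)$ to pass between the unlocked condition and the hook identity $h^{\lambda}_{(c,j)}=\ell\cdot\mathrm{arm}(c,j)$. Your write-up is somewhat more explicit than the paper's (which defers the reduction step to Theorem~\ref{lylepart}), but the argument is the same; one small imprecision is that in your iteration the column index only weakly increases, though termination still follows since the row index strictly decreases at each ``jump up'' step.
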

 
 \begin{proof}
 If $\lambda$ has a box $(i,j)$ with hook length $\ell*\mathrm{arm}(i,j)$ then the box $(i, \lambda_i)$ will be unlocked, as it is on the same ladder as the empty position directly below the last box in column $j$. Hence $\lambda$ is not in $B(\Lambda_0)^L$. 
 
 If $\lambda$ does not belong to the crystal $B(\Lambda_0)^L$ then there exists an unlocked box $(a,b)$ in the diagram of $\lambda$. Similar to the proof of Theorem \ref{lylepart}, this implies that there exists an unlocked box $(i,\lambda_i)$ which is either directly below a locked box or $i=1$. 
This box is then unlocked because it violates a type I lock rule, which means that in the same ladder as
 $(i,\lambda_i)$ there is an empty position $(n,j)$ directly below a box $(n-1,j)$. But then the box $(i,j)$ will satisfy $h_{(i,j)}^\lambda = \ell * \mathrm{arm}(i,j)$.
 \end{proof} 
  
 Finally we note that as a corollary we obtain a second proof that all $(\ell,0)$-JM partitions are nodes of the crystal $B(\Lambda_0)^L$.
 
 \begin{corollary} All $(\ell,0)$-JM partitions (in fact, all $(\ell,p)$-JM partitions) are nodes of the crystal $B(\Lambda_0)^L$.
 \end{corollary}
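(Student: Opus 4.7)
The plan is to prove the contrapositive via the characterization from the theorem immediately preceding: if $\lambda$ is an $(\ell,0)$-JM partition, then no box $(i,j) \in \lambda$ can satisfy $h^{\lambda}_{(i,j)} = \ell \cdot \mathrm{arm}(i,j)$. Assuming such a box exists, I will exhibit a triple of boxes violating the JM condition of Theorem~\ref{JM_irred} (via the rearranged form, Lemma~\ref{rearrange}), taking $(i,j)$ itself as the box with hook length divisible by $\ell$.

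Set $a = \mathrm{arm}(i,j)$, so that $h^{\lambda}_{(i,j)} = \ell a$ and $\mathrm{leg}(i,j) = (\ell-1)a - 1$; here $a \ge 1$ since $(i,j) \in \lambda$ forces $h^{\lambda}_{(i,j)} \ge 1$. It suffices to find $w \in (j, j+a]$ with $\ell \nmid h^{\lambda}_{(i,w)}$ and $z \in (i, i + (\ell-1)a - 1]$ with $\ell \nmid h^{\lambda}_{(z,j)}$. For the row direction, the hook length $h^{\lambda}_{(i, j+k)}$ for $1 \le k \le a$ equals $(a-k) + (\lambda'_{j+k} - i) + 1$. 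If all of these were divisible by $\ell$, then $\lambda'_{j+k} - \lambda'_{j+k-1} \equiv 1 \pmod{\ell}$ for each $k$; combined with the monotonicity $\lambda'_{j+k} \le \lambda'_{j+k-1}$, each drop $\lambda'_{j+k-1} - \lambda'_{j+k}$ would have to be at least $\ell - 1$. Telescoping over $k = 1, \dots, a$ forces a total drop of at least $a(\ell-1)$, but the actual drop $\lambda'_j - \lambda'_{j+a}$ is bounded above by $(\ell-1)a - 1$ (using $\lambda'_j = i + (\ell-1)a - 1$ and $\lambda'_{j+a} \ge i$), a contradiction.

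The column argument is completely symmetric: the hook lengths down column $j$ obey a parallel linear formula in the $\lambda_{i+k}$, and the analogous counting argument produces $z$, with the required strict inequality $((\ell-1)a - 1)(\ell - 1) > a$ holding comfortably for $\ell \ge 3$ and $a \ge 1$. The main obstacle is purely bookkeeping; conceptually the observation is that the condition $h = \ell \cdot \mathrm{arm}$ forces a leg so large that demanding all flanking hook lengths in the row and column also be divisible by $\ell$ overdetermines the system. For the parenthetical claim about $(\ell, p)$-JM partitions, the same first-order mod-$\ell$ triple obstructs JM-ness regardless of the refinement by $p$-adic valuations, so the identical argument applies verbatim.
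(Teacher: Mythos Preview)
Your argument is correct and takes a genuinely different route from the paper's.

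The paper proves this corollary by passing through L-partitions and the Mullineux map: if $\lambda$ is a JM partition then so is $\lambda'$, hence $S^{\lambda'}$ is irreducible; since both $D^{\mathcal{R}(\lambda')}$ and $D^{m(\mathcal{R}\lambda)}$ occur in $S^{\lambda'}$, they coincide, so by Fayers' Theorem~\ref{Lpart} $\lambda$ is an L-partition, and then Theorem~\ref{lylepart} places it in $B(\Lambda_0)^L$. Your approach instead goes straight to the hook-length characterization of $B(\Lambda_0)^L$ and exhibits, for any box with $h = \ell\cdot\mathrm{arm}$, an explicit JM-violating triple via a telescoping bound on column (respectively row) lengths. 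This is more elementary---it avoids the representation theory and Fayers' L-partition result entirely---and in fact your row argument essentially re-proves the implication ``not L-partition $\Rightarrow$ not JM'' in the special case $h = \ell\cdot\mathrm{arm}$. What the paper's route buys is the conceptual link to the Mullineux map and the fact that the corollary drops out of machinery already in place; what your route buys is a self-contained combinatorial proof that does not depend on \cite{F2}. Note also that the paper already has a first proof of this fact as Theorem~\ref{irreducible_nodes} (via induction along $i$-strings), so this corollary is itself presented as an alternative proof; yours is a third. Your handling of the $(\ell,p)$ case is fine: since every $(\ell,p)$-JM partition is an $(\ell,0)$-JM partition, the contrapositive you prove covers it.
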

 
 \begin{proof}
The two composition factors $D^{\mathcal{R} (\lambda')}$ and $D^{m(\mathcal{R}\lambda)}$ must occur in
 the Specht module $S^{\lambda'}$. If $\lambda$ is a JM partition then so is $\lambda'$, so 
$S^{\lambda'}$ is irreducible, i.e. $S^{\lambda'} \cong D^{\mathcal{R}(\lambda')} = D^{m(\mathcal{R} \lambda)}$. By Theorem \ref{Lpart}, $\lambda$ is an L-partition. Hence all JM-partitions are L-partitions.
 \end{proof}
        
        % You should repeat this portion for each additional chapter 
        % as it's appropriate to your dissertation.

    % %------------------------------------------------------------------------------
    % 
    % %--------------------------------------------------------------- NEW CHAPTER --
    %
    % % -- [INSERT TITLE OF CHAPTER FOUR]

    \newchapter{Core Bijection}{A Bijection on Core Partitions}{A Bijection on Core Partitions and a Parabolic Quotient of the Affine Symmetric Group}
    \label{sec:LabelForChapter4}

        % Either type your chapter's text here or input a file 
        % containing it using the ``\input'' command.
        
        % You will probably want to split your chapter up into several
        % sections (with each section possibly even split up into
        % subsections), each of which can either be written directly
        % in this file or input from an external file as above. E.g.:
    
   The results of this chapter are joint work with Brant Jones and Monica Vazirani.
\section{Introduction}
 In Section \ref{counting_ell_cores}, we gave a simple bijection between $\ell$-cores with first part $k$ and $(\ell-1)$-cores with first part $\leq k$. 
In this chapter we review some combinatorial models for cores and interpret the bijection in various guises.  In the Coxeter system setting, the bijection has a geometric interpretation as a projection from the root lattice of type $A_{\ell-1}$ to an embedded copy of the root lattice of type $A_{\ell-2}$; see Figure~\ref{f:sl3}.  We observe that the bijection reduces the Coxeter length of the corresponding minimal length coset representative by exactly $k$.  We also show that the bijection has a natural description in terms of another correspondence between $\ell$-cores and $(\ell-1)$-bounded partitions due to Lapointe and Morse \cite{LM}.

\subsection{Organization}

In Section~\ref{s:intro} we review the bijection $\Phi_{\ell}^k$ in terms of partition diagrams.  In Section~\ref{s:geometry}, we review the correspondence between $\ell$-cores and minimal length coset representatives for $\widetilde{S_{\ell}} / S_{\ell}$ where $\widetilde{S_{\ell}}$ denotes the affine symmetric group and $S_{\ell}$ denotes the finite symmetric group.  In Section~\ref{s:geometric_bijection}, we give a geometric version of the bijection $\Phi_{\ell}^k$ on the root lattice of type $A_{\ell-1}$.  In Section~\ref{s:l-m}, we show that $\Phi_{\ell}^k$ also has a natural description in terms of bounded partitions using the correspondence
\[ \rho_{\ell-1} : \{ \ell\text{-cores} \} \rightarrow \{ \text{partitions with first part } \leq \ell-1 \} \]
due to Lapointe and Morse \cite{LM}.

\section{Definitions, notation and a review of the bijection}\label{s:intro}

\subsection{Preliminaries}
We let $\delta_{i,j}$ denote the Kronecker delta function.  

The subset of $\mathcal{C}_{\ell}$ having first part $k$ will be denoted $\mathcal{C}_{\ell}^k$ and the subset of $\mathcal{C}_{\ell}$ having first part $\leq k$ will be denoted $\mathcal{C}_{\ell}^{\leq k}$. 

\subsection{Abaci}  Here we generalize the notion of $\beta$-numbers from Chapter \ref{sec:LabelForChapter2}.

Each partition $\lambda = (\lambda_1, \dots, \lambda_r)$ is determined by its hook lengths in the first column, i.e. the $h_{(i,1)}^{\lambda}$. From a sequence $(\alpha_1, \dots, \alpha_r)$ of positive decreasing integers one obtains a partition $\mu$ by requiring that the hook length $h_{(i,1)}^{\mu} = \alpha_i$ for $1 \leq i \leq r$. This gives a bijection between the set of partitions and the set of strictly decreasing sequences of positive integers. 

One can generalize this process by looking at the set $B$ of infinite sequences $b = (b_1, b_2, \dots )$ of integers. We give $B$ the group structure of component-wise addition. We define the element \textbf{1} $= (1,1,1,1, \dots) \in B$. Let $S$ denote the subgroup generated by \textbf{1} under addition, so $S = \{ (n,n,n, \dots ) : n \in \mathbf{Z} \}$. A sequence $b = (b_1, b_2, \dots ) \in B$ is said to \textit{stabilize} if there exists an $n$ so that $b_i - b_{i+1} = 1$ for all $i > n$. The set $\mathcal{B}$ is defined to be the subset of $B$ of strictly decreasing sequences that stabilize, modulo the added relation $\equiv$ that two sequences are equivalent if their difference is in $S$.
\begin{example}
$(11, 7, 4, 1, -1, -2, -3, \dots)$ is in $\mathcal{B}$. In $\mathcal{B}$, we have 
\[ (9, 5, 2, -1, -3, -4, -5, \dots) \equiv (11, 7, 4, 1, -1, -2, -3, \dots). \]
\end{example}

We define a bijection $\beta$ between the set $\mathcal{P}$ of partitions and $\mathcal{B}$. To a partition $\lambda = (\lambda_1, \lambda_2, \dots \lambda_r)$ of length $r$, we define $\beta(\lambda)$ to be the equivalence class of $(h_{(1,1)}^{\lambda}, h_{(2,1)}^{\lambda}, h_{(3,1)}^{\lambda}, \dots , h_{(r,1)}^{\lambda}, -1, -2, -3, -4, \dots)$ in $\mathcal{B}$.  

\begin{example}
$\beta(8,5,3,1)$ is the equivalence class of $(11, 7, 4, 1, -1, -2, -3, \dots).$
\end{example}

We generalize some definitions from Chapter \ref{sec:LabelForChapter2} about abaci.

An \textit{abacus diagram} is a diagram containing $\ell$ columns labeled $0, 1, \dots, \ell-1$, called \textit{runners}.  The horizontal cross-sections or rows will be called \em levels \em and runner $i$ contains entries labeled by $r \ell + i$ on each level $r$ where $-\infty < r < \infty$.  We draw the abacus so that each runner is vertical, oriented with $-\infty$ at the top and $\infty$ at the bottom, with runner 0 in the leftmost position, increasing to runner $\ell-1$ in the rightmost position.  Entries in the abacus diagram may be circled; such circled elements are called \textit{beads}. Entries which are not circled will be called \textit{gaps}.  The linear ordering of the entries given by the labels $r \ell + i$ is called the \em reading order \em of the abacus and corresponds to scanning left to right, top to bottom.

\begin{example}
The following abacus diagram has beads in positions ($\ldots$, -3, -2, -1, 1, 2, 4, 5, 8) and gaps in positions (0, 3, 6, 7, 9, 10, 11, $\dots$). Level 0 is the row which contains $0,1,2$. 
\noindent
\\
\\

\begin{center}
\begin{picture}(80,120)(0,10)
\put (42.5,96){.}
\put (42.5,92){.}
\put (42.5,100){.}
\put (72.5,96){.}
\put (72.5,92){.}
\put (72.5,100){.}
\put (11.5,96){.}
\put (11.5,92){.}
\put (11.5,100){.}
\put (9,80) {-3}
\put (39,80){-2}
\put (69,80){-1}
\put (10,65) {0}
\put (40,65){1}
\put (70,65){2}
\put (10,50){3}
\put (40, 50){4}
\put (70,50){5}
\put (10,35){6}
\put (40,35){7}
\put (70,35){8}
\put (10,20){9}
\put (38,20){10}
\put (68,20){11}
\put(0,105){\line(1,0){85}}
\tiny
\put (0,120){Runner}
\put (30,120){Runner}
\put (60,120){Runner}
\normalsize
\put (10,110){0}
\put (40,110){1}
\put (70,110){2}
\put (-48,65){Level 0 $\to$}
\put (-50,80){Level -1 $\to$}
\put (-48,50){Level 1 $\to$}
\put (-48,35){Level 2 $\to$}

\put (72.5,53){\circle{13}}
\put (72.5,68){\circle{13}}
\put (72.5,38){\circle{13}}
\put (42.5,68){\circle{13}}
\put (42.5,53){\circle{13}}

\put (72.5,83){\circle{13}}
\put (42.5,83){\circle{13}}
\put (12.5,83){\circle{13}}
\put (42.5,12){.}
\put (42.5,8){.}
\put (42.5,4){.}
\put (72.5,12){.}
\put (72.5,8){.}
\put (72.5,4){.}
\put (11.5,12){.}
\put (11.5,8){.}
\put (11.5,4){.}
\end{picture}
\end{center}
\end{example}

A representative $\omega$ of $\beta(\lambda)$ will be called a \textit{set of $\beta$-numbers} for $\lambda$.  Suppose $\omega = (\omega_1, \omega_2, \dots )$ is a set of $\beta$ numbers for $\lambda$. An \em abacus \em for $\lambda$ is obtained by circling the entries of $\omega$ in an abacus diagram. 

\begin{example}
The following two diagrams are abaci for $\lambda = (8,5,3,1)$, the first comes from the $\beta$-numbers $(11,7,4,1,-1,-2,-3 , \dots)$ and the second comes from the equivalent $\beta$-numbers $(9,5,2,-1,-3,-4,-5, \dots )$.  We list the beads in reverse reading order to be compatible with stability in $\mathcal{B}$.
\\\\
$$
\begin{array}{lccccccr}
\begin{picture}(80,80)
\put (42.5,96){.}
\put (42.5,92){.}
\put (42.5,100){.}
\put (72.5,96){.}
\put (72.5,92){.}
\put (72.5,100){.}
\put (11.5,96){.}
\put (11.5,92){.}
\put (11.5,100){.}
\put (8,80) {-3}
\put (38,80){-2}
\put (68,80){-1}
\put (10,65) {0}
\put (40,65){1}
\put (70,65){2}
\put (10,50){3}
\put (40, 50){4}
\put (70,50){5}
\put (10,35){6}
\put (40,35){7}
\put (70,35){8}
\put (10,20){9}
\put (38,20){10}
\put (66,20){11}
\put (72.5,23){\circle{13}}%11
\put (42.5,68){\circle{13}}
\put (42.5,53){\circle{13}}
\put (42.5,38){\circle{13}}

\put (72.5,83){\circle{13}}
\put (42.5,83){\circle{13}}
\put (12.5,83){\circle{13}}
\put (42.5,12){.}
\put (42.5,8){.}
\put (42.5,4){.}
\put (72.5,12){.}
\put (72.5,8){.}
\put (72.5,4){.}
\put (11.5,12){.}
\put (11.5,8){.}
\put (11.5,4){.}
\end{picture}
& & & & & & &
\begin{picture}(80,80)
\put (42.5,96){.}
\put (42.5,92){.}
\put (42.5,100){.}
\put (72.5,96){.}
\put (72.5,92){.}
\put (72.5,100){.}
\put (11.5,96){.}
\put (11.5,92){.}
\put (11.5,100){.}

\put (8,80) {-6}
\put (38,80){-5}
\put (68,80){-4}
\put (8,65) {-3}
\put (38,65){-2}
\put (68,65){-1}
\put (10,50){0}
\put (40, 50){1}
\put (70,50){2}
\put (10,35){3}
\put (40,35){4}
\put (70,35){5}
\put (10,20){6}
\put (40,20){7}
\put (70,20){8}
\put (10,5){9}
\put (38,5){10}
\put (68,5){11}
\put (72.5,53){\circle{13}}
\put (72.5,68){\circle{13}}
\put (72.5,38){\circle{13}}
\put (12.5,68){\circle{13}}
\put (12.5,8){\circle{13}}
\put (72.5,83){\circle{13}}
\put (42.5,83){\circle{13}}
\put (12.5,83){\circle{13}}
\put (42.5,-3){.}
\put (42.5,-7){.}
\put (42.5,-11){.}
\put (72.5,-3){.}
\put (72.5,-7){.}
\put (72.5,-11){.}
\put (11.5,-3){.}
\put (11.5,-7){.}
\put (11.5,-11){.}
\end{picture}
\end{array}
$$
\end{example}

\begin{remark}
Note that an abacus for $\lambda$ is not unique because it depends on the set of $\beta$-numbers chosen for $\lambda$.  However, from any abacus of $\lambda$ one can obtain the partition $\lambda$ by counting the number of gaps before every bead in the abacus in reading order. In the first example above for instance, we see that $\lambda_1 = 8$ since the eight numbers 10,9,8,6,5,3,2,0 are exactly the eight gaps before the bead corresponding to the final bead at position $11$. We will say that a bead is \textit{active} if it 
occurs in the positions between the first gap and the last bead, in reading order.  The active beads are those that correspond to a nonzero part in a partition.  In the left example above, the bead in spot 11 is active since it corresponds to the part $\lambda_1 = 8$, whereas the bead in spot -1 is not active since it corresponds to $\lambda_5 = 0$.
\end{remark}

\begin{definition}
We define the \textit{balance number} of an abacus to be the sum over all runners of the largest level in that runner which contains a bead. We say that an abacus is \textit{balanced} if its balance number is zero.
\end{definition}

\begin{example}
In the example above, the balance number of the first diagram is $-1+2+3 = 4$. The balance number for the second diagram is $3 + -2 + 1 = 2$, so neither are balanced.
\end{example}
\begin{remark}\label{r:unique_abacus}
Note that there is a unique abacus which represents a given partition for each balance number. In particular, there is a unique abacus of $\lambda$ with balance number 0.  The balance number for a set of $\beta$-numbers of $\lambda$ will increase by exactly $1$ when the vector \textbf{1} is added to the set of $\beta$-numbers. On the abacus picture, this corresponds to shifting all of the beads forward one entry in the reading order.
\end{remark}

% Optional:
%\begin{lemma}
%Suppose $\lambda \in \mathcal{C}_{\ell}$.  The rightmost box in row $j$ of $\lambda$ has the same residue as the rightmost box in row $i$ of $\lambda$ if and only if the hooklength of $(j,1)$ is equivalent mod $\ell$ to the hooklength of $(i,1)$.
%\end{lemma}

\begin{definition}
A runner is called \textit{flush} if no bead on the runner is preceded in reading order by a gap on that same runner.  We say that an abacus is \textit{flush} if every runner is flush.
\end{definition}

\begin{theorem}\cite[Theorem 2.7.16, Lemma 2.7.38]{JK}\label{t:flush_abacus}
$\lambda$ is an $\ell$-core if and only if any (equivalently, every) abacus of $\lambda$ is flush.  Moreover, in the balanced flush abacus of an $\ell$-core $\lambda$, each active bead on runner $i$ corresponds to a row of $\lambda$ whose rightmost box has residue $i$.
\end{theorem}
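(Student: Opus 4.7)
The plan is to reduce both assertions to the classical correspondence between bead motions on the abacus and rim-hook removal. I will first use the following standard fact (essentially built into the $\beta$-number construction): for any abacus of $\lambda$, a bead at position $p$ together with a gap at position $p - \ell$ on the same runner corresponds bijectively to a removable $\ell$-rim hook in $\lambda$, and swapping the bead into the gap produces an abacus of the partition obtained by removing that hook. Since adding $\mathbf{1}$ to a $\beta$-sequence merely shifts all beads forward by one position in reading order (which cyclically relabels runners while preserving the bead/gap pattern on each runner), the flush property is independent of the chosen representative, which justifies the ``any (equivalently, every)'' wording.

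Combining the above correspondence with Remark~\ref{divisibility}, $\lambda$ is an $\ell$-core if and only if $\lambda$ admits no removable $\ell$-rim hook, if and only if no bead has a gap $\ell$ positions before it on its runner, if and only if every runner is flush. This proves the first equivalence. For the residue statement, let $\lambda$ be an $\ell$-core with $r = len(\lambda)$. The canonical representative $(h_{(1,1)}^\lambda, \ldots, h_{(r,1)}^\lambda, -1, -2, \ldots)$ of $\beta(\lambda)$ gives an abacus whose ``phantom'' tail at positions $\leq -1$ fills each runner up to level $-1$, while the $r$ ``real'' beads sit at the positive positions $h_{(j,1)}^\lambda$. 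By the flush property just established, the real beads on each runner $i$ occupy a contiguous block of levels $\{0, 1, \ldots, M_i\}$, with the convention $M_i = -1$ when runner $i$ has no real bead; in every case $M_i = n_i - 1$, where $n_i$ is the number of real beads on runner $i$. Since $\sum_i n_i = r$, the balance number of the canonical abacus is $B_0 = \sum_i M_i = r - \ell$.

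To obtain the balanced abacus one shifts by $c = -B_0 = \ell - r$, so its active beads sit at positions $\omega_j = h_{(j,1)}^\lambda + (\ell - r) = (\lambda_j - j + r) + (\ell - r) = \lambda_j - j + \ell$. Hence $\omega_j$ lies on runner $\omega_j \bmod \ell = (\lambda_j - j) \bmod \ell$, which is precisely the residue of the rightmost box $(j, \lambda_j)$ of row $j$ of $\lambda$. The only delicate step is the balance calculation $B_0 = r - \ell$: it uses flushness to get $M_i = n_i - 1$ when $n_i > 0$, together with the observation that a runner with $n_i = 0$ has its maximum level set by the phantom tail at exactly $-1 = n_i - 1$, making the formula uniform across all $\ell$ runners. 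With that identity in hand, the rest of the argument is a direct calculation, and everything else (first-column hook lengths $h_{(j,1)}^\lambda = \lambda_j - j + r$, and the definition of residue as $b - a \bmod \ell$ for the box $(a,b)$) is bookkeeping.
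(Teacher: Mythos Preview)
The paper does not prove this theorem; it is quoted with a citation to James--Kerber \cite{JK} and used thereafter as a black box. Your proposal supplies a correct self-contained proof, so there is nothing in the paper to compare against beyond noting that your argument is in the same classical spirit as the James--Kerber treatment.

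A few brief comments on your write-up. For the first equivalence, the invocation of Remark~\ref{divisibility} is unnecessary: in this paper an $\ell$-core is \emph{defined} as a partition with no removable $\ell$-rim hook, so the chain ``$\ell$-core $\Leftrightarrow$ no removable $\ell$-rim hook $\Leftrightarrow$ no bead with a same-runner gap $\ell$ before it $\Leftrightarrow$ flush'' needs only the bead-slide/rim-hook correspondence you state. Your reduction of ``not flush'' to ``some bead with a gap exactly $\ell$ positions earlier on the same runner'' is correct but deserves one sentence (take the lowest bead on the runner that has any gap below it; then the position $\ell$ below it must itself be a gap).

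For the residue statement, your balance computation $B_0 = r - \ell$ is sound. The point that runner $0$ carries no real bead in the canonical abacus (since position $0$ is neither in the phantom tail nor a positive first-column hook length, so flushness forbids any bead above it) is implicit in your ``convention $M_i = -1$'' remark; making it explicit would tighten the argument. Once $B_0 = r - \ell$ is in hand, the shift gives $\omega_j = h_{(j,1)}^\lambda + (\ell - r) = \lambda_j - j + \ell$, and $\omega_j \bmod \ell = (\lambda_j - j) \bmod \ell$ is exactly the residue of the box $(j,\lambda_j)$, as you say. The identification of the active beads with $\omega_1, \ldots, \omega_r$ also checks out: after the shift the phantom tail ends at level position $\ell - r - 1$ and the first gap is at $\ell - r$, while every $\omega_j \geq \omega_r = \lambda_r + (\ell - r) > \ell - r$.
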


In the case that the corresponding abacus is not balanced, the boxes corresponding to the active beads on runner $i$ will share the same residue, but the residue may not be $i$.

\begin{example}\label{52111}
One can check that the partition $\lambda = (5,2,1,1,1)$ is a 4-core. One set of $\beta$-numbers for $\lambda$ is $(8,4,2,1,0,-2,-3,-4, \dots )$. This abacus is balanced as $2+0+0+(-2)=0$. All of the runners are flush. The active beads on runner 0 lie in positions 8, 4, 0 and these correspond to rows 1, 2 and 5 of the partition diagram whose final box of residue 0 is highlighted.

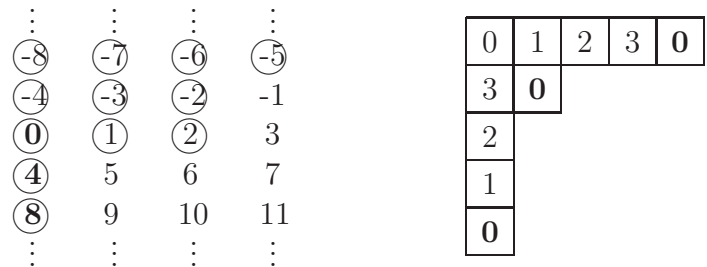
\begin{figure}[ht]
\begin{center}
\begin{tabular}{llll}
%\begin{picture}(80,100)
\begin{picture}(105,40)(0,80)
\put (42.5,96){.}
\put (42.5,92){.}
\put (42.5,100){.}
\put (72.5,96){.}
\put (72.5,92){.}
\put (72.5,100){.}
\put (102.5,96){.}
\put (102.5,92){.}
\put (102.5,100){.}
\put (11.5,96){.}
\put (11.5,92){.}
\put (11.5,100){.}
\put (9,80) {-8}
\put (39,80){-7}
\put (69,80){-6}
\put (99,80){-5}
\put (9,65) {-4}
\put (39,65){-3}
\put (69,65){-2}
\put (99,65){-1}

\put (10,50){\bf 0}
\put (40, 50){1}
\put (70,50){2}
\put (101,50){3}

\put (10,35){\bf 4}
\put (40,35){5}
\put (70,35){6}
\put (101,35){7}

\put (10,20){\bf 8}
\put (40,20){9}
\put (68,20){10}
\put (99,20){11}

\put (72.5,53){\circle{13}}
\put (72.5,68){\circle{13}}
\put (72.5,83){\circle{13}}

\put (102.5,83){\circle{13}}

\put (42.5,53){\circle{13}}
\put (42.5,68){\circle{13}}
\put (42.5,83){\circle{13}}

\put (12.5,23){\circle{13}}
\put (12.5,38){\circle{13}}
\put (12.5,53){\circle{13}}
\put (12.5,68){\circle{13}}
\put (12.5,83){\circle{13}}

\put (42.5,12){.}
\put (42.5,8){.}
\put (42.5,4){.}
\put (72.5,12){.}
\put (72.5,8){.}
\put (72.5,4){.}
\put (102.5,12){.}
\put (102.5,8){.}
\put (102.5,4){.}
\put (11.5,12){.}
\put (11.5,8){.}
\put (11.5,4){.}
\end{picture}
& & \hspace{0.5in}  &
%\parbox[t]{1in}{ \vspace{.8in} 
%\hspace{.3in} $\longrightarrow$ }
%%$\rightarrow$
\parbox[t]{2in}{
\tableau{\mbox{0} & \mbox{1} & \mbox{2} & \mbox{3} & \mbox{\bf 0} \\
         \mbox{3} & \mbox{\bf 0} \\
         \mbox{2} \\
         \mbox{1} \\
         \mbox{\bf 0} \\
} } \\
 & & & \\
\end{tabular}
\end{center}
\caption{This abacus represents the 4-core $(5,2,1,1,1)$.  The boxes of the corresponding partition diagram have been filled with their residue. }\label{f:ex_pi}
\end{figure}
\end{example}

\subsection{The bijection on abacus configurations}

Here we recall the bijection $\Phi_{\ell}^k : \mathcal{C}_{\ell}^k \rightarrow \mathcal{C}_{\ell-1}^{\leq k}$ from Section \ref{ell_core_bijection}.  Given $\lambda \in \mathcal{C}_{\ell}^k$ and an abacus for $\lambda$, remove the whole runner which contains the largest bead. Place the remaining runners into an $(\ell-1)$ abacus in order. In other words, renumber the runners $0, \ldots, \ell-2$, keeping the levels of the entries as before. This will correspond to an $(\ell - 1)$-core $\mu$ with largest part at most $k$. Then we define $\Phi_{\ell}^k : \mathcal{C}_{\ell}^k \rightarrow \mathcal{C}_{\ell-1}^{\leq k}$ to be the map which takes $\lambda$ to $\mu$. Observe that $ \Phi_{\ell}^k$ is well defined, independent of the choice of abacus for $\lambda$. 

To see that $\Phi_{\ell}^k$ is a bijection, observe that the map can be reversed.  Starting from an abacus of the $(\ell-1)$-core $\mu$, insert a new flush runner whose largest bead occurs just after the $k^{\textrm{th}}$ gap in the reading order.  This yields a flush abacus for the $\ell$-core $\lambda$ with $\lambda_1 = k$.

\begin{example}
Let $\ell = 4$ and $\lambda = (8,5,2,2,1,1,1)$. An abacus for $\lambda$ is:

\begin{center}
\begin{picture}(80,100)
\put (42.5,96){.}
\put (42.5,92){.}
\put (42.5,100){.}
\put (72.5,96){.}
\put (72.5,92){.}
\put (72.5,100){.}
\put (102.5,96){.}
\put (102.5,92){.}
\put (102.5,100){.}
\put (11.5,96){.}
\put (11.5,92){.}
\put (11.5,100){.}
\put (8,80) {-8}
\put (38,80){-7}
\put (68,80){-6}
\put (98,80){-5}
\put (8,65) {-4}
\put (38,65){-3}
\put (68,65){-2}
\put (98,65){-1}

\put (10,50){0}
\put (40, 50){1}
\put (70,50){2}
\put (101,50){3}

\put (10,35){4}
\put (40,35){5}
\put (70,35){6}
\put (101,35){7}

\put (10,20){8}
\put (40,20){9}
\put (67,20){10}
\put (99,20){11}

\put (72.5,53){\circle{13}}
\put (72.5,68){\circle{13}}
\put (72.5,83){\circle{13}}

\put (102.5,83){\circle{13}}

\put (42.5,53){\circle{13}}
\put (42.5,68){\circle{13}}
\put (42.5,83){\circle{13}}

\put (72.5,23){\circle{13}}
\put (72.5,38){\circle{13}}
\put (102.5,68){\circle{13}}
\put (12.5,83){\circle{13}}

\put (42.5,12){.}
\put (42.5,8){.}
\put (42.5,4){.}
\put (72.5,12){.}
\put (72.5,8){.}
\put (72.5,4){.}
\put (102.5,12){.}
\put (102.5,8){.}
\put (102.5,4){.}
\put (11.5,12){.}
\put (11.5,8){.}
\put (11.5,4){.}
\end{picture}
\end{center}

The largest $\beta$-number is 10. Removing the whole runner containing the 10, we get the remaining diagram with runners relabeled for $\ell = 3$
\begin{center}
\begin{picture}(80,100)
\put (42.5,96){.}
\put (42.5,92){.}
\put (42.5,100){.}
\put (72.5,96){.}
\put (72.5,92){.}
\put (72.5,100){.}
\put (102.5,96){.}
\put (102.5,92){.}
\put (102.5,100){.}
\put (11.5,96){.}
\put (11.5,92){.}
\put (11.5,100){.}
\put (8,80) {-6}
\put (38,80){-5}
\put (68,80){$\times$}
\put (98,80){-4}
\put (8,65) {-3}
\put (38,65){-2}
\put (68,65){$\times$}
\put (98,65){-1}
\put (10,50){0}
\put (40, 50){1}
\put (68,50){$\times$}
\put (101,50){2}
\put (10,35){3}
\put (40,35){4}
\put (68,35){$\times$}
\put (101,35){5}
\put (10,20){6}
\put (40,20){7}
\put (68,18){$\times$}
\put (101,20){8}
\put (72.5,53){\circle{13}}
\put (72.5,68){\circle{13}}
\put (72.5,83){\circle{13}}
\put (102.5,83){\circle{13}}
\put (42.5,83){\circle{13}}
\put (42.5,68){\circle{13}}
\put (42.5,53){\circle{13}}
\put (72.5,22){\circle{13}}
\put (72.5,38){\circle{13}}
\put (102.5,68){\circle{13}}
\put (12.5,83){\circle{13}}
\put (42.5,12){.}
\put (42.5,8){.}
\put (42.5,4){.}
\put (72.5,12){.}
\put (72.5,8){.}
\put (72.5,4){.}
\put (102.5,12){.}
\put (102.5,8){.}
\put (102.5,4){.}
\put (11.5,12){.}
\put (11.5,8){.}
\put (11.5,4){.}
\end{picture}
\end{center}
These are a set of $\beta$-numbers for the partition $(2,1,1)$, which is a 3-core with largest part $\leq 8$.  For the reverse bijection when $k=8$, notice that the eighth gap is at entry 7 which dictates where we insert the new runner and beads.  Also note in this example that the first abacus has balance number -1 while its image has balance number -3, so balance number is not necessarily preserved.
\end{example}

\subsection{The bijection on core partitions}\label{s:phi_on_diagram}

Another way to describe $\Phi_{\ell}^k$ is on the Young diagram of $\lambda$. Applying $\Phi_{\ell}^k$ to $\lambda$ is the same as removing all of the rows $i$ of $\lambda$ for which $ h_{(i,1)} \equiv h_{(1,1)} \mod \ell$. 
To illustrate, we show the bijection on the same example $\lambda = (8,5,2,2,1,1,1)$, but performed on a Young diagram instead of an abacus. We start by drawing the Young diagram and writing the hooks lengths of the boxes in the first column. The bijection simply deletes the rows which have a hook length in the first column equivalent to the hook length $h_{(1,1)}^{\lambda}\mod \ell$.
\noindent

$$
\tableau{14 & \mbox{} & \mbox{} & \mbox{} & \mbox{} & \mbox{} & \mbox{} & \mbox{} \\
10 & & \mbox{} & \mbox{} & \mbox{} \\
6 & \mbox{}\\
{\bf 5} & \mbox{}\\
{\bf 3}\\
2\\
{\bf 1}} 
\put (-275,8){\small Hook length $\equiv 14 \, mod \, 4 \to$}
\put (-275,-10){\small Hook length $\equiv 14 \, mod \, 4 \to$}
\put (-275,-28){\small Hook length $\equiv 14 \, mod \, 4 \to$}
\put (-275,-82){\small Hook length $\equiv 14 \, mod \, 4 \to$}
\put (10,-65){ \vspace{0.2in} $\stackrel{\Phi_{4}^8}{\mapsto}$}
\put (40,-71){\tableau{\mbox{} & \mbox{} \\ \mbox{} \\ \mbox{}}}
$$

Deleting the corresponding rows, we get that $\Phi_4^8 (8,5,2,2,1,1,1) = (2,1,1)$.
%\put (-250,-10){Hook length $\equiv 2 mod 4 \to$}
\normalsize

\section{Cores and the action of the affine symmetric group on the finite root lattice}\label{s:geometry}

In this section we recall that the $\ell$-cores index a system of minimal length coset representatives for $\widetilde{S_{\ell}} / S_{\ell}$ and describe some associated geometry.

\subsection{The affine root system}\label{s:aff_geometry}
Following \cite{humphreys}, let $\{ \e_1, \e_2, \dots, \e_{\ell} \}$ be an orthonormal basis of the Euclidean space $\mathbf{R}^{\ell}$ and denote the corresponding inner product by $(\cdot, \cdot)$.
For $1 \leq i \leq \ell - 1$, let $s_i$ be the reflection defined by interchanging $\e_i$ and $\e_{i+1}$; the reflecting hyperplanes are discussed below.  Then $\{s_1, \ldots, s_{\ell-1}\}$ are a set of Coxeter generators for the symmetric group $S_{\ell}$, which acts on $\mathbf{R}^{\ell}$ by permuting coordinates in the $\e_i$ basis.

Let $s_0$ be the affine reflection of $\mathbf{R}^{\ell}$ defined on $v = \sum_{j=1}^{\ell} a_j \e_j$ by
\[ s_0(v) = (a_{\ell} + 1) \e_1 + a_2 \e_2 + \cdots + a_{\ell-1} \e_{\ell-1} + (a_{1} - 1) \e_{\ell}. \]
Define the \em simple roots \em $\Delta$ \em of type $A_{\ell-1}$ \em to be the collection of $\ell-1$ vectors
\[ \a_1 = \e_1 - \e_2, \ \ \a_2 = \e_2 - \e_3, \ \ \ldots, \ \ \a_{\ell-1} = \e_{\ell-1} - \e_{\ell}. \]

The $\mathbf{Z}$-span $\Lambda_R$ of $\Delta$ is called the \em root lattice of type $A_{\ell-1}$\em.
Let $V = \mathbf{R} \otimes_{\mathbf{Z}} \Lambda_R \subsetneq \mathbf{R}^{\ell}$.  Observe that each reflection $s_i$ preserves $V$ and so
% We could use coroots here instead.  In type $A$, the coroots are equal to the roots.
$\{s_0, s_1, \ldots, s_{\ell-1}\}$ are a set of Coxeter generators for the affine symmetric group $\widetilde{S_{\ell}}$ acting on $V$.  For $w \in \widetilde{S_{\ell}}$ we let $l(w)$ denote Coxeter length.  From now on, we restrict our attention from $\mathbf{R}^{\ell}$ to $V$.  

In this presentation we see that $S_{\ell}$ is a parabolic subgroup of $\widetilde{S_{\ell}}$.  We form the parabolic quotient 
\[ \widetilde{S_{\ell}} / S_{\ell} = \{ w \in \widetilde{S_{\ell}} : l(w s_i) > l(w) \text{ for all $s_i$ where $1 \leq i \leq \ell-1$} \}. \]
By a standard result in the theory of Coxeter groups, this set gives a unique representative of minimal length from each coset $w S_{\ell}$ of $\widetilde{S_{\ell}} / S_{\ell}$.  For more on this construction, see \cite[Section 2.4]{bjorner-brenti}.
Another standard result is that $\widetilde{S_{\ell}}$ acts on $V$ as the semidirect product of $S_{\ell}$ and the translation group corresponding to the root lattice $\Lambda_R$.  
Hence, $\widetilde{S_{\ell}} / S_{\ell}$ is also in bijection with $\Lambda_R$ and we identify $\Lambda_R$ with the translation subgroup $\{t_{\mathbf{a}} : \mathbf{a} \in \Lambda_R\}$ of $\widetilde{S_{\ell}}$.

%Note that $V$ is the hyperplane in $\mathbf{R}^{\ell}$ consisting of vectors whose coordinates with respect to the standard basis $\{ \e_1, \e_2, \dots, \e_{\ell} \}$ sum to 0.  $V$ is often viewed as the dual of the Cartan subalgebra $\mathfrak{h}^{*}$ for $\mathfrak{sl}_{\ell}$.  

Let us consider this situation geometrically.  Denote the set of \em finite roots \em by $\Pi = \{ w \a_{i} : w \in S_{\ell}, \a_i \in \Delta \} \subset V$.  It is a standard fact that each root $\a \in \Pi$ can be written as an integral linear combination of the simple roots $\Delta$ such that all of the coefficients are positive or all coefficients are negative.  Therefore, $\Pi$ can be decomposed as $\Pi = \Pi^{+} \uplus \Pi^{-}$.

%\[ s_{\a, k} = v - ((v,\a) - k) \a \]
%is the corresponding affine reflection.  The affine Weyl group $\widetilde{S_{\ell}}$ is the subgroup of affine linear transformations on $V$ generated by all $s_{\a, k}$ where $\a \in \Phi, k \in \mathbf{Z}$.

For each finite root $\a$ and integer $k$ we can define an affine hyperplane
\[ H_{\a, k} = \{ v \in V : (v, \a) = k \}. \]
Observe that $s_i$ is the reflection over the hyperplane $H_{\a_i,0}$ for $1 \leq i < \ell$ while $s_0$ is the reflection over $H_{\theta, 1}$ where $\theta = \e_1 - \e_{\ell} = \sum_{i=1}^{\ell-1} \a_i$.  Let $\mathcal{H}$ denote the collection of all affine hyperplanes $H_{\a, k}$ for $\a \in \Pi, k \in \mathbf{Z}$.  Let $\mathcal{A}$ be the set of all connected components of $V \setminus \bigcup_{H \in \mathcal{H}} H$.  Each element of $\mathcal{A}$ is called an \em alcove\em.  In particular,
\[ A_{\circ} = \{ v \in V : 0 < (v, \a) < 1 \text{ for all } \a \in \Pi^{+} \} \]
is called the \em fundamental alcove \em whose closure is a fundamental domain for the action of $\widetilde{S_{\ell}}$ on $V$.  

% We could define walls.

\begin{proposition}\cite[Section 4.5]{humphreys}
The affine Weyl group $\widetilde{S_{\ell}}$ permutes the collection of alcoves transitively and freely.
% and is generated by the set of reflections with respect to the walls of $A_{\circ}$.  
The closure of $A_{\circ}$ is a fundamental domain for the action of $\widetilde{S_{\ell}}$ on $V$.
\end{proposition}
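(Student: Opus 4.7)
The plan is to prove the two assertions in sequence, relying on the Coxeter-theoretic structure of $\widetilde{S_\ell}$ together with the geometry of the hyperplane arrangement $\mathcal{H}$.

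First, I would verify that $\widetilde{S_\ell}$ genuinely permutes $\mathcal{A}$. Since each generator $s_i$ is the reflection across a hyperplane in $\mathcal{H}$, and the whole arrangement $\mathcal{H}$ is stable under every such reflection (because $\widetilde{S_\ell}$ permutes the finite roots $\Pi$ and shifts integer levels to integer levels), the action of $\widetilde{S_\ell}$ sends components of $V\setminus\bigcup_{H\in\mathcal{H}} H$ to components. Next, for transitivity, I would argue by induction on the number $d(A)$ of hyperplanes of $\mathcal{H}$ separating a given alcove $A$ from $A_{\circ}$. The base case $d(A) = 0$ gives $A = A_{\circ}$. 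For the inductive step, pick a wall $H$ of $A$ that also separates $A$ from $A_{\circ}$ and let $s_H$ be the reflection across $H$. Then $s_H A$ has $d(s_H A) = d(A) - 1$, so by induction there exists $w \in \widetilde{S_\ell}$ with $w A_{\circ} = s_H A$, and the crucial observation is that $w s_H w^{-1}$ is a reflection across $wH$; using that any wall of any alcove is in the $\widetilde{S_\ell}$-orbit of a wall of $A_{\circ}$ (which is where the generators $s_0,\ldots,s_{\ell-1}$ come in), I can rewrite this so that $(w s_H) A_{\circ} = A$, completing the induction.

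For freeness of the action on alcoves, suppose $w A_{\circ} = A_{\circ}$. Since $w$ is an affine isometry of $V$ permuting the hyperplanes of $\mathcal{H}$, it sends vertices of $A_{\circ}$ to vertices of $A_{\circ}$, and in fact fixes $A_{\circ}$ setwise as a simplex. The vertices of $A_{\circ}$ are an affine basis of $V$ (this is where the simplicial nature of the fundamental alcove is essential), so it suffices to show $w$ fixes each vertex. Each vertex of $A_{\circ}$ can be distinguished from the others by the unique combination of walls through it; since $w$ permutes walls and must respect this incidence data, it must fix each vertex individually. Being the identity on an affine basis, $w = 1$. This gives freeness.

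Finally, the fundamental domain statement follows from transitivity: given $v \in V$, choose an alcove $A$ such that $v \in \overline{A}$ (at least one such alcove exists since $V = \bigcup_{A\in\mathcal{A}} \overline{A}$), pick $w \in \widetilde{S_\ell}$ with $wA = A_{\circ}$, and then $wv \in \overline{A_{\circ}}$. Conversely, two interior points of $\overline{A_{\circ}}$ cannot be identified by a nontrivial element, by the freeness just established. The main obstacle is the inductive step in transitivity, specifically the bookkeeping that lets one convert a reflection across an arbitrary wall into a conjugate of one of the chosen generators $s_0,\ldots,s_{\ell-1}$; this is exactly where one uses that the walls of $A_{\circ}$ are $H_{\alpha_i,0}$ for $1 \le i < \ell$ together with $H_{\theta,1}$, and that every hyperplane in $\mathcal{H}$ is a $\widetilde{S_\ell}$-translate of one of these.
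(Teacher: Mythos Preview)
The paper does not give its own proof of this proposition; it is simply cited from Humphreys \cite[Section 4.5]{humphreys} as a standard fact about affine Weyl groups. So there is no argument in the paper to compare against, and your proposal amounts to sketching the classical proof.

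Your transitivity argument is essentially the standard one and is fine, though the notation is slightly off: from $w A_\circ = s_H A$ you want $s_H w A_\circ = A$, not $(w s_H) A_\circ = A$. The key point, which you do identify, is that $H$ is also a wall of $s_H A = w A_\circ$, so $w^{-1} H$ is a wall of $A_\circ$ and hence $s_H = w s_j w^{-1}$ for some generator $s_j$.

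The freeness argument, however, has a genuine gap. You claim that if $w$ fixes $A_\circ$ setwise then it must fix each vertex individually, because ``each vertex of $A_\circ$ can be distinguished from the others by the unique combination of walls through it.'' But incidence data alone does not force $w$ to fix vertices: if $w$ permutes the walls nontrivially, it will permute the vertices correspondingly while preserving all incidences. In affine type $A_{\ell-1}$ the Dynkin diagram is a cycle, and there are isometries of $V$ that cyclically permute the walls of $A_\circ$ --- these lie in the \emph{extended} affine Weyl group but not in $\widetilde{S_\ell}$ itself. Your argument gives no mechanism to rule these out. The standard proof instead uses the Coxeter structure: one shows (via the exchange condition, or equivalently by proving $l(w)$ equals the number of hyperplanes separating $A_\circ$ from $w A_\circ$) that $w A_\circ = A_\circ$ forces $l(w) = 0$ and hence $w = 1$. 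That is the missing ingredient.
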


Define 
\[ B_{\circ} = \bigcup_{w \in S_{\ell}} w A_{\circ}. \]
The set $B_{\circ}$ contains one alcove for each permutation in $S_{\ell}$ and the closure of $B_{\circ}$ is a fundamental domain for the action of translation by $\Lambda_R$ on $V$.  Moreover, for any affine permutation $t_{\mathbf{a}} w \in \Lambda_R \rtimes S_{\ell}$ we have that $B_{\circ} + \mathbf{a}$ corresponds to the left coset of $S_{\ell}$ containing $t_{\mathbf{a}} w$ in $\widetilde{S_{\ell}} / S_{\ell}$.
In fact, the set of all $B_{\circ} + \mathbf{a}$ is precisely the $\widetilde{S_{\ell}}$-orbit of $B_{\circ}$.
Since length can be computed by counting the minimal number of hyperplanes which must be crossed in a path back to $A_{\circ}$, we have that $\mathbf{a}$ determines the alcove of $B_{\circ} + \mathbf{a}$ that represents $t_{\mathbf{a}} w$ and has minimal length:  it is the alcove which requires the fewest such hyperplane crossings.  This describes a bijection that we denote $\varpi : \Lambda_R \rightarrow \widetilde{S_{\ell}} / S_{\ell}$ in which $\mathbf{a} \mapsto t_{\mathbf{a}} w_{\mathbf{a}}$.
% Optional:  Mention that we can get the finite permutation for the coset by sorting the components of the n-vector.
Figure~\ref{f:sl3} shows the Euclidean space $V$ associated to type $A_{2}$ in the context of the bijection $\Phi_{\ell}^k$.

\subsection{Cores are minimal length coset representatives}

We now show how an $\ell$-core can be associated to each $\mathbf{a} \in \Lambda_R$.
Let $\mathbf{a} = (a_1, \ldots, a_{\ell})$ be a vector in $\Lambda_R$ written with respect to the $\e_i$ basis, so each $a_i \in \mathbf{Z}$ and $\sum_{i = 1}^{\ell} a_i = 0$.  We form a balanced flush abacus from $\mathbf{a}$ by filling the $(i-1)^{\mathrm{st}}$ runner with beads from $-\infty$ down to level $a_i$.  By Remark~\ref{r:unique_abacus} and Theorem~\ref{t:flush_abacus}, every $\ell$-core has exactly one balanced flush abacus.  Hence, we obtain bijections whose composition we denote by $\pi$.
\[ \pi: \{ (a_1, \ldots, a_{\ell}) : a_i \in \mathbf{Z}, \sum_{i=1}^{\ell} a_i = 0 \} \rightarrow \{ \text{balanced flush abaci} \} \rightarrow \mathcal{C}_{\ell}. \]

\begin{remark}
Because the runners of an abacus are usually labeled by $0 \leq i \leq \ell-1$ but coordinates of $\mathbf{R}^{\ell}$ are labeled by $1 \leq j \leq \ell$ we will sometimes coordinatize $\Lambda_R$ as $\mathbf{a} = (a_1, \ldots, a_{\ell})$ or as $\mathbf{b} = (b_0, \ldots, b_{\ell-1})$.
\end{remark}

\begin{example}
Let $\ell = 4$ and let $\mathbf{a} = 2\e_1 + 0\e_2 +0\e_3 -2 \e_4$. Then we draw an abacus as shown in Figure~\ref{f:ex_pi} above with beads down to level 2 in runner 0, level 0 in runner 1, level 0 in runner 2, and level -2 in runner 3.
\end{example}

Next, we observe that the $\ell$-cores inherit an action of $\widetilde{S_{\ell}}$ from the bijection $\pi$.  To describe the action, we draw the diagram of $\lambda = (\lambda_1, \ldots, \lambda_r)$ and fill each box $(i,j)$ with the residue $j-i \mod \ell$.  Fix $\lambda \in \mathcal{C}_{\ell}$ and suppose $\mathbf{b} = (b_0, \ldots, b_{\ell-1}) = \pi^{-1}(\lambda)$.  For $0 \leq i \leq \ell-1$, we say that $s_i$ is an \em ascent \em for $\lambda$ if $b_{i-1} > b_{i} - \delta_{i,0}$, and we say that $s_i$ is a \em descent \em for $\lambda$ if $b_{i-1} < b_{i} - \delta_{i,0}$.  Here, we interpret $b_{-1}$ as $b_{\ell-1}$.

\begin{remark}
Note that the definition for $s_i$ to be a descent (respectively, ascent, neither) given above corresponds to $l(\varpi(s_i \mathbf{b})) < l(\varpi(\mathbf{b}))$ (respectively, $>$, $=$).  
\end{remark}

\begin{example}
The 4-core $(5,2,1,1,1)$ corresponds to $w = s_0 s_1 s_2 s_3 s_2 s_1 s_0 = t_{(2,0,0,-2)}$.  Hence, $s_1$ and $s_3$ are ascents for $\lambda$ and $s_0$ is a descent for $\lambda$.  Observe that $s_2$ is neither an ascent nor a descent because $s_2 w = s_0 s_1 s_2 s_3 s_2 s_1 s_0 s_2$ ceases to be a minimal length left coset representative.  Correspondingly $s_2 (2, 0, 0, -2) = (2, 0, 0, -2)$.
\end{example}

\begin{proposition}\label{p:action}
Let $\lambda$ be an $\ell$-core.  If $s_i$ is an ascent for $\lambda$ then $s_i$ acts on $\lambda$ by adding all boxes with residue $i$ to $\lambda$ such that the result is a partition.  If $s_i$ is a descent for $\lambda$ then $s_i$ acts on $\lambda$ by removing all of the boxes with residue $i$ that lie at the end of both their row and column so that their removal results in a partition.  If $s_i$ is neither an ascent nor a descent for $\lambda$ then $s_i$ does not change $\lambda$.
\end{proposition}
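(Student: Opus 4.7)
The plan is to trace the action of each simple reflection $s_i$ through the bijection $\pi$ to the balanced flush abacus, and then translate the resulting runner-level changes into box additions and removals on $\lambda$. Fix $\lambda \in \mathcal{C}_{\ell}$ with $\pi^{-1}(\lambda) = \mathbf{b} = (b_0, \ldots, b_{\ell-1})$, so that runner $j$ is filled from $-\infty$ down through level $b_j$.

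First I would record how $s_i$ acts on $\mathbf{b}$. For $1 \le i \le \ell-1$, $s_i$ interchanges the coordinates $b_{i-1}$ and $b_i$; for $s_0$, it replaces $(b_0, b_{\ell-1})$ by $(b_{\ell-1}+1, b_0 - 1)$. In every case the coordinate sum is preserved, and each runner still runs from $-\infty$ down to a well-defined level, so $s_i \cdot \mathbf{b}$ again encodes a balanced flush abacus, hence an $\ell$-core by Theorem~\ref{t:flush_abacus}.

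Next I would translate the runner-level change into bead motions, and hence into box additions. Using Theorem~\ref{t:flush_abacus}, the end-of-row box for an active bead at reading-order position $p$ has residue $p \bmod \ell$, so sliding a bead from position $p$ to position $p+1$ adds a single box of residue $(p+1) \bmod \ell$. In the ascent case for $s_i$ with $i \ge 1$ (that is, $b_{i-1} > b_i$), the swap removes beads on runner $i-1$ at levels $b_i+1, \ldots, b_{i-1}$ and installs them on runner $i$ at the same levels, which is precisely the simultaneous forward slide of $b_{i-1}-b_i$ beads, adding $b_{i-1}-b_i$ boxes of residue $i$. In the ascent case for $s_0$ (that is, $b_{\ell-1} \ge b_0$), the shifted swap slides each bead at level $k$ on runner $\ell-1$, for $b_0 \le k \le b_{\ell-1}$, forward to level $k+1$ on runner $0$, adding $b_{\ell-1}-b_0+1$ boxes of residue $0$.

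I would then verify that these bead moves enumerate the addable $i$-boxes of $\lambda$ exactly. An addable $i$-box on a flush abacus corresponds to a bead at some position $p \equiv i-1 \pmod \ell$ immediately followed by a gap at position $p+1$. For $i \ge 1$ this forces a bead on runner $i-1$ at level $k \le b_{i-1}$ and a gap on runner $i$ at level $k > b_i$, so the addable $i$-boxes are indexed by $b_i < k \le b_{i-1}$; for $i = 0$ the same analysis gives $b_0 \le k \le b_{\ell-1}$. In each case the sets match, establishing the ascent claim. The descent case follows because $s_i$ is an involution and the argument runs symmetrically, moving beads backwards and removing removable $i$-boxes; the ``neither'' case reduces to observing that $b_{i-1} = b_i - \delta_{i,0}$ is exactly the condition that $s_i$ fixes $\mathbf{b}$.

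The main technical obstacle is the asymmetry of $s_0$: because it reflects across an affine rather than a linear hyperplane, the $\pm 1$ shifts on runners $0$ and $\ell-1$ have to be reconciled with the fact that consecutive positions $k\ell + \ell-1$ and $(k+1)\ell$ in reading order lie on different abacus levels. Once this bookkeeping is done, and once one checks that the swap-plus-shift for $s_0$ still preserves balance and flushness, the ascent/descent/neither trichotomy falls out uniformly from the runner comparison $b_{i-1}$ versus $b_i - \delta_{i,0}$.
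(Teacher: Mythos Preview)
Your proposal is correct and follows essentially the same route as the paper: both translate the $s_i$-action on $\Lambda_R$ to a runner swap (with the $\pm 1$ level shift for $s_0$) on the balanced flush abacus, then identify addable and removable $i$-boxes with bead/gap adjacency patterns to conclude that the swap realizes the addition or removal of all such boxes. Your version is a bit more explicit in enumerating the exact level ranges of moved beads, but the underlying argument is the same.
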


\begin{proof}
Begin by considering the action of $s_i$ on abaci that comes from the action on $\Lambda_R \subset V$.  Applying $s_i$ for $1 \leq i \leq \ell-1$ corresponds to exchanging adjacent runners $i-1$ and $i$ in the balanced flush abacus whose runners are labeled $0, \ldots, \ell-1$.  Applying the $s_0$ generator first adds a bead to runner $\ell-1$ and removes a bead from runner $0$ so that they stay flush, and then exchanges the two runners.  

Since the coordinates of $\mathbf{b} \in \Lambda_R$ sum to 0, Theorem~\ref{t:flush_abacus} implies that each active bead in runner $i$ of the balanced flush abacus corresponds to a row of $\lambda$ whose rightmost box has residue $i$.  Because the abacus is flush, exchanging runners $i$ and $i-1$ either adds some set of boxes with residue $i$ to the diagram of $\lambda$ in the case that $s_i$ is an ascent, or else removes a set of boxes with residue $i$ in the case that $s_i$ is a descent.  The result is again a balanced flush abacus so corresponds to an $\ell$-core.  If $s_i$ is neither an ascent nor a descent then $b_i = b_{i-1} + \delta_{i,0}$ so the abacus remains unchanged.

Observe that a box with residue $i$ is removable if and only if it lies at the end of its row and column.  This occurs if and only if it corresponds to an active bead on runner $i$ with a gap immediately preceding it in the reading order of the abacus.  Similarly, a box with residue $i$ is addable if and only if it corresponds to a gap on runner $i$ with an active bead immediately succeeding it in the reading order of the abacus.  The action of $s_i$ swaps runners $i$ and $i-1$ which therefore interchanges all of the $i$-addable and $i$-removable boxes.
\end{proof}

\begin{remark}
This action can be described as a special case of the action on the crystal graph associated to the irreducible highest-weight representation $V(\Lambda_0)$ of $\widehat{\mathfrak{sl}_{\ell}}$ that is given by the operators $\widetilde{f_i}^{\varphi_i - \varepsilon_i}$ or $\widetilde{e_i}^{\varepsilon_i - \varphi_i}$.
\end{remark}

% Optional:  Remark that this proposition also shows we get a reduced expression for $\lambda$ by sorting the components of the corresponding root lattice vector into increasing order.

Let $\lambda$ be an $\ell$-core.  Then we can recursively define a canonical reduced expression for $\varpi(\pi^{-1}(\lambda))$ that we denote $w(\lambda)$ by choosing $w(\lambda) = s_i w(\widehat{\lambda})$ where $i$ is the residue of the rightmost box in the bottom row of $\lambda$ and $\widehat{\lambda}$ is the result of applying $s_i$ to $\lambda$ as in Proposition~\ref{p:action}.  Note that $s_i$ is always a descent for this choice of $i$.  The empty partition corresponds to the identity Coxeter element.  This reduced expression was previously defined in \cite[Definition 45]{LM}.  

\begin{example}
The canonical reduced expression for the 4-core $(5, 2, 1, 1, 1)$ shown in Figure~\ref{f:ex_pi} is $s_0 s_1 s_2 s_3 s_2 s_1 s_0$.  The first step in reducing this expression to the identity removes the three boxes labeled 0 that lie at the end of their rows which is recorded as the leftmost $s_0$ in the expression.
\[
\tableau{\mbox{0} & \mbox{1} & \mbox{2} & \mbox{3} & \mbox{\bf 0} \\
         \mbox{3} & \mbox{\bf 0} \\
         \mbox{2} \\
         \mbox{1} \\
         \mbox{\bf 0} \\
} \stackrel{s_0}{\hspace{0.2in} \longleftarrow \hspace{0.2in}}
\tableau{\mbox{0} & \mbox{1} & \mbox{2} & \mbox{3} \\
         \mbox{3} \\
         \mbox{2} \\
         \mbox{\bf 1} \\
} \stackrel{s_1}{\hspace{0.2in} \longleftarrow \hspace{0.2in}}
\tableau{\mbox{0} & \mbox{1} & \mbox{2} & \mbox{3} \\
         \mbox{3} \\
         \mbox{\bf 2} \\
} 
\]
\[
\stackrel{s_2}{\hspace{0.2in} \longleftarrow \hspace{0.2in}}
\tableau{\mbox{0} & \mbox{1} & \mbox{2} & \mbox{\bf 3} \\
         \mbox{\bf 3} \\
} \stackrel{s_3}{\hspace{0.2in} \longleftarrow \hspace{0.2in}}
\tableau{\mbox{0} & \mbox{1} & \mbox{\bf 2} \\
} \stackrel{s_2}{\hspace{0.2in} \longleftarrow \hspace{0.2in}}
\tableau{\mbox{0} & \mbox{\bf 1} \\
} 
\]
\[
\stackrel{s_1}{\hspace{0.2in} \longleftarrow \hspace{0.2in}}
\tableau{\mbox{\bf 0} \\
} \stackrel{s_0}{\hspace{0.2in} \longleftarrow \hspace{0.2in}}
\emptyset.
\]
\end{example}

\begin{proposition}\label{p:redexp}
For $\lambda \in \mathcal{C}_{\ell}$ we have that $w(\lambda)$ is a reduced expression for the minimal length coset representative indexed by $\lambda$.

The Coxeter length of $w(\lambda)$ is 
\[ l(w(\lambda)) = \sum_{i = 0}^{\ell-1} \lambda_{R(i)} \]
where $R(i)$ is the longest row of $\lambda$ whose rightmost box has residue $i$.
\end{proposition}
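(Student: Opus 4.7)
I would prove both statements simultaneously by induction on $|\lambda|$. The base case $\lambda = \emptyset$ is trivial, since $w(\emptyset)$ is the identity and $\sum_j \emptyset_{R(j)} = 0$. For the inductive step with $\lambda \neq \emptyset$, let $(r, \lambda_r)$ denote the rightmost box in the bottom row of $\lambda$, and set $i = (\lambda_r - r) \bmod \ell$. Because $(r, \lambda_r)$ sits at the end of its row and at the bottom of its column, it is a removable $i$-box, so Proposition~\ref{p:action} shows $s_i$ is a descent for $\lambda$ and that $\widehat{\lambda} = s_i \lambda$ is a strictly smaller $\ell$-core.

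The first statement then follows from two observations. First, by the discussion immediately following the proof of Proposition~\ref{p:action}, the $\widetilde{S_\ell}$-action on $\ell$-cores via $\pi$ corresponds to left multiplication on $\widetilde{S_\ell}/S_\ell$ via $\varpi\circ\pi^{-1}$, and the descent condition translates to $l(s_i w) < l(w)$ where $w$ is the minimum length representative of $\lambda$'s coset. Second, a standard Coxeter-theoretic fact ensures that if $w$ is a minimum length parabolic coset representative and $l(sw) < l(w)$ for some simple reflection $s$, then $sw$ is again a minimum length representative of its coset; otherwise the exchange condition applied to $sw$ produces a right descent of $w$ lying inside the parabolic, contradicting minimality. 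Applying this, $s_i w$ is the minimum length representative of the coset corresponding to $\widehat{\lambda}$, which by the induction hypothesis equals $w(\widehat{\lambda})$ and is already a reduced expression. Therefore $w = s_i\, w(\widehat{\lambda}) = w(\lambda)$, and this expression is reduced with Coxeter length $l(w(\widehat{\lambda})) + 1$.

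For the length formula, the identity $l(w(\lambda)) = l(w(\widehat{\lambda})) + 1$ just established reduces the inductive step to proving the combinatorial identity
\[ \sum_{j=0}^{\ell-1} \lambda_{R(j)} = \sum_{j=0}^{\ell-1} \widehat{\lambda}_{R(j)} + 1. \]
By Theorem~\ref{t:flush_abacus}, the active beads on runner $j$ of the balanced flush abacus biject in length-decreasing order with the rows of $\lambda$ ending in residue $j$, so $\lambda_{R(j)}$ equals the number of gaps preceding the topmost active bead on runner $j$ (or zero if no such bead exists). Since $s_i$ affects only runners $i-1$ and $i$ (or runners $0$ and $\ell-1$ when $i=0$, with a level shift), only these two summands can change. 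The hard part is the bead-level case analysis verifying that the net change on these two runners is precisely $-1$: one must track how the swap shifts the topmost bead positions by $\pm 1$ in reading order (altering their gap counts), how the position of the first gap may itself migrate between runners under $s_i$ (altering which beads are active), and edge cases where a bead becomes inactive after the swap or a row of length one is stripped entirely when its removable $i$-box is removed. A systematic enumeration, organized by whether the topmost bead on each of the two affected runners is active in $\lambda$ and in $\widehat{\lambda}$, yields the identity in every case and completes the induction.
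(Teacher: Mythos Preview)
Your inductive setup and treatment of the first statement are correct, and in fact more explicit than the paper, which simply observes that the chosen $s_i$ is always a descent and leaves the standard Coxeter-theoretic consequence (that $s_i w(\widehat{\lambda})$ is again a reduced minimal coset representative) implicit.

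For the length formula, however, your proposed abacus case analysis is much more laborious than necessary; the paper's route avoids it entirely with one structural observation.  Because $i$ is the residue of the rightmost box in the \emph{bottom} row $\lambda_r$, the bead $B$ representing $\lambda_r$ is the very first active bead in reading order, so the position immediately preceding $B$ (on runner $i-1$) is a gap.  Flushness of runner $i-1$ then forces every position on runner $i-1$ at that level or beyond to be a gap.  Two things follow at once.  First, every active bead on runner $i$ has a gap immediately preceding it, so every row of $\lambda$ ending in residue $i$ carries a removable $i$-box and loses exactly one box under $s_i$.  Second, runner $i-1$ has no active beads whatsoever, so $\lambda_{R(i-1)} = 0$.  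Hence applying $s_i$ simply relabels the class of rows ending in $i$ as rows ending in $i-1$, each one box shorter, while rows ending in the remaining residues are untouched; the identity $\sum_j \widehat{\lambda}_{R(j)} = \sum_j \lambda_{R(j)} - 1$ is immediate.  The edge cases you anticipate (migration of the first gap, beads changing activity status) are precisely what the special choice of $i$ as the bottom row's residue eliminates.
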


\begin{proof}
Let $i$ be the residue of the rightmost box in the bottom row $\lambda_r$ of $\lambda$.  Then the balanced flush abacus configuration corresponding to $\lambda$ has an active bead $B$ representing $\lambda_r$ on runner $i$ by Theorem~\ref{t:flush_abacus}.  Observe that $B$ has a gap immediately preceding it in the reading order because $B$ is the first active bead in the reading order and $\lambda_r \neq 0$.  Since the abacus is flush, every box of $\lambda$ with residue $i$ that lies at the end of its row corresponds to some active bead $B'$ on runner $i$ of the abacus with a gap immediately preceding $B'$ in the reading order.  Hence, every box of $\lambda$ with residue $i$ that lies at the end of its row also lies at the end of its column and applying $s_i$ removes every such box by Proposition~\ref{p:action}.

Iterating this process eventually produces the empty partition, corresponding to the identity Coxeter element.  At each step, we have shown that applying $s_i$ removes exactly one box from $R(i)$ yielding the length formula.
\end{proof}

% Optional:
%\begin{remark}
%This ``canonical'' reduced expression was previously defined in \cite[Definition 45]{LM}.  % So the length formula is implicit in this work.
%The length formula obtained from this expression can be compared with the formula
%\[ l(t_{\lambda}) = \sum_{\a \in \Pi^{+}} | \langle \lambda, \alpha \rangle |. \]
%% Magyar ``Schubert classes of a loop group'' pg 30 ascribes this to Lusztig, while Lam-Lapointe-Morse-Shimozono (0609110) pg 10 cite a very similar formula to Shi.
%\end{remark}

\begin{example} Let $\lambda = (10,7,4,3,2,2,2,1,1,1)$ and $\ell = 4$. Then the biggest part ending in residue 1 is $\lambda_1 = 10$, the biggest part ending in residue 3 is $\lambda_4 = 3$, the biggest part ending in residue 0 is $\lambda_6 = 2$, and no part ends in residue 2. Hence the Coxeter length of the minimal length coset representative for this 4-core is $10+3+2 = 15$. The canonical minimal length coset representative for this core is $w(\lambda) = s_3s_0s_1s_2s_3s_0s_1s_3s_2s_1s_0s_3s_2s_1s_0$.
$$\tableau{0&1&2&3&0&1&2&3&0&1\\
3&0&1&2&3&0&1\\
2&3&0&1\\
1&2&3\\
0&1\\
3&0\\
2&3\\
1\\
0\\
3}$$
\end{example}

There is another way to obtain the root vector $\pi^{-1}(\lambda) = (b_0, \ldots, b_{\ell-1}) \in \Lambda_R$ from the partition $\lambda \in \mathcal{C}_{\ell}$ due to Garvan, Kim and Stanton \cite{garvan-kim-stanton}.  Say that \em region $r$ \em of the diagram of $\lambda$ is the set of boxes $(i,j)$ satisfying $(r-1) \ell \leq j-i < r \ell$.  We call a box \em row-exposed \em if it lies at the end of its row.  Then, set $b_i$ to be the maximum region of $\lambda$ which contains a row-exposed box with residue $i$ for $0 \leq i \leq \ell-1$.  In particular, we pad $\lambda$ with parts of size zero if necessary and label all of the boxes before the $0^{\textrm{th}}$ column by their residue.  In this way $b_i$ is well-defined because column 0 contains infinitely many row-exposed boxes.  We call the vector $(b_0, \ldots, b_{\ell-1})$ obtained in this fashion the \em $n$-vector of $\lambda$ \em and we show that it is the same vector as $\pi^{-1}(\lambda)$.

\begin{example}
Let $\ell = 4$ and $\lambda = (6,3,1,1)$. From the picture below, we see that the $n$-vector for $\lambda$ is $(-1,2,0,-1)$. 
$$
\tableau{0&1&2&3&0&1\\
3&0&1\\
2\\
1}
\put (-118,6){3}
\put (-118,-12){2}
\put (-118,-30){1}
\put (-118,-48){0}
\put (-118,-66){3}
\put (-118,-84){2}
\put (-118,-102){1}
\put (-118,-120){0}
\put (-30,-50){Region 1}
\put (-75,-85){Region 0}
\put (-100,-138){Region -1}
\put (10,-20){Region 2}
\put (-108,-124){\line(0,1){76}}
\put (10,6){$\leftarrow$ First 1}
\put (-84,-30){$\leftarrow$ First 2}
\put (-170,-66){First 3 $\to$}
\put (-170,-120){First 0 $\to$}
\linethickness{1.5pt}
%region 1
\put (-36,18){\line(-1,0){18}}
\put (-36,18){\line(0,-1){18}}
\put (-36,0){\line(1,0){18}}
\put (-18,0){\line(0,-1){18}}
\put (-18,-18){\line(1,0){18}}
\put (0,-18){\line(0,-1){18}}
\put (0,-36){\line(1,0){18}}
\put (18,-36){\line(0,-1){18}}
\put (18,-54){\line(1,0){18}}
\put (36,-54){\line(0,-1){18}}
\put (36,-72){\line(1,0){18}}
\put (54,-72){\line(0,-1){18}}
\put (54,-90){\line(1,0){18}}
\put (72,-90){\line(0,-1){18}}
\put (72,-108){\line(1,0){18}}
\put (90,-108){\line(0,-1){18}}
\put (90,-126){\line(1,0){18}}
\put (108,-126){\line(0,-1){18}}
\put (108,-144){\line(1,0){18}}
%region 0
\put (-108,18){\line(-1,0){18}}
\put (-108,18){\line(0,-1){18}}
\put (-108,0){\line(1,0){18}}
\put (-90,0){\line(0,-1){18}}
\put (-90,-18){\line(1,0){18}}
\put (-72,-18){\line(0,-1){18}}
\put (-72,-36){\line(1,0){18}}
\put (-54,-36){\line(0,-1){18}}
\put (-54,-54){\line(1,0){18}}
\put (-36,-54){\line(0,-1){18}}
\put (-36,-72){\line(1,0){18}}
\put (-18,-72){\line(0,-1){18}}
\put (-18,-90){\line(1,0){18}}
\put (0,-90){\line(0,-1){18}}
\put (0,-108){\line(1,0){18}}
\put (18,-108){\line(0,-1){18}}
\put (18,-126){\line(1,0){18}}
\put (36,-126){\line(0,-1){18}}
\put (36,-144){\line(1,0){18}}
%region -1
\put (-108,-54){\line(-1,0){18}}
\put (-108,-54){\line(0,-1){18}}
\put (-108,-72){\line(1,0){18}}
\put (-90,-72){\line(0,-1){18}}
\put (-90,-90){\line(1,0){18}}
\put (-72,-90){\line(0,-1){18}}
\put (-72,-108){\line(1,0){18}}
\put (-54,-108){\line(0,-1){18}}
\put (-54,-126){\line(1,0){18}}
\put (-36,-126){\line(0,-1){18}}
\put (-36,-144){\line(1,0){18}}
%region -2
\put (-108,-126){\line(-1,0){18}}
\put (-108,-126){\line(0,-1){18}}
\put (-108,-144){\line(1,0){18}}
$$
\end{example}

\begin{lemma}\label{l:n_vector}
Let $(b_0, \ldots, b_{\ell-1})$ be the $n$-vector of an $\ell$-core $\lambda$ and let $s_i(\lambda)$ be the result of applying $s_i$ to $\lambda$ as described in Proposition~\ref{p:action}.  Then, the $n$-vector of $s_i(\lambda)$ is $(b_0, \dots, b_{i}, b_{i-1}, \dots b_{\ell-1})$ for $1 \leq i \leq \ell-1$, or $(b_{\ell-1} + 1, b_2, \dots, b_{\ell-2}, b_{0}-1)$ if $i = 0$.
\end{lemma}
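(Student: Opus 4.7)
The plan is to reduce the lemma to Proposition~\ref{p:action} by first identifying the $n$-vector of $\lambda$ with the vector $\pi^{-1}(\lambda) = (b_0,\ldots,b_{\ell-1})$ of top-bead levels in the balanced flush abacus of $\lambda$. Once this identification is in hand, each case of the lemma becomes a direct translation of the abacus-level description of $s_i$ given in the proof of Proposition~\ref{p:action}: for $1 \leq i \leq \ell-1$, $s_i$ simply exchanges runners $i-1$ and $i$, which swaps coordinates $b_{i-1}$ and $b_i$; while $s_0$ first removes a bead from runner $0$ and adds a bead to runner $\ell-1$ (sending $b_0 \mapsto b_0-1$ and $b_{\ell-1}\mapsto b_{\ell-1}+1$), then swaps these two runners, producing the claimed vector $(b_{\ell-1}+1,\,b_1,\,\ldots,\,b_{\ell-2},\,b_0-1)$. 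The symmetric group generator $s_i$ for $1\le i\le \ell-1$ acts on $V$ by exchanging $\e_i$ and $\e_{i+1}$ and so, under the indexing $a_{j+1}=b_j$, literally implements the asserted coordinate swap; the formula for $s_0$ given in Section~\ref{s:aff_geometry} matches the $s_0$ case identically.

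The heart of the argument is thus the identification of the $n$-vector with $\pi^{-1}(\lambda)$. I would begin by invoking Theorem~\ref{t:flush_abacus}, which says that in the balanced flush abacus every active bead on runner $i$ corresponds to a row of $\lambda$ whose rightmost box has residue $i$. Next I would show that the \emph{level} of such a bead equals the \emph{region} of that rightmost box: if the row is $(r,\lambda_r)$ with $\lambda_r>0$, its $\beta$-number $\lambda_r-r+C$ (where $C$ is the padding constant chosen to make the abacus balanced) has residue $(\lambda_r-r)\bmod\ell = i$ and lies at level $\lfloor(\lambda_r-r+C)/\ell\rfloor$, which one verifies equals the region index $k$ determined by $(k-1)\ell\le \lambda_r-r<k\ell$. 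The padded zero rows (and more generally the ``virtual'' row-exposed boxes at column $0$) then correspond exactly to gaps below the top active bead on each runner; since these gaps all sit at levels strictly below $b_i$, the highest row-exposed box of residue $i$ comes from the topmost active bead on runner $i$ and so lies in region exactly $b_i$. Taking the max over all row-exposed boxes of residue $i$ therefore recovers $b_i$, proving the identification.

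The main obstacle I expect is bookkeeping this normalization constant $C$ carefully enough to match the padding prescription ``column $0$ contains infinitely many row-exposed boxes''; in particular one must check that no padded virtual row produces a row-exposed box of residue $i$ in a region strictly greater than $b_i$, and that the balanced condition $\sum b_i=0$ is precisely the one compatible with the standard abacus-to-partition recovery. Once this is verified, both descriptions give the same integer for each residue and the action computation from Proposition~\ref{p:action} transfers termwise to the $n$-vector, yielding the lemma.
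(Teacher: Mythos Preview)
Your approach is correct but takes a genuinely different route from the paper.  The paper proves the lemma \emph{directly} on the partition diagram: using the Garvan--Kim--Stanton fact that a row-exposed $i$-box in region $r$ forces row-exposed $i$-boxes in every smaller region, it characterizes exactly which regions $r$ carry an addable (respectively removable) $i$-box in terms of the inequalities $b_i < r \le b_{i-1}+\delta_{i,0}$ (respectively $b_{i-1}+\delta_{i,0} < s \le b_i$), and reads off the new $n$-vector from the swap of these two intervals.  Only \emph{after} this does the paper deduce, as a corollary by induction on Coxeter length from the base case $\emptyset$, that the $n$-vector equals $\pi^{-1}(\lambda)$.

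You instead reverse this order: you propose to establish the identification $n\text{-vector}(\lambda)=\pi^{-1}(\lambda)$ first, by matching the level of each last bead in the balanced flush abacus with the maximal region carrying a row-exposed box of that residue, and then read off the $s_i$-action from the already-known action on $\Lambda_R$ described in Section~\ref{s:aff_geometry} and the proof of Proposition~\ref{p:action}.  This is legitimate, and once the identification is in hand the lemma is indeed immediate.  The trade-off is that you have moved all the work into the identification step --- the bookkeeping with the balancing constant $C$ that you flag as the ``main obstacle'' is exactly the content the paper avoids by arguing directly on regions and then getting the identification for free via induction.  Your route buys a cleaner deduction of the $s_i$-formulas at the cost of a one-shot verification that ``level equals region'' in the balanced normalization; the paper's route buys a more elementary, diagram-level argument at the cost of a slightly longer case analysis of addable versus removable boxes.
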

\begin{proof}
Denote the $n$-vector of $s_i(\lambda)$ by $(n_0, \dots, n_{\ell-1})$.  It follows from the proof in \cite[Bijection 2]{garvan-kim-stanton} that if there exists a row-exposed $i$-box in region $r$ then there exist row-exposed $i$-boxes in each region $< r$.  
%By taking conjugates, it also follows that if there exists a column-exposed $i$-box in region $r$ then there exist column-exposed $i$-boxes in each region $> r$.

Observe that a row $\lambda_p$ of $\lambda$ has an addable $i$-box in region $r$ exactly if:
\begin{enumerate}
\item the row $\lambda_p$ has a row-exposed $(i-1)$-box in region $r - \delta_{i,0} \leq b_{i-1}$, and
\item the row $\lambda_{p-1}$ does not have a row-exposed $i$-box, so $b_i < r$.
\end{enumerate}

Similarly, a row $\lambda_q$ has a removable $i$-box in region $s$ exactly if:
\begin{enumerate}
\item the row $\lambda_q$ has a row-exposed $i$-box in region $s \leq b_i$, and
\item the row $\lambda_{q+1}$ does not have a row-exposed $(i-1)$-box, so $b_{i-1} < s - \delta_{i,0}$.
\end{enumerate}

Moreover, an $i$-box is addable (removable) in $\lambda$ only if it is removable (addable, respectively) in $s_i(\lambda)$.  Therefore, when we apply $s_i$ to $\lambda$ we interchange each of the regions $r$ and $s$ satisfying
$ b_i < r \leq b_{i-1} + \delta_{i,0} $
and
$ b_{i-1} + \delta_{i,0} < s \leq b_{i}. $

Hence, we obtain
$ n_i = b_{i-1} + \delta_{i,0} $
and
$ n_{i-1} = b_i - \delta_{i,0}. $
\end{proof}

\begin{corollary}
The $n$-vector of $\lambda$ is $\pi^{-1}(\lambda)$.
\end{corollary}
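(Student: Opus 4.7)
The plan is to proceed by induction on $|\lambda|$ (equivalently, on $l(w(\lambda))$), using Lemma~\ref{l:n_vector} together with the explicit description of the $\widetilde{S_{\ell}}$-action on $\Lambda_R$ recalled in Section~\ref{s:aff_geometry}.

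For the base case, take $\lambda = \emptyset$. The balanced flush abacus of $\emptyset$ has runner $i$ filled with beads precisely down to level $0$ for every $0 \leq i \leq \ell-1$, so $\pi^{-1}(\emptyset) = (0,\ldots,0)$. On the other hand, in the diagram of $\emptyset$ padded with rows of size zero and boxes before the $0^{\mathrm{th}}$ column labeled by residue, the first row-exposed box of each residue $i$ lies in region $0$, so the $n$-vector of $\emptyset$ is also $(0,\ldots,0)$.

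For the inductive step, let $\lambda \in \mathcal{C}_{\ell}$ be nonempty, let $i$ be the residue of the rightmost box in its bottom row, and set $\widehat{\lambda} = s_i(\lambda)$ in the sense of Proposition~\ref{p:action}. Then $|\widehat{\lambda}| < |\lambda|$, so by induction the $n$-vector of $\widehat{\lambda}$ equals $\pi^{-1}(\widehat{\lambda})$. The key point is to match two transformations of $\ell$-tuples: on the $n$-vector side, Lemma~\ref{l:n_vector} tells us that passing from $\widehat{\lambda}$ to $\lambda$ swaps the $(i-1)^{\mathrm{st}}$ and $i^{\mathrm{th}}$ entries when $i \geq 1$, and sends $(b_0,\ldots,b_{\ell-1})$ to $(b_{\ell-1}+1, b_1, \ldots, b_{\ell-2}, b_0-1)$ when $i=0$. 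On the $\pi^{-1}$ side, $\pi$ is by construction $\widetilde{S_{\ell}}$-equivariant (the action on abaci was defined to match the action on $\Lambda_R$ in the proof of Proposition~\ref{p:action}), so $\pi^{-1}(\lambda) = s_i \cdot \pi^{-1}(\widehat{\lambda})$; reading off the action of $s_i$ on $V$ in $\e_i$-coordinates and translating to $b$-coordinates via $b_{j-1} = a_j$ yields exactly the same transformation as Lemma~\ref{l:n_vector}. Combining these two observations with the inductive hypothesis gives the result.

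The only step requiring genuine care is verifying that the formula for the $\widetilde{S_{\ell}}$-action on $\Lambda_R$ from Section~\ref{s:aff_geometry}, when converted into the $b$-coordinate convention used for runners, agrees termwise with the transformation in Lemma~\ref{l:n_vector} for both $i \geq 1$ and the exceptional $i = 0$ case (where the shift by $\pm 1$ comes from the fact that $s_0$ is an affine reflection and corresponds on the abacus to removing a bead from runner $0$ and adding one to runner $\ell-1$ before swapping). This bookkeeping is the main obstacle, but once the conventions are aligned it is immediate. A tidy alternative, which avoids the case analysis, is to observe that both maps $\lambda \mapsto (n\text{-vector of }\lambda)$ and $\pi^{-1}$ are $\widetilde{S_{\ell}}$-equivariant bijections $\mathcal{C}_{\ell} \to \Lambda_R$ agreeing on $\emptyset$; since $\widetilde{S_{\ell}}$ acts transitively on $\mathcal{C}_{\ell}$ through the operations of Proposition~\ref{p:action}, the two maps must coincide.
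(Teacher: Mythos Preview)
Your proof is correct and follows essentially the same approach as the paper: induction on Coxeter length (equivalently on $|\lambda|$), with base case $\emptyset$ and inductive step driven by Lemma~\ref{l:n_vector}. The paper's version is a terse two-sentence argument that leaves implicit the verification you spell out---namely that the $s_i$-action on $\Lambda_R$ in $b$-coordinates matches the transformation in Lemma~\ref{l:n_vector}---so your additional bookkeeping and the equivariance reformulation at the end are welcome elaborations rather than a different method.
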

\begin{proof}
The $n$-vector of the empty partition is equal to $\pi^{-1}(\emptyset) = (0, \ldots, 0)$.  The result then follows by induction on the Coxeter length of $w(\lambda)$ by Lemma~\ref{l:n_vector}.
\end{proof}

\begin{proposition}\label{p:k_from_vector}
Suppose that $\pi(\mathbf{a}) = \pi(a_1, \ldots, a_{\ell}) = \lambda$.  Then we have 
\[ \lambda_1 = (a_i - 1) \ell + i \]
where $a_i$ is the rightmost occurrence of the largest coordinate in $\mathbf{a}$.  Also, 
\[ \lambda_1 = \sum_{j = 1}^{i-1} (a_i - a_j) + \sum_{j=i+1}^{\ell} (a_i - a_j - 1). \]
\end{proposition}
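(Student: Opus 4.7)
The approach is to use the abacus description of $\pi$ and read $\lambda_1$ directly from it. Recall that in the balanced flush abacus assigned to $\mathbf{a}$, runner $k$ consists of beads at exactly the positions $\{m\ell + k : m \leq a_{k+1}\}$ for $0 \leq k \leq \ell-1$, and by the description of how one recovers a partition from an abacus, $\lambda_1$ equals the number of gaps in reading order that precede the \emph{largest-position} active bead.

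First I would locate this largest bead. The greatest-position bead on runner $k$ sits at $a_{k+1}\ell + k$, and since $0 \leq k \leq \ell-1$, comparing these values across runners is controlled first by the $a_{k+1}$'s and only secondarily by $k$. Hence the largest bead of the whole abacus lies on runner $i-1$ at position $a_i\ell + (i-1)$, where $i$ is precisely the rightmost index attaining $\max_j a_j$. This identifies the distinguished bead.

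The computational heart of the proof is then to count, runner by runner, the gaps whose positions are strictly less than $a_i\ell + (i-1)$. On runner $k$, the gaps sit at levels $\ell_0 \geq a_{k+1}+1$ with positions $\ell_0\ell + k$, and the inequality $\ell_0\ell + k < a_i\ell + (i-1)$ splits into three cases according to the sign of $i-1-k$. For $k < i-1$ one obtains $\ell_0 \leq a_i$, contributing $a_i - a_{k+1}$ gaps (nonnegative since $a_i$ is the maximum). For $k = i-1$ the bound produces no gaps. For $k > i-1$ one obtains $\ell_0 \leq a_i - 1$, contributing $a_i - a_{k+1} - 1$ gaps; this is nonnegative because the rightmost-maximum hypothesis forces $a_{k+1} < a_i$ strictly whenever $k+1 > i$. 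Reindexing $j = k+1$ and summing yields the second displayed formula
\[ \lambda_1 = \sum_{j=1}^{i-1}(a_i - a_j) + \sum_{j=i+1}^{\ell}(a_i - a_j - 1). \]

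Finally, expanding this sum and invoking the lattice condition $\sum_{j=1}^\ell a_j = 0$ collapses $\lambda_1$ to $(\ell-1)a_i - \sum_{j\neq i}a_j - (\ell - i) = \ell a_i - \ell + i = (a_i - 1)\ell + i$, establishing the first displayed formula. The only delicate point is the case analysis around the runner $k = i-1$, which relies critically on $i$ being the \emph{rightmost} maximizer so that the three counts fit together without sign errors (specifically, that the strict inequality $a_{k+1} < a_i$ is available for $k+1 > i$); the remaining steps are routine bookkeeping on the flush abacus.
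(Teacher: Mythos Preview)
Your proof is correct and follows essentially the same abacus approach as the paper: locate the last active bead on runner $i-1$ and count the gaps preceding it runner by runner to obtain the second formula. The one mild difference is in how the first formula is obtained. You derive $(a_i-1)\ell+i$ algebraically from the second formula by expanding the sums and invoking $\sum_j a_j=0$. The paper instead gives a separate conceptual argument: because the abacus is balanced, the beads strictly below level~0 are equinumerous with the gaps weakly above level~0, so one may mentally slide those beads up to fill those gaps, after which the gaps preceding the last bead are exactly the entries $0,1,\ldots,(a_i-1)\ell+i-1$. Both arguments are short and valid; yours is a direct computation, while the paper's makes the role of the balance condition more geometrically transparent.
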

\begin{proof}
Consider the balanced flush abacus corresponding to $\lambda$.  Then $\lambda_1$ corresponds to the last active bead $B$ in the reading order, and $B$ lies on runner $i-1$.  In particular, if there are multiple occurrences of the largest coordinate in $\mathbf{a}$ then $\lambda_1$ corresponds to the bead on the rightmost runner.  The number of boxes in $\lambda_1$ is the number of gaps prior to $B$ in the reading order of the abacus and the second formula follows from counting these gaps, using that the beads are flush on each runner.

Since the abacus is balanced, we have that the number of beads strictly below the zero level must be equal to the number of gaps weakly above the zero level.  If the last active bead occurs in level $j$ of runner $i-1$ then we could move all of the beads below the zero level to fill in the gaps above the zero level, and so count the gaps starting from entry 0 to the entry that contained $B$ as $(j-1) \ell + i$.  This yields the first formula.
\end{proof}

\begin{example}
The balanced flush abacus corresponding to the 4-core \[ \pi(1, -2, 2, -1) = \lambda = (7, 4, 3, 2, 1, 1, 1) \] is shown below together with the diagram in which the beads have been moved to calculate $\lambda_1 = 7$ as $(2-1) 4 + 3$.  Here, $a_3 = 2$ is the largest entry, corresponding to runner 2.

\begin{tabular}{ccc}
%\begin{picture}(105,40)(0,80)
\begin{picture}(105,110)%(0,80)
\put (42.5,96){.}
\put (42.5,92){.}
\put (42.5,100){.}
\put (72.5,96){.}
\put (72.5,92){.}
\put (72.5,100){.}
\put (102.5,96){.}
\put (102.5,92){.}
\put (102.5,100){.}
\put (11.5,96){.}
\put (11.5,92){.}
\put (11.5,100){.}
\put (8,80) {-8}
\put (38,80){-7}
\put (68,80){-6}
\put (98,80){-5}
\put (8,65) {-4}
\put (38,65){-3}
\put (68,65){-2}
\put (98,65){-1}

\put (10,50){0}
\put (40, 50){1}
\put (70,50){2}
\put (101,50){3}

\put (10,35){4}
\put (40,35){5}
\put (70,35){6}
\put (101,35){7}

\put (10,20){8}
\put (40,20){9}
\put (68,20){10}
\put (99,20){11}

\put (5,45.5){\line(90,0){105}}

\put (72.5,53){\circle{13}}
\put (72.5,68){\circle{13}}
\put (72.5,83){\circle{13}}

\put (102.5,83){\circle{13}}

%\put (42.5,53){\circle{13}}
%\put (42.5,68){\circle{13}}
\put (42.5,83){\circle{13}}

%\put (12.5,23){\circle{13}}
\put (12.5,38){\circle{13}}
\put (12.5,53){\circle{13}}
\put (12.5,68){\circle{13}}
\put (12.5,83){\circle{13}}

\put (72.5,38){\circle{13}}
\put (72.5,23){\circle{13}}
\put (101.5,68){\circle{13}}

\put (42.5,12){.}
\put (42.5,8){.}
\put (42.5,4){.}
\put (72.5,12){.}
\put (72.5,8){.}
\put (72.5,4){.}
\put (102.5,12){.}
\put (102.5,8){.}
\put (102.5,4){.}
\put (11.5,12){.}
\put (11.5,8){.}
\put (11.5,4){.}
\end{picture} & \hspace{0.4in} &
\begin{picture}(105,100)%(0,80)
\put (42.5,96){.}
\put (42.5,92){.}
\put (42.5,100){.}
\put (72.5,96){.}
\put (72.5,92){.}
\put (72.5,100){.}
\put (102.5,96){.}
\put (102.5,92){.}
\put (102.5,100){.}
\put (11.5,96){.}
\put (11.5,92){.}
\put (11.5,100){.}
\put (8,80) {-8}
\put (38,80){-7}
\put (68,80){-6}
\put (98,80){-5}
\put (8,65) {-4}
\put (38,65){-3}
\put (68,65){-2}
\put (98,65){-1}

\put (10,50){0}
\put (40, 50){1}
\put (70,50){2}
\put (101,50){3}

\put (10,35){4}
\put (40,35){5}
\put (70,35){6}
\put (101,35){7}

\put (10,20){8}
\put (40,20){9}
\put (68,20){\underline{\bf{10}}}
\put (99,20){11}

\put (72.5,53){\circle{13}}
\put (72.5,68){\circle{13}}
\put (72.5,83){\circle{13}}

\put (102.5,83){\circle{13}}

\put (42.5,53){\circle{13}}
\put (42.5,68){\circle{13}}
\put (42.5,83){\circle{13}}

%\put (12.5,23){\circle{13}}
%\put (12.5,38){\circle{13}}
\put (12.5,53){\circle{13}}
\put (12.5,68){\circle{13}}
\put (12.5,83){\circle{13}}

%\put (72.5,38){\circle{13}}
%\put (72.5,23){\circle{13}}
\put (102.5,68){\circle{13}}
\put (103.5,53){\circle{13}}

\put (42.5,12){.}
\put (42.5,8){.}
\put (42.5,4){.}
\put (72.5,12){.}
\put (72.5,8){.}
\put (72.5,4){.}
\put (102.5,12){.}
\put (102.5,8){.}
\put (102.5,4){.}
\put (11.5,12){.}
\put (11.5,8){.}
\put (11.5,4){.}
\end{picture} \\
\end{tabular}
\end{example}

\begin{corollary}\label{c:hyperplane}
For $k \geq 0$, let $H_{\ell}^{k}$ denote the affine hyperplane
\[ H_{\ell}^{k} = \{ \mathbf{a} = (a_1, \ldots, a_{\ell}) \in \mathbf{R}^{\ell} : (\mathbf{a}, \e_{(k\ \mathrm{mod}\ \ell)}) = \lceil {\frac{k}{\ell} } \rceil \} \cap V \]
inside $V$, where $1 \leq (k \mod \ell) \leq \ell$.
Then under the correspondence $\pi$, the $\ell$-cores $\lambda$ with $\lambda_1 = k$ all lie inside $H_{\ell}^{k} \bigcap \Lambda_R$.
\end{corollary}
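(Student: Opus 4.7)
The plan is to derive the hyperplane condition as a direct consequence of Proposition~\ref{p:k_from_vector}. Let $\lambda$ be an $\ell$-core with $\lambda_1 = k$ and write $\pi^{-1}(\lambda) = \mathbf{a} = (a_1, \ldots, a_{\ell})$. By Proposition~\ref{p:k_from_vector}, if $a_i$ denotes the rightmost occurrence of the largest coordinate in $\mathbf{a}$, then
\[ k = \lambda_1 = (a_i - 1)\ell + i, \qquad 1 \leq i \leq \ell. \]
This exhibits $k = (a_i - 1)\ell + i$ as the unique expression of $k$ in the form $q\ell + i$ with $1 \leq i \leq \ell$ and $q \in \mathbf{Z}_{\geq 0}$, so $i$ is determined by $k$ as the representative of $k$ modulo $\ell$ lying in $\{1, 2, \ldots, \ell\}$, i.e.\ $i = (k \mod \ell)$ under the convention of the statement.

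Next I would compute $\lceil k/\ell \rceil$. From $k/\ell = (a_i - 1) + i/\ell$ with $i/\ell \in (0, 1]$, we get $\lceil k/\ell \rceil = a_i$ (the case $i = \ell$ gives $k/\ell = a_i$ already an integer, and the case $1 \leq i < \ell$ gives a non-integer whose ceiling is $a_i$). Therefore
\[ (\mathbf{a}, \e_{(k \mod \ell)}) = a_i = \left\lceil \tfrac{k}{\ell} \right\rceil, \]
which is exactly the defining equation of $H_{\ell}^{k}$. Since $\mathbf{a} \in \Lambda_R$ by construction of $\pi$, we conclude $\mathbf{a} \in H_{\ell}^{k} \cap \Lambda_R$, as required.

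The argument is essentially a bookkeeping exercise once Proposition~\ref{p:k_from_vector} is in hand; there is no real obstacle. The only subtlety is aligning the indexing conventions: the runners of the abacus are labeled $0, \ldots, \ell-1$ while the coordinates of $\mathbf{a}$ are labeled $1, \ldots, \ell$, and the statement of the corollary uses the representative of $k$ modulo $\ell$ in $\{1, \ldots, \ell\}$ (rather than $\{0, \ldots, \ell-1\}$) precisely so that the index $i$ coming out of the division-with-remainder $k = (a_i - 1)\ell + i$ matches the coordinate labeling on $V$.
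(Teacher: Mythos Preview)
Your proof is correct and follows essentially the same approach as the paper: both derive the result directly from the first formula of Proposition~\ref{p:k_from_vector} by writing $k = (a_i-1)\ell + i$ and identifying $i$ with $(k \bmod \ell) \in \{1,\ldots,\ell\}$ and $a_i$ with $\lceil k/\ell\rceil$. The only minor difference is that the paper handles the case $k=0$ (i.e.\ $\lambda=\emptyset$) separately, whereas you subsume it into the general argument; your claim that $q=a_i-1 \in \mathbf{Z}_{\geq 0}$ is technically off when $k=0$ (there $q=-1$), but the ceiling computation and the final conclusion still go through.
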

\begin{proof}
We can write $k > 0$ uniquely as $(j - 1) \ell + i$ for $1 \leq i \leq \ell$ and $j \geq 1$.  In this case, $j = \lceil {\frac{k}{\ell}} \rceil$ and $i \equiv k \mod \ell$.  The result then follows from the first formula of Proposition~\ref{p:k_from_vector}.  If $\lambda_1 = 0$ then $k = 0$ and $\lambda = \emptyset$ so the statement holds.
\end{proof}

\section{The bijection as an affine linear isometry}\label{s:geometric_bijection}

\subsection{A geometric interpretation}

From Proposition~\ref{p:k_from_vector} we see that if $a_i$ is the rightmost occurrence of the largest coordinate in $\mathbf{a} = (a_1, \ldots, a_{\ell}) = \pi^{-1}(\lambda)$, then $\lambda_1 \equiv i \mod \ell$.  The next result describes $\Phi_{\ell}^{k}$ in terms of the root lattice coordinates.

\begin{theorem}\label{t:main}
Let $\psi_{\ell}$ be the affine map defined by 
$$\psi_{\ell} (a_1, \ldots, a_{\ell}) = (a_{\ell}+1, a_1, a_2, \ldots, a_{\ell-1}).$$  Then, 
\[ \pi^{-1} \circ \Phi_{\ell}^{k} \circ \pi(a_1, \ldots, a_{\ell})  = \psi_{\ell-1}^{a_i} (a_1, \ldots, \widehat{a_i}, \ldots, a_{\ell}) \]
where $a_i$ is the rightmost occurrence of the largest entry among $\{a_1, \ldots, a_{\ell}\}$ and the circumflex indicates omission.
\end{theorem}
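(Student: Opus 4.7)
The plan is to unpack both sides of the claimed identity in terms of abaci and show they describe the same balanced flush configuration. Let $\mathbf{a} = (a_1,\ldots,a_\ell)\in\Lambda_R$ and $\lambda = \pi(\mathbf{a})$. By definition of $\pi$, the balanced flush abacus of $\lambda$ has runner $j-1$ filled with beads at levels $\le a_j$ for each $j$. The last active bead in the reading order sits on the deepest, rightmost runner; that is, on runner $i-1$ where $a_i$ is the \emph{rightmost} occurrence of the maximum value among the $a_j$ (rightmost because, among runners at a common deepest level, those with larger index appear later in the left-to-right, top-to-bottom reading order). Hence $\Phi_\ell^k$ deletes precisely runner $i-1$, leaving a flush abacus on $\ell-1$ runners whose level vector, after relabeling $0,1,\ldots,\ell-2$, is $(a_1,\ldots,\widehat{a_i},\ldots,a_\ell)$.

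Next I would identify this configuration inside the type $A_{\ell-2}$ picture. The removed abacus represents some $(\ell-1)$-core $\mu$, and by Theorem~\ref{t:flush_abacus} its flushness guarantees this. However its balance number is $\sum_{j\ne i} a_j = -a_i$, so it is not the canonical balanced representative of $\mu$ unless $a_i = 0$. To compute $\pi^{-1}(\mu)$ I must rebalance without altering $\mu$.

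The key observation is that shifting every bead forward by one reading position in an $(\ell-1)$-abacus sends a bead at (level $r$, runner $j$) to (level $r$, runner $j+1$) for $j<\ell-2$ and a bead at (level $r$, runner $\ell-2$) to (level $r+1$, runner $0$). Applied to the flush configuration with level vector $(b_0,\ldots,b_{\ell-2})$, this produces the flush configuration with level vector $(b_{\ell-2}+1,\,b_0,\ldots,b_{\ell-3})$, which under the 1-indexed convention is exactly $\psi_{\ell-1}(b_0,\ldots,b_{\ell-2})$. By Remark~\ref{r:unique_abacus} such a shift does not change the underlying partition but increases the balance number by exactly $1$.

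Since we must raise the balance from $-a_i$ to $0$, we iterate this shift $a_i$ times (when $\lambda\ne\emptyset$ the coordinate $a_i$ is positive because $a_i$ is the maximum of a vector summing to zero; when $\lambda=\emptyset$ we have $a_i=0$ and both sides are trivially zero). This proves
\[
\pi^{-1}\bigl(\Phi_\ell^k(\lambda)\bigr) \;=\; \psi_{\ell-1}^{a_i}(a_1,\ldots,\widehat{a_i},\ldots,a_\ell),
\]
as desired. The only delicate points are verifying the rightmost-maximum convention (which requires carefully parsing the reading order when ties occur) and confirming that each $\psi_{\ell-1}$ indeed corresponds to the specific one-step forward shift; these are routine bookkeeping rather than serious obstacles, so I expect no real difficulty beyond keeping the 0-indexed runner labels and 1-indexed coordinate labels in sync.
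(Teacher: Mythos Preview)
Your argument is correct and follows essentially the same route as the paper: identify the deleted runner via the rightmost maximum, note that the remaining $(\ell-1)$-abacus has balance number $-a_i$, and rebalance by shifting the reading order $a_i$ times, each shift realizing $\psi_{\ell-1}$. The paper's proof is slightly terser but the key steps and their justifications are the same as yours.
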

\begin{proof}
Suppose $\pi(a_1, \ldots, a_{\ell}) = \lambda \in \mathcal{C}_{\ell}^k$.  Then $\lambda$ corresponds to a balanced flush abacus $A$ in which the first row of $\lambda$ corresponds to the last active bead $B$ in the reading order for the abacus, and $B$ occurs on runner $i-1$.  The bijection $\Phi_{\ell}^{k}(\lambda)$ is defined on $A$ by deleting the runner $i-1$ in order to obtain the abacus of an $(\ell-1)$-core.  Since the original abacus is balanced, when we remove runner $i-1$ from $A$ we are left with an abacus $A'$ in which the balance number is $-a_i$.  Applying $\psi_{\ell-1}$ corresponds to shifting all of the entries of $A'$ forward one entry in the reading order of $A'$, or equivalently adding $\mathbf{1}$ to the $\beta$-numbers for $\lambda$.  Hence, applying $\psi_{\ell-1}^{a_i}$ to $A'$ produces a balanced flush abacus for the same partition as $\Phi_{\ell}^{k}(\lambda)$.
\end{proof}

This result is illustrated in Figure~\ref{f:sl3}.  If we fix $i = k \mod \ell$ and $a_i = \lceil {\frac{k}{\ell}} \rceil$ then it follows directly from the definition of $\psi$ that the map $\psi_{\ell-1}^{a_i} (a_1, \ldots, \widehat{a_i}, \ldots, a_{\ell})$ can be factored as a translation in the root lattice of $A_{\ell-1}$ composed with a certain projection to the root system of $A_{\ell-2}$.  The translation sends the affine hyperplane $H_{\ell}^k$ to a linear hyperplane $V'$ in $V$ defined by $a_i = 0$.  The projection sends $V' \subset V \subset \mathbf{R}^{\ell}$ to $\mathbf{R}^{\ell-1}$ with the standard root system of $A_{\ell-2}$.  This projection depends on $k$ and we caution the reader that Figure~\ref{f:sl3} only illustrates the translation.  Example~\ref{e:coords} works out the factorization of $\psi_{\ell-1}^{a_i}(a_1, \ldots, \widehat{a_i}, \ldots, a_{\ell})$ in a particular case.  

\begin{example}\label{e:coords}
Let $\ell = 3$.  The affine hyperplane $H_{3}^7$ contains $(3, 1, -4) = \pi^{-1}(7, 5, 4^2, 3^2, 2^2, 1^2)$.  Translation by the vector $\mathbf{t} = (-3, 1, 2)$ sends $H_{3}^7$ to the linear hyperplane
\[ V' = \{ (a_1, a_2, a_3) \in V : a_1 = 0 \}. \]
We view this as a vector space $W$ with orthonormal basis $\{e_1', e_2'\}$ and an associated root system of type $A_{\ell-2}$.  The projection from $V'$ to $W$ identifies $e_1'$ with $e_3$ and $e_2'$ with $e_2$.  Hence, we have $\psi^{3}(1,-4) = (-2, 2)$ corresponding to $\Phi_{3}^7(7,5,4^2,3^2,2^2,1^2) = (4,3,2,1)$.
\end{example}

\begin{figure}[p!]  % [htp]
\centering
\includegraphics{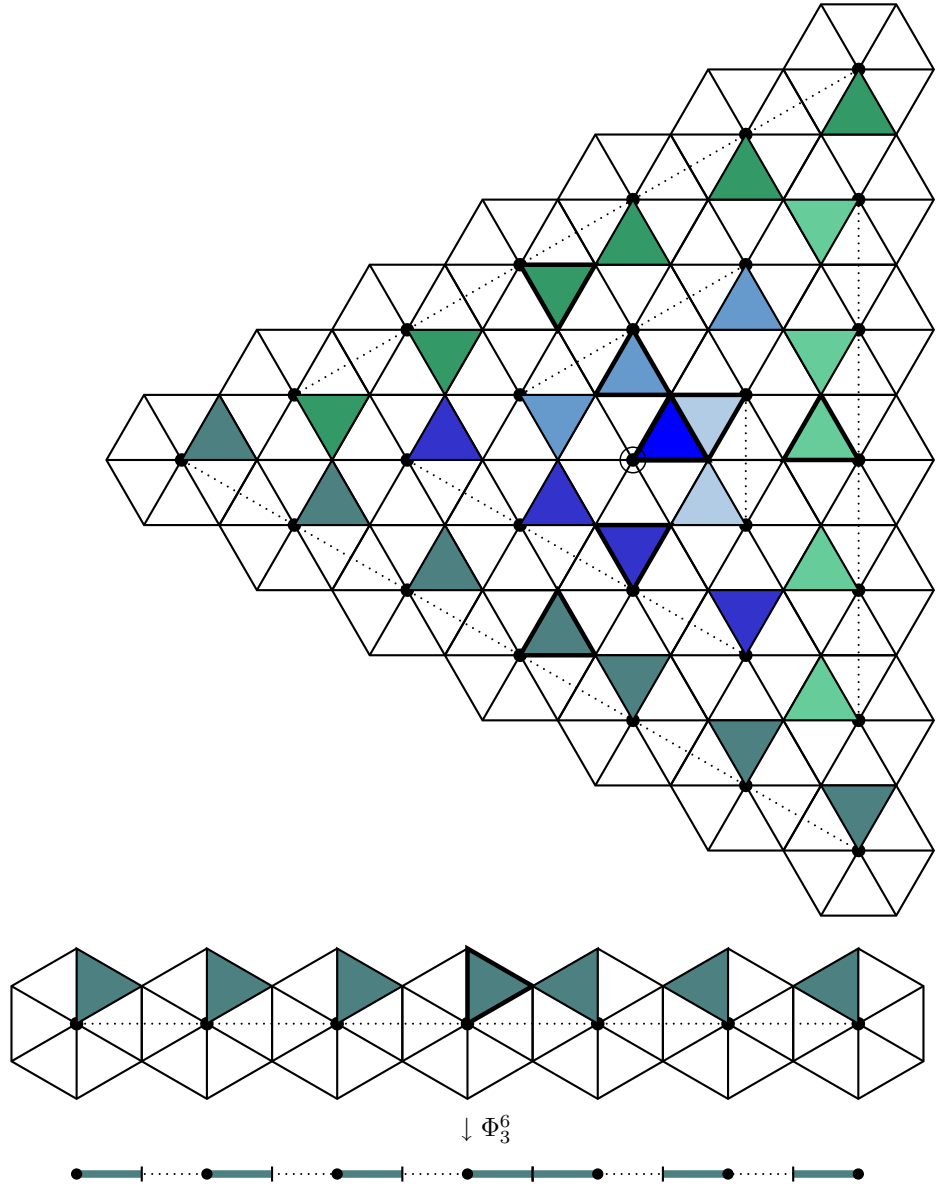}
\caption{\small The bijections $\Phi_{3}^{k}$ as a projection of $A_{2}^{(1)} \rightarrow A_{1}^{(1)}$ for $1 \leq k \leq 6$.  The $B_{\circ} + \mathbf{a}$ are hexagons which tile the plane centered at $\mathbf{a}$ which is darkened, and each $B_{\circ} + \mathbf{a}$ contains 6 triangular alcoves.  Each alcove corresponds to an element of $\widetilde{S_{\ell}}$ and each hexagon corresponds to a coset in $\widetilde{S_{\ell}} / S_{\ell}$, so there is a unique 3-core alcove in each hexagon.  These are shaded above.  Two alcoves $B_{\circ} + \mathbf{a}$ and $B_{\circ} + \mathbf{b}$ share the same color only if the first parts of the partitions $\pi(\mathbf{a})$ and $\pi(\mathbf{b})$ agree. }\label{f:sl3}
\end{figure}

\begin{remark}
If we focus on $len(\lambda)$ instead of $\lambda_1$, all of the $\ell$-cores with fixed length $m = len(\lambda)$ have $\pi^{-1}(\lambda)$ lying in the affine hyperplane 
\[ \{ \mathbf{a}  \in \mathbf{R}^{\ell} : (\mathbf{a}, \e_{((1-m)\ \mathrm{mod}\ \ell)}) = -\lceil { \frac{k}{\ell} } \rceil \} \cap V. \]
If we drew dotted lines in Figure~\ref{f:sl3} connecting the $\pi^{-1}(\lambda)$ with fixed $m = len(\lambda)$ (instead of those with fixed $k=\lambda_1$), then the lines would appear to spiral backwards from the direction of those in Figure~\ref{f:sl3}.  This can be explained by the fact that sending a partition to its transpose corresponds to the transformation of $\mathbf{R}^{\ell}$ given by $(a_1, \ldots, a_{\ell}) \mapsto (-a_{\ell}, \ldots,- a_1)$ as shown in \cite{garvan-kim-stanton}.
\end{remark}

\subsection{The bijection as a subexpression in Coxeter generators}

Recall from Section~\ref{s:phi_on_diagram} that $\Phi_{\ell}^k$ removes all rows from $\lambda$ in the same equivalence class as the first row.  In Proposition~\ref{p:redexp}, we described a canonical reduced expression $w(\lambda)$ for $\lambda = (\lambda_1, \ldots, \lambda_m) \in \mathcal{C}_{\ell}$.  In this construction, the rows of $\lambda$ are partitioned into $\ell$ equivalence classes which we denote by $[j]$ according to the residue $j$ of their rightmost box.  Let $i$ be the residue of the rightmost box $\mathsf{B}$ in the last row $\lambda_m$ of $\lambda$.  Observe that $\mathsf{B}$ is a removable $i$-box unless $\lambda = \emptyset$, and applying $s_i$ removes one box from each of the rows $\equiv [i]$.

We claim that two rows of $s_i (\lambda)$ are equivalent if and only if the rows were equivalent in $\lambda$.  To see this, consider that there can be no rows $\equiv [i-1]$ in $\lambda$.  Otherwise there exists a box in the same column as $\mathsf{B}$ whose row is $\equiv [i-1]$, and so the hooklength of this box is divisible by $\ell$ which contradicts $\lambda$ being an $\ell$-core.  Therefore, the rows $\equiv [i-1]$ in $s_i(\lambda)$ are precisely the rows $\equiv [i]$ in $\lambda$ with the possible exception of $\lambda_m$ if $\lambda_m = 1$.  In any case, no other rows change equivalence classes.

Suppose $w(\lambda) = s_{i_1} s_{i_2} \cdots s_{i_p}$ is the canonical reduced expression for $\lambda$ obtained from Proposition~\ref{p:redexp}, so $i_1$ is the residue of the box $\mathsf{B}$ and $i_p = 0$.  Working left to right to reduce $w(\lambda)$ to the identity, each application of $s_{i_j}$ removes a box from every row that is in the same equivalence class as the last row of the intermediate partition.  Let $J$ be the subset of $\{1, \ldots, p\}$ such that the first row and the last row are not in the same equivalence class in the $\ell$-core $s_{i_j} s_{i_{j+1}} \cdots s_{i_p}(\emptyset)$.  Then the subexpression of $w(\lambda)$ corresponding to the indices in $J$ gives the canonical minimal length coset representative for the $(\ell-1)$-core $\Phi_{\ell}^k(\lambda)$ after relabeling the residues with respect to $\ell-1$.

Since we remove a box from every row in the equivalence class of the last row at each step, we remove in particular a box from the longest row in that equivalence class.  Hence, we see that the positions $\{1, \ldots, p\} \setminus J$ that are deleted from $w(\lambda)$ in the application of $\Phi_{\ell}^k$ correspond with boxes in the first row of $\lambda$, so applying $\Phi_{\ell}^k$ reduces the Coxeter length by exactly $k$.

\begin{example}
Suppose $\ell = 5$ and let $\lambda=(9,5,3,2,2,1,1,1,1)$.  
We label the diagram of $\lambda$ as shown in Figure~\ref{f:subexp} by residues with respect to $\ell$ on the bottom of each box, and with respect to $\ell-1$ on the top of those boxes that are not in rows equivalent to the first row.  We put dots as placeholders in the tops of these boxes.

\begin{figure}[h]
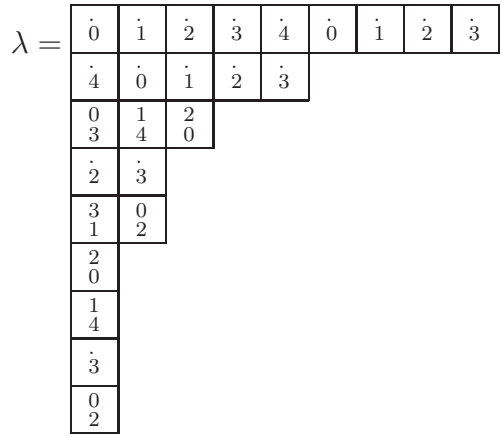

\[
\lambda = \tableau{ { }^{.}_{0} &  { }^{.}_{1} &  { }^{.}_{2} &  { }^{.}_{3} &  { }^{.}_{4} &  { }^{.}_{0} &  { }^{.}_{1} &  { }^{.}_{2} &  { }^{.}_{3} \\
 { }^{.}_{4} &  { }^{.}_{0} &  { }^{.}_{1} &  { }^{.}_{2} &  { }^{.}_{3} \\
 { }^{0}_{3} &  { }^{1}_{4} &  { }^{2}_{0} \\
 { }^{.}_{2} &  { }^{.}_{3} \\
 { }^{3}_{1} &  { }^{0}_{2} \\
 { }^{2}_{0} \\
 { }^{1}_{4} \\
 { }^{.}_{3} \\
 { }^{0}_{2} }
\]
\caption{$\Phi_{\ell}^k$ as a Coxeter subexpression of $w = {\bf s_2} s_3 {\bf s_4 s_0 s_1} s_2 {\bf s_4 s_3 } s_1 s_0 s_4 s_3 s_2 s_1 s_0$.} \label{f:subexp}
\end{figure}

We see directly from this diagram that 
\[ w(\lambda) = w = {\bf s_2} s_3 {\bf s_4 s_0 s_1} s_2 {\bf s_4 s_3 } s_1 s_0 s_4 s_3 s_2 s_1 s_0 \]
where the entries in $J$ have been highlighted.  For example, applying $s_2$ to $\lambda$ removes the last row as well as the last box from row 5.  Since we did not remove a box from the first row, we have position $1 \in J$.  The next step applies $s_3$ to remove the last row of $s_2(\lambda)$ as well as the last box from rows 1, 2, and 4, so position $2 \notin J$.

After shifting the residues in the bold subexpression $s_2 s_4 s_0 s_1 s_4 s_3$ to be calculated relative to $\ell-1$ as shown in the diagram, we obtain $w(\Phi_{\ell}^{k}(\lambda)) = s_0 s_1 s_2 s_3 s_1 s_0$.  The Coxeter length has been reduced by $15 - 6 = 9 = \lambda_1$.
\end{example}

\section{Relation with a correspondence of Lapointe--Morse}\label{s:l-m}

\subsection{Interpretation of the bijection in terms of the Lapointe-Morse correspondence}
In this section, we show that the bijection $\Phi_{\ell}^k$ can be succinctly expressed as a map between $k$-bounded partitions (one whose first part is $\leq k$) and $(k-1)$-bounded partitions, using the correspondence
\[ \rho_{k+1} : \{ (k+1)\text{-cores} \} \rightarrow \{ \text{partitions with first part } \leq k \} \] 
of Lapointe and Morse \cite[Section 3]{LM} to which we refer the reader for more details.  Let $\widetilde{\Phi_{\ell}^k}$ be defined by the property that the left square in the following diagram commutes, and define $\Upsilon_{\ell}^k$ to simply delete the first column from the diagram of the partition.  Then, we claim that the right square in the diagram also commutes.  Here, $tr$ denotes the transpose of a partition.

\medskip

\[
\begin{CD}
\left\{ \parbox{0.55in}{$\lambda \in \mathcal{C}_{\ell}^k$} \right\} @> tr >> \left\{ \parbox{1.3in}{$\lambda \in \mathcal{C}_{\ell}$, $len(\lambda) = k$} \right\} @> \rho_{\ell} >> \left\{ \parbox{1.2in}{partitions $\nu$ with $\nu_1 \leq \ell-1$ and $len(\nu) = k$} \right\} \\
@ V{\Phi_{\ell}^k} VV @ VV {\widetilde{\Phi_{\ell}^k}} V @ VV {\Upsilon_{\ell}^k} V \\
\left\{ \parbox{0.65in}{$\mu \in \mathcal{C}_{\ell-1}^{\leq k}$} \right\} @> tr >> \left\{ \parbox{1.45in}{$\mu \in \mathcal{C}_{\ell-1}$, $len(\mu) \leq k$} \right\} @>> \rho_{\ell-1} > \left\{ \parbox{1.2in}{partitions $\sigma$ with $\sigma_1 \leq \ell-2$ and $len(\sigma) \leq k$} \right\} \\
\end{CD}
\]

\bigskip

Recall that $h_{\mathsf{B}}^{\lambda}$ denotes the hooklength of a box $\mathsf{B}$ in a partition diagram $\lambda$.  Let $\lambda = (\lambda_1, \ldots, \lambda_k)$ be an $\ell$-core with first column length $k$.  Observe that $\widetilde{\Phi_{\ell}^k}$ acts by removing entire \em columns \em from $\lambda$ by transposing the explanation in Section~\ref{s:phi_on_diagram}.  Row-wise, we can describe $\widetilde{\Phi_{\ell}^k}$ as removing the leftmost box $\mathsf{B}$ in the row together with all of the boxes $\mathsf{B}'$ in the row having $h_{\mathsf{B}}^{\lambda} \equiv h_{\mathsf{B}'}^{\lambda} \mod \ell$.

As in \cite{LM}, we view $\rho_{\ell}(\lambda)$ as the result of left-justifying all of the rows in a skew-diagram $\lambda / \gamma$, where $\gamma$ consists of the boxes of $\lambda$ having hooklength $> \ell$.  We say that the boxes of $\lambda$ lying in $\lambda / \gamma$ are the \em skew boxes \em while the boxes of $\gamma$ are the \em non-skew boxes\em.  

\begin{example}
Suppose $\ell = 5$ and consider the $\ell$-core $\lambda = (6,4,3,3,2,1,1)$.  In the diagrams below, we have labeled the boxes by their hooklengths.  The skew-boxes of $\gamma \subset \lambda$ are indicated in boldface, so $\rho_{\ell}(\lambda) = (3,2,2,2,2,1,1)$.  The entries that are deleted in the application of $\widetilde{\Phi_{\ell}^k}$ are indicated with underline.  
\[
\lambda = \tableau{ \underline{12} & 9 & \underline{7} & {\bf 4} & \underline{\bf 2} & {\bf 1} \\
\underline{9} & 6 & \underline{\bf 4} & {\bf 1} \\
\underline{7} & {\bf 4} & \underline{\bf 2} \\
\underline{6} & {\bf 3} & \underline{\bf 1} \\
\underline{\bf 4} & {\bf 1} \\
\underline{\bf 2} \\
\underline{\bf 1} \\
}
\ \ \hspace{0.4in} \ \ \
\widetilde{\Phi_{\ell}^k}(\lambda) = \tableau{ 7 & {\bf 3} & {\bf 1} \\
5 & {\bf 1} \\
{\bf 3} \\
{\bf 2} \\
{\bf 1} \\
}
\]
From these diagrams, we see that $\widetilde{\Phi_{\ell}^k}(\lambda)$ is a 4-core and $\Upsilon_{\ell}^k(\rho_{\ell}(\lambda)) = (2,1,1,1,1) = \rho_{\ell-1}(\widetilde{\Phi_{\ell}^k}(\lambda))$.
\end{example}

To simplify notation, let $\widetilde{\lambda} = \widetilde{\Phi_{\ell}^k}(\lambda)$ and define $\widetilde{\gamma}$ to consist of the boxes of $\widetilde{\lambda}$ having hooklength $> \ell-1$ so that $\rho_{\ell-1}(\widetilde{\lambda})$ is the result of left justifying the boxes of $\widetilde{\lambda} / \widetilde{\gamma}$.  

\begin{lemma}\label{l:lm-one-rem}
There is exactly one skew box deleted from each row of $\lambda$ in the application of $\widetilde{\Phi_{\ell}^k}$.  Also, a box $\mathsf{B}$ in $\lambda$ that is not deleted in the application of $\widetilde{\Phi_{\ell}^k}$ is skew with respect to $\ell$ if and only if the corresponding box $\widetilde{\mathsf{B}}$ of $\widetilde{\Phi_{\ell}^k}(\lambda)$ is a skew box with respect to $\ell-1$.
\end{lemma}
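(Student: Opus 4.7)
The plan is to trace how hook lengths in row $i$ of $\lambda$ behave under the column-deletion operation $\widetilde{\Phi_{\ell}^k}$ by exploiting the abacus structure of an $\ell$-core. The arithmetic input I will use is the identity $h_{(1,j)}^{\lambda} - h_{(i,j)}^{\lambda} = (\lambda_{1}-\lambda_{i}) + (i-1)$, valid whenever $j \leq \lambda_{i}$, which combined with the definition of $\widetilde{\Phi_{\ell}^k}$ shows that $(i,j)$ is deleted if and only if $h_{(i,j)}^{\lambda} \equiv r_{i} \pmod{\ell}$, where $r_{i} \in \{1,\ldots,\ell-1\}$ denotes the residue of $h_{(i,1)}^{\lambda}$ modulo $\ell$ (nonzero since $\lambda$ is an $\ell$-core).

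For part (1), I will enumerate the hooks in row $i$ lying in the arithmetic progression $\{r_{i}, r_{i}+\ell, r_{i}+2\ell, \ldots\}$. Using the $\beta$-number abacus for $\lambda$ (so row $i$ corresponds to a bead at $\beta_{i} = h_{(i,1)}^{\lambda}$), the value $r_{i}+p\ell$ occurs as a hook in row $i$ exactly when $\beta_{i} - (r_{i}+p\ell)$ is a gap below $\beta_{i}$. Writing $\beta_{i} = q_{i}\ell + r_{i}$, these candidate positions are $0, \ell, 2\ell, \ldots, q_{i}\ell$, all on runner $0$. Since no $\beta_{j}$ is divisible by $\ell$ (because no hook of $\lambda$ is), runner $0$ contains only the padding beads $\{-\ell, -2\ell, \ldots\}$, so every nonnegative multiple of $\ell$ is a gap. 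Consequently all $q_{i}+1$ values $r_{i}, r_{i}+\ell, \ldots, \beta_{i}$ appear as hooks in row $i$, and only the smallest, $r_{i}$, is less than $\ell$. This $r_{i}$ is therefore the unique deleted skew hook of row $i$.

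For part (2), I will compute the change in hook length of a non-deleted box directly from $h_{(i,j)}^{\lambda} = (\lambda_{i}-j) + (\lambda'_{j}-i) + 1$. Since non-deleted columns retain their full length in $\widetilde{\lambda}$, the hook at a non-deleted box drops by exactly $N_{i,j}$, the number of deleted boxes of row $i$ strictly right of $(i,j)$. From part (1) the deleted hooks in row $i$, read left to right, are $\beta_{i} > \beta_{i}-\ell > \cdots > r_{i}$. Writing a non-deleted hook $h = h_{(i,j)}^{\lambda}$ as $q\ell + s$ with $s \in \{1,\ldots,\ell-1\} \setminus \{r_{i}\}$, I will show $N_{i,j} = q+1$ when $s > r_{i}$ and $N_{i,j} = q$ when $s < r_{i}$, so the image hook equals either $q(\ell-1) + (s-1)$ or $q(\ell-1) + s$. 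A one-line case analysis in $q$ will show that this value is $\leq \ell-2$ precisely when $q = 0$, which is equivalent to $h \leq \ell-1$. Since $\lambda$ has no hooks equal to $\ell$ and $\widetilde{\lambda}$ has no hooks equal to $\ell-1$, this yields the biconditional asserting that $\mathsf{B}$ is skew in $\lambda$ exactly when $\widetilde{\mathsf{B}}$ is skew in $\widetilde{\lambda}$.

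The main conceptual obstacle is establishing the abacus fact that on the $0$-runner of an $\ell$-core every nonnegative position is a gap; once that is in hand, the existence and uniqueness in part (1) as well as the clean arithmetic in part (2) both fall out of a straightforward modular analysis of the hook lengths in a single row.
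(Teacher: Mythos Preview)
Your argument is correct, but it takes a different route from the paper's. The paper works row by row: for row $i$ it passes to the sub-core $\widehat{\lambda} = (\lambda_i,\ldots,\lambda_k)$ and forms an abacus for $\widehat{\lambda}^{tr}$, so that the beads on this abacus are exactly the hooklengths appearing in row $i$. In that picture the deleted boxes comprise a single flush runner (the one containing the largest bead), which immediately gives existence of a level-$0$ deleted bead, and skewness is simply ``bead lies on level $0$''; since deleting a runner does not change levels of the surviving beads, part~(2) follows with no arithmetic at all. Your approach instead stays with the first-column abacus of $\lambda$, uses the hook identity to reduce ``column is deleted'' to the congruence $h_{(i,j)}^\lambda \equiv r_i \pmod{\ell}$, observes that runner $0$ carries no active beads in an $\ell$-core so every value $r_i + p\ell$ with $0\le p\le q_i$ is realized as a row-$i$ hook, and then computes the new hook explicitly as $q(\ell-1)+t$ with $t\in\{1,\ldots,\ell-2\}$. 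What you gain is an exact formula for the image hooklength of every surviving box (not just the skew/non-skew dichotomy); what the paper's argument gains is a one-line proof of part~(2) that avoids the case split on $s\lessgtr r_i$.
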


\begin{proof}
It suffices to prove these statements for a fixed row.  Since the skew boxes are those with hooklength $< \ell$, we delete at most one skew box from each row of $\lambda$ when we apply $\widetilde{\Phi_{\ell}^k}$.  Next, we show that at least one skew box is deleted.  Let $\mathsf{L}$ be the leftmost box of $\lambda$ in row $i$.  Since $\lambda$ is an $\ell$-core, the partition $\widehat{\lambda} = (\lambda_i, \lambda_{i+1}, \ldots, \lambda_k)$ is also an $\ell$-core.  Form an unbalanced abacus $A$ of $\widehat{\lambda}^{tr}$ such that the beads of $A$ correspond to hooklengths of the first \em row \em of $\widehat{\lambda}$ which is the $i^{\textrm{th}}$ row of $\lambda$.  In particular, those beads corresponding to the boxes with equivalent hooklength to $h_{\mathsf{L}}^{\lambda}$ form the rightmost longest runner of the abacus $A$ and they are flush.  Hence, there exists a box $\mathsf{L}'$ in the same row as $\mathsf{L}$ having hooklength $\equiv h_{\mathsf{L}}^{\widehat{\lambda}} = h_{\mathsf{L}}^{\lambda} \mod \ell$ such that $1 \leq h_{\mathsf{L}'}^{\lambda} \leq \ell-1$, so the corresponding box $\mathsf{L}'$ is a skew box.  The box $\mathsf{L}'$ is deleted from $\lambda$ when we apply $\widetilde{\Phi_{\ell}^k}$.

To prove the second statement, suppose $\mathsf{B}$ is a box in the $i$th row of $\lambda$ that does not get deleted in $\widetilde{\lambda}$.  Let $\widetilde{\mathsf{B}}$ be the corresponding box in $\widetilde{\lambda}$.  Then $\mathsf{B}$ corresponds to an active bead in the abacus $A$ that is not on same runner as $\mathsf{L}$.  We have that $\mathsf{B}$ is skew with respect to $\ell$ if and only if the corresponding bead lies on level 0 of the abacus $A$.  Since removing the runner containing $\mathsf{L}$ does not change any levels of the remaining beads, we have that $\widetilde{\mathsf{B}}$ is skew with respect to $\ell-1$ if and only if $\mathsf{B}$ is skew with respect to $\ell$.
\end{proof}

\begin{theorem}
The map $\Upsilon_{\ell}^k$ is a bijection which makes the diagram at the beginning of this section commute.
\end{theorem}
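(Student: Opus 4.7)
The proof splits into two parts: showing $\Upsilon_{\ell}^{k}$ is a bijection between the two sets displayed on the right side of the diagram, and checking that the right square commutes.

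For bijectivity, I would simply observe that $\Upsilon_{\ell}^{k}$ has an obvious inverse. Given $\nu$ with $\nu_{1} \le \ell-1$ and $\mathrm{len}(\nu) = k$, removing the first column yields a partition $\sigma$ with $\sigma_{1} \le \ell-2$ and $\mathrm{len}(\sigma) \le k$, and conversely, prepending a column of length exactly $k$ to such a $\sigma$ restores $\nu$. Thus $\Upsilon_{\ell}^{k}$ is a well-defined bijection between the indicated sets.

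For the commutativity of the right square, I would argue row by row. Fix $\lambda \in \mathcal{C}_{\ell}$ with $\mathrm{len}(\lambda) = k$, set $\widetilde{\lambda} := \widetilde{\Phi_{\ell}^{k}}(\lambda)$, and fix a row index $i$. By definition $(\rho_{\ell}(\lambda))_{i}$ is the number of skew boxes (with respect to $\ell$) in row $i$ of $\lambda$, and $(\rho_{\ell-1}(\widetilde{\lambda}))_{i}$ is the number of skew boxes (with respect to $\ell-1$) in row $i$ of $\widetilde{\lambda}$. Lemma \ref{l:lm-one-rem} provides exactly the two facts needed: precisely one skew box is deleted from row $i$ of $\lambda$ when $\widetilde{\Phi_{\ell}^{k}}$ is applied, and every non-deleted box of $\lambda$ is skew with respect to $\ell$ if and only if its image in $\widetilde{\lambda}$ is skew with respect to $\ell-1$. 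Combining these, the non-deleted skew boxes of row $i$ of $\lambda$ are in bijection with the skew boxes of row $i$ of $\widetilde{\lambda}$, so $(\rho_{\ell-1}(\widetilde{\lambda}))_{i} = (\rho_{\ell}(\lambda))_{i} - 1$ whenever $(\rho_{\ell}(\lambda))_{i} \ge 1$. On the other hand, since $\rho_{\ell}(\lambda)$ has exactly $k$ nonempty rows, stripping its first column decreases each of these rows by $1$, giving $(\Upsilon_{\ell}^{k}(\rho_{\ell}(\lambda)))_{i} = (\rho_{\ell}(\lambda))_{i} - 1$ as well. The two partitions therefore agree row by row, proving the square commutes.

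The main obstacle was already resolved in the proof of Lemma \ref{l:lm-one-rem}; once that structural fact about which boxes are removed and how their skewness transforms under $\widetilde{\Phi_{\ell}^{k}}$ is in hand, the theorem reduces to the bookkeeping above. The one edge case to verify carefully is when row $i$ of $\rho_{\ell}(\lambda)$ has a single skew box, so that $(\rho_{\ell}(\lambda))_{i} = 1$; then the unique skew box in row $i$ of $\lambda$ is precisely the one deleted by $\widetilde{\Phi_{\ell}^{k}}$, so row $i$ of $\widetilde{\lambda}$ contributes nothing to $\rho_{\ell-1}(\widetilde{\lambda})$, matching the disappearance of the corresponding length-$1$ row of $\rho_{\ell}(\lambda)$ after $\Upsilon_{\ell}^{k}$ strips the first column.
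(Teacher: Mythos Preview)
Your proof is correct and follows essentially the same approach as the paper: both deduce commutativity directly from Lemma~\ref{l:lm-one-rem}, with you spelling out the row-by-row bookkeeping and the edge case that the paper leaves implicit. Your separate verification of bijectivity via the explicit inverse is a nice addition the paper omits (presumably relying on the fact that the other three maps in the square are already known bijections).
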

\begin{proof}
We see from Lemma~\ref{l:lm-one-rem} that the notion of skew box is preserved under the application of $\widetilde{\Phi_{\ell}^k}$.  Thus, we have that $\rho_{\ell-1}(\widetilde{\Phi_{\ell}^k}(\lambda))$ is formed from $\rho_{\ell}(\lambda)$ by simply deleting the first column.
\end{proof}

     \newchapter{Conclusion}{Conclusion}{Conclusion}
    \label{conclusion}

%Conclusion from Carter Paper:

We will end by mentioning some open problems related to this thesis. Cossey, Ondrus and 
Vinroot (see \cite{COV}) have a construction for the case of the symmetric group in characteristic 
$p$ which is an analog of our decomposition of $\ell$-partitions from Section \ref{construct}. 
Their theorem should be the starting point for any attempt at generalizing our results to this case.

%%Conclusion From JM paper:

One possible direction to continue in would be to give a necessary and sufficient condition on a weak $\ell$-partition which determines when $\widetilde{f}_i^{\varphi-1} \lambda$ and $\widetilde{e}_i^{\varepsilon-1} \lambda$ are weak $\ell$-partitions.

We conjecture that similar arguments could be found to prove Theorems 
\ref{top_and_bottom_weak} and \ref{other_cases_weak} in the 
more general setting of $(\ell,p)$-JM partitions. We gave a representation theoretic proof 
of Theorem \ref{top_and_bottom_weak}. This proof is
 extendable to prove a version of Theorem \ref{top_and_bottom_weak} for $(\ell,p)$-JM partitions.
 It is fairly trivial to see that an $(\ell,p)$-JM partition is an $(\ell,0)$-JM partition. 
Hence our crystal contains all irreducibles for $H_n(q)$ where $q$ is an $\ell^{th}$ root of unity
 over a field of characteristic $p$.

\section{Decomposition Numbers}\label{decomposition_numbers}
\subsection{Definitions}

Let $\lambda$ be a partition of $n$ and let $\mu$ be an $\ell$-regular partition of $n$. Then we 
recall the definition of the decomposition number $d_{\lambda,\mu} := [S^{\lambda}:D^{\mu}]$.

\subsection{Open Problem}
Just as Dipper and James constructed all of the irreducibles for $H_n(q)$ as quotients of the Specht
 modules $S^{\lambda}$ with $\lambda$ $\ell$-regular, one of our main goals is now to give a 
construction of the irreducibles of $H_n(q)$ indexed by the nodes of 
$B(\Lambda_0)^L$. In other words, we are currently looking for a construction of the irreducibles
 indexed by the smallest partition in a regularization class, whereas they are usually indexed 
by the largest partition in a regularization class. One obvious benefit of such a construction
 would be that all of the irreducibles of $H_n(q)$ which are isomorphic to a Specht module are 
instantly recognized. For example, if $\ell = 3$ and $\lambda = (2,1)$ then James' construction says
 to look for $D^{(2,1)}$ as a quotient of $S^{(2,1)}$, whereas our crystal has the node $(1,1,1)$
 which is an $(\ell,0)$-JM partition and hence $S^{(1,1,1)} = D^{(2,1)}$. Giving such a 
construction should lead to a nicer description of the irreducibles since all irreducible Specht
 modules will be used. Also, based on computational evidence, we make the following conjecture.
\begin{conjecture}
Let $\lambda$ be a partition. Then $\dim S^{\mathcal{S}\lambda} \leq \dim S^{\mathcal{R} \lambda}$.
\end{conjecture}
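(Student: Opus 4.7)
The plan is to translate the conjecture, via the hook length formula $\dim S^\mu = n!/\prod_{(i,j) \in \mu} h^\mu_{(i,j)}$, into the equivalent combinatorial inequality
$$\prod_{(i,j) \in \mathcal{R}\lambda} h^{\mathcal{R}\lambda}_{(i,j)} \;\leq\; \prod_{(i,j) \in \mathcal{S}\lambda} h^{\mathcal{S}\lambda}_{(i,j)}.$$
Since $\mathcal{R}\lambda$ and $\mathcal{S}\lambda$ differ only by sliding certain boxes up their $\ell$-ladders, the content of the conjecture is a purely combinatorial comparison of hook length products along ladder motions.

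First I would try to exhibit a chain $\mathcal{S}\lambda = \mu^{(0)}, \mu^{(1)}, \ldots, \mu^{(N)} = \mathcal{R}\lambda$ inside the common regularization class, where each step $\mu^{(t)} \to \mu^{(t+1)}$ promotes a single box from a removable position $(a,b)$ to the addable position $(a-(\ell-1), b+1)$ immediately above it on the same ladder. The theory of unlocked boxes from Section~\ref{locked} guarantees such incremental promotions exist. For a single elementary slide, the hook lengths that change are confined to rows $a-(\ell-1)$, $a$ and columns $b$, $b+1$, yielding a closed form for $\prod h^{\mu^{(t+1)}}/\prod h^{\mu^{(t)}}$ that can be analyzed in terms of $\ell$ and the positions of a few neighboring boxes.

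The main obstacle is that weak monotonicity of the dimension does \emph{not} hold for an arbitrary chain: already for $\ell=3$ and $\mathcal{S}\lambda = (2,2,2,1,1,1)$ the single elementary slide producing $(2,2,2,2,1)$ \emph{decreases} the dimension from $48$ to $42$. So the difficulty is to show that an admissible monotone chain always exists. A natural candidate is a greedy chain that at each stage promotes the unlocked box lying on the lowest (or rightmost) available ladder; verifying monotonicity for this greedy chain should reduce to a finite case analysis based on the occupancy of rows $a-(\ell-1), a, a+1$ and columns $b-1, b, b+1$ in $\mu^{(t)}$.

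As an alternative that bypasses the combinatorial choice of chain, I would try a representation-theoretic approach. Writing $\dim S^\mu = \sum_\nu d_{\mu,\nu}\dim D^\nu$, the conjecture becomes
$$\sum_{\nu} (d_{\mathcal{R}\lambda,\nu} - d_{\mathcal{S}\lambda,\nu})\dim D^\nu \geq 0.$$
By Theorem~\ref{decomp} the contributions from $\nu = \mathcal{R}\lambda$ agree and cancel, so it would suffice to prove the termwise inequality $d_{\mathcal{S}\lambda,\nu} \leq d_{\mathcal{R}\lambda,\nu}$ for every $\ell$-regular $\nu$. I expect such an inequality to follow from the LLT--Ariki algorithm together with $\mathcal{S}\lambda \leq \mathcal{R}\lambda$ in dominance order and the fact that both partitions index canonical representatives of the same regularization class, but making this rigorous may itself be the hardest step, as controlling decomposition numbers simultaneously across an entire regularization class is a delicate problem even in well-studied cases.
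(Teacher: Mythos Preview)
The paper does not prove this statement: it is presented explicitly as a \emph{conjecture}, supported only by computational evidence and an illustrative example (Section~\ref{decomposition_numbers}). There is therefore no proof in the paper for your proposal to be compared against.

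As for the proposal itself, it is an outline of strategies rather than a proof, and you correctly identify the genuine obstacles in each. In the combinatorial route, you already observe that the hook-product is not monotone along arbitrary elementary ladder slides (your $(2,2,2,1,1,1)\to(2,2,2,2,1)$ example), so the entire argument hinges on the existence of a monotone chain, which you do not establish; the suggestion that a greedy choice ``should reduce to a finite case analysis'' is speculation, not an argument. In the representation-theoretic route, the proposed termwise inequality $d_{\mathcal{S}\lambda,\nu}\le d_{\mathcal{R}\lambda,\nu}$ is a strictly stronger statement than the conjecture and is not known; dominance order alone does not control decomposition numbers in this way, and you yourself note that making this rigorous ``may itself be the hardest step.'' So neither approach, as written, closes the gap, which is consistent with the statement's status in the paper as an open problem.
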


\begin{example}
Let $\ell = 3$ and let $\lambda = (3,2,1)$. Then $\lambda \in B(\Lambda_0)$, and $\mu = (2,1,1,1,1) \in B(\Lambda_0)^L$ with $\mathcal{R}\mu = \lambda$. The dimension of $S^{\lambda}$ is 16 and the dimension of $S^{\mu}$ is 5. Both have a copy of $D^{\lambda}$ (by Proposition \ref{decomp}), which is of dimension 4. From the viewpoint of trying to decompose the smallest module possible, $D^{\lambda}$ is easier to find in $S^{\mu}$ than it is in $S^{\lambda}$.
\end{example}
        % You should repeat this portion for each additional chapter 
        % as it's appropriate to your dissertation.

    % %------------------------------------------------------------------------------
    % 
    % %--------------------------------------------------------------- NEW CHAPTER --
    %
    % % -- [INSERT TITLE OF CHAPTER FIVE]

    % %------------------------------------------------------------------------------
    % 
    % %------------------------------------------------------------------ NEW PAGE --
    %
    % % -- REFERENCES

\end{document}